\newcommand\figcaption{\def\@captype{figure}\caption}
\newcommand\tabcaption{\def\@captype{table}\caption}
\newcommand{\bm}[1]{\mbox{\boldmath$#1$}}
\newcommand{\pa}{\partial}
\newcommand{\maT}{\mathcal T}
\newcommand{\be}{\begin{eqnarray}}
\newcommand{\ee}{\end{eqnarray}}
\newcommand{\ben}{\begin{eqnarray*}}
\newcommand{\een}{\end{eqnarray*}}
\def\be{\begin{equation}}
\def\ee{\end{equation}}
\def\bes{\begin{equation*}}
\def\ees{\end{equation*}}
\def\beq{\begin{eqnarray}}
\def\eeq{\end{eqnarray}}
\def\beqs{\begin{eqnarray*}}
\def\eeqs{\end{eqnarray*}}
\def\bal{\begin{aligned}}
\def\eal{\end{aligned}}
\def\bsqs{\begin{subequations}}
\def\esqs{\end{subequations}}
\newcommand{\maK}{\mathcal K}
\newtheorem{theorem}{Theorem}[section]
\newtheorem{corollary}[theorem]{Corollary}
\newtheorem{lemma}[theorem]{Lemma}
\newtheorem{lem}[theorem]{Lemma}
\theoremstyle{definition}
\newtheorem{definition}[theorem]{Definition}
\theoremstyle{definition}
\newtheorem{algorithm}[theorem]{Algorithm}
\theoremstyle{remark}
\newtheorem{remark}[theorem]{Remark}
\newtheorem{example}[theorem]{Example}
\newcommand{\definetitlefootnote}[1]{%
  \newcommand\addtitlefootnote{%
    \makebox[0pt][l]{$^{\bigstar}$}%
    \footnote{\protect\@titlefootnotetext}
  }%
  \newcommand\@titlefootnotetext{\spaceskip=\z@skip $^{\bigstar}$#1}%
}
\title[FEM for biharmonic]{A $C^0$ finite element method for the biharmonic problem with Dirichlet boundary conditions in a polygonal domain \addtitlefootnote}
\author[H. Li, C. D. Wickramasinghe, P. Yin]{Hengguang Li$^\dagger$, Charuka D. Wickramasinghe$^\dagger$, Peimeng Yin$^{*}$}
\address{$^\dagger$ Department of Mathematics, Wayne State University, Detroit, MI 48202, USA}
\email{li@wayne.edu; gi6036@wayne.edu}
\address{$^*$ Multiscale Methods and Dynamics Group, Computer Science and Mathematics Division, Oak Ridge National Laboratory,  Oak Ridge, Tennessee 37831, USA.
}\email{yinp@ornl.gov}
\keywords{Biharmonic equation, reentrant corner, Stokes equation, Taylor-Hood method, optimal error estimates. }
\subjclass{65N12, 65N30, 35J40}
\thanks{$^*$ Corresponding author.}
\begin{document}

\date{\today}

\begin{abstract}
In this paper, we study the biharmonic equation with Dirichlet boundary conditions in a polygonal domain. In particular, we propose a method that effectively decouples the fourth-order problem into a system of two Poison equations and one Stokes equation, or a system of one Stokes equation and one Poisson equation. It is shown that the solution of each system is equivalent to that of the original fourth-order problem on both convex and non-convex polygonal domains. Two finite element algorithms are in turn proposed to solve the decoupled systems. In addition, we show the regularity of the solutions in each decoupled system in both the Sobolev space and the weighted Sobolev space, and we derive the optimal error estimates for the numerical solutions on both quasi-uniform meshes and graded meshes. Numerical test results are presented to justify the theoretical findings.
\end{abstract}

\maketitle


\section{Introduction}
We are interested in $C^0$ finite element method for the biharmonic problem
\begin{eqnarray}\label{eqnbi}
\Delta^2\phi=f \quad {\rm{in}} \ \Omega,\qquad \quad \phi=0 \quad {\rm{and}} \quad \partial_\mathbf{n} \phi=0 \quad {\rm{on}} \ \pa\Omega,
\end{eqnarray}
where $\Omega\subset \mathbb R^2$ is a polygonal domain, $\mathbf{n}$ is the outward normal derivation, and $f$ is a given function.
The boundary conditions in (\ref{eqnbi}) are referred to as the homogeneous Dirichlet boundary conditions or clamped boundary conditions \cite{MR1422248, MR2499118} that occur for example in fluid mechanics \cite{girault1986} and linear elasticity \cite{Ciarlet}.
Equation (\ref{eqnbi}) is a fourth-order elliptic equation, there are three classical approaches to discretizing the biharmonic equation in the literature. The first type is conforming finite element methods, such as the Argyris finite element method \cite{argyris1968tuba}, which requires globally $C^1$ finite element spaces.
The second type is the nonconforming method, such as Morley finite element methods \cite{morley1968}, which generally involve low order polynomials. The third type is the mixed finite element methods that require only Lagrange finite element spaces.

To obtain the mixed finite element methods, one first decomposes the fourth-order equation into a lower order system, then applies the finite element method to the system. 
Typically, problem (\ref{eqnbi}) is decoupled into two Poisson problems by introducing one intermediate function $\psi=-\Delta \phi$. Different from biharmonic equation with Navior boundary conditions ($u=\Delta u =0$ on $\partial \Omega$) that allows one
to obtain two Poisson equations that are completely decoupled \cite{lyz2020}, applying such decomposition to (\ref{eqnbi}) leads to two Poisson equation with either undertermined or overdetermined boundary condition,
\begin{eqnarray*}
\left\{\begin{array}{ll}
-\Delta \psi=f \quad {\rm{in}} \ \Omega,\\
\text{no data} \quad {\rm{on}} \ \pa\Omega;
\end{array}\right.
\qquad \qquad {\rm{and}}\qquad \qquad\left\{\begin{array}{ll}
-\Delta \phi=\psi \quad {\rm{in}} \ \Omega,\\
\phi=\partial_\mathbf{n} \phi=0 \quad {\rm{on}} \ \pa\Omega.
\end{array}\right.
\end{eqnarray*}

To overcome this difficulty, 
Ciarlet and Raviart \cite{MR0657977} introduced a mixed finite element method for problem (\ref{eqnbi}) in a smooth domain by introducing a conditioned function space. Later, Monk improved the mixed finite element method in \cite{monk1987} to allow the intermediate function $\psi$ in $H^1(\Omega)$. 
Their main difference lies in the smoothness of the intermediate function $\psi$, namely $\psi$ is in different Sobolev spaces, but their solutions could be equivalent under some smoothness assumptions \cite{de2015}. However, these two mixed finite element methods can only be applied to smooth domains and convex polygonal domains. For non-convex polygonal domains, the solution of the Ciarlet-Raviart mixed method is equivalent to the weak solution of (\ref{eqnbi}), but the corresponding mixed finite element solutions won't converge to the exact solution due to the low regularity \cite{MR0657977, de2015}, while the Monk mixed method could result in spurious solutions as  discussed in \cite{de2015}, in which an augmented Monk method was also introduced to remove the spurious solutions.

There is rich literature on mixed finite element methods for the biharmonic problem (\ref{eqnbi}). For example,  the work in \cite{davini2000} studied a mixed method, and in \cite{zulehner2015} preconditioning techniques were investigated for the discrete system, both of which are based on the Ciarlet-Raviart formulation \cite{MR0657977}. In addition, more mixed variational formulations can be found in \cite{krendl2016,gallistl2017}. 

We concern with the efficient numerical method for (\ref{eqnbi}) in a polygonal domain, especially, the non-convex polygonal domain. 
Motivated by the fact that 
a Stokes problem could be reduced to a biharmonic problem on convex polygonal domains or smooth domains \cite{Ciarlet, brezzi1986, brezzi1991, bacuta2003, gallistl2017}, we resort to decompose problem (\ref{eqnbi}) in the following way. For given source terms $f$ of the biharmonic problem (\ref{eqnbi}), we first identify a source term of the Stokes problem, though there are infinity many choices for the source term of the Stokes problem, the velocity of the Stokes problem is uniquely determined by $f$. Then we decompose the biharmonic problem (\ref{eqnbi}) into a system of two Poisson problems and one Stokes problem  (P-S-P sequentially), or a system of one Stokes problem and one Poisson problem  (S-P sequentially), and we prove that the solution of each decoupled system, namely the solution of the last Poisson problem, is equivalent to the solution of the biharmonic problem (\ref{eqnbi}) in both convex and non-convex polygonal domains. 

Many fast solvers or finite element methods are available for the involved Stokes problem and the Poisson problems, see \cite{Ciarlet} for a review of the finite element methods. For each decoupled system, we provide a finite element algorithm, in which the Mini element method \cite{arnold1984} or the Taylor-Hood element method  \cite{girault1986, stenberg1990, BS02} is used to solve the Stokes problem, and the usual finite element method \cite{Ciarlet, Brenner02} is used for the Poisson problems.
For each algorithm, we carry out the error analysis for both the finite element approximations of the biharmonic problem and the Taylor-Hood element approximations of the involved Stokes problem.

For the algorithm involving only one Stokes problem and one Poisson problem (S-P), the error estimates 
for the Stokes problem are standard. 
The error estimate in $H^1$ norm for the biharmonic problem has an extra bound by the $L^2$ error for the velocity of the Stokes problem, and it has a convergence rate $h^{\min\{k,\alpha_0+1, 2\alpha_0\}}$, where $k$ is the degree of polynomials for finite element approximations of the biharmonic problem, and $\alpha_0$ depending on the largest interior angle $\omega$ is given by (\ref{alpha0}), which is visualized in Figure \ref{Regularity}.
The error estimate in $L^2$ norm has an extra bound depending on $\omega$ by either the $L^2$ or the $H^{-1}$ error for the velocity of the Stokes problem, and it has a convergence rate $h^{\min\{k+1,\alpha_0+2, 2\alpha_0\}}$.

For the algorithm involving two Poisson problems and one Stokes problem (P-S-P), the error estimates for the Poisson problem (the first Poisson problem) sharing the same source term as the biharmonic equation are standard. The $H^1$ error estimate for the velocity of the Stokes problem has an extra bound by the $L^2$ error for the first Possion problem. The error estimates in $L^2$ and $H^{-1}$ norms essentially use the property that the finite element approximations for the source term of the Stokes problem and the source term itself are invariant in inner product with any functions in $\textbf{curl}$ of finite element space (see Lemma \ref{fcurlgal} below). Finally, the $H^1$ error for the biharmonic problem has a convergence rate $h^{\min\{k,\beta_0+2,\alpha_0+1, 2\alpha_0\}}$ with $\beta_0=\frac{\pi}{\omega}$, and the error in $L^2$ norm has a convergence rate  $h^{\min\{k+1,\beta_0+3,\alpha_0+2, 2\alpha_0\}}$. 
For $k\leq 4$, we find $\beta_0+2$ in $H^1$ convergence rate and $\beta_0+3$ in $L^2$ convergence rate cannot achieve the minimum so the convergence rates are the same as those in the previous algorithm.
Moreover, two algorithms give the same convergence rates in non-convex polygonal domains for any $k \geq 1$.


In addition, we derive the regularity estimates for each proposed system in a class of Kondratiev-type weighted Sobolev space. Based on the regularity results, we in turn propose graded mesh refinement algorithms, such that the associated finite element algorithms recover the optimal convergence rates for both the finite element approximations of the biharmonic problem, and Mini element approximation or Taylor-Hood approximations of the involved Stokes problem. We shall point out that the thresholds of the grading parameter for the graded meshes could depend on the norm of the error and the regularity of the solution to a specific equation.

The rest of the paper is organized as follows.  In Section \ref{sec-2}, based on the general regularity theory for second-order elliptic equations and the Stokes equation, we review the weak solutions of all involved equations and show the equivalence of the solution of the proposed system to that of the original problem. 
In Section \ref{sec-3}, we propose two finite element algorithms and obtain  error estimates on quasi-uniform meshes.
In Section \ref{sec-4}, we introduce a weighted Sobolev space and derive regularity estimates for the solution. We also present the graded mesh algorithm and provide optimal error estimates on graded meshes.
We report  numerical test results in Section \ref{sec-5} to validate the theory.

Throughout the paper, the generic constant $C>0$ in our estimates may be different at different occurrences. It will depend on the computational domain, but not on the functions involved nor on the mesh level in the finite element algorithms.

\section{The biharmonic problem and its decoupled formulation}\label{sec-2}

\subsection{Well-posdedness and regularity of the solution}

Denote by $H^m(\Omega)$, $m\geq 0$,  the Sobolev space that consists of functions whose $i$th ($0\leq i\leq m$) derivatives are square integrable. 
Denote by $H^1_0(\Omega)\subset H^1(\Omega)$  the subspace consisting of functions with  zero trace on the boundary  $\partial\Omega$.
Let $L^2(\Omega):=H^0(\Omega)$, we denote the norm $\|\cdot\|_{L^2(\Omega)}$ by $\|\cdot\|$ when there is no ambiguity about the underlying domain.
For $s>0$, let $s=m+t$, where $m\in \mathbb Z_{\geq 0}$ and $0<t<1$.
Recall that for   $D\subseteq \mathbb{R}^d$,
the fractional order Sobolev space $H^s(D)$ consists of distributions $v$ in $D$ satisfying
$$
\|v\|^2_{H^s(D)}:=\|v\|^2_{H^m(D)} + \sum_{|\alpha|= m}\int_{D}\int_{D} \frac{|\partial^\alpha v(x) - \partial^\alpha v(y)|^2 }{|x-y|^{d+2t}} dxdy <\infty,
$$
where $\alpha=(\alpha_1, \ldots, \alpha_d)\in\mathbb Z^d_{\geq0}$ is a multi-index such that $\partial^\alpha=\partial_{x_1}^{\alpha_1}\cdots\partial^{\alpha_d}_{x_d}$ and $|\alpha|=\sum_{i=1}^d\alpha_i$.
We denote by $H_0^s(D)$ the closure of $C_0^\infty(D)$ in $H^s(D)$, and $H^{-s}(D)$ the dual space of $H_0^s(D)$.
The notation $[\cdot]^2$ represents the vector space. For example, $\mathbf{v}=(v_1,v_2)^T \in [H_0^1(\Omega)]^2$ represents $v_i \in H_0^1(\Omega)$, $i=1,2$, where $T$ is the transposition of a matrix or a vector. For $\mathbf{v}=(v_1,v_2)^T$, we denote
($\text{curl } \mathbf{v}) := \frac{\partial v_2}{\partial x_1} - \frac{\partial v_1}{\partial x_2}$. For a scalar function $\psi$, we denote $(\textbf{curl }{\psi}) := ({\psi}_{x_2}, -{\psi}_{x_1})^T$.

By applying Green's formulas, the variational formulation for biharmonic problem (\ref{eqnbi}) can be written as:
\begin{eqnarray}\label{eqn.firstbi}
a(\phi, \psi):=\int_\Omega \Delta \phi\Delta \psi dx=\int_\Omega f\psi dx=(f, \psi) \quad \forall \psi\in H_0^2(\Omega).
\end{eqnarray}
For a function $\psi \in H_0^2(\Omega)$, applying the Poincar\'e-type inequality \cite{Grisvard1992} twice, it follows
$$
a(\psi,\psi) = \|\Delta \psi\|^2 = |\psi|^2_{H^2(\Omega)} \geq C\|\psi\|^2_{H^2(\Omega)}.
$$
Thus, for
any $f \in H^{-2}(\Omega)$, we have by the Lax-Milgram Theorem that (\ref{eqn.firstbi}) admits a unique weak solution
$
\phi \in H_0^2(\Omega).
$

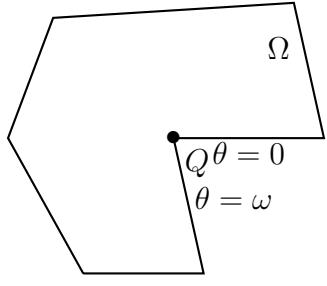
\begin{figure}
\begin{center}
\begin{tikzpicture}[scale=0.2]
\draw[thick]
(-6,-11) -- (2,-11) -- (0,-2) -- (10,-2) -- (8,7) -- (-8,6) -- (-11,-2) -- (-6,-11);
\draw (5,-3) node {$\theta = 0$};
\draw (4,-6) node {$\theta = \omega$};
\draw (7,4) node {$\Omega$};
\draw[thick] (0,-2) node {$\bullet$} node[anchor = north west] {$Q$};
\end{tikzpicture}
\end{center}
\vspace*{-15pt}
    \caption{Domain $\Omega$ containing one reentrant corner.}
    \label{fig:Omega}
\end{figure}

The regularity of the solution $\phi$ depends on the given data $f$ and the domain geometry \cite{agmon1959, blum1980boundary}.
In order to decouple  (\ref{eqnbi}), we assume that the polygonal domain $\Omega$ consists of $N$ vertices $Q_i$, $i=1,\cdots, N$, and the corresponding interior angles are $\omega_i\in (0, 2\pi)$. The largest interior angle $\omega = \max_i \omega_i \in [\frac{\pi}{3}, 2\pi)$ associated with the vertex $Q$. A sketch of the domain is given in Figure \ref{fig:Omega}.
We set $z_j$, $j=1,2,\cdots,n$ the solutions of the following characteristic equation corresponding to the the biharmonic problem (\ref{eqnbi}) (see, e.g.,  \cite{Grisvard1992}),
$$
\sin^2(z\omega) = z^2\sin^2(\omega),
$$
then there exists a threshold
\be\label{alpha0}
\alpha_0 := \min\{\rm{Re}(z_j), \  j=1,2,\cdots, n\} > \frac{1}{2},
\ee
such that when $0\leq \alpha< \alpha_0$, the biharmonic problem (\ref{eqnbi}) holds the regularity estimate \cite{kozlov2001, bacuta2002, bourlard1992}
$$
\|\phi\|_{H^{2+\alpha}(\Omega)} \leq C \|f\|_{H^{-2+\alpha}(\Omega)}.
$$
The sketches of the threshold $\alpha_0$ in terms of the interior angle $\omega$ is shown in Figure \ref{Regularity}a. In Figure \ref{Regularity}a, as a comparison we also show $\beta_0 = \frac{\pi}{\omega}$, which is the threshold of the characteristic equation for the Poisson equation with homogeneous Dirichlet boundary condition in the same polygonal domain. In Figure \ref{Regularity}b, we show the difference $\alpha_0-\beta_0$ in term of the interior angle $\omega$, from which we find $\beta_0+1 > \alpha_0$, when the largest interior angle $\omega$ satisfies $\frac{\pi}{3}<\omega'<\omega<\pi$ for some $\omega'$. In Table \ref{alpha0tab}, we present some numerical values of $\alpha_0$ in terms of different interior angles $\omega$.

\begin{figure}
\centering
\subfigure[]{\includegraphics[width=0.45\textwidth]{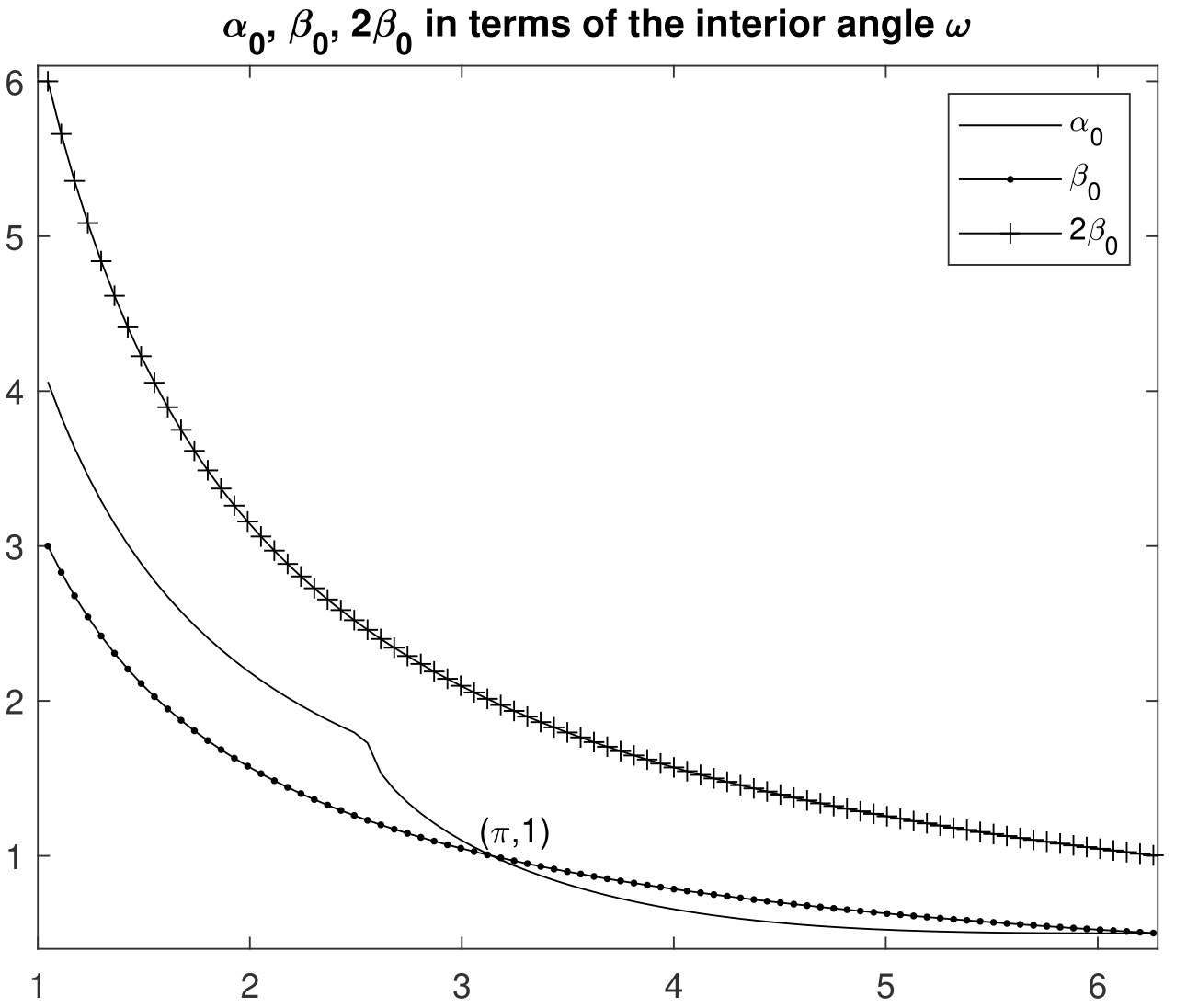}}
\subfigure[]{\includegraphics[width=0.49\textwidth]{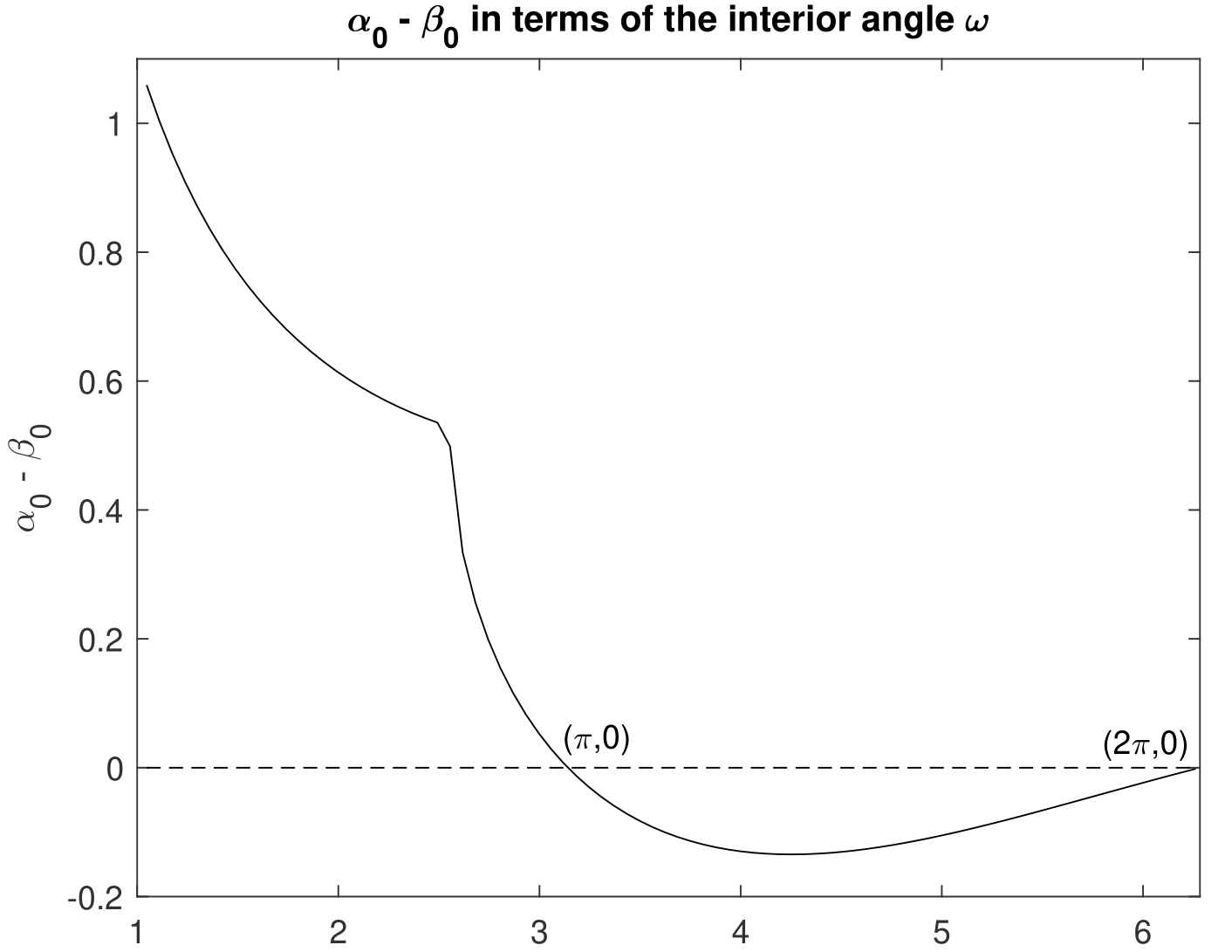}}
\caption{(a) $\alpha_0, \beta_0$ in terms of $\omega$; (b) $\alpha_0-\beta_0$ in terms of $\omega$. }\label{Regularity}
\end{figure}

\begin{table}[!htbp]\tabcolsep0.04in
\caption{Some numerical values of $\alpha_0$ in terms of different interior angles $\omega$.}
\begin{tabular}[c]{||c|c||c|c||}
\hline
$\omega (<\pi)$  & $\alpha_0 \approx$ & $\omega(>\pi)$  & $\alpha_0 \approx$ \\
\hline
$\frac{\pi}{3}$ & 4.059329012151345 & $\frac{7\pi}{6}$ & 0.751974545407645 \\
\hline
$\frac{\pi}{2}$ & 2.739593356324596 & $\frac{6\pi}{5}$ & 0.717799308407060 \\
\hline
$\frac{2\pi}{3}$ & 2.094139108847751 & $\frac{5\pi}{4}$ & 0.673583432221468 \\
\hline
$\frac{3\pi}{4}$ & 1.885371778114173 & $\frac{4\pi}{3}$ & 0.615731059491289 \\
\hline
$\frac{5\pi}{6}$ & 1.533859976323978 & $\frac{3\pi}{2}$ & 0.544483736993940 \\
\hline
$\frac{11\pi}{12}$ & 1.200631594651580 & $\frac{7\pi}{4}$ & 0.505009699452470\\
\hline
\end{tabular}\label{alpha0tab}
\end{table}

For $\alpha_0$ and $\beta_0$, we have the following result from \cite[Theorem 7.1.1]{kozlov2001}.
\begin{lemma}\label{betaalpha}
If  $\omega\in (0,\pi)$, it follows that $\alpha_0$ in (\ref{alpha0}) satisfies
\be\label{betaalphapi}
\beta_0 < \alpha_0 < 2\beta_0,
\ee
and if $\omega\in(\pi,2\pi)$, it follows
\be\label{betaalpha2pi}
\frac{1}{2}<\alpha_0<\beta_0,
\ee
where $\beta_0=\frac{\pi}{\omega}$.
\end{lemma}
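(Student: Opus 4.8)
The statement concerns only the location of the roots of the characteristic equation $\sin^2(z\omega)=z^2\sin^2\omega$, so the plan is to study this transcendental equation directly; this is precisely the analysis underlying \cite[Theorem 7.1.1]{kozlov2001}, which we may ultimately invoke. I would begin by factoring
\[
\sin^2(z\omega)-z^2\sin^2\omega=\bigl(\sin(z\omega)-z\sin\omega\bigr)\bigl(\sin(z\omega)+z\sin\omega\bigr),
\]
so that the root set is the union of the zeros of $g_-(z):=\sin(z\omega)-z\sin\omega$ and $g_+(z):=\sin(z\omega)+z\sin\omega$. Note that $z=1$ is a zero of $g_-$ corresponding to the trivial (polynomial) exponent that is excluded from the list $\{z_j\}$, and recall that $\beta_0=\pi/\omega$ is the smallest positive zero of $\sin(\beta\omega)$, i.e. the first Dirichlet exponent of the Laplacian. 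The two claimed inequalities thus become statements about where the first \emph{nontrivial} zero of $g_+$ or $g_-$ sits relative to $\beta_0$, $2\beta_0$, and $\tfrac12$.

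For $\omega\in(0,\pi)$ one has $\sin\omega>0$. Along the real segment $(0,\beta_0]$ we have $z\omega\in(0,\pi]$, hence $\sin(z\omega)\ge0$; thus $g_+>0$ there, while $g_-$ is concave (since $g_-''(z)=-\omega^2\sin(z\omega)<0$) with $g_-(0)=0$, $g_-'(0)=\omega-\sin\omega>0$ and $g_-(\beta_0)=-\beta_0\sin\omega<0$, so its only zeros in $(0,\beta_0]$ are the discarded points $0$ and $1$. Consequently no nontrivial \emph{real} zero lies in $(0,\beta_0]$. Both inequalities $\beta_0<\alpha_0<2\beta_0$ then reduce to a count of the (generally complex) zeros of $g_\pm$ in the vertical strips $\{\,{\rm Re}(z)\le\beta_0\,\}$ and $\{\,\beta_0<{\rm Re}(z)<2\beta_0\,\}$: one shows the first strip is free of nontrivial zeros while the second contains at least one, via the argument principle applied on rectangles whose vertical sides are lines ${\rm Re}(z)={\rm const}$. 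That the extremal root here is complex rather than real is already visible for $\omega=\tfrac{\pi}{2}$, where $g_\pm$ have no real zero in $(\beta_0,2\beta_0)=(2,4)$.

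For $\omega\in(\pi,2\pi)$ we have $\beta_0=\pi/\omega\in(\tfrac12,1)$ and $\sin\omega<0$. The upper bound $\alpha_0<\beta_0$ follows from a single sign change of $g_+$: since $g_+(\tfrac12)=\sin(\tfrac{\omega}{2})\bigl(1+\cos\tfrac{\omega}{2}\bigr)>0$ and $g_+(\beta_0)=\beta_0\sin\omega<0$, the function $g_+$ has a real zero in $(\tfrac12,\beta_0)$, so $\alpha_0<\beta_0$. For the lower bound $\alpha_0>\tfrac12$ I would first exclude real zeros in $(0,\tfrac12]$: the function $g_+$ is concave on $(0,\beta_0)\supset(0,\tfrac12]$ with $g_+(0)=0$ and $g_+(\tfrac12)>0$, so concavity gives $g_+>0$ on $(0,\tfrac12]$, and likewise $g_-=\sin(z\omega)+z|\sin\omega|>0$ there because $\sin(z\omega)>0$ for $z\omega<\pi$. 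It then remains to rule out complex zeros of real part $\le\tfrac12$, once more by an argument-principle count in the strip $\{\,0<{\rm Re}(z)\le\tfrac12\,\}$.

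The elementary sign and concavity computations above only pin down the \emph{real} behaviour and the candidate strips; the genuine obstacle in every case is the complex zeros, namely proving that no complex zero of $g_\pm$ has real part below the value already identified, so that the located real zero (or the counted one) actually realizes $\alpha_0$. This is handled by global winding-number estimates along the vertical lines ${\rm Re}(z)={\rm const}$, exploiting that $|\sin(z\omega)|$ grows like $e^{\omega|{\rm Im}\,z|}$ while the linear term $z\sin\omega$ grows only polynomially, so that the zeros of $g_\pm$ are asymptotically those of $\sin(z\omega)$ and can be enumerated. Because these estimates are carried out in full in \cite[Theorem 7.1.1]{kozlov2001}, we may simply cite that result; our role is to supply the sign computations that identify the extremal strip and verify the strict inequalities \eqref{betaalphapi} and \eqref{betaalpha2pi}.
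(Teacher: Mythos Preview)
The paper does not prove this lemma at all: it is simply stated as a quotation of \cite[Theorem 7.1.1]{kozlov2001}. Your proposal is consistent with this, since after the elementary real-axis sign/concavity analysis of $g_\pm(z)=\sin(z\omega)\pm z\sin\omega$ you likewise invoke that reference for the decisive step (the argument-principle count excluding complex zeros from the forbidden strips). In that sense your approach and the paper's coincide.

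What you add is genuinely useful expository content that the paper omits: the factorization of the characteristic equation, the identification of $z=1$ as the trivial root of $g_-$, the concavity argument placing the only real zeros of $g_\pm$ outside the relevant intervals, and for $\omega\in(\pi,2\pi)$ the explicit sign change $g_+(\tfrac12)>0>g_+(\beta_0)$ producing a real root in $(\tfrac12,\beta_0)$ and hence $\alpha_0<\beta_0$. These computations are correct as stated. Just be aware that, as you yourself flag, they settle only the real picture; the inequalities $\alpha_0>\beta_0$ (convex case) and $\alpha_0>\tfrac12$ (non-convex case) are not established until the complex zeros are controlled, so the citation to \cite{kozlov2001} is doing the essential work in your write-up exactly as it does in the paper.
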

By Lemma \ref{betaalpha}, when $\omega <\pi$, it follows
\be\label{alpha2beta}
\frac{1}{2} <\frac{\beta_0}{\alpha_0}<1,
\ee
and when $\omega>\pi$, it follows
\be\label{alpha2beta2}
\frac{\beta_0}{\alpha_0}>1.
\ee

\subsection{Decoupled formulation of the biharmonic problem}

It is known that solving high order problems numerically, such as  (\ref{eqnbi}), is much harder than solving the lower order problem. To decouple (\ref{eqnbi}) into lower order problems,
we first introduce a steady-state Stokes problem
\be\label{stokes}
\bal
- \Delta \mathbf{u} + \nabla p = & \mathbf{F} \quad \text{in } \Omega,\\
\text{div } \mathbf{u} = & 0 \quad \text{in } \Omega,\\
\mathbf{u} = & 0  \quad \text{on } \pa\Omega,
\eal
\ee
where $\mathbf{u} = (u_1, u_2)^T$ is the velocity field of an incompressible fluid motion, $p$ is the associated pressure.  The source term $\mathbf{F} = (f_1, f_2)^T$ satisfies
\be\label{curlFf}
\text{curl } \mathbf{F} = \frac{\partial f_2}{\partial x_1} - \frac{\partial f_1}{\partial x_2} = f,
\ee
for $f$ given in (\ref{eqnbi}).

The weak formulation of the Stokes equations  (\ref{stokes}) is to find $\mathbf{u} \in [H_0^1(\Omega)]^2$ and $p\in L_0^2(\Omega)$ such that
\be\label{stokesweak}
\bal
(\nabla \mathbf{u}, \nabla \mathbf{v}) - (\text{div } \mathbf{v},p) = & \langle\mathbf{F}, \mathbf{v} \rangle \quad \forall \mathbf{v} \in [H_0^1(\Omega)]^2,\\
-(\text{div } \mathbf{u}, q) = & 0 \quad \forall q \in L_0^2(\Omega),
\eal
\ee
where
$$
L_0^2(\Omega) = \{q \in L^2(\Omega),  \int_\Omega q dx =0\}.
$$

For the bilinear forms in weak formulation (\ref{stokesweak}), we have the following Ladyzhenskaya-Babuska-Breezi (LBB) or inf-sup conditions,
\be\label{skinfsup}
\bal
& \inf_{q\in L^2_0(\Omega)} \sup_{\mathbf{v}\in [H_0^1(\Omega)]^2}  \frac{-(\text{div } \mathbf{v},q)}{\|\mathbf{v}\|_{[H_0^1(\Omega)]^2}\|q\|} \geq \gamma_1>0,\\
& \inf_{\mathbf{u}\in [H_0^1(\Omega)]^2} \sup_{\mathbf{v}\in [H_0^1(\Omega)]^2} \frac{(\nabla \mathbf{u}, \nabla \mathbf{v})}{\|\mathbf{u}\|_{[H_0^1(\Omega)]^2}\|\mathbf{v}\|_{[H_0^1(\Omega)]^2}}
\geq \gamma_2 >0,
\eal
\ee
and the boundedness
\be\label{skubb}
\bal
& (\nabla \mathbf{u}, \nabla \mathbf{v}) \leq C_1 \|\mathbf{u}\|_{[H_0^1(\Omega)]^2}\|\mathbf{v}\|_{[H_0^1(\Omega)]^2},\\
& (\text{div } \mathbf{v},q) \leq C_2\|\mathbf{v}\|_{[H_0^1(\Omega)]^2} \|q\|,
\eal
\ee
where $\gamma_1, \gamma_2, C_1, C_2$ are constants.

Given that $\mathbf{F} \in [H^{-1}(\Omega)]^2$, under conditions (\ref{skinfsup}) and (\ref{skubb}), the weak formulation (\ref{stokesweak}) admits a unique solution $(\mathbf{u}, p) \in [H_0^1(\Omega)]^2 \times L_0^2(\Omega)$ (see, e.g. \cite{ladyzhenskaya1969,Temam1977,girault1979}).
Moreover, if $\mathbf{F} \in [H^{-1+\alpha}(\Omega)]^2$ for $\alpha<\alpha_0$, the Stokes problem holds the regularity estimate \cite{bernardi1981, Grisvard1992, pyo2015},
\be\label{stokesreg}
\|\mathbf{u}\|_{[H^{1+\alpha}(\Omega)]^2} + \|p\|_{H^{\alpha}(\Omega)} \leq C \|\mathbf{F}\|_{[H^{-1+\alpha}(\Omega)]^2}.
\ee

Next, we introduce
$$
\mathbf H(\text{curl}; \Omega) := \{ \mathbf{F} \in [L^2(\Omega)]^2 : \text{curl }\mathbf{F} \in L^2(\Omega)\}.
$$
For given $f$, we introduce a Poisson problem
\begin{equation}\label{eqnpoissonf}
-\Delta w  = f \quad \text{in }\Omega, \quad w=0 \quad {\rm{on}} \ \pa\Omega.
\end{equation}
Then we have the following results.
\begin{lem}\label{Fbblem}
For $f\in H^{-1}(\Omega)$, assume that $w$ is the solution of (\ref{eqnpoissonf}), then it follows
\be\label{Fthcurl}
\mathbf{F} = \textbf{curl }w \in \mathbf H(\text{curl}; \Omega) \subset [L^2(\Omega)]^2
\ee
satisfies (\ref{curlFf}) and
\be\label{Fbdd}
\|\mathbf F\|_{[L^2(\Omega)]^2} \leq C\|f\|_{H^{-1}(\Omega)}.
\ee
\end{lem}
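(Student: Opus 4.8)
The plan is to construct $\mathbf{F}$ directly from the scalar Poisson solution $w$ and to verify the three assertions—membership in $[L^2(\Omega)]^2$, the identity (\ref{curlFf}), and the bound (\ref{Fbdd})—by elementary vector-calculus manipulations backed by the a priori estimate for (\ref{eqnpoissonf}). First I would invoke the well-posedness of the homogeneous Dirichlet Poisson problem: since $-\Delta\colon H_0^1(\Omega)\to H^{-1}(\Omega)$ is an isomorphism (Lax--Milgram together with the Poincar\'e inequality), for $f\in H^{-1}(\Omega)$ there is a unique $w\in H_0^1(\Omega)$ solving (\ref{eqnpoissonf}), and it obeys the stability bound $\|w\|_{H^1(\Omega)}\le C\|f\|_{H^{-1}(\Omega)}$.

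Next I would record that $\mathbf{F}=\textbf{curl }w=(w_{x_2},-w_{x_1})^T$ has both components in $L^2(\Omega)$ because $w\in H^1(\Omega)$, so $\mathbf{F}\in[L^2(\Omega)]^2$; moreover $\|\mathbf{F}\|_{[L^2(\Omega)]^2}^2=\|w_{x_1}\|^2+\|w_{x_2}\|^2=|w|_{H^1(\Omega)}^2$, and combining this with the stability bound yields (\ref{Fbdd}). To establish (\ref{curlFf}) I would pair $\text{curl }\mathbf{F}$ with an arbitrary $\varphi\in C_0^\infty(\Omega)$: writing $\mathbf{F}=(F_1,F_2)^T=(w_{x_2},-w_{x_1})^T$, the distributional derivatives give
\[
\langle \text{curl }\mathbf{F},\varphi\rangle = -\langle F_2,\partial_{x_1}\varphi\rangle + \langle F_1,\partial_{x_2}\varphi\rangle = \langle w_{x_1},\partial_{x_1}\varphi\rangle + \langle w_{x_2},\partial_{x_2}\varphi\rangle = (\nabla w,\nabla\varphi).
\]
The right-hand side equals $\langle f,\varphi\rangle$ by the weak form of (\ref{eqnpoissonf}), so $\text{curl }\mathbf{F}=-\Delta w=f$ in $\mathcal{D}'(\Omega)$, which is precisely (\ref{curlFf}). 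When $f\in L^2(\Omega)$ this also shows $\text{curl }\mathbf{F}=f\in L^2(\Omega)$ and hence $\mathbf{F}\in\mathbf{H}(\text{curl};\Omega)$.

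The computation is essentially routine; the one point that deserves care is the interpretation of $\text{curl }\mathbf{F}$. Since $w$ lies only in $H^1(\Omega)$, the identity $\text{curl}(\textbf{curl }w)=-\Delta w$ cannot be read off pointwise and must be justified distributionally as above, using the variational identity for $w$ rather than its second derivatives. I expect this distributional bookkeeping—together with the attendant subtlety that the full $\mathbf{H}(\text{curl};\Omega)$ membership requires $f\in L^2(\Omega)$, whereas the bound (\ref{Fbdd}) and the identity (\ref{curlFf}) already hold for $f\in H^{-1}(\Omega)$—to be the only place where anything beyond direct calculation is needed.
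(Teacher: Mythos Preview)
Your proof is correct and follows essentially the same approach as the paper: invoke well-posedness of (\ref{eqnpoissonf}) to get $w\in H_0^1(\Omega)$ with $\|w\|_{H^1(\Omega)}\le C\|f\|_{H^{-1}(\Omega)}$, observe $\|\textbf{curl }w\|_{[L^2(\Omega)]^2}=|w|_{H^1(\Omega)}$, and use $\text{curl}(\textbf{curl }w)=-\Delta w=f$. The paper simply asserts the last identity, whereas you supply the distributional verification via the weak form of (\ref{eqnpoissonf}); this extra care is appropriate but does not constitute a different argument.
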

\begin{proof}
It is easy to verify that $\mathbf{F}$ satisfies (\ref{curlFf}), i.e.,
$$
\text{curl } \mathbf F =  \text{curl }( \textbf{curl }w) = - \Delta w = f.
$$
The Poisson problem (\ref{eqnpoissonf}) admits a unique $w\in H_0^1(\Omega)$, which satisfies
$$\|w\|_{H^1(\Omega)} \leq C\|f\|_{H^{-1}(\Omega)}.$$
Note that $\|\textbf{curl }w\|_{[L^2(\Omega)]^2} = |w|_{H^1(\Omega)}$, so we have
\be\label{curlfbdd}
\|\textbf{curl }w\|_{[L^2(\Omega)]^2} \leq C\|f\|_{H^{-1}(\Omega)}.
\ee
Thus, the estimate (\ref{Fbdd}) holds.
\end{proof}
In a polygonal domain $\Omega$, if $f \in H^l(\Omega)$ for $l \geq -1$, the regularity estimate \cite{Grisvard1985, Grisvard1992} for the Poisson problem (\ref{eqnpoissonf}) gives \be\label{regpoisson}
\|w\|_{H^{\min\{1+\beta,l+2\}}(\Omega)} \leq \|f\|_{H^l(\Omega)},
\ee
where $\beta<\beta_0=\frac{\pi}{\omega}$ with $\omega$ being the largest interior angles of $\Omega$. So for $\mathbf{F}$ obtained from (\ref{Fthcurl}), we have $\mathbf{F} \in [H^{\min\{\beta,l+1\}}(\Omega)]^2$. 


Since no boundary data is enforced to (\ref{curlFf}), so $\mathbf{F}$ obtained through (\ref{curlFf}) is not unique. 
Assume that $\mathbf{F}_0 \in [L^2(\Omega)]^2$ is a solution of (\ref{curlFf}), then it follows that
$$
\mathbf{F} = \mathbf{F}_0 + \nabla q \quad \forall q\in H^1(\Omega),
$$
is also a solution of (\ref{curlFf}) in $[L^2(\Omega)]^2$, 
since for $q\in H^1(\Omega)$, we have
$
(\text{curl }\nabla q) \equiv 0.
$

In addition to obtaining $\mathbf{F}$ by Lemma \ref{Fbblem}, we provide another way to obtain $\mathbf{F}$.
\begin{lemma}\label{Fint}
Assume that $f \in L^2(\Omega)$.\\
(i) For any fixed $x_2$, if $f(\xi, x_2) $ is integrable on $[c_1,x_1]$ for some constant $c_1$, then
\be\label{F1}
\mathbf{F} = \left[0, \ \int_{c_1}^{x_1} f(\xi, x_2) d\xi \right]^T
\ee
satisfies (\ref{curlFf}).\\
(ii) Similarly, for any fixed $x_1$, if  $f(x_1, \zeta)$ is integrable on $[c_2,x_2]$ for some constant $c_2$, then
\be\label{F2}
\mathbf{F} = \left[-\int_{c_2}^{x_2} f(x_1, \zeta) d\zeta, \ 0 \right]^T
\ee
also satisfies (\ref{curlFf}).\\
(iii) If both $f(\xi, x_2)$ and $f(x_1, \zeta)$ are integrable, then for any constant $\eta$,
\be\label{F3}
\mathbf{F} = \left[-\eta\int_{c_2}^{x_2} f(x_1, \zeta) d\zeta, \ (1-\eta)\int_{c_1}^{x_1} f(\xi, x_2) d\xi \right]^T
\ee
also satisfies (\ref{curlFf}).
\end{lemma}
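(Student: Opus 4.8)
The plan is to verify each formula by direct computation, using that $\text{curl}$ acts on $\mathbf{F}=(f_1,f_2)^T$ simply as $\partial_{x_1}f_2-\partial_{x_2}f_1$. The key observation is that in each construction the differentiation variable enters only through the limit of integration, not through the integrand, so the fundamental theorem of calculus returns $f$ directly, without any differentiation under the integral sign. For part (i), I would take $f_1=0$ and $f_2(x_1,x_2)=\int_{c_1}^{x_1}f(\xi,x_2)\,d\xi$; since the integrand $f(\xi,x_2)$ does not depend on $x_1$, we obtain $\text{curl}\,\mathbf{F}=\partial_{x_1}f_2=f$. For part (ii), I would symmetrically take $f_2=0$ and $f_1(x_1,x_2)=-\int_{c_2}^{x_2}f(x_1,\zeta)\,d\zeta$, so that $\text{curl}\,\mathbf{F}=-\partial_{x_2}f_1=f$.

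For part (iii), rather than repeat the computation I would exploit the linearity of $\text{curl}$: the field in (\ref{F3}) is exactly $\eta$ times the field of part (ii) plus $(1-\eta)$ times the field of part (i). Since each of these satisfies (\ref{curlFf}) by the previous two steps, the combination satisfies $\text{curl}\,\mathbf{F}=\eta f+(1-\eta)f=f$ for every scalar $\eta$, with no further work.

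The only genuine subtlety---and the step I expect to require the most care---is the rigor of the fundamental theorem of calculus when $f$ is merely in $L^2(\Omega)$ rather than continuous. The integrability hypotheses on $f(\cdot,x_2)$ and $f(x_1,\cdot)$ guarantee the antiderivatives are well defined, and for almost every fixed $x_2$ (respectively $x_1$) the one-dimensional map $x_1\mapsto\int_{c_1}^{x_1}f(\xi,x_2)\,d\xi$ is absolutely continuous with a.e. derivative $f(x_1,x_2)$. I would therefore read (\ref{curlFf}) in the weak sense and combine this a.e. one-dimensional statement with Fubini's theorem to conclude that $\text{curl}\,\mathbf{F}=f$ as distributions, which is all that the subsequent use of $\mathbf{F}$ as a Stokes source term requires.
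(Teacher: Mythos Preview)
Your proposal is correct and is precisely the direct verification the paper has in mind; in fact the paper states this lemma without proof, so your computation via the fundamental theorem of calculus for (i) and (ii) together with linearity for (iii) is exactly the intended argument. Your extra care about reading the identity in the weak sense for $f\in L^2(\Omega)$ is more than the paper supplies and is a welcome refinement.
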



It is obvious that $\mathbf{F}$ obtained from Lemma \ref{Fint} satisfies $\mathbf{F} \in [L^2(\Omega)]^2$. For all these $\mathbf{F} \in [L^2(\Omega)]^2$ satisfying (\ref{curlFf}), we have the following result.

\begin{theorem}\label{Stokeindep}
Assume that $\mathbf{F}_l \in [L^2(\Omega)]^2$, $l=1,2$ both satisfy (\ref{curlFf}).
Let $(\mathbf{u}_l, p_l)$ be solutions of (\ref{stokes}) or (\ref{stokesweak}) corresponding to $\mathbf{F}_l$, then it follows that
\be
\bal
\mathbf{u}_1 = & \mathbf{u}_2 \quad \text{in } [H_0^1(\Omega)]^2 \cap [H^{1+\alpha}(\Omega)]^2,\\
p_1 = & p_2 + q \quad \text{in } L^2_0(\Omega)\cap H^{\alpha}(\Omega),
\eal
\ee
where $q \in L_0^2(\Omega)\cap H^1(\Omega)$ satisfies $\nabla q = \mathbf{F}_1 - \mathbf{F}_2$.
\end{theorem}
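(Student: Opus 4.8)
The plan is to exploit the linearity of the Stokes system together with the uniqueness guaranteed by the inf-sup conditions (\ref{skinfsup})--(\ref{skubb}). First I would set $\mathbf{G} := \mathbf{F}_1 - \mathbf{F}_2 \in [L^2(\Omega)]^2$ and observe that, since both $\mathbf{F}_l$ satisfy (\ref{curlFf}), the difference is curl-free: $\text{curl}\,\mathbf{G} = \text{curl}\,\mathbf{F}_1 - \text{curl}\,\mathbf{F}_2 = f - f = 0$ in the distributional sense. This is the structural fact that makes the two source terms interchangeable at the level of the velocity.

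Second, I would promote this curl-free condition to a gradient representation. On the polygonal domain $\Omega$, which we take to be simply connected (cf. Figure \ref{fig:Omega}), a field in $[L^2(\Omega)]^2$ with vanishing curl is the gradient of an $H^1$ potential; that is, there exists $q \in H^1(\Omega)$ with $\nabla q = \mathbf{G} = \mathbf{F}_1 - \mathbf{F}_2$. After subtracting its mean we may normalize $q$ so that $q \in L_0^2(\Omega) \cap H^1(\Omega)$, which matches the function asserted in the statement. This is the de Rham / Poincar\'e-type representation, and invoking it on a possibly non-convex (but simply connected) polygon is the only genuinely non-mechanical step; the remainder is pure linearity.

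Third, by linearity of (\ref{stokesweak}) the pair $(\mathbf{w}, r) := (\mathbf{u}_1 - \mathbf{u}_2, p_1 - p_2)$ is the weak solution corresponding to the source $\mathbf{G} = \nabla q$. For any $\mathbf{v} \in [H_0^1(\Omega)]^2$ one has $\langle \nabla q, \mathbf{v}\rangle = -(\text{div}\,\mathbf{v}, q)$, the boundary term vanishing because $\mathbf{v}$ has zero trace. Substituting this into the momentum equation rewrites the weak form as $(\nabla \mathbf{w}, \nabla \mathbf{v}) - (\text{div}\,\mathbf{v}, r - q) = 0$ for all such $\mathbf{v}$, together with $-(\text{div}\,\mathbf{w}, q') = 0$ for all $q' \in L_0^2(\Omega)$. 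Thus $(\mathbf{w}, r - q)$ is a weak solution of the \emph{homogeneous} Stokes problem, and since $r - q \in L_0^2(\Omega)$, the well-posedness already recorded in the excerpt forces $\mathbf{w} = 0$ and $r - q = 0$. This yields $\mathbf{u}_1 = \mathbf{u}_2$ and $p_1 = p_2 + q$.

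Finally, the membership in the higher-regularity spaces is inherited rather than proved afresh: whenever $\mathbf{F}_l \in [H^{-1+\alpha}(\Omega)]^2$ with $\alpha < \alpha_0$, the estimate (\ref{stokesreg}) places $\mathbf{u}_l \in [H^{1+\alpha}(\Omega)]^2$ and $p_l \in H^{\alpha}(\Omega)$, so the equalities hold in $[H_0^1(\Omega)]^2 \cap [H^{1+\alpha}(\Omega)]^2$ and $L_0^2(\Omega) \cap H^{\alpha}(\Omega)$ respectively. The main obstacle I anticipate is purely the gradient-representation step of the second paragraph, where one must cite a de Rham theorem valid for $L^2$ fields on a simply connected Lipschitz (here polygonal) domain and track the normalization of $q$ so that the pressure identity is consistent with the mean-zero convention; the uniqueness argument itself is immediate from the inf-sup framework.
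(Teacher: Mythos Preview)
Your proof is correct and follows the same overall architecture as the paper: show that $\mathbf{F}_1-\mathbf{F}_2$ is a pure gradient $\nabla q$, then absorb $q$ into the pressure so that the difference $(\mathbf{u}_1-\mathbf{u}_2,\,p_1-p_2-q)$ solves the homogeneous Stokes system, and conclude by uniqueness and the regularity estimate (\ref{stokesreg}).

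The only noteworthy difference is in how the gradient representation is obtained. You invoke the de Rham/Poincar\'e lemma directly on the simply connected polygon: a curl-free $L^2$ field is the gradient of an $H^1$ potential. The paper instead writes the Helmholtz decomposition $\bar{\mathbf{F}}=\nabla q+\textbf{curl }\psi$ with $(\textbf{curl }\psi)\cdot\mathbf{n}=0$ on $\partial\Omega$, deduces that $\psi$ is constant on $\partial\Omega$, and then observes that $-\Delta\psi=\text{curl }\bar{\mathbf{F}}=0$ forces $\psi$ to be constant in $\Omega$, whence $\textbf{curl }\psi=0$ and $\bar{\mathbf{F}}=\nabla q$. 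Your route is shorter and puts the topological hypothesis (simple connectedness) front and center; the paper's route is more self-contained in that it reduces the question to the well-posedness of a Dirichlet Poisson problem already in hand, at the cost of a few extra lines. Either way the remaining uniqueness argument is identical.
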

\begin{proof}
We take $\bar{\mathbf{F}} = \mathbf{F}_1 - \mathbf{F}_2\in [L^2(\Omega)]^2$, then by Helmholtz decomposition \cite{girault1979}, there exist a stream-function $\psi$ and a potential-function $q \in H^1(\Omega)$ uniquely up to a constant such that
\be\label{Helmdep}
\bar{\mathbf{F}} = \nabla q + \textbf{curl }\psi,
\ee
and
\be\label{Helmbc}
(\bar{\mathbf{F}}-\nabla q)\cdot\mathbf{n} = (\textbf{curl }\psi)\cdot\mathbf{n} = 0 \quad \text{in } H^{-\frac{1}{2}}(\partial \Omega).
\ee
From (\ref{Helmbc}), we have
\bes
\frac{\partial \psi}{\partial \mathbf{\tau}} = (\textbf{curl }\psi)\cdot\mathbf{n} = 0\quad \text{in } H^{-\frac{1}{2}}(\partial \Omega),
\ees
where $\tau$ is the unit tangential vector on $\partial \Omega$, thus we have
\be\label{HelmDbc}
\psi = C_0 \quad \text{in } H^{\frac{1}{2}}(\partial \Omega),
\ee
where $C_0$ is a constant.
Take $\text{curl }$ on (\ref{Helmdep}),  we have
\be\label{poicurl}
-\Delta \psi = \text{curl }(\textbf{curl } \psi) = \text{curl }  \bar{\mathbf{F}} = 0,
\ee
where the last equality is based on the fact that $\mathbf{F}_1, \mathbf{F}_2$ satisfy (\ref{curlFf}).
By the Lax-Milgram Theorem, the Poisson equation (\ref{poicurl}) with the boundary condition (\ref{HelmDbc}) admits a unique solution $\psi=C_0$ in $H^1(\Omega)$. Therefore, the decomposition (\ref{Helmdep}) is equivalent to
\be\label{Helmdep1}
\bar{\mathbf{F}} = \nabla q.
\ee
Let $\bar{\mathbf{u}} = \mathbf{u}_1 - \mathbf{u}_2$ and $\bar{p} = p_1-p_2$, then $(\bar{\mathbf{u}}, \bar{p})$ satisfies
\be\label{stokesbar}
\bal
- \Delta \bar{\mathbf{u}} + \nabla (\bar{p}-q) = & 0 \quad \text{in } \Omega,\\
\text{div } \bar{\mathbf{u}} = & 0 \quad \text{in } \Omega,\\
\bar{\mathbf{u}} = & 0  \quad \text{on } \pa\Omega.
\eal
\ee
By the regularity of the Stokes problem (\ref{stokesbar}),  the conclusion holds.
\end{proof}

\begin{lemma}\label{coro1}
Assume that the source term $\mathbf{F}\in [L^2(\Omega)]^2$ of the Stokes problem (\ref{stokes}) is any vector function determined by $f \in H^{-1}(\Omega)$ satisfying (\ref{curlFf}), then (\ref{stokes}) admits a unique solution $\mathbf{u} \in [H^{1+\alpha}(\Omega)]^2$ and satisfies
\be
\|\mathbf{u}\|_{[H^{1+\alpha}(\Omega)]^2} \leq C\|f\|_{H^{-1}(\Omega)}.
\ee
\end{lemma}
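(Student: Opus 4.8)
The plan is to reduce the case of an arbitrary admissible source to the canonical choice $\mathbf{F} = \textbf{curl } w$ provided by Lemma \ref{Fbblem}, and then to read off both the regularity and the bound from the Stokes estimate (\ref{stokesreg}). The structural point that makes this work is Theorem \ref{Stokeindep}: any two source terms in $[L^2(\Omega)]^2$ satisfying (\ref{curlFf}) yield \emph{the same} velocity $\mathbf{u}$. Consequently it suffices to verify the assertion for one convenient representative, after which it transfers verbatim to every admissible $\mathbf{F}$, while uniqueness of $\mathbf{u}$ is already guaranteed by the well-posedness of (\ref{stokesweak}).

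First I would fix the distinguished source $\mathbf{F}_* = \textbf{curl } w$, where $w \in H_0^1(\Omega)$ solves the Poisson problem (\ref{eqnpoissonf}). Lemma \ref{Fbblem} ensures that $\mathbf{F}_* \in [L^2(\Omega)]^2$, that it satisfies (\ref{curlFf}), and that $\|\mathbf{F}_*\|_{[L^2(\Omega)]^2} \leq C\|f\|_{H^{-1}(\Omega)}$. Since $[L^2(\Omega)]^2 = [H^0(\Omega)]^2 \hookrightarrow [H^{-1+\alpha}(\Omega)]^2$ whenever $\alpha \leq 1$, the source $\mathbf{F}_*$ meets the hypothesis of the regularity estimate (\ref{stokesreg}) throughout the admissible range $\alpha < \alpha_0$ (with $\alpha \leq 1$; in the convex case $\alpha_0 > 1$ the value $\alpha = 1$ is permitted). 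Applying (\ref{stokesreg}) to the Stokes solution associated with $\mathbf{F}_*$ and chaining the embedding with the bound from Lemma \ref{Fbblem} gives
\begin{equation*}
\|\mathbf{u}\|_{[H^{1+\alpha}(\Omega)]^2} \leq C\|\mathbf{F}_*\|_{[H^{-1+\alpha}(\Omega)]^2} \leq C\|\mathbf{F}_*\|_{[L^2(\Omega)]^2} \leq C\|f\|_{H^{-1}(\Omega)}.
\end{equation*}

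Finally I would invoke Theorem \ref{Stokeindep} to pass from $\mathbf{F}_*$ to a general admissible $\mathbf{F}$: the velocity computed from $\mathbf{F}$ coincides with the one computed from $\mathbf{F}_*$, so the membership $\mathbf{u} \in [H^{1+\alpha}(\Omega)]^2$ and the displayed estimate hold for every choice of the non-unique source. The one point that requires care is the bookkeeping on $\alpha$: because $f$ is only assumed in $H^{-1}(\Omega)$, the sharpest regularity available for $\mathbf{F}_*$ is membership in $[L^2(\Omega)]^2$, which caps the usable smoothness at $\alpha \leq 1$. I do not expect a genuine obstacle here, since the content of the lemma is not sharpness in $\alpha$ but rather that the bound is expressed through $\|f\|_{H^{-1}(\Omega)}$ \emph{uniformly} over all admissible $\mathbf{F}$ — and that uniformity is exactly what Theorem \ref{Stokeindep} supplies by collapsing the non-uniqueness of the source onto a single velocity.
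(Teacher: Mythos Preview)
Your proposal is correct and follows essentially the same approach as the paper: fix the canonical source $\mathbf{F}_* = \textbf{curl }w$ from Lemma \ref{Fbblem}, apply the Stokes regularity estimate (\ref{stokesreg}) together with the embedding $[L^2(\Omega)]^2 \hookrightarrow [H^{-1+\alpha}(\Omega)]^2$ and the bound (\ref{Fbdd}), and then invoke Theorem \ref{Stokeindep} to transfer the conclusion to an arbitrary admissible $\mathbf{F}$. Your added remark about the cap $\alpha \leq 1$ is a helpful clarification that the paper leaves implicit in its chain of inequalities.
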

\begin{proof}
Given $f \in H^{-1}(\Omega)$, we can always find a vector function $\mathbf{F}_0 \in [L^2(\Omega)]^2$ following Lemma \ref{Fbblem} such that the corresponding Stokes problem (\ref{stokes}) admits a unique solution $\mathbf{u}_0 \in [H^{1+\alpha}(\Omega)]^2$ satisfying
\be
\|\mathbf{u}_0\|_{[H^{1+\alpha}(\Omega)]^2} \leq C \|\mathbf{F}_0\|_{[H^{-1+\alpha}(\Omega)]^2} \leq C \|\mathbf{F}_0\|_{[L^2(\Omega)]^2} \leq C\|f\|_{H^{-1}(\Omega)}.
\ee
For any source term $\mathbf{F}$ also satisfying (\ref{Fthcurl}), it follows by Theorem \ref{Stokeindep} that the corresponding solution $\mathbf{u}=\mathbf{u}_0$, so the conclusion holds.
\end{proof}

To show the connection of the solution $\mathbf{u}$ to the Stokes problem (\ref{stokes}) with the biharmonic problem (\ref{eqnbi}), we introduce the following result from \cite[Theorem 3.1]{girault1986}.
\begin{lem}\label{scurlv}
A function $\mathbf{v} \in [H^m(\Omega)]^2$ for integer $m\geq 0$ satisfies
$$
\text{div } \mathbf{v} = 0, \quad \langle \mathbf{v} \cdot \mathbf{n}, 1 \rangle|_{\partial \Omega} =0,
$$
then there exists a stream function $\psi \in H^{m+1}(\Omega)$ uniquely up to an additive constant satisfying
$$
\mathbf{v} = \textbf{curl } \psi.
$$
\end{lem}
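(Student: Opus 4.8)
The plan is to recast the existence of a stream function as the classical de Rham--Poincar\'e lemma for curl-free fields and then to read off the $H^{m+1}$ regularity by a one-line bootstrap. I introduce the rotated field $\mathbf{g} := (-v_2, v_1)^T$, which lies in $[H^m(\Omega)]^2$. A scalar function $\psi$ satisfies $\mathbf{v} = \textbf{curl }\psi = (\psi_{x_2}, -\psi_{x_1})^T$ if and only if $\nabla \psi = (\psi_{x_1},\psi_{x_2})^T = \mathbf{g}$; moreover $\text{curl }\mathbf{g} = \partial_{x_1}v_1 + \partial_{x_2}v_2 = \text{div }\mathbf{v} = 0$, so the hypothesis $\text{div }\mathbf{v} = 0$ is exactly the compatibility condition that lets $\mathbf{g}$ be a gradient. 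Thus the lemma is equivalent to the assertion that, on the simply connected polygonal domain $\Omega$, a field $\mathbf{g}\in[H^m(\Omega)]^2$ with $\text{curl }\mathbf{g}=0$ equals $\nabla\psi$ for some $\psi\in H^{m+1}(\Omega)$, unique up to an additive constant.

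I first treat the base case $m=0$, which is the heart of the argument. Here $\mathbf{g}\in[L^2(\Omega)]^2\subset[H^{-1}(\Omega)]^2$ and $\text{curl }\mathbf{g}=0$ in the sense of distributions, and I would invoke the functional form of de Rham's theorem on a bounded, simply connected Lipschitz domain: a curl-free field in $[L^2(\Omega)]^2$ is the gradient of a function in $L^2(\Omega)$. This is the functional-analytic companion of the surjectivity of $\text{div}\colon [H_0^1(\Omega)]^2\to L_0^2(\Omega)$, i.e.\ of the inf-sup condition already recorded in (\ref{skinfsup}), and it is precisely the step that is delicate on a domain with corners, since one only controls an $L^2$ field against a merely Lipschitz boundary. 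Granting it, I obtain $\psi\in L^2(\Omega)$ with $\nabla\psi = \mathbf{g}\in[L^2(\Omega)]^2$; since the gradient is square integrable this already gives $\psi\in H^1(\Omega)$ and $\mathbf{v}=\textbf{curl }\psi$, and normalizing $\int_\Omega\psi\,dx = 0$ fixes the additive constant.

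For $m\geq 1$ the regularity is then immediate: since $\nabla\psi = \mathbf{g}\in[H^m(\Omega)]^2$, every partial derivative of $\psi$ of order at most $m+1$ is a derivative of order at most $m$ of a component of $\mathbf{g}$ and hence lies in $L^2(\Omega)$; together with $\psi\in L^2(\Omega)$ this shows $\psi\in H^{m+1}(\Omega)$ with no additional work. Uniqueness follows because $\textbf{curl }\psi_1 = \textbf{curl }\psi_2$ forces $\nabla(\psi_1-\psi_2)=0$, so $\psi_1-\psi_2$ is constant on the connected set $\Omega$.

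Finally, I would remark that the flux hypothesis $\langle\mathbf{v}\cdot\mathbf{n},1\rangle|_{\partial\Omega}=0$ is not an independent assumption in the present setting: as $\mathbf{v}\in[L^2(\Omega)]^2$ with $\text{div }\mathbf{v}=0$ belongs to $H(\text{div};\Omega)$, the Green formula yields $\langle\mathbf{v}\cdot\mathbf{n},1\rangle|_{\partial\Omega}=\int_\Omega \text{div }\mathbf{v}\,dx = 0$ automatically on the simply connected $\Omega$. It is retained only because the general Girault--Raviart statement also covers multiply connected domains, where the vanishing of the total flux is what makes the line-integral definition $\psi(x)=\int_{x_0}^x\mathbf{g}\cdot d\ell$ single-valued; the main obstacle in that generality is again the $L^2$ de Rham lemma, now requiring in addition the vanishing of the periods of $\mathbf{g}$ around the holes of $\Omega$.
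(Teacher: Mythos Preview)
The paper does not supply its own proof of this lemma; it is quoted verbatim as \cite[Theorem~3.1]{girault1986} and used as a black box. Your argument is correct and is essentially the standard proof one finds in that reference: rotate the divergence-free field to a curl-free one, invoke the $L^2$ de Rham lemma on a simply connected Lipschitz domain to write it as a gradient, and then read off the $H^{m+1}$ regularity of the potential from $\nabla\psi\in[H^m(\Omega)]^2$. Your observation that the flux condition $\langle\mathbf{v}\cdot\mathbf{n},1\rangle|_{\partial\Omega}=0$ is automatic here (since $\Omega$ is simply connected and $\mathbf{v}\in H(\mathrm{div};\Omega)$ with $\mathrm{div}\,\mathbf{v}=0$) is also correct; the hypothesis is only needed in the multiply connected case covered by the general Girault--Raviart theorem.
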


Since $\mathbf{u} \in [H_0^1(\Omega)]^2 \cap [H^{1+\alpha}(\Omega)]^2$ and
$\text{div } \mathbf{u} = 0$, so we have by Lemma \ref{scurlv} that there exists $\bar{\phi} \in  H^{2}(\Omega)$ uniquely up to an additive constant
satisfying
\be\label{utophi}
(u_1, u_2)^T = \mathbf{u} = \textbf{curl }\bar{\phi} = (\bar{\phi}_{x_2}, -\bar{\phi}_{x_1})^T,
\ee
which further implies $|\nabla \bar\phi| \in H^{1+\alpha}(\Omega)$, thus we have
\be\label{phiH2}
\bar{\phi} \in  H^{2+\alpha}(\Omega).
\ee

\begin{lem}\label{barphibi0}
There exists a unique
\be\label{phiH02}
\bar{\phi} \in H_0^2(\Omega)\cap  H^{2+\alpha}(\Omega).
\ee
satisfying (\ref{utophi}).
\end{lem}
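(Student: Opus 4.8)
The plan is to use the homogeneous boundary condition $\mathbf{u}=0$ on $\pa\Omega$ of the Stokes problem (\ref{stokes}) to pin down the single free additive constant in $\bar\phi$, and then to verify the two homogeneous trace conditions that characterize $H_0^2(\Omega)$. By Lemma \ref{scurlv} together with (\ref{phiH2}) we already have a $\bar\phi\in H^{2+\alpha}(\Omega)$, unique up to an additive constant, satisfying (\ref{utophi}); so the whole task reduces to choosing this constant correctly and showing that the choice is forced. Since $\bar\phi\in H^{2+\alpha}(\Omega)\subset H^2(\Omega)$, its gradient $\nabla\bar\phi\in[H^1(\Omega)]^2$ admits a well-defined trace in $[H^{1/2}(\pa\Omega)]^2$, which legitimizes all the boundary identities below.

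First I would read off the boundary behaviour of $\bar\phi$ from that of $\mathbf{u}$. As $\mathbf{u}\in[H_0^1(\Omega)]^2$ its trace vanishes on $\pa\Omega$, while (\ref{utophi}) gives $\mathbf{u}=\textbf{curl }\bar\phi=(\bar\phi_{x_2},-\bar\phi_{x_1})^T$, whose two components are precisely the two components of $\nabla\bar\phi$. Hence $\nabla\bar\phi=0$ on $\pa\Omega$ in the trace sense. Decomposing this vanishing gradient into its normal and tangential parts yields
\be
\pa_\mathbf{n}\bar\phi=\nabla\bar\phi\cdot\mathbf{n}=0 \quad\text{and}\quad \frac{\pa\bar\phi}{\pa\tau}=\nabla\bar\phi\cdot\tau=0 \quad\text{on }\pa\Omega,
\ee
with $\tau$ the unit tangent along $\pa\Omega$.

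It then remains to fix the constant. The vanishing of the tangential derivative, together with the fact that $\pa\Omega$ is connected (the polygon $\Omega$ being simply connected), forces $\bar\phi$ to equal one constant $C_0$ on all of $\pa\Omega$; subtracting $C_0$ is the unique normalization making $\bar\phi=0$ on $\pa\Omega$. With this choice both $\bar\phi=0$ and $\pa_\mathbf{n}\bar\phi=0$ hold on $\pa\Omega$, so the standard trace characterization of $H_0^2$ on a Lipschitz domain gives $\bar\phi\in H_0^2(\Omega)$; combined with $\bar\phi\in H^{2+\alpha}(\Omega)$ this is exactly (\ref{phiH02}). Uniqueness is immediate, since two admissible functions differ by a constant by Lemma \ref{scurlv} and the only constant lying in $H_0^2(\Omega)$ is zero. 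The one place meriting care is the passage from $\pa\bar\phi/\pa\tau=0$ to constancy of $\bar\phi$ on $\pa\Omega$: this is where connectedness of the boundary is used, and one must ensure the trace identities above are genuine equalities in $H^{1/2}(\pa\Omega)$ inherited from the $H^1$-trace of $\nabla\bar\phi$ rather than merely pointwise statements. Everything else follows directly from results already established.
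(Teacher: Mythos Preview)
Your proposal is correct and follows essentially the same approach as the paper: both use $\mathbf{u}=0$ on $\pa\Omega$ together with (\ref{utophi}) to show that the tangential derivative of $\bar\phi$ vanishes (hence $\bar\phi$ is constant on the connected boundary), normalize to zero, and then observe that the full gradient trace vanishes. Your write-up is slightly more explicit about the uniqueness argument and the trace-theoretic justification, but the underlying logic is identical.
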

\begin{proof}
By calculation,
\bes
\bar{\phi}_\mathbf{\tau} = \textbf{curl }\bar{\phi} \cdot \mathbf{n} = \mathbf{u} \cdot \mathbf{n} = 0,
\ees
where $\mathbf{\tau}$ is the unit tangent to $\partial \Omega$, thus it follows
$$
\bar{\phi} = \text{constant} \quad \text{on } \partial \Omega.
$$
Without loss of generality, we can take
\be\label{barphibc}
\bar{\phi} = 0 \quad \text{on } \partial \Omega.
\ee
From (\ref{utophi}), we also have
\be\label{gradphi}
\nabla \bar{\phi} = (\bar{\phi}_{x_1}, \bar{\phi}_{x_2})^T = (-u_2, u_1)^T = \mathbf{0}\quad \text{on } \pa\Omega.
\ee
Thus, the conclusion (\ref{phiH02}) follows from (\ref{barphibc}), (\ref{gradphi}) and (\ref{phiH2}).
\end{proof}

Instead of solving for $\bar{\phi} \in H_0^2(\Omega)\cap  H^{2+\alpha}(\Omega)$ from (\ref{utophi}) directly, we apply the operator $\text{curl}$ on (\ref{utophi}) to obtain the following Poisson problem
\begin{equation}\label{eqnpoisson1}
-\Delta \bar{\phi}  = \text{curl }\mathbf{u} \quad \text{in }\Omega \quad \bar{\phi}=0 \quad {\rm{on}} \ \pa\Omega.
\end{equation}
The weak formulation of (\ref{eqnpoisson1}) is to find $\bar{\phi} \in H_0^1(\Omega)$, such that
\be\label{poissonweak1}
(\nabla \bar{\phi}, \nabla \psi) = (\text{curl } \mathbf{u}, \psi) \quad \forall \psi \in H_0^1(\Omega).
\ee
Since $(\text{curl } \mathbf{u} )\in L^2(\Omega)$, so we have by the Lax-Milgram Theorem that (\ref{poissonweak1}) admits a unique solution $\bar{\phi} \in H_0^1(\Omega)$.

\begin{lem}
The Poisson problem (\ref{eqnpoisson1}) admits a unique solution $\bar{\phi} \in H_0^2(\Omega)\cap  H^{2+\alpha}(\Omega)$.
\end{lem}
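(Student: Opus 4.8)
The plan is to avoid a direct application of the Poisson elliptic-regularity estimate (\ref{regpoisson}) to (\ref{eqnpoisson1}) and instead to \emph{identify} the weak $H_0^1$ solution guaranteed by Lax--Milgram with the smoother stream function already produced in Lemma \ref{barphibi0}. The reason for this detour is precisely the point I expect to be the only real subtlety: applying (\ref{regpoisson}) with source term $\text{curl }\mathbf u \in H^{\alpha}(\Omega)$ yields only $\bar\phi \in H^{1+\beta}(\Omega)$ for $\beta<\beta_0$, which at a reentrant corner (where $\beta_0<1$) is \emph{strictly less} than $H^2(\Omega)$. The improved regularity $H^{2+\alpha}(\Omega)$ cannot come from the Poisson equation itself; it must be inherited from the Stokes velocity $\mathbf u \in [H^{1+\alpha}(\Omega)]^2$, whose regularity is governed by the sharper threshold $\alpha_0$ rather than by $\beta_0$.

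First I would recall that (\ref{eqnpoisson1}) was obtained precisely by taking $\text{curl}$ of the relation (\ref{utophi}). Hence the function $\bar\phi$ furnished by Lemma \ref{barphibi0}, which satisfies $\mathbf u = \textbf{curl }\bar\phi$ and lies in $H_0^2(\Omega)\cap H^{2+\alpha}(\Omega)$, obeys
$$
-\Delta\bar\phi = \text{curl}(\textbf{curl }\bar\phi) = \text{curl }\mathbf u \quad \text{in }\Omega,
$$
together with $\bar\phi = 0$ on $\pa\Omega$ by (\ref{barphibc}). In particular this $\bar\phi$ is a strong solution of (\ref{eqnpoisson1}).

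Next I would verify that this same $\bar\phi$ is a \emph{weak} solution in the sense of (\ref{poissonweak1}). Since $\bar\phi \in H_0^2(\Omega) \subset H_0^1(\Omega)$ and $-\Delta\bar\phi = \text{curl }\mathbf u \in L^2(\Omega)$, an integration by parts (legitimate because $\bar\phi$ vanishes on $\pa\Omega$) gives $(\nabla\bar\phi, \nabla\psi) = (\text{curl }\mathbf u, \psi)$ for every $\psi \in H_0^1(\Omega)$, so $\bar\phi$ solves (\ref{poissonweak1}).

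Finally, I would invoke the uniqueness established just above the statement: by the Lax--Milgram Theorem, (\ref{poissonweak1}) has exactly one solution in $H_0^1(\Omega)$. Since the stream function of Lemma \ref{barphibi0} is such a solution and already lies in $H_0^2(\Omega)\cap H^{2+\alpha}(\Omega)$, the unique weak solution must coincide with it, which transfers the $H^{2+\alpha}$ smoothness to it and yields the conclusion. The entire argument thus hinges on recognizing that the Lax--Milgram weak solution and the stream function of $\mathbf u$ are forced to be equal by uniqueness in $H_0^1(\Omega)$.
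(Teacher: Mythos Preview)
Your proposal is correct and follows essentially the same approach as the paper: you identify the stream function $\bar\phi \in H_0^2(\Omega)\cap H^{2+\alpha}(\Omega)$ from Lemma \ref{barphibi0} as a solution of (\ref{eqnpoisson1}) and then invoke the Lax--Milgram uniqueness in $H_0^1(\Omega)$ to conclude. Your write-up is in fact more careful than the paper's (you spell out the weak-formulation verification and explain why direct Poisson regularity via (\ref{regpoisson}) would be insufficient at a reentrant corner), but the underlying argument is the same.
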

\begin{proof}
Since $\bar{\phi} \in H_0^2(\Omega)\cap  H^{2+\alpha}(\Omega) \subset H_0^1(\Omega)$ is a solution of (\ref{utophi}), so it is also a solution of the Poisson problem (\ref{eqnpoisson1}). By the uniqueness of the solution of (\ref{eqnpoisson1}) in $H_0^1(\Omega)$, the conclusion holds.
\end{proof}

\begin{lem} \label{barphibi}
The solution $\bar{\phi} \in H_0^2(\Omega)\cap  H^{2+\alpha}(\Omega)$ obtained through (\ref{utophi}) or the Poisson problem (\ref{eqnpoisson1})
satisfies the biharmonic problem
\begin{equation}\label{eqnbi2}
\Delta^2 \bar{\phi} = \text{curl } \mathbf{F} =f  \quad \text{in }\Omega, \qquad \quad \bar{\phi}=0 \quad {\rm{and}} \quad \partial_\mathbf{n} \bar{\phi}=0 \quad {\rm{on}} \ \pa\Omega.
\end{equation}
\end{lem}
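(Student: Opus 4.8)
The boundary conditions in (\ref{eqnbi2}) are already in hand: Lemma \ref{barphibi0} places $\bar\phi$ in $H_0^2(\Omega)$, and by the trace characterization of $H_0^2(\Omega)$ this encodes exactly $\bar\phi=0$ and $\partial_\mathbf{n}\bar\phi=0$ on $\partial\Omega$. So the only thing left is the interior equation $\Delta^2\bar\phi=f$. Morally this is the statement that applying $\text{curl}$ to the Stokes momentum equation kills the pressure ($\text{curl }\nabla p=0$), commutes with $\Delta$, and—using $\text{curl }\mathbf{u}=\text{curl}(\textbf{curl }\bar\phi)=-\Delta\bar\phi$—produces $\Delta^2\bar\phi=\text{curl }\mathbf{F}=f$. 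Since $\bar\phi\in H^{2+\alpha}(\Omega)$ with $\alpha$ possibly small, I would make this rigorous at the available regularity by showing that $\bar\phi$ solves the variational biharmonic problem (\ref{eqn.firstbi}), $a(\bar\phi,\psi)=(f,\psi)$ for all $\psi\in H_0^2(\Omega)$; uniqueness of the weak solution then identifies $\bar\phi$ with the solution of (\ref{eqnbi}).

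The mechanism is to feed a suitable test function into the weak Stokes formulation (\ref{stokesweak}). Given $\psi\in H_0^2(\Omega)$, set $\mathbf{v}=\textbf{curl }\psi=(\psi_{x_2},-\psi_{x_1})^T$; since the first derivatives of $\psi$ lie in $H_0^1(\Omega)$ we have $\mathbf{v}\in[H_0^1(\Omega)]^2$, and $\text{div }\mathbf{v}=\psi_{x_1x_2}-\psi_{x_2x_1}=0$. Substituting $\mathbf{v}$ into the first line of (\ref{stokesweak}) makes the pressure term $(\text{div }\mathbf{v},p)$ vanish, leaving
\be
(\nabla\mathbf{u},\nabla\mathbf{v})=\langle\mathbf{F},\mathbf{v}\rangle .
\ee

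I then evaluate the two sides separately. For the right-hand side, the distributional definition of $\text{curl}$ together with (\ref{curlFf}) gives $\langle\mathbf{F},\textbf{curl }\psi\rangle=\langle\text{curl }\mathbf{F},\psi\rangle=(f,\psi)$, where the integration by parts is legitimate because $\psi$ vanishes on $\partial\Omega$ and the identity extends from $\psi\in C_0^\infty(\Omega)$ to $\psi\in H_0^2(\Omega)$ by density, using $\mathbf{F}\in[L^2(\Omega)]^2$. For the left-hand side I substitute $\mathbf{u}=\textbf{curl }\bar\phi$ and expand the Dirichlet form into the Hessian inner product,
\be
(\nabla\mathbf{u},\nabla\mathbf{v})=\int_\Omega\left(\bar\phi_{x_1x_1}\psi_{x_1x_1}+2\bar\phi_{x_1x_2}\psi_{x_1x_2}+\bar\phi_{x_2x_2}\psi_{x_2x_2}\right)dx .
\ee
The crucial step is the integration-by-parts identity $\int_\Omega\big(2\bar\phi_{x_1x_2}\psi_{x_1x_2}-\bar\phi_{x_1x_1}\psi_{x_2x_2}-\bar\phi_{x_2x_2}\psi_{x_1x_1}\big)dx=0$ for $\bar\phi,\psi\in H_0^2(\Omega)$, which recasts the Hessian form as $\int_\Omega\Delta\bar\phi\,\Delta\psi\,dx=a(\bar\phi,\psi)$. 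Combining the two evaluations yields $a(\bar\phi,\psi)=(f,\psi)$ for every $\psi\in H_0^2(\Omega)$, which is the weak form of (\ref{eqnbi2}).

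The main obstacle is the rigorous justification of this last identity on a possibly non-convex polygonal domain with only $\bar\phi\in H^{2+\alpha}(\Omega)$: one cannot integrate by parts naively against the reentrant corner. The clean route is to prove the identity first for $\bar\phi,\psi\in C_0^\infty(\Omega)$ by two integrations by parts (all boundary terms vanish), and then pass to $H_0^2(\Omega)$ by density, using that both $\int_\Omega D^2\bar\phi:D^2\psi\,dx$ and $\int_\Omega\Delta\bar\phi\,\Delta\psi\,dx$ are continuous bilinear forms on $H^2(\Omega)\times H^2(\Omega)$. The same density principle handles the right-hand side pairing, so in fact no pointwise regularity beyond membership in $H_0^2(\Omega)$ is needed for the weak statement; the extra smoothness $H^{2+\alpha}(\Omega)$ recorded in (\ref{phiH02}) is reserved for the later error analysis.
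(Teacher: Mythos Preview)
Your proof is correct and takes a genuinely different route from the paper. The paper argues at the level of the strong Stokes equations: it substitutes $\mathbf{u}=\textbf{curl }\bar\phi$ into (\ref{stokes}) to obtain the two scalar equations $-\Delta(\bar\phi_{x_2})+p_{x_1}=f_1$ and $-\Delta(-\bar\phi_{x_1})+p_{x_2}=f_2$, then applies $-\partial_{x_2}$ and $\partial_{x_1}$ respectively and adds, so the mixed partials of $p$ cancel and $\Delta^2\bar\phi=\text{curl }\mathbf{F}=f$ drops out. This is short but formal: at the regularity $\bar\phi\in H^{2+\alpha}(\Omega)$, $p\in H^\alpha(\Omega)$ with $\alpha$ possibly below $1$, differentiating the Stokes equations pointwise is not immediately justified and should be read distributionally. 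Your approach avoids this entirely by working in the weak formulation (\ref{stokesweak}) with the divergence-free test field $\mathbf{v}=\textbf{curl }\psi$, so the pressure never needs to be differentiated, and the passage from the Hessian form to $a(\bar\phi,\psi)$ is handled by a density argument valid on any polygon. What the paper's computation buys is brevity and transparency of the mechanism (take $\text{curl}$ of the momentum equation); what your argument buys is a proof that is rigorous at exactly the regularity available, and it lands directly on the variational identity (\ref{eqn.firstbi}), which is what uniqueness actually uses.
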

\begin{proof}
Following (\ref{utophi}), we replace $\mathbf{u}$ by $\textbf{curl }\bar{\phi}$ in (\ref{stokes}) and obtain
\begin{subequations}\label{stokes2}
\begin{align}
-\Delta (\bar{\phi}_{x_2}) + p_{x_1} =&  f_1 \quad \text{in }\Omega, \\
-\Delta (-\bar{\phi}_{x_1}) + p_{x_2} =&  f_2 \quad \text{in }\Omega.
\end{align}
\end{subequations}
Applying differential operators $-\frac{\partial}{\partial x_2}$ and $\frac{\partial}{\partial x_1}$ to (\ref{stokes2}a) and (\ref{stokes2}b), respectively, and taking the summation lead to the conclusion.
\end{proof}

From Lemma \ref{barphibi}, we find that $\bar{\phi}$ in (\ref{eqnbi2}) satisfies exactly the same problem as $\phi$ in (\ref{eqnbi}) in the following sense,
\be
\phi = \bar{\phi} \quad \text{in } H_0^2(\Omega)\cap  H^{2+\alpha}(\Omega).
\ee
From now on, we will use $\phi$ to replace the notation $\bar{\phi}$. Thus the Poisson problem (\ref{eqnpoisson1}) is equivalent to
\begin{equation}\label{eqnpoisson}
-\Delta \phi  = \text{curl }\mathbf{u} \quad \text{in }\Omega, \quad \phi=0 \quad {\rm{on}} \ \pa\Omega.
\end{equation}
The weak formulation (\ref{poissonweak}) is equivalent to $\phi \in H_0^1(\Omega)$ satisfying
\be\label{poissonweak}
(\nabla \phi, \nabla \psi) = (\text{curl } \mathbf{u}, \psi) \quad \forall \psi \in H_0^1(\Omega).
\ee
By the regularity of the Poisson problem (\ref{eqnpoisson}) and Lemma \ref{coro1}, we have that
\be
\|\phi\|_{H^{2+\alpha}(\Omega)} \leq C\|\text{curl }\mathbf{u}\|_{H^{\alpha}(\Omega)}\leq C\|\nabla \mathbf{u}\|_{[H^{\alpha}(\Omega)]^2} \leq C\|\mathbf{u}\|_{[H^{1+\alpha}(\Omega)]^2}\leq C\|f\|_{H^{-1}(\Omega)}.
\ee

In summary, we can obtain the solution $\phi$ of the biharmonic problem (\ref{eqnbi}) by solving the lower order problems in the following steps,
\begin{enumerate}
    \item Choose an appropriate $\mathbf{F}$ by Lemma \ref{Fbblem} or Lemma \ref{Fint};
    \item Solve $\mathbf{u}$ from the Stokes problem (\ref{stokes});
    \item Solve $\phi$ from the Poisson problem (\ref{eqnpoisson}).
\end{enumerate}

\section{The finite element method and error estimates}\label{sec-3}

In this section, we propose a linear $C^0$ finite element method for solving the biharmonic problem (\ref{eqnbi}) based on the results in the previous section.

\subsection{The finite element algorithm}\label{fem}
Let $\maT_n$ be a triangulation of $\Omega$ with shape-regular triangles and let $\mathcal P_k(\maT_n)$ be the $C^0$ Lagrange finite element space associated with $\maT_n$,
\be\label{eqn.fems}
\mathcal P_k(\maT_n):=\{v\in C^0(\Omega): \ v|_T\in P_k, \ \forall T \in \maT_n\},
\ee
where $P_k$ is the space of polynomials of degree no more than $k$.
Further, we introduce the following specific $C^0$ Lagrange finite element spaces associated with $\maT_n$,
\be\label{eqn.space}
\bal
V_n^k:=& \mathcal P_k(\maT_n) \cap H_0^1(\Omega),\\
S_{n}^{k}:=& \mathcal P_k(\maT_n) \cap L_0^2(\Omega),
\eal
\ee
and the bubble function space
$$
B_n^3:= \{v\in C^0(\Omega): \ v|_T\in \text{span}\{\lambda_1\lambda_2\lambda_3\} , \ \forall T \in \maT_n\},
$$
where $\lambda_i$, $i=1,2,3$ are the barycentric coordinates on $T$.

We define the finite element solution of  the biharmonic problem (\ref{eqnbi}) by utilizing the decomposition in the previous section as follows.

\begin{algorithm}\label{femalg+}
For any $f\in H^{-1}(\Omega)$ and $k \geq 1$, we consider the following steps.
\begin{itemize}
\item{Step 1.} 
Find $w_n \in V_n^{k}$ of the Poisson equation
\be\label{poissonfem0}
(\nabla w_n, \nabla \psi) = (f, \psi) \quad \forall \psi \in V_{n}^{k},
\ee
then take $\mathbf{F}_n=\textbf{curl }w_n$.
\item{Step 2.}  
If $k=1$, we find the Mini element approximation 
$\mathbf{u}_n \times p_n \in [V_n^1 \oplus B_n^3]^2 \times S_{n}^{1}$ of the Stokes equation
\be\label{stokesfem+}
\bal
(\nabla \mathbf{u}_n, \nabla \mathbf{v}) - (p_n, \text{div } \mathbf{v}) = & \langle\mathbf{F}_n, \mathbf{v} \rangle \quad \forall \mathbf{v} \in [V_n^1 \oplus B_n^3]^2,\\
-(\text{div } \mathbf{u}_n, q) = & 0 \quad \forall q \in S_{n}^{1}.
\eal
\ee
If $k\geq 2$, we find the Taylor-Hood element solution $\mathbf{u}_n \times p_n \in [V_n^k]^2 \times S_{n}^{k-1}$ of the Stokes equation
\be\label{stokesfem}
\bal
(\nabla \mathbf{u}_n, \nabla \mathbf{v}) - (p_n, \text{div } \mathbf{v}) = & \langle\mathbf{F}_n, \mathbf{v} \rangle \quad \forall \mathbf{v} \in [V_n^k]^2,\\
-(\text{div } \mathbf{u}_n, q) = & 0 \quad \forall q \in S_{n}^{k-1}.
\eal
\ee
\item{Step 3.} Find the finite element solution $\phi_n\in V_{n}^{k}$ of the Poisson equation
\be\label{poissonfem}
(\nabla \phi_n, \nabla \psi) = (\text{curl }\mathbf{u}_n, \psi) \quad \forall \psi \in V_{n}^{k}.
\ee
\end{itemize}
\end{algorithm}

For some source term $f$, Algorithm \ref{femalg+} could be updated as follows.

\begin{algorithm}\label{femalg}
If $f\in L^2(\Omega)$ satisfies the condition of Lemma \ref{Fint}, we do the following updates.
\begin{itemize}
\item{Step 1.} This step is the same as Algorithm \ref{femalg+} Step 2 with $\langle\mathbf{F}_n, \mathbf{v} \rangle$ replaced by $\langle\mathbf{F}, \mathbf{v} \rangle$, where $\mathbf{F}$ obtained following Lemma \ref{Fint}.
\item{Step 2.} The same as Algorithm \ref{femalg+} Step 3.
\end{itemize}
\end{algorithm}

The finite element approximations for the Poisson problems in both Algorithm \ref{femalg+} and Algorithm \ref{femalg} are well defined by the Lax-Milgram Theorem. We take the Mini element method \cite{arnold1984} and the Taylor-Hood element method \cite{verfurth1984, Brezzi1991TH} for solving the Stokes problem, other methods could also be used. 
The Mini element approximations or the Taylor-Hood element approximations are well defined,
if i) the bilinear forms in Mini element method satisfy the following LBB condition,
\begin{subequations}\label{skinfsupMini}
\begin{align}
& \inf_{q\in S_{n}^{1}} \sup_{\mathbf{v}\in [V_n^1 \oplus B_n^3]^2}  \frac{-(\text{div } \mathbf{v},q)}{\|\mathbf{v}\|_{[H_0^1(\Omega)]^2}\|q\|} \geq \tilde \gamma_1>0,\\
& \inf_{\mathbf{u}\in [V_n^1 \oplus B_n^3]^2} \sup_{\mathbf{v}\in [V_n^1 \oplus B_n^3]^2} \frac{(\nabla \mathbf{u}, \nabla \mathbf{v})}{\|\mathbf{u}\|_{[H_0^1(\Omega)]^2}\|\mathbf{v}\|_{[H_0^1(\Omega)]^2}} \geq \tilde \gamma_2 >0,
\end{align}
\end{subequations}
and these in Taylor-Hood method satisfies the following LBB condition,
\begin{subequations}\label{skinfsupTH}
\begin{align}
& \inf_{q\in S_{n}^{k-1}} \sup_{\mathbf{v}\in [V_n^k]^2}  \frac{-(\text{div } \mathbf{v},q)}{\|\mathbf{v}\|_{[H_0^1(\Omega)]^2}\|q\|} \geq \tilde\gamma_1>0,\\
& \inf_{\mathbf{u}\in [V_n^k]^2} \sup_{\mathbf{v}\in [V_n^k]^2} \frac{(\nabla \mathbf{u}, \nabla \mathbf{v})}{\|\mathbf{u}\|_{[H_0^1(\Omega)]^2}\|\mathbf{v}\|_{[H_0^1(\Omega)]^2}} \geq \tilde\gamma_2 >0,
\end{align}
\end{subequations}
where $\tilde \gamma_1, \tilde \gamma_2$ are some constants; ii) the bilinear forms are bounded.

\begin{remark}
The Algorithm \ref{femalg+} is similar to method in \cite{brezzi1986, brezzi1991, gallistl2017} for fourth order problems in smooth domains or convex polygonal domains. Error estimates were derived in \cite{brezzi1991} 
for $P_k$ element approximations 
in a convex polygonal domain by assuming that the solutions are smooth enough. In \cite{gallistl2017}, error estimates  were given 
for $P_1$ element approximations 
based on the regularity assumption $|\nabla \mathbf{u}|, |\mathbf{F}|, p, |\nabla\phi| \in H^{s+1}(\Omega)$ for $-1<s\leq 0$. In this work, we carry out the error analysis of Algorithm \ref{femalg+} and Algorithm \ref{femalg} for biharmonic problem (\ref{eqnbi}) in both convex and non-convex polygonal domains for $P_k$ polynomials based on the regularity estimates obtained in Section \ref{sec-2}. 
\end{remark}

\subsection{Optimal error estimates on quasi-uniform meshes}

Suppose that the mesh $\maT_n$ consists of quasi-uniform triangles with size $h$.
The interpolation error estimate on $\maT_n$ (see e.g., \cite{Ciarlet}) for any $v \in H^\sigma(\Omega)$, $\sigma>1$,
\be\label{interr}
\| v - v_I \|_{H^\tau(\Omega)} \leq Ch^{\sigma-\tau}\|v\|_{H^\sigma(\Omega)},
\ee
where $\tau= 0, 1$ and $v_I\in V_n^k$ represents the nodal interpolation of $v$.

To make the analysis simple and clear, we 
we assume that $f\in H^{\max\{\alpha_0,\beta_0\}-1}(\Omega) \cap L^2(\Omega)$, where $\alpha_0$ given in (\ref{alpha0}), and $\beta_0=\frac{\pi}{\omega}$.
Since Algorithm \ref{femalg} involves only one Stokes equation and one Poisson equation, we first give the error estimate of Algorithm \ref{femalg}.

For $f\in H^{\max\{\alpha_0,\beta_0\}-1}(\Omega) \cap L^2(\Omega)$, if $\mathbf{F}$ is given by Lemma \ref{Fint}, we have $\mathbf{u} \in [H^{1+\alpha}(\Omega)]^2,  p \in H^{\alpha}(\Omega)$.
Note that the bilinear forms in the Mini element method ($k=1$) or Taylor-Hood element method ($k\geq 2$) satisfying the LBB condition  (\ref{skinfsupMini}) or (\ref{skinfsupTH}) on quasi-uniform meshes \cite{arnold1984, verfurth1984, Brezzi1991TH}, 
then the standard arguments for error estimate (see e.g., \cite{girault1986, stenberg1990, BS02}) give the following error estimate.

\begin{lem}\label{stokesestlem}
Let $(\mathbf{u}, p)$ be the solution of the Stokes problem (\ref{stokesweak}), and $(\mathbf{u}_n, p_n)$ be the Mini element solution ($k=1$) or Taylor-Hood element solution ($k\geq 2$) in Algorithm \ref{femalg} on quasi-uniform meshes, then it follows
\begin{subequations}\label{stokeserr}
\begin{align}
\|\mathbf{u}-\mathbf{u}_n\|_{[H^1(\Omega)]^2} + \|p-p_n\| \leq Ch^{\min\{k,\alpha\}},\\
\|\mathbf{u}-\mathbf{u}_n\|_{[L^2(\Omega)]^2} \leq Ch^{\min\{k+1, \alpha+1, 2\alpha\}},\\
\|\mathbf{u}-\mathbf{u}_n\|_{[H^{-1}(\Omega)]^2} \leq Ch^{\min\{2k,k+2, k+\alpha, \alpha+2, 2\alpha\}}.
\end{align}
\end{subequations}
\end{lem}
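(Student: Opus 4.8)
The three bounds in (\ref{stokeserr}) are the standard a priori estimates for a conforming, inf--sup stable discretization of the Stokes system; the plan is to establish the energy estimate by a C\'ea-type argument and then upgrade it to the $L^2$ and $H^{-1}$ norms by two Aubin--Nitsche duality arguments, using throughout the regularity $\mathbf{u}\in[H^{1+\alpha}(\Omega)]^2$, $p\in H^{\alpha}(\Omega)$ for $\alpha<\alpha_0$ furnished by (\ref{stokesreg}) and Lemma \ref{coro1}, together with the interpolation estimate (\ref{interr}). For (\ref{stokeserr}a) I would combine the discrete LBB conditions---(\ref{skinfsupMini}) when $k=1$ and (\ref{skinfsupTH}) when $k\geq 2$---with the boundedness (\ref{skubb}) to obtain the quasi-optimality bound
\[
\|\mathbf{u}-\mathbf{u}_n\|_{[H^1(\Omega)]^2}+\|p-p_n\| \leq C\Big(\inf_{\mathbf{v}_n}\|\mathbf{u}-\mathbf{v}_n\|_{[H^1(\Omega)]^2}+\inf_{q_n}\|p-q_n\|\Big),
\]
the classical C\'ea lemma for saddle-point problems. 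Taking $\mathbf{v}_n,q_n$ to be nodal interpolants and applying (\ref{interr}) with $\sigma=1+\alpha,\tau=1$ for the velocity and with $\sigma=\alpha,\tau=0$ for the pressure (each rate capped by the polynomial degree) yields the stated rate $h^{\min\{k,\alpha\}}$.

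For (\ref{stokeserr}b) I set $\mathbf{e}:=\mathbf{u}-\mathbf{u}_n\in[L^2(\Omega)]^2$ and let $(\boldsymbol{\Phi},\xi)$ solve the adjoint Stokes problem with load $\mathbf{e}$. Because $[L^2(\Omega)]^2\subset[H^{-1+\min\{1,\alpha\}}(\Omega)]^2$, the estimate (\ref{stokesreg}) gives $\boldsymbol{\Phi}\in[H^{1+\min\{1,\alpha\}}(\Omega)]^2$ and $\xi\in H^{\min\{1,\alpha\}}(\Omega)$ with norms bounded by $\|\mathbf{e}\|$. Testing this dual problem against $\mathbf{e}$, using the Galerkin orthogonality of the Stokes discretization (both the momentum and the incompressibility relations of (\ref{stokesfem+})--(\ref{stokesfem})) to subtract the interpolants of $(\boldsymbol{\Phi},\xi)$, and then estimating with (\ref{stokeserr}a) and (\ref{interr}) gives
\[
\|\mathbf{e}\|^2\leq C\,h^{\min\{k,\alpha\}}\,h^{\min\{k,1,\alpha\}}\,\|\mathbf{e}\|.
\]
Since $\min\{k,\alpha\}+\min\{k,1,\alpha\}=\min\{k+1,\alpha+1,2\alpha\}$, this is exactly (\ref{stokeserr}b).

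The negative-norm estimate (\ref{stokeserr}c) is the same duality carried out one order higher. I would write $\|\mathbf{e}\|_{[H^{-1}(\Omega)]^2}=\sup_{\mathbf{g}\in[H_0^1(\Omega)]^2}(\mathbf{e},\mathbf{g})/\|\mathbf{g}\|_{[H^1(\Omega)]^2}$ and, for each competitor $\mathbf{g}$, solve the adjoint Stokes problem with load $\mathbf{g}$. Now $[H_0^1(\Omega)]^2\subset[H^{-1+\min\{2,\alpha\}}(\Omega)]^2$, so (\ref{stokesreg}) upgrades the dual solution to $[H^{1+\min\{2,\alpha\}}(\Omega)]^2$, and repeating the Galerkin-orthogonality and interpolation computation produces the exponent $\min\{k,\alpha\}+\min\{k,2,\alpha\}=\min\{2k,k+2,k+\alpha,\alpha+2,2\alpha\}$, which is (\ref{stokeserr}c).

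The only delicate point---and the main thing to get right---is the bookkeeping of the regularity caps in the two duality arguments. The dual load is only as smooth as $[L^2(\Omega)]^2$ for (\ref{stokeserr}b) and as $[H_0^1(\Omega)]^2$ for (\ref{stokeserr}c), so the shift index $s$ in $[H^{-1+s}(\Omega)]^2$ must simultaneously respect that smoothness ($s\le 1$, resp.\ $s\le 2$) and the corner threshold $\alpha_0$ through (\ref{stokesreg}) (whence $s\le\alpha$); it is precisely the interplay of these minima that converts the two products of interpolation rates into the stated piecewise-linear exponents. The Mini versus Taylor--Hood dichotomy enters only through which discrete inf--sup condition, (\ref{skinfsupMini}) or (\ref{skinfsupTH}), is invoked, and is otherwise transparent to the argument.
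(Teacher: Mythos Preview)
Your proposal is correct and is exactly the ``standard argument'' the paper invokes: the paper does not give a proof of this lemma at all, merely citing \cite{girault1986, stenberg1990, BS02} and stating the result, and your C\'ea-plus-duality outline is precisely what those references contain. The detailed duality computation the paper does carry out (in the proof of Lemma~\ref{stokesestlem+}, where the extra consistency term $\langle\mathbf{F}-\mathbf{F}_n,\cdot\rangle$ appears) specializes to your argument when $\mathbf{F}_n=\mathbf{F}$, confirming that your bookkeeping of the regularity caps and the resulting exponents is on target.
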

If the largest interior angle $\omega<\pi$, it follows  $\min\{\alpha+1, 2\alpha\} = \alpha+1$, and if $\omega>\pi$, we have $\min\{\alpha+1, 2\alpha\} = 2\alpha$.

If $\mathbf{F}$ is given by Lemma \ref{Fbblem}, then
for the Poisson equations (\ref{eqnpoissonf}) in a polygonal domain with $f\in H^{\max\{\alpha_0,\beta_0\}-1}(\Omega) \cap L^2(\Omega)$, the regularity estimate gives $v \in H^{1+\beta}(\Omega)$ for $\beta<\beta_0=\frac{\pi}{\omega}$ (see e.g., \cite{Grisvard1985, Grisvard1992}), it implies that 
$$
\mathbf{F}=\textbf{curl }w \in [ H^{\beta}(\Omega)]^2 \subset [H^{\alpha-1}(\Omega)]^2\cap [H^{\beta}(\Omega)]^2,
$$ 
where $\alpha<\alpha_0$. Therefore, we also have $\mathbf{u} \in [H^{\alpha+1}(\Omega)]^2\cap [H^{\beta+2}(\Omega)]^2,  p \in H^{\alpha}(\Omega)\cap H^{\beta+1}(\Omega)$, and $\phi \in H^{\alpha+2}(\Omega)\cap H^{\beta+3}(\Omega)$.
For the finite element approximations $w_n$ in (\ref{poissonfem0}),
the standard error estimate  \cite{Ciarlet} yields
\be\label{wh1err}
\|w-w_n\|_{H^1(\Omega)} \leq Ch^{\min\{k,\beta\}}, \quad \|w-w_n\|\leq Ch^{\min\{k+1, \beta+1, 2\beta
\} },
\ee
which implies that
\be\label{Ferrs}
\bal
\|\mathbf{F}-\mathbf{F}_n\|_{[L^2(\Omega)]^2} = &  \|\textbf{curl }w - \textbf{curl }w_n\|_{[L^2(\Omega)]^2} \leq \|w-w_n\|_{H^1(\Omega)} \leq Ch^{\min\{k,\beta\}},\\
\|\mathbf{F}-\mathbf{F}_n\|_{[H^{-1}(\Omega)]^2} = &  \|\textbf{curl }w - \textbf{curl }w_n\|_{[H^{-1}(\Omega)]^2} \leq C\|w-w_n\| \leq Ch^{\min\{k+1, \beta+1, 2\beta\} }.
\eal
\ee

For $\mathbf{F}_n$ in Algorithm \ref{femalg+}, we further have the following result.
\begin{lem}\label{fcurlgal}
If $\mathbf{F}_n$ is given by $\mathbf{F}_n=\textbf{curl }w_n$  in Step 1 of Algorithm \ref{femalg+}, and $\mathbf{F}=\textbf{curl }w$ is given in (\ref{Fthcurl}), then it follows
\be
\langle \mathbf{F}-\mathbf{F}_n, \textbf{curl }\psi \rangle = 0 \quad \forall \psi \in V_n^k.
\ee
\end{lem}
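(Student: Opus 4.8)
The plan is to reduce the curl--curl pairing to an ordinary gradient pairing and then invoke Galerkin orthogonality for the Poisson problem solved in Step 1. First I would use linearity of the curl operator to write $\mathbf{F} - \mathbf{F}_n = \textbf{curl }w - \textbf{curl }w_n = \textbf{curl}(w - w_n)$, so that the quantity to be controlled is $\langle \textbf{curl}(w - w_n), \textbf{curl }\psi \rangle$ for an arbitrary $\psi \in V_n^k$. Note that since $\mathbf{F} - \mathbf{F}_n \in [L^2(\Omega)]^2$ and $\textbf{curl }\psi \in [L^2(\Omega)]^2$, the pairing $\langle \cdot, \cdot \rangle$ here is simply the $[L^2(\Omega)]^2$ inner product.

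The key step is the pointwise algebraic identity for the scalar curl. Since $\textbf{curl }a = (a_{x_2}, -a_{x_1})^T$ for any scalar function $a$, a direct computation gives $\textbf{curl }a \cdot \textbf{curl }b = a_{x_2} b_{x_2} + a_{x_1} b_{x_1} = \nabla a \cdot \nabla b$. Integrating over $\Omega$ with $a = w - w_n$ and $b = \psi$ yields
$$
\langle \mathbf{F} - \mathbf{F}_n, \textbf{curl }\psi \rangle = \langle \textbf{curl}(w-w_n), \textbf{curl }\psi \rangle = (\nabla(w - w_n), \nabla \psi).
$$

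It then remains to show $(\nabla(w - w_n), \nabla \psi) = 0$ for all $\psi \in V_n^k$, which is exactly the Galerkin orthogonality of the Poisson discretization in Step 1. Indeed, the exact solution $w$ of (\ref{eqnpoissonf}) satisfies $(\nabla w, \nabla \psi) = (f, \psi)$ for every $\psi \in H_0^1(\Omega)$, and since $V_n^k \subset H_0^1(\Omega)$ this holds in particular for $\psi \in V_n^k$; on the other hand $w_n$ satisfies $(\nabla w_n, \nabla \psi) = (f, \psi)$ for every $\psi \in V_n^k$ by (\ref{poissonfem0}). Subtracting the two identities gives $(\nabla(w - w_n), \nabla \psi) = 0$ on $V_n^k$, and combining this with the displayed reduction finishes the proof.

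There is no genuine obstacle here: the whole statement rests on the observation that the $[L^2(\Omega)]^2$ inner product of two scalar curls equals the $L^2(\Omega)$ inner product of the corresponding gradients, after which the result is an immediate consequence of the conformity $V_n^k \subset H_0^1(\Omega)$ and the Galerkin equations. The only point requiring a line of care is the sign bookkeeping in the curl--curl identity, ensuring that the two components recombine cleanly into $\nabla a \cdot \nabla b$ with no stray cross or boundary terms.
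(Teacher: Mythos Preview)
Your proposal is correct and follows essentially the same approach as the paper: both reduce $\langle \textbf{curl}(w-w_n), \textbf{curl }\psi\rangle$ to $(\nabla(w-w_n),\nabla\psi)$ via the pointwise identity $\textbf{curl }a\cdot\textbf{curl }b=\nabla a\cdot\nabla b$, and then invoke Galerkin orthogonality for the discrete Poisson problem. The only difference is the order of presentation---the paper states Galerkin orthogonality first and then verifies the curl--curl identity, while you do the reverse---but the content is identical.
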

\begin{proof}
Subtract (\ref{poissonfem0}) from the weak formulation of (\ref{eqnpoissonf}), then we have the Galerkin orthogonality,
\be
\bal
(\nabla (w-w_n), \nabla \psi) = (w-w_n)_{x_1}\psi_{x_1}+(w-w_n)_{x_2}\psi_{x_2}  = 0 \quad \forall \psi \in V_{n}^{k},
\eal
\ee
which implies that
\be
\bal
\langle \mathbf{F}-\mathbf{F}_n, \textbf{curl }\psi \rangle = \langle \textbf{curl }(w-w_n), \textbf{curl }\psi \rangle = (w-w_n)_{x_2}\psi_{x_2} + (w-w_n)_{x_1}\psi_{x_1}  = 0.
\eal
\ee
\end{proof}
Next, we consider the error estimates of Taylor-Hood element approximations.
Subtract (\ref{stokesfem}) from (\ref{stokesweak}), we have the following equality,
\begin{subequations}\label{notgo}
\begin{align}
(\nabla (\mathbf{u}-\mathbf{u}_n), \nabla \mathbf{v}) - (\text{div } \mathbf{v}, p-p_n) = & \langle\mathbf{F}-\mathbf{F}_n, \mathbf{v} \rangle \quad \forall \mathbf{v} \in [V_n^k]^2,\\
-(\text{div } (\mathbf{u}-\mathbf{u}_n), q) = & 0 \quad \forall q \in S_{n}^{k-1}.
\end{align}
\end{subequations}

We introduce the adjoint problem of the
the Stokes equations  (\ref{stokes}),
\be\label{stokeadjoint}
\bal
- \Delta \mathbf{r} + \nabla s = & \mathbf g \quad \text{in } \Omega,\\
\text{div } \mathbf{r} = & 0 \quad \text{in } \Omega,\\
\mathbf{r} = & 0  \quad \text{on } \pa\Omega,
\eal
\ee
where $\mathbf g \in [H_0^{l}(\Omega)]^2$ for some $l = 0,1$. Here, the notation $H_0^0(\Omega):=H^0(\Omega)=L^2(\Omega)$.
The weak formulation of (\ref{stokeadjoint}) is to find $\mathbf{r} \in [H_0^1(\Omega)]^2$ and $s\in L_0^2(\Omega)$ such that
\begin{subequations}\label{stokesadweak}
\begin{align}
(\nabla \mathbf{r}, \nabla \mathbf{v}) - (\text{div } \mathbf{v},s) = & \langle\mathbf g, \mathbf{v} \rangle \quad \forall \mathbf{v} \in [H_0^1(\Omega)]^2,\\
-(\text{div } \mathbf{r}, q) = & 0 \quad \forall q \in L_0^2(\Omega).    
\end{align}
\end{subequations}
We have the following regularity result,
\be\label{stokesdualreg}
\|\mathbf{r}\|_{[H^{1+\min\{\alpha,l+1\}}(\Omega)]^2} + \|s\|_{H^{\min\{\alpha,l+1\}}(\Omega)} \leq C \|\mathbf g\|_{[H^{\min\{\alpha,l+1\}-1}(\Omega)]^2}\leq C\|\mathbf g\|_{[H^{l}(\Omega)]^2},
\ee
where $\alpha<\alpha_0$.

Note that $\mathbf{r}\in [H^{1+\min\{\alpha,l+1\}}(\Omega)]^2$ satisfying (\ref{stokesdualreg}) and (\ref{stokeadjoint}), we have by Lemma \ref{scurlv} that there exists $\psi \in H^{2+\min\{\alpha,l+1\}}(\Omega)\cap H_0^1(\Omega)$ such that
\be\label{rtopis}
\mathbf{r} = \textbf{curl }\psi.
\ee
We also have that $\|\psi\|_{H^{2+\min\{\alpha,l+1\}}(\Omega)} \leq C\|\mathbf{r}\|_{[H^{1+\min\{\alpha,l+1\}}(\Omega)]^2}$.

Let $(\mathbf r_n, s_n)$ be the Taylor-Hood solution of (\ref{stokeadjoint}), we have
\begin{subequations}\label{notgofem}
\begin{align}
(\nabla \mathbf{r}_n, \nabla \mathbf{v}) - (\text{div } \mathbf{v}, s_n) = & \langle \mathbf{g}, \mathbf{v} \rangle \quad \forall \mathbf{v} \in [V_n^k]^2,\\
-(\text{div } \mathbf{r}_n, q) = & 0 \quad \forall q \in S_{n}^{k-1}.
\end{align}
\end{subequations}
Subtracting (\ref{notgofem}) from (\ref{stokesadweak}), we have the Galerkin orthogonality,
\begin{subequations}\label{notgors}
\begin{align}
(\nabla (\mathbf{r}-\mathbf{r}_n), \nabla \mathbf{v}) - (\text{div } \mathbf{v}, s-s_n) = & 0 \quad \forall \mathbf{v} \in [V_n^k]^2,\\
-(\text{div } (\mathbf{r}-\mathbf{r}_n), q) = & 0 \quad \forall q \in S_{n}^{k-1}.
\end{align}
\end{subequations}

\begin{lem}\label{stokesestlem+}
Let $(\mathbf{u}, p)$ be the solution of the Stokes problem (\ref{stokesweak}), and $(\mathbf{u}_n, p_n)$ be the Mini element solution ($k=1$)  or Taylor-Hood element solution ($k\geq 2$) in Algorithm \ref{femalg+} on quasi-uniform meshes, then it follows the error estimates
\begin{subequations}\label{stokeserr+}
\begin{align}
\|\mathbf{u}-\mathbf{u}_n\|_{[H^1(\Omega)]^2} + \|p-p_n\| \leq Ch^{\min\{k,\alpha,\beta+1\}},\\
\|\mathbf{u}-\mathbf{u}_n\|_{[L^2(\Omega)]^2} \leq Ch^{\min\{k+1, \alpha+1, \beta+2, 2\alpha\}},\\
\|\mathbf{u}-\mathbf{u}_n\|_{[H^{-1}(\Omega)]^2} \leq Ch^{\min\{2k,k+2, k+\beta, \alpha+2, \beta+3, 2\alpha\}}.
\end{align}
\end{subequations}
\end{lem}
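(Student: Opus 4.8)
The plan is to prove \eqref{stokeserr+}a by the stability theory for the perturbed saddle-point system \eqref{notgo}, and \eqref{stokeserr+}b--\eqref{stokeserr+}c by an Aubin--Nitsche duality argument against the adjoint Stokes problem \eqref{stokeadjoint}; in every case the new feature compared with Lemma \ref{stokesestlem} is a consistency term generated by the \emph{approximate} source $\mathbf{F}-\mathbf{F}_n$, which is controlled using Lemma \ref{fcurlgal}. For the $H^1$ estimate I would start from \eqref{notgo}, invoke the discrete inf-sup stability \eqref{skinfsupMini}/\eqref{skinfsupTH} and the boundedness of the forms, and apply a Brezzi/Strang quasi-optimality bound
\[
\|\mathbf{u}-\mathbf{u}_n\|_{[H^1(\Omega)]^2}+\|p-p_n\| \le C\Big(\inf_{\mathbf{v}_n\in[V_n^k]^2}\|\mathbf{u}-\mathbf{v}_n\|_{[H^1(\Omega)]^2}+\inf_{q_n\in S_n^{k-1}}\|p-q_n\|+\|\mathbf{F}-\mathbf{F}_n\|_{[H^{-1}(\Omega)]^2}\Big).
\]
The two best-approximation terms are handled by the interpolation bound \eqref{interr} together with the regularity $\mathbf{u}\in[H^{1+\min\{\alpha,\beta+1\}}(\Omega)]^2$, $p\in H^{\min\{\alpha,\beta+1\}}(\Omega)$ established in Section \ref{sec-2}, giving the rate $h^{\min\{k,\alpha,\beta+1\}}$; the consistency term is bounded directly by the second line of \eqref{Ferrs}, namely $\|\mathbf{F}-\mathbf{F}_n\|_{[H^{-1}(\Omega)]^2}\le Ch^{\min\{k+1,\beta+1,2\beta\}}$. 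Using Lemma \ref{betaalpha} to compare $\alpha$ and $\beta$ in the convex ($\omega<\pi$) and non-convex ($\omega>\pi$) regimes, one checks $\min\{k+1,\beta+1,2\beta\}\ge \min\{k,\alpha,\beta+1\}$ always, so the consistency error is absorbed and \eqref{stokeserr+}a follows.

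For the $L^2$ estimate I would take $\mathbf{g}=\mathbf{u}-\mathbf{u}_n$ (so $l=0$) in \eqref{stokeadjoint} and let $(\mathbf{r}_n,s_n)\in[V_n^k]^2\times S_n^{k-1}$ be its discrete solution, satisfying the Galerkin orthogonality \eqref{notgors}. Writing the symmetric Stokes form $\mathcal B((\mathbf{u},p),(\mathbf{v},q))=(\nabla\mathbf{u},\nabla\mathbf{v})-(\mathrm{div}\,\mathbf{v},p)-(\mathrm{div}\,\mathbf{u},q)$, testing the adjoint problem by $(\mathbf{u}-\mathbf{u}_n,p-p_n)$ and then using the primal error identity \eqref{notgo} on the discrete pair $(\mathbf{r}_n,s_n)$ gives
\[
\|\mathbf{u}-\mathbf{u}_n\|^2 = \mathcal B\big((\mathbf{u}-\mathbf{u}_n,p-p_n),(\mathbf{r}-\mathbf{r}_n,s-s_n)\big)+\langle\mathbf{F}-\mathbf{F}_n,\mathbf{r}_n\rangle .
\]
The first term is bounded by the product of the primal $H^1$ error from \eqref{stokeserr+}a and the adjoint $H^1$ error $\|\mathbf{r}-\mathbf{r}_n\|_{[H^1(\Omega)]^2}+\|s-s_n\|\le Ch^{\min\{k,\alpha,1\}}\|\mathbf{u}-\mathbf{u}_n\|$, obtained from the adjoint regularity \eqref{stokesdualreg} by the standard arguments; this already produces the factors $\min\{k,\alpha,\beta+1\}+\min\{k,\alpha,1\}$, which in the non-convex case collapse to $2\alpha$.

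The remaining pairing $\langle\mathbf{F}-\mathbf{F}_n,\mathbf{r}_n\rangle$ is the main obstacle: the naive bound $\|\mathbf{F}-\mathbf{F}_n\|_{[H^{-1}(\Omega)]^2}\|\mathbf{r}_n\|_{[H^1(\Omega)]^2}$ loses a power of $h$ and does not reach the claimed rate. This is exactly where Lemma \ref{fcurlgal} is needed. Since $\mathbf{r}$ is divergence-free with zero normal trace, Lemma \ref{scurlv} yields $\mathbf{r}=\textbf{curl}\,\psi$ with $\psi\in H^{2+\min\{\alpha,l+1\}}(\Omega)\cap H_0^1(\Omega)$, so I would split $\mathbf{r}_n=(\mathbf{r}_n-\mathbf{r})+\textbf{curl}\,\psi$ and, because $\langle\mathbf{F}-\mathbf{F}_n,\textbf{curl}\,\psi_I\rangle=0$ for the nodal interpolant $\psi_I\in V_n^k$ by Lemma \ref{fcurlgal}, replace $\textbf{curl}\,\psi$ by $\textbf{curl}(\psi-\psi_I)$. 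This gives
\[
|\langle\mathbf{F}-\mathbf{F}_n,\mathbf{r}_n\rangle|\le \|\mathbf{F}-\mathbf{F}_n\|_{[L^2(\Omega)]^2}\big(\|\mathbf{r}-\mathbf{r}_n\|_{[L^2(\Omega)]^2}+\|\psi-\psi_I\|_{H^1(\Omega)}\big),
\]
and inserting the first line of \eqref{Ferrs}, the adjoint $L^2$ estimate, and \eqref{interr} applied to $\psi$ raises the consistency rate enough to be absorbed into $h^{\min\{k+1,\alpha+1,\beta+2,2\alpha\}}$. Dividing by $\|\mathbf{u}-\mathbf{u}_n\|$ yields \eqref{stokeserr+}b.

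Finally, \eqref{stokeserr+}c follows from the same duality scheme run with $l=1$, i.e.\ $\mathbf{g}\in[H_0^1(\Omega)]^2$ and $\|\mathbf{u}-\mathbf{u}_n\|_{[H^{-1}(\Omega)]^2}=\sup_{\mathbf{g}}\langle\mathbf{u}-\mathbf{u}_n,\mathbf{g}\rangle/\|\mathbf{g}\|_{[H^1(\Omega)]^2}$. The stronger adjoint regularity \eqref{stokesdualreg} with $l=1$ improves every adjoint approximation rate and makes $\psi$ correspondingly smoother, so repeating the curl-orthogonality step of the previous paragraph now delivers the higher rate $h^{\min\{2k,k+2,k+\beta,\alpha+2,\beta+3,2\alpha\}}$. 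I expect the genuinely delicate part throughout to be the two interlocking ingredients of the consistency estimate: routing the pairing $\langle\mathbf{F}-\mathbf{F}_n,\cdot\rangle$ through the continuous stream function $\psi$ (the discrete test functions $\mathbf{r}_n$ are themselves not discrete curls, so Lemma \ref{fcurlgal} cannot be applied to them directly), and the case-by-case verification, via Lemma \ref{betaalpha} and the relations \eqref{alpha2beta}--\eqref{alpha2beta2}, that the resulting sums of exponents are never smaller than the stated minima in either the convex or the non-convex regime.
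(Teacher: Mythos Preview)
Your proposal is correct and follows essentially the same approach as the paper: a Strang-type quasi-optimality bound (Lemma \ref{stokesTHbdd}) for \eqref{stokeserr+}a, and an Aubin--Nitsche duality against \eqref{stokeadjoint} for \eqref{stokeserr+}b--c, with the consistency term $\langle\mathbf{F}-\mathbf{F}_n,\mathbf{r}_n\rangle$ handled by splitting $\mathbf{r}_n=(\mathbf{r}_n-\mathbf{r})+\textbf{curl}\,\psi$ and invoking Lemma \ref{fcurlgal} to replace $\textbf{curl}\,\psi$ by $\textbf{curl}(\psi-\psi_I)$. The only minor deviation is that the paper bounds the piece $\langle\mathbf{F}-\mathbf{F}_n,\mathbf{r}_n-\mathbf{r}\rangle$ by the $H^{-1}\times H^1$ pairing (using the second line of \eqref{Ferrs} and the adjoint $H^1$ error \eqref{dualest0}) rather than your $L^2\times L^2$ pairing; both variants reach the stated rates, and the paper treats $l=0,1$ in a single pass exactly as you outline.
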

\begin{proof} We will only present the proof of error estimates of the Taylor-Hood element approximations, the error estimates of the Mini element approximations can be proved similarly.
By Lemma \ref{stokesTHbdd}, we have
\bes\bal
\|\mathbf{u}-\mathbf{u}_n\|_{[H^1(\Omega)]^2} + \|p-p_n\| \leq & C \left( \inf_{\mathbf{v} \in [V_n^{k}]^2 } \|\mathbf{u}-\mathbf{v}\|_{[H^1(\Omega)]^2} + \inf_{ q \in S_{n}^{k-1} } \|p-q\| + \|\mathbf{F}-\mathbf{F}_n\|_{[H^{-1}(\Omega)]^2} \right)\\
\leq & Ch^{\min\{k,\alpha,\beta+1\}} + Ch^{\min\{k+1, \beta+1, 2\beta
\} } \leq Ch^{\min\{k, \alpha,\beta+1\}}.
\eal\ees
We take $\mathbf{v} = \mathbf u -\mathbf u_n$, $q=p-p_n$ in (\ref{stokesadweak}), 
then we have 
\bes
\bal
& \|\mathbf u -\mathbf u_n\|_{[H^{-l}(\Omega)]^2} =  \sup_{g\in [H_0^l(\Omega)]^2} \frac{\langle\mathbf g, \mathbf u -\mathbf u_n \rangle}{\|\mathbf g\|_{[H^{l}(\Omega)]^2}},
\eal
\ees
where $l=0,1$.
Subtracting (\ref{notgofem}) with $\mathbf{v}=\mathbf{u}_n$ from (\ref{stokesadweak}) with $\mathbf{v}=\mathbf{u}$, we have
\bes
\bal
\langle\mathbf g, \mathbf u -\mathbf u_n \rangle = & (\nabla \mathbf{r}, \nabla \mathbf u )-(\nabla \mathbf{r}_n, \nabla \mathbf u_n ) - (\text{div } \mathbf u,s) + (\text{div } \mathbf u_n,s_n) \\
= & (\nabla (\mathbf{u}-\mathbf{u}_n), \nabla \mathbf r)
- (\text{div } (\mathbf u- \mathbf u_n),s) + (\nabla (\mathbf{r}-\mathbf{r}_n), \nabla \mathbf u_n )  -  (\text{div } \mathbf u_n,s-s_n) \\
= & (\nabla (\mathbf{u}-\mathbf{u}_n), \nabla \mathbf r)
- (\text{div } (\mathbf u- \mathbf u_n),s)
\eal
\ees
where we have used (\ref{notgors}a) with $\mathbf{v} = \mathbf{u}_n$ in the last equality. Subtracting (\ref{notgo}a) with $\mathbf{v}=\mathbf{r}_n$, (\ref{notgo}b) with $q=s_n$, and (\ref{stokesadweak}b) with $q=-(p-p_n)$ from the equation above, respectively, we have
\be\label{gunn}
\bal
\langle\mathbf g, \mathbf u -\mathbf u_n \rangle 
= &  (\nabla (\mathbf u -\mathbf u_n), \nabla (\mathbf{r} -\mathbf{r}_n ))  - (\text{div } (\mathbf{r}-\mathbf{r}_n), p-p_n) \\
& - (\text{div }(\mathbf u -\mathbf u_n),s - s_n) + \langle \mathbf{F}-\mathbf{F}_n, \mathbf{r}_n\rangle\\
:= & T_1+T_2+T_3+T_4,
\eal
\ee
For $\mathbf r-\mathbf r_n$ and $s-s_n$, we have the following estimate
\be\label{dualest0}
\bal
& \|\mathbf r -\mathbf r_n\|_{[H^1(\Omega)]^2} + \|s-s_n\| \leq C \left(\inf_{\mathbf{r}_I \in [V_n^k(\Omega)]^2}\|\mathbf r -\mathbf r_I\|_{[H^1(\Omega)]^2} + \inf_{s_I\in S_n^{k-1}} \|s-s_I\| \right)\\
& \leq Ch^{\min\{k,\alpha, l+1\}}
(\|\mathbf{r}\|_{[H^{\min\{k+1,\alpha+1, l+2\}}(\Omega)]^2} + \|s\|_{H^{\min\{k,\alpha, l+1\}}(\Omega)}).
\eal
\ee
Then we have the following estimates for terms $T_i$, $i=1,\cdots,4$ in (\ref{gunn}).
\bes
\bal
|T_1| \leq &  |\mathbf u -\mathbf u_n|_{[H^1(\Omega)]^2}|\mathbf r -\mathbf r_n|_{[H^1(\Omega)]^2} \leq Ch^{\min\{k, \alpha, \beta+1\} + \min\{k,\alpha, l+1\} } \|\mathbf{r}\|_{[H^{1+\min\{k,\alpha, l+1\}}(\Omega)]^2}\\
=& Ch^{\min\{2k, k+l+1, k+\alpha, \alpha+l+1,k+\beta+1,\alpha+\beta+1,\beta+l+2, 2\alpha\}}\|\mathbf{r}\|_{[H^{1+\min\{k,\alpha, l+1\}}(\Omega)]^2}.
\eal
\ees
\bes
\bal
|T_2| \leq  \|p-p_n\||\mathbf{r}-\mathbf{r}_n|_{[H^1(\Omega)]^2} \leq Ch^{\min\{2k, k+l+1, k+\alpha, \alpha+l+1,k+\beta+1,\alpha+\beta+1,\beta+l+2, 2\alpha\}}\|\mathbf{r}\|_{[H^{1+\min\{k,\alpha, l+1\}}(\Omega)]^2}.
\eal
\ees
\bes
\bal
|T_3| \leq  |\mathbf u -\mathbf u_n|_{[H^1(\Omega)]^2}|\|s-s_n\| \leq Ch^{\min\{2k, k+l+1, k+\alpha, \alpha+l+1,,k+\beta+1,\alpha+\beta+1,\beta+l+2, 2\alpha\}}\|s\|_{H^{\min\{k,\alpha, l+1\}}(\Omega)}.
\eal
\ees
For $T_4$, we have
\bes
\bal
T_4 = \langle \mathbf{F}-\mathbf{F}_n, \mathbf{r}_n-\mathbf{r}\rangle + \langle \mathbf{F}-\mathbf{F}_n, \mathbf{r}\rangle:=T_{41}+T_{42}.
\eal
\ees
\bes
\bal
|T_{41}| \leq &  \|\mathbf{F}-\mathbf{F}_n\|_{[H^{-1}(\Omega)]^2}\|\mathbf{r}-\mathbf{r}_n\|_{[H^{1}(\Omega)]^2} \leq Ch^{\min\{k+1, \beta+1, 2\beta\}+\min\{k,\alpha, l+1 \} }\|\mathbf{r}\|_{[H^{1+\min\{\alpha, l+1\}}(\Omega)]^2}\\
=& Ch^{\min\{2k+1,k+l+2, k+\beta+1,\beta+l+2, \alpha+\beta+1, 2\beta+\alpha\} }\|\mathbf{r}\|_{[H^{1+\min\{\alpha, l+1\}}(\Omega)]^2}.
\eal
\ees
By (\ref{rtopis}) and Lemma \ref{fcurlgal}, we have 
\bes
\bal
|T_{42}| = & |\langle \mathbf{F}-\mathbf{F}_n,  \textbf{curl }\psi\rangle|= |\langle \mathbf{F}-\mathbf{F}_n,  \textbf{curl }(\psi-\psi_I)\rangle|\leq \|\mathbf{F}-\mathbf{F}_n\|_{[L^2(\Omega)]^2}\|\textbf{curl }(\psi-\psi_I)\|_{[L^2(\Omega)]^2}\\
\leq & \|\mathbf{F}-\mathbf{F}_n\|_{[L^2(\Omega)]^2}\|\psi-\psi_I\|_{H^1(\Omega)} \leq Ch^{\min\{k, \beta\} + \min\{k, \alpha+1, l+2\} }\|\mathbf{r}\|_{[H^{1+\min\{\alpha,l+ 1\}}(\Omega)]^2}\\
\leq & Ch^{\min\{2k,k+l+2,k+\beta,\beta+l+2, \alpha+\beta+1\} }\|\mathbf{r}\|_{[H^{1+\min\{\alpha,l+ 1\}}(\Omega)]^2}.
\eal
\ees
where $\psi_I$ is the nodal interpolation of $\psi$.
It can be verified that
$$
|T_4| \leq |T_{41}|+|T_{42}| \leq  Ch^{\min\{2k,k+l+2,k+\beta,\beta+l+2, \alpha+\beta+1,\alpha+2\beta\} }\|\mathbf{r}\|_{[H^{1+\min\{\alpha,l+ 1\}}(\Omega)]^2}.
$$

By the regularity (\ref{stokesdualreg}) and the summation of  estimates $|T_i|$, $i=1, \cdots, 4$, the error estimate $\|\mathbf u - \mathbf u_n\|_{[H^{-l}(\Omega)]^2}$ holds.

\end{proof}

\begin{theorem}\label{phierrthm3.1}
Let $\phi_n\in V_{n}^{k}$ be the solution of finite element solution of (\ref{poissonfem}) from Algorithm \ref{femalg} or Algorithm \ref{femalg+}, and $\phi$ is the solution of the biharmonic problem (\ref{eqnbi}), if $\phi_n$ is the solution of Algorithm \ref{femalg}, then we have
\be\label{phiH1err}
\|\phi-\phi_n\|_{H^1(\Omega)} \leq Ch^{\min\{k, \alpha+1,2\alpha\}};
\ee
if it is the solution of of Algorithm \ref{femalg+}, then we have
\be\label{phiH1err+}
\|\phi-\phi_n\|_{H^1(\Omega)} \leq Ch^{\min\{k, \beta+2, \alpha+1,2\alpha\}}.
\ee
\end{theorem}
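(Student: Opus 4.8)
The plan is to compare $\phi_n$ with the exact Poisson solution $\phi$ of \eqref{poissonweak} by inserting an auxiliary discrete function that isolates the two sources of error: the discretization of the Poisson problem and the replacement of the exact velocity $\mathbf{u}$ by its finite element approximation $\mathbf{u}_n$ in the source. Let $\tilde\phi_n \in V_n^k$ solve the Poisson problem driven by the \emph{exact} velocity, namely $(\nabla \tilde\phi_n, \nabla \psi) = (\text{curl }\mathbf{u}, \psi)$ for all $\psi \in V_n^k$. I then split $\phi - \phi_n = (\phi - \tilde\phi_n) + (\tilde\phi_n - \phi_n)$ and bound each piece separately. The first piece is a pure finite element error for a Poisson problem with the fixed datum $\text{curl }\mathbf{u}$, while the second piece carries the influence of the Stokes velocity error.

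For the first piece, note that $\tilde\phi_n$ is exactly the Galerkin projection of $\phi$ onto $V_n^k$, so C\'ea's lemma together with the interpolation estimate \eqref{interr} gives $\|\phi - \tilde\phi_n\|_{H^1(\Omega)} \leq C\inf_{\psi\in V_n^k}\|\phi-\psi\|_{H^1(\Omega)} \leq Ch^{\sigma-1}\|\phi\|_{H^\sigma(\Omega)}$, where $\sigma$ is the regularity of $\phi$. For Algorithm \ref{femalg} the source $\mathbf{F}$ comes from Lemma \ref{Fint} and $\phi \in H^{2+\alpha}(\Omega)$, giving $Ch^{\min\{k,\alpha+1\}}$; for Algorithm \ref{femalg+} the source $\mathbf{F}=\textbf{curl }w$ from Lemma \ref{Fbblem} is smoother and $\phi \in H^{2+\alpha}(\Omega)\cap H^{3+\beta}(\Omega)$, which sharpens the bound to $Ch^{\min\{k,\alpha+1,\beta+2\}}$.

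The key step is the second piece, and it is the only nonstandard estimate. Subtracting \eqref{poissonfem} from the defining relation for $\tilde\phi_n$, the difference $e_n := \tilde\phi_n - \phi_n \in V_n^k$ satisfies $(\nabla e_n, \nabla \psi) = (\text{curl }(\mathbf{u}-\mathbf{u}_n), \psi)$ for all $\psi \in V_n^k$. Choosing $\psi = e_n$ and integrating the right-hand side by parts---legitimate since $e_n \in H_0^1(\Omega)$ vanishes on $\partial\Omega$, so no boundary term survives---converts the scalar curl of the velocity error into an $L^2$ pairing with the velocity error itself:
\[
\|\nabla e_n\|^2 = (\text{curl }(\mathbf{u}-\mathbf{u}_n), e_n) = (\mathbf{u}-\mathbf{u}_n, \textbf{curl }e_n).
\]
Since $\|\textbf{curl }e_n\| = |e_n|_{H^1(\Omega)} = \|\nabla e_n\|$, Cauchy--Schwarz yields $\|\nabla e_n\| \leq \|\mathbf{u}-\mathbf{u}_n\|_{[L^2(\Omega)]^2}$, and the Poincar\'e inequality gives $\|e_n\|_{H^1(\Omega)} \leq C\|\mathbf{u}-\mathbf{u}_n\|_{[L^2(\Omega)]^2}$. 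This is precisely the ``extra bound by the $L^2$ error for the velocity'' advertised in the introduction; the fact that the $H^1$ error of $\phi$ is controlled by the $L^2$ (not $H^1$) velocity error is exactly what this duality buys.

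To conclude, I would substitute the velocity $L^2$ rate into this bound: for Algorithm \ref{femalg} the rate $h^{\min\{k+1,\alpha+1,2\alpha\}}$ from Lemma \ref{stokesestlem}, and for Algorithm \ref{femalg+} the rate $h^{\min\{k+1,\alpha+1,\beta+2,2\alpha\}}$ from Lemma \ref{stokesestlem+}. Adding the two pieces and simplifying the resulting minima (absorbing $k+1$ into $k$ and discarding repeated $\alpha+1$, $\beta+2$ entries) produces $\min\{k,\alpha+1,2\alpha\}$ for \eqref{phiH1err} and $\min\{k,\beta+2,\alpha+1,2\alpha\}$ for \eqref{phiH1err+}. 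The only point requiring care is that the two algorithms feed in \emph{different} regularities of $\phi$ and different velocity $L^2$ rates; once these are tracked, the combination is routine. I anticipate no genuine obstacle beyond justifying the curl integration-by-parts identity rigorously in the non-smooth polygonal setting, which is safe here because the test function $e_n$ is a continuous piecewise polynomial vanishing on $\partial\Omega$.
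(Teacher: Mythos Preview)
Your proof is correct and follows essentially the same approach as the paper. The only cosmetic difference is that you split via the Galerkin projection $\tilde\phi_n$ of $\phi$, whereas the paper splits via the nodal interpolant $\phi_I$ and writes the energy argument directly for $e=\phi_I-\phi_n$; in both cases the key step is the same integration-by-parts identity $(\text{curl }(\mathbf{u}-\mathbf{u}_n),\psi)=(\mathbf{u}-\mathbf{u}_n,\textbf{curl }\psi)$ that converts the velocity error to an $L^2$ bound, and the final rates are assembled from the same ingredients (Lemma~\ref{stokesestlem}, Lemma~\ref{stokesestlem+}, and $\phi\in H^{2+\alpha}$).
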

\begin{proof}
Subtracting (\ref{poissonfem}) from (\ref{poissonweak}) gives
$$
(\nabla (\phi-\phi_n), \nabla \psi) =  (\text{curl } (\mathbf{u}-\mathbf{u}_n), \psi) \quad \forall \psi \in V_{n}^{k}.
$$
Denote by $\phi_I \in V_{n}^{k}$ the nodal interpolation of $\phi$. Set $\epsilon = \phi_I-\phi$, $e=\phi_I - \phi_n$ and take $\psi = e$, then we have
\bes
(\nabla e, \nabla e) = (\nabla \epsilon, \nabla e) + (\text{curl } (\mathbf{u}-\mathbf{u}_n), e) = (\nabla \epsilon, \nabla e) + (\mathbf{u}-\mathbf{u}_n, \textbf{curl } e),
\ees
which gives
\be\label{eerr}
\bal
\|e\|^2_{H^1(\Omega)} \leq & \|\epsilon\|_{H^1(\Omega)}\|e\|_{H^1(\Omega)}+\|\mathbf{u}-\mathbf{u}_n\|_{[L^2(\Omega)]^2}\|\textbf{curl } e\|_{[L^2(\Omega)]^2} \\
\leq & C (\|\epsilon\|_{H^1(\Omega)} +\|\mathbf{u}-\mathbf{u}_n\|_{[L^2(\Omega)]^2} )\|e\|_{H^1(\Omega)},
\eal
\ee
By the triangle inequality, we have
\be
\bal
\|\phi-\phi_n\|_{H^1(\Omega)} \leq & \|\epsilon\|_{H^1(\Omega)}+\|e\|_{H^1(\Omega)} \leq C\left( \|\epsilon\|_{H^1(\Omega)}+  \|\mathbf{u}-\mathbf{u}_n\|_{[L^2(\Omega)]^2} \right)
\eal
\ee
Recall that $\phi \in H^{2+\alpha}(\Omega)$, so it follows
$$
\|\epsilon\|_{H^1(\Omega)} \leq Ch^{\min\{k,1+\alpha\}},
$$
which together with (\ref{stokeserr}) for Algorithm \ref{femalg} and (\ref{stokeserr+}) for Algorithm \ref{femalg+} leads to the conclusion.
\end{proof}

\begin{theorem}\label{phierrL2thm}
Let $\phi_n$ be the solution of finite element solution of (\ref{poissonfem}) from Algorithm \ref{femalg} or Algorithm \ref{femalg+}, and $\phi$ be the solution of the biharmonic problem (\ref{eqnbi}). If $\phi_n$ is the solution of Algorithm \ref{femalg}, then we have
\be\label{l2erru}
\|\phi-\phi_n\| \leq Ch^{\min\{k+1,\alpha+2, 2\alpha\}};
\ee
if it is the solution of Algorithm \ref{femalg+}, then we have
\be\label{l2erru+}
\|\phi-\phi_n\| \leq Ch^{\min\{k+1, \beta+3, \alpha+2,  2\alpha\}}.
\ee
\end{theorem}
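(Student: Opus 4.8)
The plan is to prove the $L^2$ estimate by an Aubin--Nitsche duality argument, feeding in the velocity error bounds already established (Lemma \ref{stokesestlem} for Algorithm \ref{femalg}, Lemma \ref{stokesestlem+} for Algorithm \ref{femalg+}) and the $H^1$ estimate just obtained in Theorem \ref{phierrthm3.1}. First I would introduce the adjoint Poisson problem $-\Delta z = \phi - \phi_n$ in $\Omega$, $z=0$ on $\partial\Omega$, whose solution, by the Poisson regularity (\ref{regpoisson}) with $l=0$, satisfies $\|z\|_{H^{\min\{1+\beta,2\}}(\Omega)} \leq C\|\phi - \phi_n\|$; in particular $z\in H^2(\Omega)$ when $\omega<\pi$, while only $z\in H^{1+\beta}(\Omega)$, $\beta<\beta_0$, when $\omega>\pi$. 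Testing against $\phi-\phi_n\in H_0^1(\Omega)$ and integrating by parts yields the identity $\|\phi-\phi_n\|^2 = (\nabla(\phi-\phi_n),\nabla z)$.

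Next I would insert the nodal interpolant $z_I\in V_n^k$ and invoke the Galerkin orthogonality $(\nabla(\phi-\phi_n),\nabla z_I)=(\text{curl}(\mathbf{u}-\mathbf{u}_n),z_I)$, obtained by subtracting (\ref{poissonfem}) from (\ref{poissonweak}). Writing $z_I = z-(z-z_I)$ in the curl term and moving the curl onto the test function through the integration-by-parts identity $(\text{curl}(\mathbf{u}-\mathbf{u}_n),\psi)=(\mathbf{u}-\mathbf{u}_n,\textbf{curl }\psi)$ --- whose boundary contribution vanishes because $\mathbf{u}-\mathbf{u}_n\in[H_0^1(\Omega)]^2$ --- I arrive at the decomposition
\bes
\|\phi-\phi_n\|^2 = (\nabla(\phi-\phi_n),\nabla(z-z_I)) + (\mathbf{u}-\mathbf{u}_n,\textbf{curl }z) - (\text{curl}(\mathbf{u}-\mathbf{u}_n),z-z_I) =: I_1+I_2+I_3.
\ees

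The terms $I_1$ and $I_3$ are routine: by the interpolation estimate (\ref{interr}) and the regularity of $z$, one gets $I_1 \leq \|\phi-\phi_n\|_{H^1}\|z-z_I\|_{H^1}\lesssim h^{\min\{\beta,1\}}\|\phi-\phi_n\|_{H^1}\|\phi-\phi_n\|$ and $I_3\leq |\mathbf{u}-\mathbf{u}_n|_{H^1}\|z-z_I\|\lesssim h^{1+\min\{\beta,1\}}\|\mathbf{u}-\mathbf{u}_n\|_{H^1}\|\phi-\phi_n\|$, after which I substitute the $H^1$ bound on $\phi-\phi_n$ from Theorem \ref{phierrthm3.1} and the $H^1$ velocity bound from Lemma \ref{stokesestlem}/\ref{stokesestlem+}. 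The decisive term is $I_2=(\mathbf{u}-\mathbf{u}_n,\textbf{curl }z)$, and \emph{the main obstacle} is to extract from it the sharp exponent $\alpha+2$ rather than the $\alpha+1$ produced by a crude $L^2$ pairing. This forces a case split on the geometry: when $\omega<\pi$ one has $z\in H^2(\Omega)$, hence $\textbf{curl }z\in[H^1(\Omega)]^2$, and I bound $I_2\leq \|\mathbf{u}-\mathbf{u}_n\|_{[H^{-1}(\Omega)]^2}\|z\|_{H^2}\lesssim \|\mathbf{u}-\mathbf{u}_n\|_{[H^{-1}(\Omega)]^2}\|\phi-\phi_n\|$ using the $H^{-1}$ velocity estimate; when $\omega>\pi$ the duality solution is only $H^{1+\beta}$ and the $H^{-1}$ pairing is unavailable, so I fall back on $I_2\leq \|\mathbf{u}-\mathbf{u}_n\|_{[L^2(\Omega)]^2}|z|_{H^1}\lesssim \|\mathbf{u}-\mathbf{u}_n\|_{[L^2(\Omega)]^2}\|\phi-\phi_n\|$, which suffices precisely because the non-convex rate is dominated by the $2\alpha$ term carried by the $L^2$ velocity error.

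Finally, dividing through by $\|\phi-\phi_n\|$ and collecting exponents, I would substitute the velocity rates --- (\ref{stokeserr}) for Algorithm \ref{femalg} and (\ref{stokeserr+}) for Algorithm \ref{femalg+} --- together with the $\phi$ $H^1$ rates from Theorem \ref{phierrthm3.1}. For Algorithm \ref{femalg} this gives (\ref{l2erru}) with rate $h^{\min\{k+1,\alpha+2,2\alpha\}}$; for Algorithm \ref{femalg+}, where the velocity and $\mathbf{F}$ errors carry the extra $\beta$-dependent terms (and where Lemma \ref{fcurlgal} has already been used upstream to sharpen those velocity estimates), the same computation gives (\ref{l2erru+}) with $h^{\min\{k+1,\beta+3,\alpha+2,2\alpha\}}$. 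A short check that in each regime the geometric contributions ($\min\{\beta,1\}$ or $\beta$) from $I_1$ and $I_3$ never undercut the stated exponent completes the argument.
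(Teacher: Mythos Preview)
Your proposal is correct and follows essentially the same Aubin--Nitsche argument as the paper: the same dual Poisson problem, the same three-term decomposition (your $I_1,I_2,I_3$ coincide with the paper's three terms up to an integration by parts on the middle term), and the same geometry-dependent case split on $I_2$ using the $H^{-1}$ velocity bound when $\omega<\pi$ and the $L^2$ bound when $\omega>\pi$. The only cosmetic difference is that you bound $I_3$ via $|\mathbf{u}-\mathbf{u}_n|_{H^1}\|z-z_I\|$ whereas the paper first moves the curl and uses $\|\mathbf{u}-\mathbf{u}_n\|_{[L^2]^2}\|v-v_I\|_{H^1}$; both pairings give exponents dominated by the ones appearing in the final statement.
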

\begin{proof}
Consider the Poisson problem \be\label{dualVL2}
-\Delta v = \phi - \phi_n \text{ in } \Omega \quad v =0 \text{ on } \partial \Omega.
\ee
Then we have
\be\label{dualL2}
\|\phi - \phi_n\|^2=(\nabla (\phi - \phi_n), \nabla v).
\ee
By Subtracting (\ref{poissonfem}) from (\ref{poissonweak}), it follows
\be\label{galorth}
(\nabla (\phi - \phi_n), \nabla \psi) = (\text{curl }(\mathbf{u} - \mathbf{u}_n), \psi) \quad \forall \psi \in V_{n}^{k}.
\ee
Set $\psi = v_I \in V_{n}^{k}$ the nodal interpolation of $v$ and subtract (\ref{galorth}) from (\ref{dualVL2}), we have
\bes
\bal
\|\phi - \phi_n\|^2= & (\nabla (\phi - \phi_n), \nabla (v - v_I)) + (\text{curl }(\mathbf{u} - \mathbf{u}_n), v_I), \\
= & (\nabla (\phi - \phi_n), \nabla (v - v_I)) + (\text{curl }(\mathbf{u} - \mathbf{u}_n), v_I-v)+ (\text{curl }(\mathbf{u} - \mathbf{u}_n), v), \\
= & (\nabla (\phi - \phi_n), \nabla (v - v_I)) + (\mathbf{u} - \mathbf{u}_n, \textbf{curl }(v_I-v) )+ (\mathbf{u} - \mathbf{u}_n, \textbf{curl }v), \\
\leq & \|\phi-\phi_n\|_{H^1(\Omega)} \|v - v_I\|_{H^1(\Omega)} + \|\mathbf{u} - \mathbf{u}_n\|_{[L^2(\Omega)]^2} \|v-v_I\|_{H^1(\Omega)} \\
& + \|\mathbf{u} - \mathbf{u}_n\|_{[H^{-\min\{\lfloor\beta\rfloor, 1\}}(\Omega)]^2}\|\textbf{curl }v\|_{H^{\min\{\lfloor\beta\rfloor,1\}}(\Omega)},
\eal
\ees
where $\lfloor \cdot \rfloor$ represents the floor function.

The regularity result \cite{Grisvard1985, Grisvard1992} of the Poisson problem (\ref{dualL2}) gives
\be\label{poireg}
\|v\|_{H^{\min\{1+\beta,2\}}(\Omega)} \leq C \|\phi-\phi_n\|_{H^{\min\{\beta-1,0\}}(\Omega)}\leq C\|\phi-\phi_n\|.,
\ee
where $\beta<\frac{\pi}{\omega}$.
From (\ref{interr}), we have
\bes
\| v - v_I \|_{H^1(\Omega)} \leq Ch^{\min\{\beta,1\}}\|v\|_{H^{\min\{1+\beta,2\}}(\Omega)}.
\ees
For Algorithm \ref{femalg}, we have the following result by (\ref{stokeserr}).
Since $\beta<\frac{\pi}{\omega}$, so if $\omega>\pi$ we have $\lfloor\beta\rfloor=0$,  and
\be\label{betaleqpi} 
\|\mathbf{u} - \mathbf{u}_n\|_{[H^{-\min\{\lfloor\beta\rfloor, 1\}}(\Omega)]^2}=\|\mathbf{u} - \mathbf{u}_n\|_{[L^2(\Omega)]^2} \leq Ch^{2\alpha}.
\ee
and if $\omega<\pi$, we have $\lfloor\beta\rfloor=1$, and
\be\label{betageqpi}
\|\mathbf{u} - \mathbf{u}_n\|_{[H^{-\min\{\lfloor\beta\rfloor, 1\}}(\Omega)]^2} \leq Ch^{\min\{2k, k+2, k+\alpha, \alpha+2,2\alpha\}}.
\ee
For $\omega\in(0,2\pi)\setminus\{\pi\}$, (\ref{betaleqpi}) and (\ref{betageqpi}) imply that
\be\label{Hbeta}
\|\mathbf{u} - \mathbf{u}_n\|_{[H^{-\min\{\lfloor\beta\rfloor, 1\}}(\Omega)]^2} \leq Ch^{\min\{2k, k+2, k+\alpha, \alpha+2,2\alpha\}}.
\ee
Thus, we have by (\ref{phiH1err}), (\ref{stokeserr}), and (\ref{Hbeta}),
\be\label{dualL22}
\bal
\|\phi - \phi_n\|^2
\leq & Ch^{\min\{k+1, \alpha+2,k+\beta,2\alpha+\beta\}}\|v\|_{H^{\min\{1+\beta,2\}}(\Omega)}
 + Ch^{\min\{2k,k+2,k+\alpha,\alpha+2, 2\alpha\}}   \|v\|_{H^{\min\{1+\beta,2\}}(\Omega)}\\
\leq & Ch^{\min\{k+1, \alpha+2, 2\alpha\}}\|v\|_{H^{\min\{1+\beta,2\}}(\Omega)}.
\eal
\ee
By (\ref{poireg}) and (\ref{dualL22}),  the estimate (\ref{l2erru}) holds.

Similarly, for Algorithm \ref{femalg+} we have
\be\label{dualL22+}
\bal
\|\phi - \phi_n\|^2
\leq & Ch^{\min\{k+1, \alpha+2,\beta+3, k+\beta,2\alpha+\beta\}}\|v\|_{H^{\min\{1+\beta,2\}}(\Omega)}\\
& + Ch^{\min\{2k,k+2,k+\beta,\alpha+2,\beta+3, 2\alpha\}}   \|v\|_{H^{\min\{1+\beta,2\}}(\Omega)}\\
\leq & Ch^{\min\{k+1, \alpha+2, \beta+3, 2\alpha\}}\|v\|_{H^{\min\{1+\beta,2\}}(\Omega)}.
\eal
\ee
By (\ref{poireg}) and (\ref{dualL22+}),  the estimate (\ref{l2erru+}) holds.
\end{proof}

\begin{remark}
For results in Theorem \ref{phierrthm3.1} and Theorem \ref{phierrL2thm}, we have the following comparisons.
\begin{itemize}
\item From Figure \ref{Regularity}(b), we find $\alpha>\beta+1$ only when the largest interior angle $\omega$ of the domain $\Omega$ is close to $\frac{\pi}{3}$, so there exists $\omega_0 \in (\frac{\pi}{3}, \frac{\pi}{2})$ such that when $\omega>\omega_0$ the Algorithm \ref{femalg+} and the Algorithm \ref{femalg} give the same convergence rates. In particular, these two algorithms give the same convergence rates on non-convex domains.
\item For $k\leq 4$, from Figure \ref{Regularity} we can find $\beta+2$ in estimate (\ref{phiH1err+}) and $\beta+3$ in estimate (\ref{l2erru+}) cannot achieve the minimum, so these two algorithms also give the same convergence rates. 
\end{itemize}
\end{remark}

\section{Optimal error estimates on graded meshes}\label{sec-4}


To improve the convergence rate, we consider the Algorithm \ref{femalg} on graded meshes. We start with the regularity in weighted Sobolev space.

\subsection{Weighted Sobolev space}

Recall that $Q_i$, $i=1,\cdots, N$ are the vertices of domain $\Omega$.
Let $r_i=r_i(x,Q_i)$ be the distance from $x$ to $Q_i$ and let
\begin{eqnarray}\label{eqn.rho}
\rho(x)=\Pi_{1\leq i \leq N} r_i(x,Q_i).
\end{eqnarray}
Let $\mathbf a = (a_1, \cdots,a_i, \cdots, a_N)$ be a vector with $i$th component associated with  $Q_i$. We denote $t+\mathbf a = (t+a_1, \cdots, t+a_N)$, so we have
$$
\rho(x)^{(t+\mathbf a)}=\Pi_{1\leq i \leq N} r_i^{(t+\mathbf a)}(x,Q_i) = \Pi_{1\leq i \leq N} r_i^t(x,Q_i) \Pi_{1\leq i \leq N} r_i^{\mathbf a}(x,Q_i) = \rho(x)^t \rho(x)^{\mathbf{a}}.
$$
Then, we introduce the Kondratiev-type weighted Sobolev spaces for the analysis of the Stokes problem (\ref{stokesweak}) and the Poisson problem (\ref{eqnpoisson}).
\begin{definition} \label{wss} (Weighted Sobolev spaces)
For $a\in\mathbb R$, $m\geq 0$, and $G\subset \Omega$,  we define the weighted Sobolev space
$$
\maK_{\mathbf a}^m(G) := \{v|\ \rho^{|\nu|-\mathbf a}\partial^\nu v\in L^2(G), \forall\ |\nu|\leq m \},
$$
where the multi-index $\nu=(\nu_1,\nu_2)\in\mathbb Z^2_{\geq 0}$, $|\nu|=\nu_1+\nu_2$, and $\partial^\nu=\partial_x^{\nu_1}\partial_y^{\nu_2}$.
The $\maK_{\mathbf a}^m(G)$ norm for $v$  is defined by
$$
\|v\|_{\maK_{\mathbf a}^m(G)}=\big(\sum_{|\nu|\leq m}\iint_{G} |\rho^{|\nu|-\mathbf a}\partial^\alpha v|^2dxdy\big)^{\frac{1}{2}}.
$$
\end{definition}


\begin{remark}\label{KHeq}
According to Definition \ref{wss}, in the region that is away from the corners, the weighted space $\maK^m_{\mathbf a}$ is equivalent to the Sobolev space $H^m$. In the neighborhood of $Q_i$, the space $\maK^m_{\mathbf a}(B_i)$ is the equivalent to the Kondratiev space  \cite{Dauge88,Grisvard1985,Kondratiev67},
$$
\maK_{a_i}^m(B_i) := \{v|\ r_i^{|\nu|-a_i}\partial^\alpha v\in L^2(B_i), \forall\ |\nu|\leq m \},
$$
where $B_i \subset \Omega$ represents the neighborhood of $Q_i$ satisfying $B_i \cap B_j = \emptyset$ for $i\not = j$.
\end{remark}

\subsection{Graded meshes}

We now present the construction of graded meshes to improve the convergence rate of the numerical approximation from Algorithm \ref{femalg}.

\begin{algorithm}\label{graded} (Graded refinements) Let $\maT$ be a triangulation of $\Omega$ with shape-regular triangles.
Recall that $Q_i$, $i=1,\cdots, N$ are the vertices of $\Omega$.
Let ${AB}$ be an edge in the triangulation $\mathcal T$ with $A$ and $B$ as the endpoints. Then, in a graded refinement, a new node $D$ on $AB$ is produced according to the following conditions:
\begin{itemize}
\item[1.] (Neither $A$ nor $B$ coincides with $Q_i$.) We choose $D$  as the midpoint ($|AD|=|BD|$).
\item[2.] ($A$ coincides with $Q_i$.) We choose $r$  such that $|AD|=\kappa_{Q_i}|AB|$, where $\kappa_{Q_i}\in (0, 0.5)$ is a parameter that will be specified later. See Figure \ref{fig.2} for example.
\end{itemize}
Then, the graded refinement, denoted by $\kappa(\mathcal T)$, proceeds as follows.
For each triangle $T\in \mathcal T$, a new node is generated on each edge of $T$ as described above. Then, $T$ is decomposed into four small triangles by connecting these new nodes (Figure \ref{fig.333}). Given an initial mesh $\mathcal T_0$ satisfying the condition above, the associated family of graded meshes $\{\mathcal T_n,\ n\geq0\}$ is defined recursively $\mathcal T_{n+1}=\kappa(\mathcal T_{n})$.
\end{algorithm}

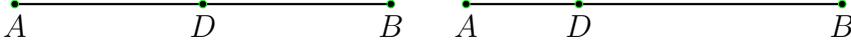
\begin{figure}
\begin{center}
\begin{tikzpicture}[scale=0.5]

\draw[thick]
(-1,1) node[anchor = north] {$B$}
-- (-11, 1) node[anchor = north] {$A$}
-- (-1,1);
\draw[green,fill=black] (-6,1) circle (.1);
\draw[green,fill=black] (-11,1) circle (.1);
\draw[green,fill=black] (-1,1) circle (.1);
\draw (-6,1) node[anchor = north] {$D$};

\draw[thick]
(11,1) node[anchor = north] {$B$}
-- (1, 1) node[anchor = north] {$A$}
-- (11,1);
\draw[green,fill=black] (4,1) circle (.1);
\draw[green,fill=black] (1,1) circle (.1);
\draw[green,fill=black] (11,1) circle (.1);
\draw (4,1) node[anchor = north] {$D$};

%

\end{tikzpicture}
\end{center}
\vspace*{-15pt}
\caption{The new node on an edge $AB$. (left): $A \neq Q_i$ and $B\neq Q_i$ (midpoint);  (right) $A=Q_i$  ($|AD|=\kappa_{Q_i}|AB|$,  $\kappa_{Q_i}<0.5$).}\label{fig.2}
\end{figure}


\begin{figure}
\begin{center}
\begin{tikzpicture}[scale=0.5]

\draw[thick]
(-1,1) node[anchor = north] {$x_2$}
-- (-4,7) node[anchor = south] {$x_0$}
-- (-11, 1) node[anchor = north] {$x_1$}
-- (-1,1);

\draw[thick]
(11,1) node[anchor = north] {$x_2$}
-- (8,7) node[anchor = south] {$x_0$}
-- (1, 1) node[anchor = north] {$x_1$}
-- (11,1);

\draw[thick]
(9.5,4)
-- (4.5,4)
-- (6,1) node[anchor = north] {$x_{12}$}
-- (9.5,4);
\draw (3.9,4.3) node {$x_{01}$};
\draw (10,4.3) node {$x_{02}$};

\draw[thick]
(-1,-7) node[anchor = north] {$x_2$}
-- (-4,-1) node[anchor = south] {$x_0$}
-- (-11,-7) node[anchor = north] {$x_1$}
-- (-1,-7);
\draw[green,fill=green] (-4,-1) circle (.2);

\draw[thick]
(-23/4, -5/2)
-- (-6,-7+0.04) node[anchor = north] {$x_{12}$}
-- (-13/4, -5/2)
-- (-23/4, -5/2);
\draw (-6.4,-2.2) node {$x_{01}$};
\draw (-2.7,-2.2) node {$x_{02}$};

\draw[thick]
(11,-7) node[anchor = north] {$x_2$}
-- (8,-1) node[anchor = south] {$x_0$}
-- (1, -7) node[anchor = north] {$x_1$}
-- (11,-7);
\draw[green,fill=green] (8,-1) circle (.2);

\draw[thick]
(25/4, -5/2)
-- (6,-7+0.04) node[anchor = north] {$x_{12}$}
-- (35/4, -5/2)
-- (25/4, -5/2);
\draw (5.6,-2.2) node {$x_{01}$};
\draw (9.3,-2.2) node {$x_{02}$};

\draw[thick] (29/8, -19/4) -- (7/2,-7) -- (49/8,-19/4) -- (29/8, -19/4);
\draw[thick] (49/8,-19/4) -- (59/8,-19/4) -- (15/2,-5/2) -- (49/8,-19/4);
\draw[thick] (59/8,-19/4) -- (17/2,-7) -- (79/8,-19/4) -- (59/8,-19/4);
\draw[thick] (121/16,-11/8) -- (15/2,-5/2) -- (131/16,-11/8) -- (121/16,-11/8);

%

\end{tikzpicture}
\end{center}
\vspace*{-15pt}
\caption{Refinement of a triangle $\triangle x_0x_1x_2$. First row: (left -- right): the initial triangle and the midpoint refinement; second row: two consecutive graded refinements toward $x_0=Q_i$, ($\kappa_{Q_i}<0.5$).}
\label{fig.333}
\end{figure}
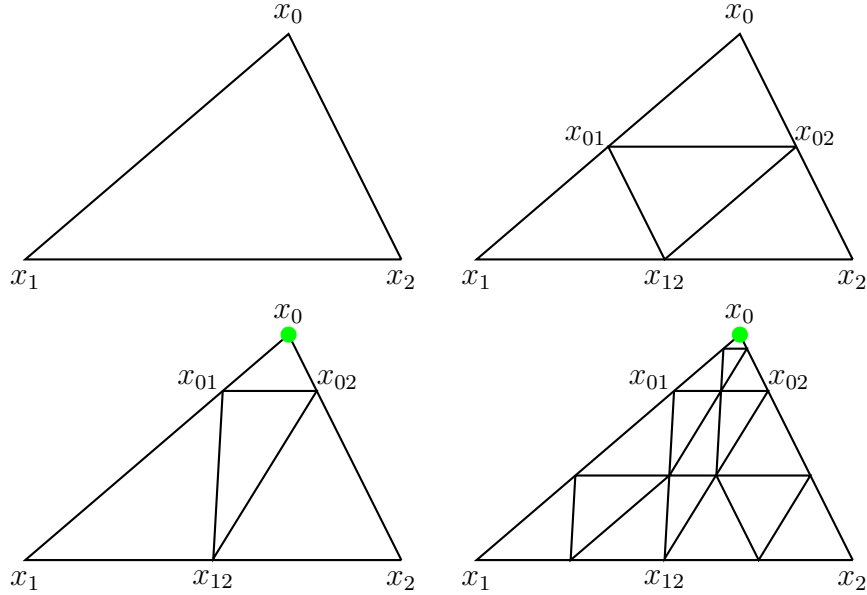

Given a grading parameter $\kappa_{Q_i}$, Algorithm \ref{graded} produces smaller elements near $Q_i$ for a better approximation of the singular solution. It is an explicit construction of graded meshes based on recursive refinements. See also \cite{ASW96,BNZ205, LMN10, LN09} and references therein for more discussions on the graded mesh.

Note that after $n$ refinements, the number of triangles in the mesh $\maT_n$ is $O(4^n)$, so we denote the ``mesh size" of $\maT_n$ by
\be\label{meshsize}
h = 2^{-n}.
\ee

In Algorithm \ref{graded}, we choose the parameter $\kappa_{Q_i}$ for each vertex $Q_i$ as follows. Recall that and $\alpha_0^i$ is the solution of (\ref{alpha0}) with $\omega$ being replaced by the interior angle $\omega_i$ at $Q_i$.
Given the degree of polynomials $k, k'$ in Algorithm \ref{femalg}, we choose
\be\label{kappainit}
\kappa_{Q_i}=2^{-\frac{\theta}{a_i}}\left(\leq \frac{1}{2}\right),
\ee
where $a_i>0$
and $\theta$ could be any possible constants satisfying
\be\label{thetarange}
a_i \leq \theta \leq \min\{k,m\}.
\ee
In (\ref{thetarange}), if we take $a_i=\theta$, the grading parameter $\kappa_{Q_i}=\frac{1}{2}$.

\subsection{Interpolation error estimates on graded meshes}




\begin{lem}\label{r1r3}
Let $T_{(0)}\in\mathcal T_{0}$ be an initial triangle of the triangulation $\mathcal T_n$ in Algorithm \ref{graded} with grading parameters $\kappa_{Q_i}$ given by (\ref{kappainit}). For $m\geq 1, k \geq 1$, we denote $v_I \in V_n^{k}$ (resp. $q_I \in S_n^{k-1}$) the nodal interpolation of $v \in \maK_{\mathbf a+1}^{m+1}(\Omega)$ (resp. $q \in \maK_{\mathbf a}^{m}(\Omega)$). If $\bar T_{(0)}$ does not contain any vertices $Q_i$, $i=1,\cdots,N$, then
\begin{equation*}
\|v-v_I\|_{H^1(T_{(0)})}\leq Ch^{\min\{k,m\}}, \quad \|q-q_I\|_{L^2(T_{(0)})} \leq Ch^{\min\{k,m\}},
\end{equation*}
where $h = 2^{-n}$.
\end{lem}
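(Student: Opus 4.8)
The plan is to exploit the fact that $\bar T_{(0)}$ is bounded away from every corner, which places us entirely in the regular regime where the weighted theory reduces to the classical one. The argument has three ingredients: the refinement is uniform on $T_{(0)}$, the weighted norm is equivalent to an ordinary Sobolev norm on $T_{(0)}$, and the classical element-wise interpolation estimate then applies verbatim.

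First I would verify that no grading is ever triggered inside $T_{(0)}$. Since $\bar T_{(0)}$ contains none of the vertices $Q_i$, each of the three vertices of $T_{(0)}$ falls under case~1 of Algorithm~\ref{graded}, so the new node on every edge is the midpoint. The same holds recursively for every descendant: a midpoint of an edge contained in $\bar T_{(0)}$ cannot coincide with any $Q_i$, so only case~1 is ever applied. Hence the restriction of $\maT_n$ to $T_{(0)}$ is the $n$-fold uniform midpoint refinement, consisting of $4^n$ shape-regular subtriangles $T$, each of diameter $h_T \le C\,2^{-n}\diam(T_{(0)}) = C' h$. In particular the mesh is quasi-uniform on $T_{(0)}$ with element size comparable to $h = 2^{-n}$, and the grading parameter $\kappa_{Q_i}$ of (\ref{kappainit}) never enters.

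Second, because $\bar T_{(0)}$ is compact and disjoint from all $Q_i$, the weight $\rho$ of (\ref{eqn.rho}) satisfies $0 < c_1 \le \rho(x) \le c_2$ on $\bar T_{(0)}$. By Remark~\ref{KHeq} the weighted norm is therefore equivalent to the ordinary Sobolev norm on $T_{(0)}$, so $v \in H^{m+1}(T_{(0)})$ and $q \in H^{m}(T_{(0)})$ with $\|v\|_{H^{m+1}(T_{(0)})} \le C\|v\|_{\maK_{\mathbf a+1}^{m+1}(\Omega)}$ and $\|q\|_{H^{m}(T_{(0)})} \le C\|q\|_{\maK_{\mathbf a}^{m}(\Omega)}$ (note $m+1 \ge 2$, so the nodal interpolant of $v$ is well defined in two dimensions). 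Third, I apply the classical local interpolation estimate, the element-wise version of (\ref{interr}), on each subtriangle $T$: for the degree-$k$ interpolant of $v$ and the degree-$(k-1)$ interpolant of $q$,
\begin{equation*}
\|v-v_I\|_{H^1(T)} \le C h_T^{\min\{k,m\}}\|v\|_{H^{\min\{k,m\}+1}(T)}, \qquad \|q-q_I\|_{L^2(T)} \le C h_T^{\min\{k,m\}}\|q\|_{H^{\min\{k,m\}}(T)}.
\end{equation*}
Squaring, summing over all $T \subset T_{(0)}$, using $h_T \le C h$ together with the additivity of the Sobolev seminorms over the elements, and finally invoking the norm equivalence from the second step yields the two stated bounds.

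The only point that needs genuine care is the first step: confirming that the grading rule is dormant for triangles whose closure avoids the vertex set, so that the local mesh size is isotropic and $\approx h$ rather than refined. Once this is established the estimate is entirely standard, and the lemma functions as the baseline case, with the genuinely delicate scaling analysis reserved for the triangles that do touch a vertex, which is treated separately.
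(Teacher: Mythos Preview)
Your proposal is correct and follows essentially the same approach as the paper's proof: both argue that the mesh on $T_{(0)}$ is quasi-uniform of size $O(2^{-n})$ since no grading is triggered, invoke Remark~\ref{KHeq} to pass from the weighted space to the ordinary Sobolev space on $T_{(0)}$, and then apply the standard interpolation estimate. Your version is simply more explicit about each step (in particular the recursive verification that only midpoint refinement occurs), whereas the paper compresses the whole argument into a couple of sentences.
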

\begin{proof}
If $\bar T_0$ does not contain any vertices $Q_i$ of the domain $\Omega$, we have $v\in \maK_{\mathbf a+1}^{m+1}(\Omega) \subset H^{m+1}(T_{(0)})$ (see Remark \ref{KHeq}) and the mesh on $T_{(0)}$ is quasi-uniform (Algorithm \ref{graded}) with size $O(2^{-n})$. Therefore, based on the standard interpolation error estimate, we have
\begin{eqnarray}\label{1.1}
\|v-v_I\|_{H^1(T_{(0)})}\leq Ch^{\min\{k,m\}}\|v\|_{H^{m+1}(T_{(0)})}.
\end{eqnarray}
Note that $q \in \maK_{\mathbf a}^{m}(\Omega) \subset H^{m}(\Omega)$, we can similar obtain the estimate for $\|q-q_I\|$.
\end{proof}

We now study the interpolation error in the neighborhood $Q_i$, $i=1,\cdots, N$.
In the rest of this subsection, we assume $T_{(0)}\in\mathcal T_0$ is an initial triangle such that the $i$th vertex $Q_i$ is a vertex of $T_{(0)}$.
We first define mesh layers on $T_{(0)}$ which are collections of triangles in $\mathcal T_n$.
\begin{definition} (Mesh layers) Let $T_{(t)}\subset T_{(0)}$ be the triangle in $\mathcal T_t$, $0\leq t\leq n$, that is attached to the singular vertex $Q_i$ of $T_{(0)}$. For $0\leq t<n$, we define the $t$th mesh layer of $\mathcal T_n$ on $T_{(0)}$ to be the region $L_{t}:=T_{(t)}\setminus T_{(t+1)}$; and for $t=n$, the $n$th layer is $L_{n}:=T_{(n)}$.  See Figure \ref{fig.layer} for example.
\end{definition}


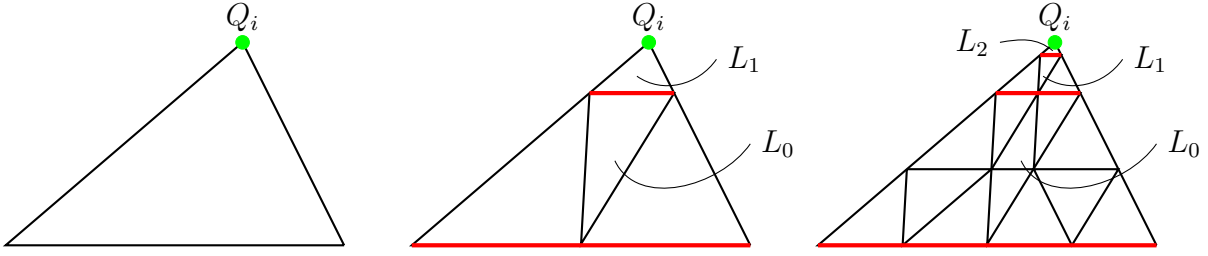
\begin{figure}
\begin{center}
\begin{tikzpicture}[scale=0.45]

\draw[thick]
(-13,-7)
-- (-16,-1) node[anchor = south] {$Q_i$}
-- (-23, -7)
-- (-13,-7);
\draw[green,fill=green] (-16,-1) circle (.2);

\draw[thick]
(-1,-7)
-- (-4,-1) node[anchor = south] {$Q_i$}
-- (-11,-7)
-- (-1,-7);
\draw[green,fill=green] (-4,-1) circle (.2);

\draw[thick]
(-23/4, -5/2)
-- (-6,-7+0.04)
-- (-13/4, -5/2)
-- (-23/4, -5/2);

\draw[red, ultra thick] (-13/4, -5/2) -- (-23/4, -5/2);
\draw[red, ultra thick] (-11,-7) -- (-1,-7);
\draw (-5,-4.5) to[out=-65,in=235] (-1,-4) node[anchor = west] {$L_0$};
\draw (-4.35,-2) to[out=-35,in=235] (-2,-1.5) node[anchor = west] {$L_1$};

\draw[thick]
(11,-7)
-- (8,-1) node[anchor = south] {$Q_i$}
-- (1, -7)
-- (11,-7);
\draw[green,fill=green] (8,-1) circle (.2);

\draw[thick]
(25/4, -5/2)
-- (6,-7+0.04)
-- (35/4, -5/2)
-- (25/4, -5/2);

\draw[thick] (29/8, -19/4) -- (7/2,-7) -- (49/8,-19/4) -- (29/8, -19/4);
\draw[thick] (49/8,-19/4) -- (59/8,-19/4) -- (15/2,-5/2) -- (49/8,-19/4);
\draw[thick] (59/8,-19/4) -- (17/2,-7) -- (79/8,-19/4) -- (59/8,-19/4);
\draw[thick] (121/16,-11/8) -- (15/2,-5/2) -- (131/16,-11/8) -- (121/16,-11/8);

\draw[red, ultra thick] (131/16,-11/8) -- (121/16,-11/8);
\draw[red, ultra thick] (-13/4+12, -5/2) -- (-23/4+12, -5/2);
\draw[red, ultra thick] (-11+12,-7) -- (-1+12,-7);
\draw (-5+12,-4.5) to[out=-65,in=235] (-1+12,-4) node[anchor = west] {$L_0$};
\draw (-4.35+12,-2) to[out=-35,in=235] (-2+12,-1.5) node[anchor = west] {$L_1$};
\draw (127/16,-1.25) to[out=500,in=10] (102/16,-1) node[anchor = east] {$L_2$};

\end{tikzpicture}
\end{center}
\vspace*{-5pt}
\caption{Mesh layers (left -- right): the initial triangle $T_{(0)}$ with a vertex $Q_i$; two layers after one refinement; three layers after two refinements.}
\label{fig.layer}
\end{figure}

\begin{remark}
The triangles in $\mathcal T_n$ constitute $n$ mesh layers on $T_{(0)}$. According to Algorithm \ref{graded} and the choice of grading parameters $\kappa_{Q_i}$ given by (\ref{kappainit}), the mesh size in the $t$th  layer $L_t$ is
\begin{equation}\label{eqn.size}O(\kappa_{Q_i}^t2^{t-n}). \end{equation}
Meanwhile, the weight function $\rho$ in (\ref{eqn.rho}) satisfies
\begin{eqnarray}\label{eqn.dist}
\rho=O(\kappa_{Q_i}^t) \ \ \ {\rm{in\ }} L_t\ (0\leq t< n) \qquad {\rm{and}}  \qquad \rho \leq C\kappa_{Q_i}^n \ \ \ {\rm{in\ }} L_n.
\end{eqnarray}
\end{remark}
Although the mesh size varies in different layers, the triangles in $\mathcal T_n$ are shape regular. In addition, using the local Cartesian coordinates such that $Q$ is the origin,  the mapping
\begin{eqnarray}\label{eqn.map}
\mathbf B_{t}= \begin{pmatrix}
  \kappa_{Q_i}^{-t}   &   0 \\
  0    &   \kappa_{Q_i}^{-t} \\
\end{pmatrix}\qquad 0\leq t\leq n,
\end{eqnarray}
is a bijection between $L_t$ and $L_0$ for $0\leq t<n$ and a bijection between $L_n$ and $T_{(0)}$. We call $L_0$ (resp. $T_{(0)}$) the reference region associated to $L_t$ for $0\leq t<n$ (resp. $L_n$).

With the mapping (\ref{eqn.map}), we have that for any point $(x, y)\in L_t$, $0\leq t\leq n$, the image point $(\hat x, \hat y):=\mathbf B_t(x,y)$ is in its reference region. We introduce the following dilation result.

\begin{lem}\label{dilation}
For $0\leq t\leq n$,
given a function $v(x, y) \in \maK_{a}^{l}(L_t)$, the function $\hat v(\hat x, \hat y):=v(x, y)$ belongs to $\maK_{a}^{l}(\hat L)$, where $(\hat x, \hat y):=\mathbf B_t(x,y)$, $\hat L=L_0$ for $0\leq t< n$, and $\hat L=T_{(0)}$ for $t=n$. Then, it follows
\begin{equation*}
\|\hat v(\hat x, \hat y)\|_{\maK_{a}^{l}(\hat L)} = \kappa_{Q_i}^{t(a-1)} \|v(x,y)\|_{\maK_{a}^{l}(L_i)}.
\end{equation*}
\end{lem}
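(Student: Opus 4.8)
The plan is to prove the identity by a direct change of variables, exploiting the fact that $\mathbf B_t$ is a pure dilation by the scalar factor $\lambda := \kappa_{Q_i}^{-t}$ and that the Kondratiev weight is tuned precisely so that such dilations act through a single power of $\lambda$, independent of the order of differentiation. Throughout I would work in the local Cartesian coordinates centered at $Q_i$, so that (by Remark \ref{KHeq}) the $\maK_a^l$ norm on $L_t$ may be taken with the single-vertex weight $r := r_i$, the distance to $Q_i$, and $\mathbf B_t(x,y) = (\lambda x, \lambda y)$. Since the argument is purely pointwise plus a Jacobian, it treats the two cases $\hat L = L_0$ (for $0\le t<n$) and $\hat L = T_{(0)}$ (for $t=n$) uniformly.

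First I would record how the three ingredients of the weighted norm transform under $(\hat x,\hat y) = \mathbf B_t(x,y)$, i.e. $(x,y) = (\lambda^{-1}\hat x, \lambda^{-1}\hat y)$. For a multi-index $\nu$ with $|\nu| = j$, the chain rule gives $\partial^\nu_{(\hat x,\hat y)}\hat v = \lambda^{-j}\,(\partial^\nu v)$ evaluated at $(\lambda^{-1}\hat x,\lambda^{-1}\hat y)$. The distance scales as $\hat r = \lambda r$, so the weight obeys $\hat r^{\,j-a} = \lambda^{\,j-a} r^{\,j-a}$. Finally the area element transforms by the Jacobian $d\hat x\,d\hat y = \lambda^2\,dx\,dy$.

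Combining these in each term of the norm, the factor $\lambda^{\,j}$ coming from the weight cancels the factor $\lambda^{-j}$ coming from the $j$th-order derivative, leaving only $\lambda^{-a}$, so that $\hat r^{\,j-a}\,\partial^\nu_{(\hat x,\hat y)}\hat v = \lambda^{-a}\,r^{\,j-a}\,\partial^\nu v$. Squaring and integrating against $d\hat x\,d\hat y = \lambda^2\,dx\,dy$ produces a factor $\lambda^{2-2a}$ in every summand with $|\nu|\le l$; summing over $|\nu|\le l$ then gives $\|\hat v\|_{\maK_a^l(\hat L)}^2 = \lambda^{2-2a}\,\|v\|_{\maK_a^l(L_t)}^2$. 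Taking square roots and substituting $\lambda = \kappa_{Q_i}^{-t}$ yields $\lambda^{1-a} = \kappa_{Q_i}^{\,t(a-1)}$, which is exactly the claimed constant; this simultaneously establishes $\hat v \in \maK_a^l(\hat L)$, because the right-hand norm is finite by hypothesis.

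There is no genuine obstacle here: the content lies entirely in the bookkeeping of the powers of $\lambda$. The one point that deserves care is the exact cancellation of the order-dependent powers $\lambda^{\pm j}$, which is precisely the scale-covariance built into the Kondratiev weight $r^{|\nu|-a}$ and which would fail for an ordinary Sobolev norm. I would also flag at the outset that one must work with the single-vertex weight $r_i$ rather than the global product weight $\rho = \prod_j r_j$ from \eqref{eqn.rho}; this is legitimate because, on the layers $L_t \subset T_{(0)}$ clustering at $Q_i$, the remaining factors $r_j$ with $j\neq i$ are bounded above and below, so the two weights are equivalent near $Q_i$ by Remark \ref{KHeq}, and the dilation $\mathbf B_t$ then acts exactly on the relevant weight.
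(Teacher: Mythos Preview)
Your argument is correct: the change of variables with $\lambda=\kappa_{Q_i}^{-t}$ produces exactly the factor $\lambda^{2(1-a)}=\kappa_{Q_i}^{2t(a-1)}$ on each term of the squared norm, independently of $|\nu|$, and you rightly flag that the exact identity requires the single-vertex weight $r_i$ (justified via Remark~\ref{KHeq}) rather than the product weight $\rho$. The paper does not prove this lemma in-line but simply cites \cite[Lemma~4.5]{li2021}; your direct scaling computation is the standard argument that such a reference would contain, so your approach is essentially the same as the intended one.
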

\begin{proof}
The proof can be found in \cite[Lemma 4.5]{li2021}.
\end{proof}

We then derive the interpolation error estimate in each layer.

\begin{lem}\label{TNtri}
For $k \geq 1, m \geq 1$, set $\kappa_{Q_i}$ in (\ref{kappainit}) with $\theta$ satisfying (\ref{thetarange}) for the graded mesh on $T_{(0)}$.
Let $h:=2^{-n}$, then in the $t$th layer $L_t$ on $T_{(0)}$, $0\leq t<n$,\\
(i) if $v_I \in V_n^k$ be the nodal interpolation of $v\in \maK_{\mathbf a+1}^{m+1}(\Omega)$, it follows
\be\label{projgh1}
|v-v_{I}|_{H^1(L_t)} \leq Ch^{\theta}\|v\|_{\maK_{a_i+1}^{m+1}(L_t)};
\ee
(ii) if $q_I \in V_n^{k-1}$ be the nodal interpolation of $q\in \maK_{\mathbf a}^{m}(\Omega)$, it follows
\be\label{projgl2}
\|q-q_{I}\|_{L^2(L_t)} \leq Ch^{\theta}\|q\|_{\maK_{a_i}^{m}(L_t)}.
\ee
\end{lem}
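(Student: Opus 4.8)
The plan is to reduce each per-layer estimate to a standard quasi-uniform interpolation bound on a fixed reference region by means of the dilation $\mathbf B_t$ from (\ref{eqn.map}), and then to track the scaling factors. Fix $0\le t<n$. Recall from Algorithm \ref{graded} that $\mathbf B_t$ is a bijection carrying $L_t$ onto the reference region $\hat L=L_0$, that it maps the nodes of $\mathcal T_n$ lying in $L_t$ bijectively onto the nodes of the mesh induced on $\hat L$, and that this induced mesh is quasi-uniform of size $\hat h=O(2^{t-n})$ (combine (\ref{eqn.size}) with the scaling factor $\kappa_{Q_i}^{-t}$). Writing $\hat v(\hat x,\hat y):=v(x,y)$, the fact that $\mathbf B_t$ sends nodes to nodes gives $\widehat{v_I}=(\hat v)_I$, so interpolation commutes with the dilation. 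Since $\hat L$ is bounded away from $Q_i$, Remark \ref{KHeq} yields the norm equivalences $\maK_{a_i+1}^{m+1}(\hat L)\simeq H^{m+1}(\hat L)$ and $\maK_{a_i}^{m}(\hat L)\simeq H^{m}(\hat L)$.

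For part (i) I would first observe that in two dimensions the $H^1$ seminorm is invariant under the isotropic scaling $\mathbf B_t$: the Jacobian factor $\kappa_{Q_i}^{2t}$ in the area element cancels the two factors $\kappa_{Q_i}^{-t}$ coming from the first derivatives, so that $|v-v_I|_{H^1(L_t)}=|\hat v-\hat v_I|_{H^1(\hat L)}$. On the quasi-uniform reference mesh the standard estimate (\ref{interr}) together with the equivalence above gives $|\hat v-\hat v_I|_{H^1(\hat L)}\le C\hat h^{\min\{k,m\}}\|\hat v\|_{\maK_{a_i+1}^{m+1}(\hat L)}$. Then Lemma \ref{dilation} with $a=a_i+1$, $l=m+1$ converts the reference norm back to the physical layer as $\|\hat v\|_{\maK_{a_i+1}^{m+1}(\hat L)}=\kappa_{Q_i}^{t a_i}\|v\|_{\maK_{a_i+1}^{m+1}(L_t)}$. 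Collecting these and using $\kappa_{Q_i}=2^{-\theta/a_i}$ from (\ref{kappainit}) reduces matters to the scalar inequality
\[
2^{(t-n)\min\{k,m\}}\,\kappa_{Q_i}^{t a_i}=2^{-n\min\{k,m\}}\,2^{t(\min\{k,m\}-\theta)}\le 2^{-n\theta}=h^{\theta},
\]
which holds because $\theta\le\min\{k,m\}$ by (\ref{thetarange}) makes the exponent $\min\{k,m\}-\theta$ nonnegative, so the middle quantity is nondecreasing in $t$ and is controlled over $t<n$ by its value as $t\to n$; here $h=2^{-n}$ by (\ref{meshsize}). This establishes (\ref{projgh1}).

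Part (ii) runs along the same lines, the only change being the scaling of the $L^2$ norm. Under $\mathbf B_t$ one has $\|q-q_I\|_{L^2(L_t)}=\kappa_{Q_i}^{t}\|\hat q-\hat q_I\|_{L^2(\hat L)}$ (the single area-element factor is not cancelled, since no derivatives are present), while the quasi-uniform $L^2$ interpolation bound for the degree-$(k-1)$ space gives $\|\hat q-\hat q_I\|_{L^2(\hat L)}\le C\hat h^{\min\{k,m\}}\|\hat q\|_{\maK_{a_i}^{m}(\hat L)}$, and Lemma \ref{dilation} with $a=a_i$, $l=m$ gives $\|\hat q\|_{\maK_{a_i}^{m}(\hat L)}=\kappa_{Q_i}^{t(a_i-1)}\|q\|_{\maK_{a_i}^{m}(L_t)}$. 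The net power of $\kappa_{Q_i}$ is again $\kappa_{Q_i}^{t}\,\kappa_{Q_i}^{t(a_i-1)}=\kappa_{Q_i}^{t a_i}=2^{-\theta t}$, identical to part (i), so the same scalar inequality closes the estimate (\ref{projgl2}).

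The genuinely delicate point, and the one I would check most carefully, is the exponent bookkeeping in the displayed inequality: it is exactly the constraint $\theta\le\min\{k,m\}$ built into (\ref{thetarange}) that guarantees the layer-dependent factor $2^{t(\min\{k,m\}-\theta)}$ does not blow up as $t$ approaches $n$, and it is the isotropy of the scaling (together with the weight exponent appearing in Lemma \ref{dilation}) that makes the $H^1$ and $L^2$ cases collapse to the same net factor $\kappa_{Q_i}^{t a_i}$. Everything else is routine once the commutation $\widehat{v_I}=(\hat v)_I$ and the quasi-uniformity of the reference mesh are justified.
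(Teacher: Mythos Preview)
Your proposal is correct and follows essentially the same approach as the paper: both proofs dilate the layer $L_t$ to the reference region $L_0$ via $\mathbf B_t$, invoke the scale invariance of the $H^1$ seminorm (respectively the $\kappa_{Q_i}^t$ scaling of the $L^2$ norm), apply the standard quasi-uniform interpolation bound on the reference mesh of size $O(2^{t-n})$, convert back using Lemma~\ref{dilation}, and close with the same exponent computation relying on $\theta\le\min\{k,m\}$ from (\ref{thetarange}). The paper writes the final scalar inequality as $2^{(t-n)\mu-\theta t}=2^{-n\theta}2^{(n-t)(\theta-\mu)}\le 2^{-n\theta}$, which is just a rearrangement of your displayed bound.
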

\begin{proof}
For $L_t$ associated with $Q_i$, $0\leq t<n$, the space $\maK_{a_i+1}^{m+1}(L_t)$ (resp. $\maK_{a_i}^{m}(L_t)$) is equivalent to $H^{m+1}(L_t)$ (resp. $H^{m}(L_t)$). Therefore, $v$ (resp. $q$) is a continuous function in $L_t$.
For any point $(x, y)\in L_t$, let $(\hat x, \hat y)=\mathbf B_t(x,y)\in L_0$. For $v(x, y)$ (resp. $q(x, y)$) in $L_t$, we define $\hat v(\hat x, \hat y):=v(x, y)$ (resp. $\hat q(\hat x, \hat y):=q(x, y)$) in $L_0$.

(i). Using the standard interpolation error estimate, the scaling argument,  the estimate in (\ref{eqn.size}), and the mapping in (\ref{eqn.map}), we have
\begin{eqnarray*}
 |v-v_I|_{H^1(L_t)}&=& |\hat v-\hat v_I|_{H^1(L_0)}\leq C 2^{(t-n)\mu}\|\hat v\|_{\maK_{a_i+1}^{m+1}(L_0)}
 \leq C 2^{(t-n)\mu}\kappa_{Q_i}^{a_it}\|v\|_{\maK_{a_i+1}^{m+1}(L_t)},
\end{eqnarray*}
where we have used Lemma \ref{dilation} in the last inequality.
Since $\kappa_{Q_i} = 2^{-\frac{\theta}{a_i}}$,
so we have
$\kappa_{Q_i}^{a_it} = 2^{-\theta t}$.
Set $\mu=\min\{k,m\}$, by $\theta \leq \mu$ from (\ref{thetarange}) and $t<n$, we have $2^{(n-t)(\theta-\mu)}<2^0=1$.
Therefore, we have the estimate
\begin{eqnarray*}
 |v-v_I|_{H^1(L_t)}
 &\leq& C 2^{(t-n)\mu-\theta t}\|v\|_{\maK_{a_i+1}^{m+1}(L_t)} = C2^{-n\theta} 2^{(n-t)(\theta-\mu)}\|v\|_{\maK_{a_i+1}^{m+1}(L_t)}\\
 &\leq& C2^{-n\theta} \|v\|_{\maK_{a_i+1}^{m+1}(L_t)} \leq Ch^{\theta} \|v\|_{\maK_{a_i+1}^{m+1}(L_t)}.
\end{eqnarray*}

(ii). We can show that
\begin{eqnarray*}
\|q-q_I\|_{L^2(L_t)}&=& \kappa_{Q_i}^t\|\hat q-\hat q_I\|_{L^2(L_0)}\leq C \kappa_{Q_i}^t 2^{(t-n)\mu}\|\hat q\|_{\maK_{a_i}^{m}(L_0)} \leq C 2^{(t-n)\mu}\kappa_{Q_i}^{a_it}\|q\|_{\maK_{a_i}^{m}(L_t)},
\end{eqnarray*}
where again we used Lemma \ref{dilation} in the last inequality.
Using the similar argument as in (i), the estimate (\ref{projgl2}) holds.
\end{proof}

Before deriving the interpolation error estimate in the last layer $L_n$ on $T_{(0)}$, we first introduce the following results.

\begin{lemma}\label{Lnrela}
For $\forall v \in \maK_a^l(L_n)$, if $0\leq l'\leq l$ and $a'\leq a$, then it follows
\be
\|v\|_{\maK_{a'}^{l'}(L_n)}\leq C \kappa_{Q_i}^{n(a-a')} \|v\|_{\maK_{a}^{l}(L_n)}.
\ee
\end{lemma}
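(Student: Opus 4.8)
The plan is to reduce the inequality to a pointwise comparison of weight powers inside each integral of the $\maK$-norm and then invoke the layer estimate $\rho \le C\kappa_{Q_i}^n$ in $L_n$ from (\ref{eqn.dist}). First I would recall from Definition \ref{wss} that
$$
\|v\|_{\maK_{a'}^{l'}(L_n)}^2 = \sum_{|\nu|\le l'}\iint_{L_n} |\rho^{|\nu|-a'}\partial^\nu v|^2\,dx\,dy,
$$
and similarly for $\|v\|_{\maK_a^l(L_n)}$. Since $l'\le l$, the multi-indices appearing in the left-hand sum form a subset of those on the right, so it suffices to bound each term $\iint_{L_n}|\rho^{|\nu|-a'}\partial^\nu v|^2$ with $|\nu|\le l'$ by $C\kappa_{Q_i}^{2n(a-a')}$ times the corresponding term $\iint_{L_n}|\rho^{|\nu|-a}\partial^\nu v|^2$.

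The key step is the factorization $\rho^{|\nu|-a'} = \rho^{a-a'}\,\rho^{|\nu|-a}$. Because $a'\le a$ we have $a-a'\ge 0$, so the exponent $a-a'$ is nonnegative and the map $\rho\mapsto\rho^{a-a'}$ is increasing; combined with the upper bound $\rho\le C\kappa_{Q_i}^n$ valid throughout $L_n$ by (\ref{eqn.dist}), this yields the pointwise estimate $\rho^{a-a'}\le C\kappa_{Q_i}^{n(a-a')}$ on $L_n$. Substituting gives the pointwise bound $|\rho^{|\nu|-a'}\partial^\nu v|^2 \le C\kappa_{Q_i}^{2n(a-a')}|\rho^{|\nu|-a}\partial^\nu v|^2$.

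Finally I would integrate this over $L_n$, sum over $|\nu|\le l'$, and use that this partial sum is dominated by the full sum over $|\nu|\le l$ to obtain
$$
\|v\|_{\maK_{a'}^{l'}(L_n)}^2 \le C\kappa_{Q_i}^{2n(a-a')}\|v\|_{\maK_a^l(L_n)}^2,
$$
and the claim follows after taking square roots. There is no serious obstacle here: the only points requiring care are that the nonnegativity $a-a'\ge 0$ is exactly what makes the increasing-power estimate go in the right direction, and that the pointwise bound $\rho\le C\kappa_{Q_i}^n$ is available precisely because we are working in the last layer $L_n$, where $\rho$ is smallest. Both hypotheses of the lemma are used in an essential way: $a'\le a$ controls the weight, while $l'\le l$ controls the truncation of the derivative sum.
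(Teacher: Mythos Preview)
Your proof is correct and is precisely the elementary argument one would give; the paper itself simply cites \cite[Lemma~2.6]{LN09} together with the bound $\rho\le C\kappa_{Q_i}^n$ on $L_n$ from (\ref{eqn.dist}), which is exactly the ingredient you identify and use. Your explicit factorization $\rho^{|\nu|-a'}=\rho^{a-a'}\rho^{|\nu|-a}$ with $a-a'\ge 0$ is the content of that cited lemma specialized to the last layer, so the approaches coincide.
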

\begin{proof}
This is a direct application of \cite[Lemma 2.6]{LN09} under condition (\ref{eqn.dist}) on $L_n$.
\end{proof}

\begin{lemma}\label{HtoKbdd}
For $\forall v \in \maK_a^l(L_n)$ , if $a\geq l$, then it follows that
\be\label{HtoKbddfor}
\|v\|_{H^l(L_n)} \leq C \kappa_{Q_i}^{n(a-l)} \|v\|_{\maK_a^l(L^n)}.
\ee
\end{lemma}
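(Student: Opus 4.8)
The plan is to reduce everything to the pointwise control of the weight function on the last layer. On $L_n$ the distance estimate (\ref{eqn.dist}) gives $\rho \leq C\kappa_{Q_i}^n$, so the weight is bounded from above by $\kappa_{Q_i}^n$ throughout $L_n$. This single fact is exactly what lets one trade an unweighted top-order derivative for a weighted one, at the cost of a power of $\kappa_{Q_i}^n$.

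First I would fix a multi-index $\nu$ with $|\nu|\leq l$ and write $\partial^\nu v = \rho^{a-|\nu|}(\rho^{|\nu|-a}\partial^\nu v)$. The role of the hypothesis $a\geq l$ is that the exponent satisfies $a-|\nu| \geq a-l \geq 0$, so $\rho^{a-|\nu|}$ is a \emph{nonnegative} power of $\rho$ and may be estimated from above by $\rho\leq C\kappa_{Q_i}^n$. This yields the pointwise bound $\rho^{a-|\nu|}\leq C\kappa_{Q_i}^{n(a-|\nu|)}$ on $L_n$, where the constant depends only on $a$ through $C^{a-|\nu|}\leq \max\{1,C^a\}$. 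Since $\kappa_{Q_i}\in(0,1)$ and $a-|\nu|\geq a-l$, a larger exponent produces a smaller value, hence $\kappa_{Q_i}^{n(a-|\nu|)}\leq \kappa_{Q_i}^{n(a-l)}$; this is precisely the place where one must track the direction of the inequality for a base smaller than one.

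Combining these, for each $\nu$ I would estimate $\|\partial^\nu v\|_{L^2(L_n)}^2 = \iint_{L_n}\rho^{2(a-|\nu|)}\,|\rho^{|\nu|-a}\partial^\nu v|^2\,dxdy \leq C\kappa_{Q_i}^{2n(a-l)}\iint_{L_n}|\rho^{|\nu|-a}\partial^\nu v|^2\,dxdy$. Summing over all $|\nu|\leq l$ and recognizing the resulting right-hand sum as $\|v\|_{\maK_a^l(L_n)}^2$ gives $\|v\|_{H^l(L_n)}^2 \leq C\kappa_{Q_i}^{2n(a-l)}\|v\|_{\maK_a^l(L_n)}^2$, and taking square roots produces (\ref{HtoKbddfor}).

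I do not expect a genuine obstacle here; the argument is a short weighted-norm comparison of the same flavour as Lemma \ref{Lnrela}, and it could equally be deduced from \cite[Lemma 2.6]{LN09}. The only point requiring care is the bookkeeping of exponents: one must use $a\geq l$ to guarantee $a-|\nu|\geq 0$, since otherwise the small values of $\rho$ near $Q_i$ would make $\rho^{a-|\nu|}$ blow up rather than shrink, and one must remember that for $\kappa_{Q_i}<1$ the largest contribution comes from the top-order terms $|\nu|=l$, which is exactly what fixes the exponent $a-l$ in the final bound.
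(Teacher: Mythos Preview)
Your argument is correct and is essentially the same as the paper's: the paper simply cites \cite[Lemma 2.8]{LN09} together with the distance bound (\ref{eqn.dist}) on $L_n$, and what you wrote is precisely the content of that lemma spelled out. The only minor point is that the paper invokes Lemma~2.8 of \cite{LN09} rather than Lemma~2.6, but your direct weighted-norm comparison is exactly the intended mechanism.
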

\begin{proof}
This is a direct application of \cite[Lemma 2.8]{LN09} under condition (\ref{eqn.dist}) on $L_n$.
\end{proof}

\begin{lem}\label{TNtri2}
For $k \geq 1, m \geq 1$, set $\kappa_{Q_i}$ in (\ref{kappainit}) with $\theta$ satisfying (\ref{thetarange}) for the graded mesh on $T_{(0)}$.
Let $h:=2^{-n}$, then in the $n$th layer $L_n$ on $T_{(0)}$ for $n$ sufficiently large,\\
(i) if $v_I \in V_n^k$ be the nodal interpolation of $v\in \maK_{\mathbf a+1}^{m+1}(\Omega)$, it follows
\be\label{projgh12}
|v-v_{I}|_{H^1(L_n)} \leq Ch^{\theta}\|v\|_{\maK^{m+1}_{{a_i}+1}(L_{n})};
\ee
(ii) if $q_I \in V_n^{k-1}$ be the nodal interpolation of $q\in \maK_{\mathbf a}^{m}(\Omega)$, it follows
\be\label{projgl22}
\|q-q_{I}\|_{L^2(L_n)} \leq Ch^{\theta}\|q\|_{\maK_{a_i}^{m}(L_n)}.
\ee
\end{lem}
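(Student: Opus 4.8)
The proof of Lemma \ref{TNtri} used that $\maK_{a_i+1}^{m+1}$ is equivalent to $H^{m+1}$ on any layer that stays away from $Q_i$; this breaks down on $L_n$, whose closure contains $Q_i$, so a genuinely weighted argument is needed. The plan is to transport the estimate to the fixed reference region $T_{(0)}$ through the dilation $\mathbf B_n$ of (\ref{eqn.map}), which maps $L_n$ bijectively onto $T_{(0)}$, and to transfer norms back by Lemma \ref{dilation}. Writing $(\hat x,\hat y)=\mathbf B_n(x,y)$ and $\hat v(\hat x,\hat y)=v(x,y)$, the affine map $\mathbf B_n$ commutes with nodal interpolation, so $\widehat{v_I}=\hat v_I$; moreover, in two dimensions the Jacobian factors cancel in the $H^1$-seminorm, giving $|v-v_I|_{H^1(L_n)}=|\hat v-\hat v_I|_{H^1(T_{(0)})}$, while $\|q-q_I\|_{L^2(L_n)}=\kappa_{Q_i}^{n}\|\hat q-\hat q_I\|_{L^2(T_{(0)})}$.

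For part (i) I would split $|v-v_I|_{H^1(L_n)}\le |v|_{H^1(L_n)}+|v_I|_{H^1(L_n)}$. The first term is immediate from Lemma \ref{HtoKbdd} with $l=1$ and $a=a_i+1\ge 1$: since $\|v\|_{\maK_{a_i+1}^{1}(L_n)}\le\|v\|_{\maK_{a_i+1}^{m+1}(L_n)}$, it gives $|v|_{H^1(L_n)}\le\|v\|_{H^1(L_n)}\le C\kappa_{Q_i}^{na_i}\|v\|_{\maK_{a_i+1}^{m+1}(L_n)}$. For the interpolant term I would use the inverse inequality on the single element $L_n$ together with the $L^\infty$-stability of the nodal interpolation; in two dimensions the mesh-size factors cancel and this reduces the bound to $|v_I|_{H^1(L_n)}\le C\|v\|_{L^\infty(L_n)}=C\|\hat v\|_{L^\infty(T_{(0)})}$.

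The main obstacle is to control $\|\hat v\|_{L^\infty(T_{(0)})}$ on the fixed region by its weighted norm. This is the weighted Sobolev embedding $\maK_{a_i+1}^{m+1}(T_{(0)})\hookrightarrow C^0(\overline{T_{(0)}})$, which holds because $m+1\ge 2$ and the weight index obeys $a_i+1>1$, i.e. $a_i>0$, a property guaranteed by (\ref{kappainit}); this embedding is also what makes the nodal values, and hence $\hat v_I$, well defined in the first place. Granting it, $\|\hat v\|_{L^\infty(T_{(0)})}\le C\|\hat v\|_{\maK_{a_i+1}^{m+1}(T_{(0)})}$, and Lemma \ref{dilation} with $a=a_i+1$ returns the factor $\kappa_{Q_i}^{na_i}$, namely $\|\hat v\|_{\maK_{a_i+1}^{m+1}(T_{(0)})}=\kappa_{Q_i}^{na_i}\|v\|_{\maK_{a_i+1}^{m+1}(L_n)}$. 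Both pieces therefore carry $\kappa_{Q_i}^{na_i}$, and since $\kappa_{Q_i}=2^{-\theta/a_i}$ we have $\kappa_{Q_i}^{na_i}=2^{-n\theta}=h^{\theta}$, which is (\ref{projgh12}); the assumption that $n$ be large is used only to ensure $L_n\subset B_i$ so that the reference estimate applies uniformly.

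Part (ii) follows the same template with the pressure weight. Splitting $\|q-q_I\|_{L^2(L_n)}\le\|q\|_{L^2(L_n)}+\|q_I\|_{L^2(L_n)}$, the first term comes from Lemma \ref{HtoKbdd} with $l=0$, $a=a_i\ge 0$, giving $\|q\|_{L^2(L_n)}\le C\kappa_{Q_i}^{na_i}\|q\|_{\maK_{a_i}^{m}(L_n)}$, while the interpolant term is handled by the area factor and $L^\infty$-stability, $\|q_I\|_{L^2(L_n)}\le C\kappa_{Q_i}^{n}\|q\|_{L^\infty(L_n)}=C\kappa_{Q_i}^{n}\|\hat q\|_{L^\infty(T_{(0)})}$, followed by the weighted embedding of $\maK_{a_i}^{m}(T_{(0)})$ into $C^0$ (valid under $a_i>0$ and the smoothness for which $q_I$ is defined) and Lemma \ref{dilation} with $a=a_i$. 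The two scaling factors then combine as $\kappa_{Q_i}^{n}\cdot\kappa_{Q_i}^{n(a_i-1)}=\kappa_{Q_i}^{na_i}=h^{\theta}$, yielding (\ref{projgl22}). Here Lemma \ref{Lnrela} is available to compare the lower-order weighted norms of $q$ on $L_n$ should a smoothness or weight index need adjusting in this last step.
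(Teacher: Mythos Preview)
Your argument for part (i) is correct but takes a different route from the paper. The paper introduces a smooth cutoff $\eta$ vanishing near $Q_i$, sets $w=\eta\hat v$, and uses that $w_{\hat I}=\hat v_{\hat I}$ (because $\hat v(Q_i)=0$) together with the equivalence of $\maK_1^l$ and $H^l$ for the compactly supported $w$; the weighted norms are then transported back by Lemmas \ref{dilation} and \ref{Lnrela}. Your approach avoids the cutoff by splitting $|v-v_I|_{H^1}\le |v|_{H^1}+|v_I|_{H^1}$ and invoking the embedding $\maK_{a_i+1}^{m+1}(T_{(0)})\hookrightarrow C^0(\overline{T_{(0)}})$. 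That embedding is legitimate here because the weight index satisfies $a_i+1>1$; this is exactly the threshold needed in two dimensions (functions behave like $r^\lambda$ with $\lambda>a_i>0$), so part (i) goes through.

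Part (ii), however, has a genuine gap. You claim the embedding $\maK_{a_i}^{m}(T_{(0)})\hookrightarrow C^0(\overline{T_{(0)}})$ ``valid under $a_i>0$'', but this is false: the continuous embedding requires the weight index to exceed $d/2=1$, not $0$. Indeed, for any $0<a_i<1$ the function $r^{\lambda}$ with $a_i-1<\lambda<0$ belongs to $\maK_{a_i}^{m}$ for every $m$ yet blows up at $Q_i$. Since the paper's main interest is non-convex corners, where $a_i<\alpha_0^i<1$, your $L^\infty$ route breaks down precisely in the regime that matters, and the bound $\|q_I\|_{L^2(L_n)}\le C\kappa_{Q_i}^{n}\|\hat q\|_{L^\infty(T_{(0)})}$ cannot be closed. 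The paper circumvents this entirely: for (ii) it simply invokes $L^2$-stability of the interpolant on $L_n$, $\|q_I\|_{L^2(L_n)}\le C\|q\|_{L^2(L_n)}$, and then applies Lemma \ref{HtoKbdd} with $l=0$, $a=a_i$ to get $\|q-q_I\|_{L^2(L_n)}\le C\|q\|_{L^2(L_n)}\le C\kappa_{Q_i}^{na_i}\|q\|_{\maK_{a_i}^{m}(L_n)}=Ch^\theta\|q\|_{\maK_{a_i}^{m}(L_n)}$, never touching $L^\infty$.
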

\begin{proof}
Recall the mapping $\mathbf B_n$ in (\ref{eqn.map}). For any point $(x, y)\in L_n$, let $(\hat x, \hat y)=\mathbf B_n(x,y)\in T_{(0)}$.

(i). Let $\eta: T_{(0)} \rightarrow [0, 1]$ be a  smooth function that
is equal to $0$ in a neighborhood of $Q_i$, but is equal
to 1 at all the other nodal points in $\mathcal T_0$.
For a function $v(x, y)$ in $L_n$, we define $\hat v(\hat x, \hat y):=v(x, y)$ in $T_{(0)}$. We take $w=\eta \hat v$ in $T_{(0)}$. Consequently, we have for $l\geq 0$
\begin{equation}\label{eqn.aux111}
\bal
\|w\|^2_{\maK^{l}_{1}(T_{(0)})} & = & \|\eta
\hat v\|^2_{\maK^{l}_{1}(T_{(0)})} \leq C
\|\hat v\|^2_{\maK^{l}_{1}(T_{(0)})},
\eal
\end{equation}
where $C$ depends on $l$ and the smooth function $\eta$. Moreover, the condition $\hat v\in \maK_{a^i+1}^{m+1}(T_{(0)})$ with and $m\geq 2$ implies $\hat v(Q)=0$ (see, e.g., \cite[Lemma 4.7]{LN09}).
Let $w_{\hat I}$ be the nodal interpolation of $w$ associated with the mesh $\mathcal T_0$ on $T_{(0)}$.
Therefore, by the definition of $w$, we have
\begin{eqnarray}\label{wi}
w_{\hat I}=\hat v_{\hat I} = \widehat{v_{I}} \quad {\rm{in}}\ T_{(0)}.
\end{eqnarray}

Note that the $\maK^{l}_{1}$ norm and the $H^l$ norm are equivalent for $w$ on $T_{(0)}$, since $w=0$ in the neighborhood of the vertex $Q_i$. Let $r$ be the distance from $(x,y)$ to $Q_i$, and $\hat r$ be the distance from $(\hat x,\hat y)$ to $Q_i$. Then, by the definition of the weighted space, the scaling argument, (\ref{eqn.aux111}),  (\ref{wi}),  and (\ref{eqn.dist}), we have
\begin{eqnarray*}
|v-v_{I}|_{H^1(L_{n})}^2 &\leq& C\|v-v_{I}\|_{\maK^1_{{1}}(L_{n})}^2 \leq C\sum_{|\nu|\leq 1}\|r(x,y)^{|\nu|-1}\partial^\nu (v- v_I)\|_{L^2(L_{n})}^2\\
&\leq &C\sum_{|\nu|\leq 1}\|\hat r(\hat{x},\hat{y})^{|\nu|-1}\partial^\nu (\hat v- \widehat{v_I})\|_{L^2(T_{(0)})}^2\leq C\|\hat v- w+w-\widehat{v_I}\|_{\maK^1_{1}( T_{(0)})}^2 \\
& \leq &C\big( \|\hat v-w\|^2_{\maK^1_{1}(T_{(0)})} +
\|w-\widehat{v_I}\|^2_{\maK^1_{1}( T_{(0)}  )}\big) = C\big( \|\hat v-w\|^2_{\maK^1_{1}(T_{(0)})} +
\|w-w_{\hat I}\|^2_{\maK^1_{1}( T_{(0)} )}\big)  \\
& \leq & C\big(   \|\hat v\|^2_{\maK^1_{1}(T_{(0)} )} +
\|w\|^2_{\maK^{m+1}_{1}( T_{(0)}  )}\big) \leq C\big(   \|\hat v\|^2_{\maK^1_{1}(T_{(0)})} +
\|\hat v\|^2_{\maK^{m+1}_{1}( T_{(0)} )}\big)\\%
& = & C\big(   \|v\|^2_{\maK^1_{1}(L_n)} +
\|v\|^2_{\maK^{m+1}_{1}( L_n)}\big)\leq C
\kappa_{Q_i}^{2na_i}\|v\|_{\maK^{m+1}_{{a_i}+1}(L_{n})}^2\\
& \leq & C
2^{-2n\theta}\|v\|_{\maK^{m+1}_{{a_i}+1}(L_{n})}^2\leq C
h^{2\theta}\|v\|_{\maK^{m+1}_{{a_i}+1}(L_{n})}^2,
\end{eqnarray*}
where the ninth and tenth relationships are based on Lemma \ref{dilation} and Lemma \ref{Lnrela}, respectively.
This completes the proof of (\ref{projgh1}).

(ii). Since $q\in L^2(\Omega)$, we have that the interpolation operator is $L^2$ stable
\be\label{ProfL2Stab}
\|q_I\|_{L^2(L_n)} \leq C \|q\|_{L^2(L_n)}.
\ee
Thus, by (\ref{ProfL2Stab}) and (\ref{HtoKbddfor}), we have
\bes
\bal
\|q-q_I\|_{L^2(L_n)} \leq C \|q\|_{L^2(L_n)} \leq C \kappa_{Q_i}^{na_i} \|q\|_{\maK_{a_i}^{m}(L_n)} \leq C2^{-n\theta}\|q\|_{\maK_{a_i}^{m}(L_n)} \leq Ch^{\theta}\|q\|_{\maK_{a_i}^{m}(L_n)}.
\eal
\ees
\end{proof}

\begin{theorem}\label{gradprojerr}
Let $\mathcal T_{0}$ be an initial triangle of the triangulation $\mathcal T_n$ in Algorithm \ref{graded} with grading parameters $\kappa_{Q_i}$ in (\ref{kappainit}).
For $k\geq1, m \geq 1$,
if $v_I \in V_n^k$ (resp. $q_I \in V_n^{k-1}$) be the nodal interpolation of $v\in \maK_{\mathbf a+1}^{m+1}(\Omega)$ (resp. $q\in \maK_{\mathbf a}^{m}(\Omega)$). Then, it follows the following interpolation error
\be\label{projgherr}
\|v-v_{I}\|_{H^1(\Omega)} \leq Ch^{\theta} \|v\|_{\maK_{\mathbf a+1}^{m+1}(\Omega)}, \quad \|q-q_I\| \leq Ch^{\theta}\|q\|_{\maK_{\mathbf a}^{m}(\Omega)},
\ee
where $h:=2^{-n}$, and $\theta$ satisfying (\ref{thetarange}).
\end{theorem}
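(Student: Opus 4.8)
The plan is to prove the global estimate by a straightforward summation over the initial triangulation $\maT_0$, reducing everything to the local, per-triangle bounds already established in Lemmas \ref{r1r3}, \ref{TNtri}, and \ref{TNtri2}. Since $\Omega=\bigcup_{T_{(0)}\in\maT_0}T_{(0)}$ and both $\|\cdot\|_{H^1(\Omega)}^2$ and $\|\cdot\|_{L^2(\Omega)}^2$ are obtained by adding the corresponding squared norms over the $T_{(0)}$, it suffices to bound each local contribution by $Ch^{2\theta}$ times the appropriate local weighted norm and then sum, using that the initial mesh is chosen so each $T_{(0)}$ contains at most one vertex $Q_i$.

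First I would split the initial triangles into two families. If $\bar T_{(0)}$ contains no vertex $Q_i$, Lemma \ref{r1r3} gives $\|v-v_I\|_{H^1(T_{(0)})}\leq Ch^{\min\{k,m\}}\|v\|_{H^{m+1}(T_{(0)})}$ and the analogous bound for $q$; since $\theta\leq\min\{k,m\}$ by (\ref{thetarange}) and $h=2^{-n}\leq 1$, we have $h^{\min\{k,m\}}\leq h^{\theta}$, and by Remark \ref{KHeq} the Sobolev norm is equivalent to $\|v\|_{\maK^{m+1}_{\mathbf a+1}(T_{(0)})}$ on such a triangle, so the desired local bound holds. If $\bar T_{(0)}$ does contain a vertex $Q_i$, I would decompose it into the disjoint mesh layers $T_{(0)}=\bigcup_{t=0}^n L_t$ and sum: for $v$, using Lemma \ref{TNtri}(i) on the layers $0\leq t<n$ and Lemma \ref{TNtri2}(i) on $L_n$,
\[
|v-v_I|_{H^1(T_{(0)})}^2=\sum_{t=0}^n|v-v_I|_{H^1(L_t)}^2\leq Ch^{2\theta}\sum_{t=0}^n\|v\|_{\maK^{m+1}_{a_i+1}(L_t)}^2=Ch^{2\theta}\|v\|_{\maK^{m+1}_{a_i+1}(T_{(0)})}^2,
\]
where the last equality uses that the layers partition $T_{(0)}$, so the weighted norm is additive; the $q$-estimate follows identically from Lemma \ref{TNtri}(ii) and Lemma \ref{TNtri2}(ii).

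The main obstacle will be recovering the full $H^1$ norm for $v$, because the layer lemmas control only the seminorm $|v-v_I|_{H^1(L_t)}$, whereas the theorem asks for $\|v-v_I\|_{H^1}$. To close this gap I would estimate $\|v-v_I\|_{L^2(L_t)}$ directly by the same dilation-and-scaling mechanism used in Lemma \ref{TNtri}(ii): map $L_t$ to its reference region via $\mathbf B_t$ in (\ref{eqn.map}), apply the standard $L^2$ interpolation error of order $\mu:=\min\{k+1,m+1\}$, use the norm equivalence of Remark \ref{KHeq} on the reference region, and return to $L_t$ by Lemma \ref{dilation}. Because the $L^2$ scaling contributes an extra factor $\kappa_{Q_i}^t\leq 1$ and $\mu\geq\theta+1$, the resulting bound is again $\|v-v_I\|_{L^2(L_t)}\leq Ch^{\theta}\|v\|_{\maK^{m+1}_{a_i+1}(L_t)}$ (in fact with room to spare), so summing over $t$ delivers the $L^2$ part of the norm at the same rate; on $L_n$ the $L^2$-stability of the interpolant together with Lemma \ref{HtoKbdd} plays the analogous role.

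Finally I would reassemble the pieces. Summing the per-triangle bounds over all $T_{(0)}\in\maT_0$ and using that, by Remark \ref{KHeq}, the vertex-localized weights $r_i$ near each $Q_i$ are comparable to the global weight $\rho$ of (\ref{eqn.rho}), the local weighted norms assemble into the global norms $\|v\|_{\maK^{m+1}_{\mathbf a+1}(\Omega)}$ and $\|q\|_{\maK^m_{\mathbf a}(\Omega)}$; taking square roots yields (\ref{projgherr}). The only genuinely delicate bookkeeping is matching the single-vertex exponent $a_i$ used on each $T_{(0)}$ with the multi-component weight vector $\mathbf a$ of the global space, which is legitimate precisely because distinct vertices have disjoint neighborhoods and each $T_{(0)}$ sees at most one of them.
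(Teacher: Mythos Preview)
Your proposal is correct and follows essentially the same approach as the paper: decompose $\Omega$ into the initial triangles $T_{(0)}\in\maT_0$, apply Lemma~\ref{r1r3} on those away from the vertices and Lemmas~\ref{TNtri}--\ref{TNtri2} layer by layer on those containing a vertex, then sum the squared local contributions. You are in fact more careful than the paper, which writes the summation in a single line without addressing the seminorm-versus-full-$H^1$-norm distinction that you correctly identify and resolve.
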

\begin{proof}
By summing the estimates in Lemma \ref{r1r3}, Lemma \ref{TNtri}, and Lemma \ref{TNtri2}, we have
\bes
\bal
\|v-v_{I}\|^2_{H^1(\Omega)} =& \sum_{T_{(0)} \in \mathcal T_{0}} \|v-v_{I}\|^2_{H^1(T_{(0)})} \leq Ch^{2\theta}\|v\|^2_{\maK_{\mathbf a+1}^{m+1}(\Omega)}, \\
\|q-q_{I}\|^2 =& \sum_{T_{(0)} \in \mathcal T_{0}} \|v-v_{I}\|^2_{L^2(T_{(0)})} \leq Ch^{2\theta}\|q\|^2_{\maK_{\mathbf a}^{m}(\Omega)}.
\eal
\ees
\end{proof}

\subsection{Regularity in weighted Sobolev space}

Let $\alpha_0^i$ the solution of  (\ref{alpha0}) with $\omega$ being replaced by the interior angle $\omega_i$ at $Q_i$. We denote the vector
$\bm{\alpha}_0=(\alpha_0^1, \cdots, \alpha_0^i, \cdots, \alpha_0^N)$.
By Lemma \ref{Fbblem}, for $f\in H^{-1}(\Omega)$, there exists $\mathbf{F} \in [\maK_{\mathbf a-1}^{0}(\Omega)]^2$ with $a_i < \min_i\{\alpha_0^i\}$ satisfying (\ref{curlFf}) and
\be\label{Fdecom1}
\|\mathbf{F}\|_{[\maK_{\mathbf a-1}^{0}(\Omega)]^2} \leq C \|f\|_{H^{-1}(\Omega)}.
\ee
If $f \in \maK_{\mathbf a-2}^{m-2}(\Omega) \cap H^{-1}(\Omega)$ with $m\geq 1$ and $0\leq \mathbf a < \bm\alpha_0$, then we can find $\mathbf{F} \in [\maK_{\mathbf a-1}^{m-1}(\Omega)]^2$ satisfying (\ref{curlFf}) and
\be\label{Fdecom2}
\|\mathbf{F}\|_{[\maK_{\mathbf a-1}^{m-1}(\Omega)]^2} \leq C \|f\|_{\maK_{\mathbf a-2}^{m-2}(\Omega)}.
\ee

For the Stokes problem (\ref{stokes}), we have the following regularity estimate in weighted Sobolev space \cite{bernardi1981}.
\begin{lemma}\label{stokeswreg}
Let $(\mathbf{u},p)\in [H_0^1(\Omega)]^2\times L_0^2(\Omega)$ be the solution of the Stokes problem (\ref{stokes}). For $m\geq 1$ and $0\leq \mathbf a \leq \bm\alpha_0$, if $\mathbf{F}\in [\maK_{\mathbf a-1}^{m-1}(\Omega)]^2$, then it follows
\be\label{regweak}
\|\mathbf{u}\|_{[\maK_{\mathbf a+1}^{m+1}(\Omega)]^2} + \|p\|_{\maK_{\mathbf a}^{m}(\Omega)} \leq C \|\mathbf{F}\|_{[\maK_{\mathbf a-1}^{m-1}(\Omega)]^2}.
\ee
\end{lemma}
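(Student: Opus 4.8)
The plan is to prove the weighted estimate by localization: decompose $\Omega$ with a partition of unity into an interior piece, supported away from all vertices, and corner pieces, each supported in a neighbourhood $B_i$ of a single vertex $Q_i$. Away from the corners the weight $\rho$ in (\ref{eqn.rho}) is bounded above and below, so the $\maK$-norms coincide with ordinary Sobolev norms and the estimate reduces to the classical interior and boundary regularity for the Stokes system (an Agmon--Douglis--Nirenberg elliptic system), which I would bootstrap in $m$ starting from the energy solution $(\mathbf u,p)\in[H^1_0(\Omega)]^2\times L^2_0(\Omega)$. The entire difficulty is therefore concentrated at each vertex, and the remainder of the argument works on a single model sector of opening $\omega_i$ with $Q_i$ at the origin.

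For the corner estimate I would use a dyadic decomposition compatible with the dilation framework already set up in the paper. Writing the sector as a union of annuli $A_j=\{2^{-j-1}\le r_i\le 2^{-j}\}$ and rescaling each $A_j$ to a fixed reference annulus $A$ (which does not touch the vertex) by $x\mapsto 2^{j}x$, the Stokes operator is scale invariant up to explicit powers of $2^{-j}$, and the standard local Stokes estimate on $A$ applies with a constant independent of $j$. Tracking the powers of $2^{-j}$ produced by each derivative and by the weight $r_i^{|\nu|-a_i}$ in Definition~\ref{wss}, and summing the resulting geometric series, yields the a priori bound (\ref{regweak}) localized to $B_i$, \emph{provided} one already knows that $(\mathbf u,p)$ lies in the target weighted space $[\maK_{a_i+1}^{m+1}(B_i)]^2\times\maK_{a_i}^{m}(B_i)$.

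Establishing that membership is the crux, and it is where the hypothesis $0\le a_i\le\alpha_0^i$ is used. I would pass to the model cone, apply the Mellin transform in $r_i$, and study the associated operator pencil $\mathfrak A_i(\lambda)$ obtained from the Stokes system on the arc $(0,\omega_i)$ with Dirichlet conditions. Its spectrum consists of the exponents $\lambda$ for which the cone admits a homogeneous singular solution of type $r_i^{\lambda}$; via the stream-function reduction $\mathbf u=\textbf{curl }\psi$ (Lemma~\ref{scurlv}) these exponents coincide with the roots of the biharmonic characteristic equation $\sin^2(z\omega_i)=z^2\sin^2(\omega_i)$, whose smallest real part exceeding $\frac12$ is exactly $\alpha_0^i$ by (\ref{alpha0}). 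The Kondratiev shift theorem then promotes the energy solution to $\maK_{a_i+1}^{m+1}$ as long as the open strip between the weight line of the $H^1$ solution and that of the target space contains no eigenvalue of $\mathfrak A_i$; the condition $a_i\le\alpha_0^i$ guarantees precisely this, since $\alpha_0^i$ is the first eigenvalue encountered. The pressure $p$ is governed by the same pencil and is handled simultaneously, which also accounts for the one-power-lower weight index $\maK_{a_i}^{m}$.

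The main obstacle is this spectral step: identifying the pencil $\mathfrak A_i(\lambda)$, proving its invertibility off the discrete spectrum, and verifying that the admissible strip $\{0<\mathrm{Re}\,\lambda<\alpha_0^i\}$ is eigenvalue-free so that $a_i\le\alpha_0^i$ is the sharp threshold. Everything else---the interior regularity, the dyadic summation, and the assembly of the local estimates through the partition of unity---is routine bookkeeping once the shift theorem is in place; indeed this is the content of the Kondratiev/Mellin theory for the Stokes system developed in \cite{bernardi1981}, which I would invoke for the corner piece after reducing to the model cone.
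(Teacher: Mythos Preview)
The paper does not prove this lemma at all: it is stated with a citation to \cite{bernardi1981} and no argument is given. Your proposal, by contrast, sketches the actual Kondratiev--Mellin proof (localization to each corner, dyadic rescaling, identification of the Stokes operator pencil with the biharmonic characteristic equation via the stream function, and the shift theorem under the spectral condition $a_i\le\alpha_0^i$). That outline is correct in spirit and is essentially what lies behind the cited reference; so there is no gap, but you are doing strictly more than the paper does, which simply quotes the result.
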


We then have the following result.
\begin{lemma}\label{biregwsb}
Given $f\in \maK_{\mathbf a-2}^{m-2}(\Omega)\cap H^{-1}(\Omega)$ for $0\leq \mathbf a <\bm \alpha_0$ and $m\geq 1$, let $\phi\in H_0^2(\Omega)$ be the solution of the Possible problem (\ref{eqnpoisson}), then it follows
\be
\|\phi\|_{\maK_{\mathbf a+2}^{m+2}(\Omega)} \leq C \|f\|_{\maK_{\mathbf a-2}^{m-2}(\Omega)}.
\ee
\end{lemma}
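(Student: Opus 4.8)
The plan is to avoid invoking weighted elliptic regularity for the Poisson problem (\ref{eqnpoisson}) directly: the critical exponent for the Dirichlet Laplacian is $\bm\beta_0$, and on convex corners $\bm\beta_0<\bm\alpha_0$, so the admissible range $0\leq\mathbf a<\bm\alpha_0$ would cross the Poisson threshold and a generic $\maK_{\mathbf a+2}^{m+2}$ estimate for $\phi$ would fail. Instead I would transfer the weighted regularity through the Stokes velocity $\mathbf u$, whose threshold is exactly $\bm\alpha_0$. First, using (\ref{Fdecom2}), from $f\in\maK_{\mathbf a-2}^{m-2}(\Omega)\cap H^{-1}(\Omega)$ I obtain a source $\mathbf F\in[\maK_{\mathbf a-1}^{m-1}(\Omega)]^2$ satisfying (\ref{curlFf}) with $\|\mathbf F\|_{[\maK_{\mathbf a-1}^{m-1}(\Omega)]^2}\leq C\|f\|_{\maK_{\mathbf a-2}^{m-2}(\Omega)}$. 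Then Lemma \ref{stokeswreg}, valid for $0\leq\mathbf a\leq\bm\alpha_0$, yields $\mathbf u\in[\maK_{\mathbf a+1}^{m+1}(\Omega)]^2$ with $\|\mathbf u\|_{[\maK_{\mathbf a+1}^{m+1}(\Omega)]^2}\leq C\|\mathbf F\|_{[\maK_{\mathbf a-1}^{m-1}(\Omega)]^2}$, and by Lemma \ref{coro1} and Theorem \ref{Stokeindep} this $\mathbf u$ is exactly the velocity appearing in (\ref{eqnpoisson}), independent of the chosen $\mathbf F$.

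The second step uses the identity (\ref{utophi}), $\mathbf u=\textbf{curl }\phi=(\phi_{x_2},-\phi_{x_1})^T$, so that $\nabla\phi=(-u_2,u_1)^T$ and hence $\nabla\phi\in[\maK_{\mathbf a+1}^{m+1}(\Omega)]^2$ with the same weighted bound as $\mathbf u$. Since the weight index of $\maK_{\mathbf a+2}^{m+2}$ drops by one under each differentiation, for every multi-index $\nu$ with $1\leq|\nu|\leq m+2$ I write $\partial^\nu\phi=\partial^{\nu'}(\partial_i\phi)$ with $|\nu'|=|\nu|-1$ and bound $\rho^{|\nu|-(\mathbf a+2)}\partial^\nu\phi$ in $L^2$ directly by $\|\nabla\phi\|_{[\maK_{\mathbf a+1}^{m+1}(\Omega)]^2}$. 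This controls all derivative terms of order at least one.

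The only remaining, and main, obstacle is the zeroth-order term $\rho^{-(\mathbf a+2)}\phi$, which differentiation does not reach. Here I would invoke a weighted Poincar\'e inequality exploiting the full homogeneous condition $\phi=0$ on $\partial\Omega$ (recall $\phi\in H_0^2(\Omega)$). Away from the corners the weight is comparable to a constant and the bound reduces to the standard estimate through the equivalence in Remark \ref{KHeq}. In a neighborhood of each vertex $Q_i$, where $\rho$ is comparable to $r_i$, I write the integral in polar coordinates $(r,\theta)$ centered at $Q_i$; since $\phi$ vanishes on both edges $\theta=0,\omega_i$, the one-dimensional Poincar\'e inequality gives $\int_0^{\omega_i}|\phi|^2\,d\theta\leq C\int_0^{\omega_i}|\partial_\theta\phi|^2\,d\theta$, and using $|\nabla\phi|^2\geq r^{-2}|\partial_\theta\phi|^2$ yields, pointwise in $r$, the estimate $\int r^{-2(a_i+2)}|\phi|^2\,r\,d\theta\leq C\int r^{-2(a_i+1)}|\nabla\phi|^2\,r\,d\theta$, with no restriction on $a_i$. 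Integrating in $r$ and summing over the corners gives $\|\rho^{-(\mathbf a+2)}\phi\|_{L^2(\Omega)}\leq C\|\nabla\phi\|_{[\maK_{\mathbf a+1}^{m+1}(\Omega)]^2}$. Combining the two estimates proves $\|\phi\|_{\maK_{\mathbf a+2}^{m+2}(\Omega)}\leq C\|\nabla\phi\|_{[\maK_{\mathbf a+1}^{m+1}(\Omega)]^2}=C\|\mathbf u\|_{[\maK_{\mathbf a+1}^{m+1}(\Omega)]^2}\leq C\|f\|_{\maK_{\mathbf a-2}^{m-2}(\Omega)}$, which is the assertion.
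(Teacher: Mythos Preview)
Your argument is correct and takes a different, more elementary route than the paper. The paper's proof, after obtaining $\mathbf u\in[\maK_{\mathbf a+1}^{m+1}(\Omega)]^2$ from Lemma~\ref{stokeswreg} (as you do), passes to $\text{curl}\,\mathbf u\in\maK_{\mathbf a}^{m}(\Omega)$ and then invokes the weighted Kondratiev regularity for the Poisson problem~(\ref{eqnpoisson}) to conclude $\phi\in\maK_{\mathbf a+2}^{m+2}(\Omega)$. Your worry about this last step is legitimate: read at face value, the Poisson shift (cf.\ (\ref{wreggrad})) requires the weight parameter $\mathbf a+1$ to lie below the Laplace threshold $\bm\beta_0$, a constraint not implied by $0\leq\mathbf a<\bm\alpha_0$---it fails at every reentrant corner (where $\beta_0^i<1$) and can fail at convex corners as well (where $\alpha_0^i>\beta_0^i$ by Lemma~\ref{betaalpha}). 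The paper does not address this point. Your approach via the stream-function identity $\nabla\phi=(-u_2,u_1)^T$ from (\ref{utophi}) transfers the weighted estimate directly from $\mathbf u$ to $\nabla\phi$, and your angular Poincar\'e argument for the zeroth-order term $\rho^{-(\mathbf a+2)}\phi$ is valid with no restriction on $a_i$. This sidesteps the threshold issue entirely and is in that sense cleaner; the paper's route is shorter but leans on an external regularity statement whose hypotheses are not verified in the text.
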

\begin{proof}
By Lemma \ref{stokeswreg}, we have
$\mathbf{u} \in [\maK_{\mathbf a+1}^{m+1}(\Omega)]^2 \cap [H_0^1(\Omega)]^2$, thus we have
$(\text{curl }\mathbf{u}) \in \maK_{\mathbf a}^{m}(\Omega)$ and
\be\label{curltoF}
\|\text{curl }\mathbf{u}\|_{\maK_{\mathbf a}^{m}(\Omega)} \leq C\|\nabla \mathbf{u}\|_{[\maK_{\mathbf a}^{m}(\Omega)]^2} \leq C\|\mathbf{u}\|_{[\maK_{\mathbf a+1}^{m+1}(\Omega)]^2}\leq C\|\mathbf{F}\|_{[\maK_{\mathbf a-1}^{m-1}(\Omega)]^2}.
\ee
By the regularity estimate \cite{Grisvard1985, LMN10} for the Poisson problem (\ref{eqnpoisson}), we have
\be\label{Poissonwreg}
\|\phi\|_{\maK_{\mathbf a+2}^{m+2}(\Omega)} \leq \|\text{curl }\mathbf{u}\|_{\maK_{\mathbf a}^{m}(\Omega)}.
\ee
The conclusion holds by combining (\ref{Fdecom2}), (\ref{curltoF}) and (\ref{Poissonwreg}).
\end{proof}

\subsection{Optimal error estimates on graded meshes}

To better observe the threshold of grading parameter $\kappa_{Q_i}$ in obtaining the optimal convergence rates, we always assume $1\leq k\leq m$ in the following discussions, otherwise, we just replace $k$ by $\min\{k,m\}$.
In this section, we assume that $f\in \maK_{\mathbf{a}-1}^{m-1}(\Omega)\cap\maK_{\mathbf{b}-1}^{m-1}(\Omega)$ with $0< \mathbf a < \bm\alpha_0$, and  $0<\mathbf{b}<\bm{\beta}_0$,
where $\bm{\beta}_0 = (\frac{\pi}{\omega_1}, \cdots, \frac{\pi}{\omega_N})$.

If $\mathbf{F}$ is given by Lemma \ref{Fint}, then we have $\mathbf{F} \in [\maK_{\mathbf a-1}^{m-1}(\Omega)]^2$, and the regularities in Lemma \ref{stokeswreg} and Lemma \ref{biregwsb} hold.

If $\mathbf{F}$ is given by Lemma \ref{Fbblem}, by the regularity estimate \cite{BNZ205} for the Poisson problem (\ref{eqnpoissonf}) on weighted Sobolev space, it follows that 
\be\label{wreggrad}
\|w\|_{\maK_{\mathbf{b}+1}^{m+1}(\Omega)} \leq C\|f\|_{\maK_{\mathbf{b}-1}^{m-1}(\Omega)},
\ee
which implies $\mathbf{F} = \textbf{curl }w \in [\maK_{\mathbf b}^{m}(\Omega)]^2 \subset [\maK_{\mathbf b}^{m-1}(\Omega)]^2$. Then the solution of the Stokes problem (\ref{stokes}) satisfies
\be\label{regwsb3.2}
\|\mathbf{u}\|_{[\maK_{\mathbf{c}+1}^{m+1}(\Omega)]^2} + \|p\|_{\maK_{\mathbf{c}}^{m}(\Omega)} \leq C\|\mathbf{F}\|_{[\maK_{{\mathbf c}-1}^{m-1}(\Omega)]^2}\leq C\|\mathbf{F}\|_{[\maK_{{\mathbf b}-1}^{m-1}(\Omega)]^2},
\ee
where $\mathbf{c}=(c_1, \cdots, c_N)$ with $c_i=\min\{b_{i}+1,a_i\}$.
Thus, the solution of the Poisson problem (\ref{eqnpoisson}) satisfies $\phi \in \maK_{\mathbf{c}+2}^{m+2}(\Omega)$.

Since the bilinear functional in (\ref{poissonfem0}) is coercive and continuous on $V_n^k$, so we have by C\'ea's Theorem, 
\be\label{ceathmgrade}
\|w-w_n\|_{H^1(\Omega)} \leq C \inf_{v \in V_n^k} \|w-v\|_{H^1(\Omega)}.
\ee
Recall that $\alpha_0 = \min_i\{\alpha_0^i\}$ given by (\ref{alpha0}), and $\beta_0 = \min_i\{\beta_0^i\}=\frac{\pi}{\omega}$ are the thresholds corresponding to the largest interior angle $\omega$,
then we have the following result.
\begin{lemma}\label{possion1error}
Set the grading parameters $\kappa_{Q_i}=2^{-\frac{\theta}{a_i}}\left(=2^{-\frac{\theta'}{b_i}}\right)$ with $0<a_i<\alpha_0^i$, $0< b_i <\beta_0^i$, $\theta$ being any constant satisfying $a_i\leq \theta \leq k$, and $\theta'=\min\left\{\frac{\beta_0}{\alpha_0}\max\{\theta, \alpha_0\}, k\right\}$ satisfying $b_i \leq \theta' \leq k$.
Let $w_n\in V_{n}^{k}$ be the solution of finite element solution of (\ref{poissonfem0}), and $w$ is the solution of the Poisson problem (\ref{eqnpoissonf}), then it follows
\be\label{phiH1errg3.1}
\|w-w_n\|_{H^1(\Omega)} \leq Ch^{\theta'}, \quad \|w-w_n\|\leq Ch^{\min\left\{2\theta', \theta'+1\right\}},
\ee
where $h:=2^{-n}$.
\end{lemma}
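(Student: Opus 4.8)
The plan is to split the proof into the energy ($H^1$) estimate, handled by C\'ea's Lemma together with the graded interpolation bound of Theorem \ref{gradprojerr}, and the $L^2$ estimate, handled by an Aubin--Nitsche duality argument. For the $H^1$ bound I would first invoke the weighted regularity (\ref{wreggrad}) to place the exact solution in $w\in\maK_{\mathbf b+1}^{m+1}(\Omega)$ with $\|w\|_{\maK_{\mathbf b+1}^{m+1}(\Omega)}\leq C\|f\|_{\maK_{\mathbf b-1}^{m-1}(\Omega)}$, the right-hand side being a fixed constant under the hypothesis $f\in\maK_{\mathbf b-1}^{m-1}(\Omega)$. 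Since the grading parameters are $\kappa_{Q_i}=2^{-\theta'/b_i}$ and $\theta'$ obeys $b_i\leq\theta'\leq k=\min\{k,m\}$, Theorem \ref{gradprojerr} applied with weight index $\mathbf b$ and interpolation exponent $\theta'$ yields $\|w-w_I\|_{H^1(\Omega)}\leq Ch^{\theta'}\|w\|_{\maK_{\mathbf b+1}^{m+1}(\Omega)}$ for the nodal interpolant $w_I\in V_n^k$. Combined with the quasi-optimality (\ref{ceathmgrade}), this gives $\|w-w_n\|_{H^1(\Omega)}\leq C\|w-w_I\|_{H^1(\Omega)}\leq Ch^{\theta'}$.

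For the $L^2$ bound, let $z\in H_0^1(\Omega)$ solve the dual Poisson problem $-\Delta z=w-w_n$ in $\Omega$ with $z=0$ on $\partial\Omega$, so that $\|w-w_n\|^2=(\nabla(w-w_n),\nabla z)$; by the Galerkin orthogonality obtained from subtracting (\ref{poissonfem0}) from the weak form of (\ref{eqnpoissonf}), this equals $(\nabla(w-w_n),\nabla(z-z_I))\leq\|w-w_n\|_{H^1(\Omega)}\|z-z_I\|_{H^1(\Omega)}$ for any interpolant $z_I\in V_n^k$. Because the dual source $w-w_n$ lies only in $L^2(\Omega)$, the dual solution $z$ has merely $\maK^2$-regularity, so I would choose a dual weight $\mathbf d$ with $d_i=b_i\min\{\theta',1\}/\theta'$; one checks $0<d_i\leq\min\{1,\beta_0^i\}$, whence $L^2(\Omega)\subset\maK_{\mathbf d-1}^0(\Omega)$ and the weighted Poisson regularity gives $z\in\maK_{\mathbf d+1}^2(\Omega)$ with $\|z\|_{\maK_{\mathbf d+1}^2(\Omega)}\leq C\|w-w_n\|$. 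The key point is that this choice keeps the grading consistent, $2^{-\min\{\theta',1\}/d_i}=2^{-\theta'/b_i}=\kappa_{Q_i}$, so Theorem \ref{gradprojerr} applies with exponent $\theta_{\mathrm{dual}}=\min\{\theta',1\}$ and yields $\|z-z_I\|_{H^1(\Omega)}\leq Ch^{\min\{\theta',1\}}\|w-w_n\|$. Substituting the two interpolation bounds and cancelling one factor of $\|w-w_n\|$ produces $\|w-w_n\|\leq Ch^{\theta'+\min\{\theta',1\}}=Ch^{\min\{2\theta',\theta'+1\}}$, as claimed.

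The main obstacle I anticipate is the $L^2$ step: since the dual data $w-w_n$ is only in $L^2(\Omega)$, the dual solution cannot be taken smoother than $\maK^2$, which caps the attainable dual interpolation order at one power of $h$ and is exactly the mechanism behind the saturation of the exponent at $\theta'+1$. The delicate verification is that the dual weight $\mathbf d$ can simultaneously satisfy $d_i\leq1$ (needed for $L^2(\Omega)\subset\maK_{\mathbf d-1}^0(\Omega)$), $d_i<\beta_0^i$, and the admissibility range $d_i\leq\min\{\theta',1\}\leq\min\{k,1\}=1$ of Theorem \ref{gradprojerr}, all while keeping the grading parameter identical to the one already fixed by the primal problem. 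The rescaling $d_i=b_i\min\{\theta',1\}/\theta'$ is what reconciles these constraints in both regimes $\theta'\leq1$ and $\theta'>1$, and confirming its admissibility is the one nonroutine point of the argument.
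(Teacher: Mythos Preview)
Your proposal is correct and follows essentially the same route as the paper: C\'ea's Lemma plus Theorem \ref{gradprojerr} for the $H^1$ bound, then an Aubin--Nitsche duality argument with a weighted $\maK^2$ estimate for the dual solution and another application of Theorem \ref{gradprojerr} to obtain the factor $h^{\min\{\theta',1\}}$. The only cosmetic difference is that the paper takes the dual weight to be $b_i'=\min\{b_i,1\}$ rather than your rescaled choice $d_i=b_i\min\{\theta',1\}/\theta'$, but both parameterizations keep the grading parameter fixed at $\kappa_{Q_i}$ and yield the same dual interpolation exponent $\min\{\theta',1\}$.
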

\begin{proof}
By (\ref{ceathmgrade}) and the interpolation error estimates in Lemma \ref{gradprojerr} under the regularity result in (\ref{wreggrad}) and $\kappa_{Q_i}=2^{-\frac{\theta}{a_i}}=2^{-\frac{\theta'}{b_i}}$,
we have the estimate  
$$
\|w-w_n\|_{H^1(\Omega)} \leq C \|w-w_I\|_{H^1(\Omega)} \leq Ch^{\theta'}.
$$

Consider the Poisson problem 
\be\label{dualVL2w}
-\Delta v = w - w_n \text{ in } \Omega, \quad v =0 \text{ on } \partial \Omega.
\ee
Then we have
\be\label{dualL2w}
\|w - w_n\|^2=(\nabla (w - w_n), \nabla v).
\ee
Subtract (\ref{poissonfem0}) from (\ref{eqnpoissonf}), we have the Galerkin orthogonality,
\be\label{poigo}
(\nabla (w - w_n), \nabla \psi) = 0 \quad \forall \psi \in V_n^k.
\ee
Setting $\psi=v_I\in V_{n}^{k}$ the nodal interpolation of $v$ and subtract (\ref{poigo}) from  (\ref{dualL2w}), we have
\be\label{dualL2w2}
\bal
\|w - w_n\|^2=(\nabla (w - w_n), \nabla (v-v_I)) \leq \|w-w_n\|_{H^1(\Omega)} \|v-v_I\|_{H^1(\Omega)}.
\eal
\ee
Similarly, the solution
$v\in \mathcal K^{2}_{\mathbf b'+1}(\Omega)$ satisfies the regularity estimate
\be\label{poiregw1}
\|v\|_{K^{2}_{\mathbf b'+1}(\Omega)} \leq C\|w-w_n\|_{K^{0}_{\mathbf b'-1}(\Omega)} \leq C\|w-w_n\|,
\ee
where the $i$th entry of $\mathbf{b}'$ satisfying $b'_i=\min\left\{b_i,1\right\}$.
By Lemma \ref{gradprojerr} with grading parameter $\kappa_{Q_i} =2^{-\frac{\theta'}{b_i}}(= 2^{-\frac{\theta}{a_i}})$ again,
we have the interpolation error
\be\label{dualintererr3.1}
\| v - v_I \|_{H^1(\Omega)} \leq Ch^{\min\{\theta', 1\}}\|v\|_{K^{2}_{\mathbf b'+1}(\Omega)}.
\ee
The $L^2$ error estimate in (\ref{phiH1errg3.1}) can be obtained by combining (\ref{dualL2w2}), (\ref{poiregw1}), and (\ref{dualintererr3.1}).
\end{proof}
Thus, we have the following result,
\be\label{Ferrs+}
\bal
\|\mathbf{F}-\mathbf{F}_n\|_{[L^2(\Omega)]^2} = &  \|\textbf{curl }v - \textbf{curl }v_n\|_{[L^2(\Omega)]^2} \leq \|w-w_n\|_{H^1(\Omega)} \leq Ch^{\theta'},\\
\|\mathbf{F}-\mathbf{F}_n\|_{[H^{-1}(\Omega)]^2} = &  \|\textbf{curl }w - \textbf{curl }w_n\|_{[H^{-1}(\Omega)]^2} \leq C\|w-w_n\| \leq Ch^{\min\{\theta'+1, 2\theta'\} }.
\eal
\ee

Now we have the following error estimate of the Mini element approximation ($k=1$) or Taylor-Hood method ($k\geq 2$) in Algorithm \ref{femalg} on graded meshes for the Stokes problem (\ref{stokes}). 

\begin{lemma}\label{StLBB}
The bilinear forms in both Mini element method and the Taylor-Hood method on graded meshes satisfies the LBB or inf-sup condition (\ref{skinfsupMini}) or (\ref{skinfsupTH}).
\end{lemma}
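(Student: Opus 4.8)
The plan is to deduce both discrete inf-sup conditions from the continuous one in (\ref{skinfsup}) by constructing a Fortin operator whose $H^1$-stability constant is \emph{independent of the refinement level $n$}. The only mesh property this construction needs is uniform shape-regularity; it does \emph{not} require quasi-uniformity, so the strong grading toward the vertices $Q_i$ causes no loss. Accordingly, the verification splits into two tasks: (i) show that the family $\{\maT_n\}$ generated by Algorithm \ref{graded} is uniformly shape-regular, and (ii) show that the Fortin operator for the Mini element and for the Taylor-Hood element is $H^1$-stable with a constant depending only on the shape-regularity parameter.

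For (i), I would use the self-similar structure of Algorithm \ref{graded}. Away from the vertices each refinement connects edge midpoints and subdivides a triangle into four triangles similar to their parent, so all such triangles lie in the finitely many similarity classes determined by $\maT_0$. Near a singular vertex $Q_i$, the dilation $\mathbf B_t$ in (\ref{eqn.map}) is a bijection carrying the mesh on the layer $L_t$ onto the mesh on the reference region $L_0$ (onto $T_{(0)}$ for $t=n$); hence every triangle in $L_t$ is an exact scaled copy of a triangle of the refined $L_0$. Consequently every triangle occurring in $\bigcup_n\maT_n$ is similar to one of finitely many reference triangles, and the shape-regularity constant is bounded uniformly in $n$.

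For (ii), I would construct the Fortin operators explicitly. For the Mini element set $\Pi_n\mathbf v=\Pi_n^1\mathbf v+\Pi_n^b\mathbf v$, where $\Pi_n^1\mathbf v\in[V_n^1]^2$ is a Cl\'ement/Scott--Zhang quasi-interpolant (which is $H^1$-stable with a constant depending only on shape-regularity) and $\Pi_n^b\mathbf v\in[B_n^3]^2$ is the elementwise bubble correction $\Pi_n^b\mathbf v|_T=\beta_T\,\lambda_1\lambda_2\lambda_3$, with $\beta_T\in\R^2$ fixed by $\int_T\Pi_n^b\mathbf v\,dx=\int_T(\mathbf v-\Pi_n^1\mathbf v)\,dx$. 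Since $\nabla q$ is constant on each $T$ for $q\in S_n^1$, this elementwise mean-matching yields the Fortin orthogonality $(\text{div}(\mathbf v-\Pi_n\mathbf v),q)=0$ for all $q\in S_n^1$, and the bubble correction is $H^1$-stable by the affine scaling of $\lambda_1\lambda_2\lambda_3$ on the reference triangle \cite{arnold1984}. For the Taylor-Hood element I would invoke the standard patch-based (macroelement) Fortin operator \cite{verfurth1984, Brezzi1991TH}, whose construction and estimates are again purely local and affine-invariant and therefore valid on any uniformly shape-regular mesh, provided $n$ is large enough that each macroelement patch contains an interior edge.

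The main obstacle is to make precise that the $H^1$-stability constant of $\Pi_n$ does not degrade as the mesh is graded, i.e. that it is genuinely independent of the ratio between the coarsest and finest layers. This is exactly where the scaling structure established in (i) is used: every local estimate entering the Fortin construction is invariant under the dilations $\mathbf B_t$, so it suffices to bound the relevant constants on the finitely many reference configurations, giving a bound uniform in $n$. With a uniformly $H^1$-stable Fortin operator in hand, the classical argument (see e.g. \cite{girault1986, BS02}) transfers the continuous inf-sup (\ref{skinfsup}) to the discrete pairs, producing (\ref{skinfsupMini}) and (\ref{skinfsupTH}) with constants $\tilde\gamma_1,\tilde\gamma_2>0$ independent of $n$; the boundedness of both bilinear forms follows immediately from the Cauchy--Schwarz inequality.
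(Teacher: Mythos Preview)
Your proposal is correct and follows essentially the same logic as the paper: first establish that the graded family $\{\maT_n\}$ is uniformly shape-regular, then deduce the discrete inf-sup conditions from known results whose constants depend only on the shape-regularity parameter. The paper carries this out much more tersely, simply noting that Algorithm~\ref{graded} gives $h_T\le\sigma(\kappa)\,\rho_T$ for all $T\in\maT_n$ (with $\sigma$ depending only on $\kappa=\min_i\kappa_{Q_i}$) and then invoking \cite[Theorem~3.1]{Stenberg}, which covers both the Mini and Taylor-Hood pairs at once via the macroelement technique; your explicit Fortin-operator constructions are valid but unnecessary once that reference is in hand.
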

\begin{proof}
For given $\kappa = \min_i\{\kappa_{Q_i}\}$, Algorithm \ref{graded} implies that there exists a constant $\sigma(\kappa)>0$ such that
\be\label{meshtolbb}
h_T \leq \sigma(\kappa) \rho_T, \quad \forall T \in \maT_n,
\ee
where $h_T$ is the diameter of $T$, and $\rho_T$ is the maximum diameter of all circles contained in $T$. Under condition (\ref{meshtolbb}) of the graded mesh, the conclusion follows from \cite[Theorem 3.1]{Stenberg}.
\end{proof}

\begin{theorem}\label{Stokesgraderr}
Set the grading parameters $\kappa_{Q_i}=2^{-\frac{\theta}{a_i}}$ with $0<a_i<\alpha_0^i$ and $\theta$ being any constants satisfying $a_i\leq \theta \leq k$.
Let $(\mathbf{u}, p)$ be the solution of the Stokes problem (\ref{stokesweak}), and $(\mathbf{u}_n, p_n)$ be the Mini element solution ($k=1$)  or Taylor-Hood element solution ($k\geq 2$) on graded meshes $\mathcal T_n$.
If $(\mathbf{u}_n, p_n)$ is the solution of in Algorithm \ref{femalg}, then it follows
\begin{equation}\label{uh1errwsb}
\|\mathbf u-\mathbf  u_n\|_{[H^1(\Omega)]^2}+\|p-p_n\|\leq Ch^\theta;
\end{equation}
if it is the solution of in Algorithm \ref{femalg+}, then it follows
\begin{equation}\label{uh1errwsb+}
\|\mathbf u-\mathbf  u_n\|_{[H^1(\Omega)]^2}+\|p-p_n\|\leq Ch^{\min\{\theta, \theta'+1\}},
\end{equation}
where $\theta'$ is given in Lemma \ref{possion1error}.
\end{theorem}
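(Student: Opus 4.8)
The plan is to reduce both estimates to the abstract Brezzi quasi-optimality bound for the Stokes discretization and then feed in the graded-mesh interpolation estimate and the source-approximation estimate that are already available. First I would invoke Lemma \ref{StLBB}, which says that the Mini and Taylor--Hood pairs satisfy the discrete LBB condition on the graded family $\maT_n$; together with the boundedness of the bilinear forms this yields exactly the quasi-optimal a priori bound already used in the proof of Lemma \ref{stokesestlem+}, namely
\[
\|\mathbf u-\mathbf u_n\|_{[H^1(\Omega)]^2}+\|p-p_n\|\le C\Big(\inf_{\mathbf v\in[V_n^k]^2}\|\mathbf u-\mathbf v\|_{[H^1(\Omega)]^2}+\inf_{q\in S_n^{k-1}}\|p-q\|+\|\mathbf F-\mathbf F_n\|_{[H^{-1}(\Omega)]^2}\Big).
\]
For Algorithm \ref{femalg} the source is taken exactly, so $\mathbf F_n=\mathbf F$ and the last term drops out.

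To obtain (\ref{uh1errwsb}) I would use the weighted regularity of Lemma \ref{stokeswreg}, $\mathbf u\in[\maK_{\mathbf a+1}^{m+1}(\Omega)]^2$ and $p\in\maK_{\mathbf a}^{m}(\Omega)$, choose $\mathbf v=\mathbf u_I$, $q=p_I$ the nodal interpolants, and apply the graded interpolation bound Theorem \ref{gradprojerr} with the weight $\mathbf a$ matching the grading $\kappa_{Q_i}=2^{-\theta/a_i}$ of (\ref{kappainit}). Since $a_i\le\theta\le k\le m$, both infima are bounded by $Ch^{\theta}$, which is (\ref{uh1errwsb}).

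The substantive case is Algorithm \ref{femalg+}, where $\mathbf F_n=\textbf{curl }w_n$ only approximates $\mathbf F$. Here the relevant regularity is (\ref{regwsb3.2}), $\mathbf u\in[\maK_{\mathbf c+1}^{m+1}(\Omega)]^2$, $p\in\maK_{\mathbf c}^{m}(\Omega)$ with $c_i=\min\{b_i+1,a_i\}$. The device is to reinterpret the same mesh under the weight $\mathbf c$: writing $\kappa_{Q_i}=2^{-\theta/a_i}=2^{-\theta_{c,i}/c_i}$ with $\theta_{c,i}=\theta c_i/a_i$, one checks $c_i\le\theta_{c,i}\le\theta\le k$ (using $c_i\le a_i\le\theta$), so Theorem \ref{gradprojerr} applies corner-by-corner and the two interpolation infima are bounded by $h^{\min_i\theta_{c,i}}$. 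Using the grading relation $\theta/a_i=\theta'/b_i$ from Lemma \ref{possion1error} I would simplify $\theta_{c,i}=\min\{\theta b_i/a_i+\theta/a_i,\ \theta\}=\min\{\theta'+\theta/a_i,\ \theta\}\ge\min\{\theta,\theta'+1\}$, the last step because $a_i\le\theta$ gives $\theta/a_i\ge1$. For the source term I would quote the already-derived bound (\ref{Ferrs+}), $\|\mathbf F-\mathbf F_n\|_{[H^{-1}(\Omega)]^2}\le Ch^{\min\{\theta'+1,2\theta'\}}$. Combining all contributions gives the exponent $\min\{\theta,\ \theta'+1,\ 2\theta'\}$, which I would collapse to $\min\{\theta,\theta'+1\}$.

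The main obstacle is precisely this collapse, i.e.\ showing $\min\{\theta'+1,2\theta'\}\ge\min\{\theta,\theta'+1\}$, which amounts to $2\theta'\ge\theta$. I would dispatch it by unwinding $\theta'=\min\{\tfrac{\beta_0}{\alpha_0}\max\{\theta,\alpha_0\},\,k\}$: if $\theta'=k$ then $2\theta'=2k\ge k\ge\theta$ trivially, and otherwise $2\theta'=\tfrac{2\beta_0}{\alpha_0}\max\{\theta,\alpha_0\}\ge\tfrac{2\beta_0}{\alpha_0}\theta\ge\theta$, since $2\beta_0>\alpha_0$ by Lemma \ref{betaalpha} in both the convex regime ($\alpha_0<2\beta_0$) and the non-convex regime ($\alpha_0<\beta_0$). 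This inequality is exactly what makes the source error subordinate to the interpolation rate, delivering the clean exponent $\min\{\theta,\theta'+1\}$ of (\ref{uh1errwsb+}); the remaining work is the routine bookkeeping of the minima.
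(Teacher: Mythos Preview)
Your proposal is correct and follows essentially the same route as the paper: invoke the discrete LBB condition on graded meshes (Lemma~\ref{StLBB}) to get the Brezzi quasi-optimality bound (Lemma~\ref{stokesTHbdd} / Corollary~\ref{stokesTHbdd+}), feed in the graded interpolation estimate (Theorem~\ref{gradprojerr}) together with the weighted regularity, and for Algorithm~\ref{femalg+} add the source error (\ref{Ferrs+}) and collapse the exponents.

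Two minor differences are worth noting. First, for the interpolation step in Algorithm~\ref{femalg+} the paper simply writes $Ch^{\theta}$ under the regularity (\ref{regwsb3.2}), whereas you work explicitly with the weight $\mathbf c$ and obtain the corner-by-corner rate $\theta_{c,i}\ge\min\{\theta,\theta'+1\}$; your treatment is more transparent about why the pressure regularity in $\maK_{\mathbf c}^{m}$ still suffices. Second, for the collapse $\min\{\theta'+1,2\theta'\}\ge\min\{\theta,\theta'+1\}$ the paper argues by splitting into the cases $\omega>\pi$ (where $\theta'\ge\theta$ via (\ref{alpha2beta2})) and $\omega<\pi$ (where $\theta'>1$), while you give a unified argument via $2\beta_0>\alpha_0$ from Lemma~\ref{betaalpha}. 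Both routes are valid; yours is slightly cleaner bookkeeping.
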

\begin{proof}
For Algorithm \ref{femalg}, by Corollary \ref{stokesTHbdd+} and the interpolation error estimates in Lemma \ref{gradprojerr} under the regularity result in Lemma \ref{stokeswreg}, the estimate (\ref{uh1errwsb}) holds.

For Algorithm \ref{femalg+}, by Lemma \ref{stokesTHbdd} with the estimate (\ref{Ferrs+})
and the interpolation error estimates in Lemma \ref{gradprojerr} under the regularity result (\ref{regwsb3.2}), it follows
\bes
\bal
\|\mathbf{u}-\mathbf{u}_n\|_{[H^1(\Omega)]^2}+\|p-p_n\|\leq Ch^{\theta} + Ch^{\min\{\theta'+1, 2\theta'\} } \leq  Ch^{\min\{\theta, \theta'+1\}}.
\eal
\ees
Here, we have used the fact that if $\omega>\pi$, $\theta\leq \theta'<2\theta'$. Note that $\theta'=\min\left\{\frac{\beta_0}{\alpha_0}\max\{\theta, \alpha_0\}, k\right\}$, so if $\theta'=k$, then $\theta\leq k=\theta'$; otherwise
$$
\theta'=\frac{\beta_0}{\alpha_0}\max\{\alpha_0, \theta\}\geq \frac{\beta_0}{\alpha_0}\theta>\theta,
$$
where we have used (\ref{alpha2beta2}).

If $\omega<\pi$, by taking $1< b_i=\frac{\beta_0}{\alpha_0}  a_i<\beta_0\leq \beta_0^i$, it follows $\theta'=\frac{b_i}{a_i}\theta \geq b_i>1$, so that $\theta'+1<2\theta'$.
Thus, the estimate (\ref{uh1errwsb+}) holds.
\end{proof}

In weighted Sobolev space, the regularity result for (\ref{stokesdualreg}) with $l=0,1$ has the form
\be\label{stokesregwe}
\|\mathbf{r}\|_{[\maK_{\mathbf{a}'+1}^{l+2}(\Omega)]^2} + \|s\|_{\maK_{\mathbf{a}'}^{l+1}(\Omega)} \leq C \|\mathbf g\|_{[\maK_{\mathbf{a}'-1}^{l}(\Omega)]^2} \leq C\|\mathbf g\|_{[\maK_{\mathbf{b}}^{l}(\Omega)]^2},
\ee
where $0<\mathbf{a}'= \min\{\mathbf{a}, l+1\}$ with $0< \mathbf a < \bm\alpha_0$ and $0<\mathbf{b}<\bm{\beta}_0$ .
Then we have the following result.
\begin{theorem}\label{StokesgradL2err}
Set the grading parameters $\kappa_{Q_i}=2^{-\frac{\theta}{a_i}}$ with $0<a_i<\alpha_0^i$ and $\theta$ being any constants satisfying $a_i\leq \theta \leq k$.
Let $(\mathbf{u}, p)$ be the solution of the Stokes problem (\ref{stokesweak}), and $(\mathbf{u}_n, p_n)$ be the Mini element solution ($k=1$)  or Taylor-Hood element solution ($k\geq 2$) in Algorithm \ref{femalg+} on graded meshes $\mathcal T_n$.
If $(\mathbf{u}_n, p_n)$ is the solution of in Algorithm \ref{femalg}, then it follows
\begin{equation}\label{stokL2err}
\bal
\|\mathbf u-\mathbf  u_n\|_{[L^2(\Omega)]^2}\leq & Ch^{\min\{2\theta, \theta+1\}},\\
\|\mathbf u-\mathbf  u_n\|_{[(\maK_{\mathbf{b}}^{1}(\Omega))^*]^2}\leq & Ch^{\min\{2\theta, \theta+2\}};
\eal
\end{equation}
if it is the solution of in Algorithm \ref{femalg}, then it follows
\begin{equation}\label{stokL2err+}
\bal
\|\mathbf u-\mathbf  u_n\|_{[L^2(\Omega)]^2}\leq & Ch^{\min\{2\theta, \theta+1, \theta'+2\}},\\
\|\mathbf u-\mathbf  u_n\|_{[(\maK_{\mathbf{b}}^{1}(\Omega))^*]^2}\leq & Ch^{\min\{2\theta, \theta+2, k+\theta', \theta+\theta'+1, \theta'+3\}},
\eal
\end{equation}
where $\theta'$ is given in Lemma \ref{possion1error}.
Here, $(\cdot)^*$ represents the dual space.
\end{theorem}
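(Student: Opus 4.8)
The plan is to run an Aubin--Nitsche duality argument on graded meshes, following verbatim the bookkeeping in the proof of Lemma \ref{stokesestlem+}, but with every quasi-uniform interpolation and regularity input replaced by its weighted, graded-mesh counterpart. Concretely, both estimates are the cases $l=0$ (for the $[L^2(\Omega)]^2$ norm, taking $\mathbf g = \mathbf u - \mathbf u_n$) and $l=1$ (for the $[(\maK_{\mathbf b}^1(\Omega))^*]^2$ norm) of a single computation. For a generic datum $\mathbf g \in [\maK_{\mathbf b}^l(\Omega)]^2$ I would introduce the adjoint Stokes problem (\ref{stokeadjoint})--(\ref{stokesadweak}), use the weighted regularity (\ref{stokesregwe}) to place $(\mathbf r, s) \in [\maK_{\mathbf a'+1}^{l+2}(\Omega)]^2 \times \maK_{\mathbf a'}^{l+1}(\Omega)$ with $\mathbf a' = \min\{\mathbf a, l+1\}$, and represent $\mathbf r = \textbf{curl }\psi$ via (\ref{rtopis}), so that $\psi$ gains one order of weighted regularity over $\mathbf r$. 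Writing $\|\mathbf u - \mathbf u_n\|_{[(\maK_{\mathbf b}^l)^*]^2} = \sup_{\mathbf g}\langle \mathbf g, \mathbf u - \mathbf u_n\rangle / \|\mathbf g\|_{[\maK_{\mathbf b}^l]^2}$ and subtracting the weak and discrete equations exactly as in (\ref{gunn}) yields the splitting $\langle \mathbf g, \mathbf u - \mathbf u_n\rangle = T_1 + T_2 + T_3 + T_4$, with $T_1,T_2,T_3$ the three Galerkin cross terms and $T_4 = \langle \mathbf F - \mathbf F_n, \mathbf r_n\rangle$.

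Next I would bound each $T_i$ by a product of a forward factor and an adjoint factor. The forward factors come from Theorem \ref{Stokesgraderr}: $|\mathbf u - \mathbf u_n|_{[H^1]^2} + \|p-p_n\|$ is $O(h^\theta)$ in Algorithm \ref{femalg} and $O(h^{\min\{\theta,\theta'+1\}})$ in Algorithm \ref{femalg+}. The adjoint factors come from the graded interpolation estimate of Theorem \ref{gradprojerr} applied to $(\mathbf r, s)$ under (\ref{stokesregwe}); since the adjoint lives only in the weighted scale of top index $l+2$, this bounds $\|\mathbf r - \mathbf r_n\|_{[H^1]^2} + \|s - s_n\|$ by $O(h^{\min\{\theta, l+1\}})$, while the one-order-smoother stream function admits the stronger rate $\|\psi - \psi_I\|_{H^1} = O(h^{\min\{k, l+2\}})$. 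For Algorithm \ref{femalg} one has $\mathbf F_n = \mathbf F$, hence $T_4 = 0$, and $T_1,T_2,T_3$ each contribute $h^{\theta + \min\{\theta, l+1\}} = h^{\min\{2\theta, \theta+l+1\}}$, which is $\min\{2\theta,\theta+1\}$ for $l=0$ and $\min\{2\theta,\theta+2\}$ for $l=1$, giving (\ref{stokL2err}).

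For Algorithm \ref{femalg+} the extra work is $T_4$. I would split $T_4 = \langle \mathbf F - \mathbf F_n, \mathbf r_n - \mathbf r\rangle + \langle \mathbf F - \mathbf F_n, \mathbf r\rangle =: T_{41} + T_{42}$, bound $T_{41}$ by $\|\mathbf F - \mathbf F_n\|_{[H^{-1}]^2}\,\|\mathbf r - \mathbf r_n\|_{[H^1]^2}$, and --- using $\mathbf r = \textbf{curl }\psi$ together with the orthogonality of Lemma \ref{fcurlgal} to replace $\mathbf r$ by $\textbf{curl }(\psi - \psi_I)$ --- bound $T_{42}$ by $\|\mathbf F - \mathbf F_n\|_{[L^2]^2}\,\|\psi - \psi_I\|_{H^1}$. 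Feeding in the source estimates (\ref{Ferrs+}), namely $\|\mathbf F - \mathbf F_n\|_{[L^2]^2} = O(h^{\theta'})$ and $\|\mathbf F - \mathbf F_n\|_{[H^{-1}]^2} = O(h^{\min\{\theta'+1, 2\theta'\}})$, together with the forward $H^1$ rate and the adjoint rates above, produces the additional exponents of (\ref{stokL2err+}): the cross terms $T_1,T_2,T_3$ now carry $\min\{\theta,\theta'+1\}$ in place of $\theta$, which supplies $\theta'+2$ (for $l=0$) and $\theta+\theta'+1,\ \theta'+3$ (for $l=1$), while $T_{42}=O(h^{\theta'+\min\{k,l+2\}})$ supplies $\theta'+2$ and $k+\theta'$.

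The main obstacle is purely the exponent accounting: the grading parameter $\kappa_{Q_i} = 2^{-\theta/a_i}$ is calibrated to the forward weight $a_i < \alpha_0^i$, whereas the adjoint solution is regular only in the capped weighted scale with index $\mathbf a' = \min\{\mathbf a, l+1\}$, so the same mesh must be re-read against this truncated weight --- this is exactly what forces the $(\mathbf r,s)$ rate to saturate at $\min\{\theta, l+1\}$ while allowing the smoother $\psi$ to reach $\min\{k,l+2\}$. After assembling all products one must verify that the long list of cross-exponents collapses to the stated minima; here the relations $a_i \le \theta \le k$, the definition $\theta' = \min\{\frac{\beta_0}{\alpha_0}\max\{\theta,\alpha_0\}, k\}$ from Lemma \ref{possion1error}, and the sign information $\beta_0/\alpha_0 < 1$ for $\omega<\pi$ and $\beta_0/\alpha_0 > 1$ for $\omega>\pi$ from (\ref{alpha2beta})--(\ref{alpha2beta2}) are what let me discard the redundant terms (e.g.\ $\theta+\theta'+1$ dominating $\theta+1$ in the $L^2$ bound whenever $\theta'\ge0$) and arrive at the clean rates. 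Finally, for the $l=0$ case one checks that, since $\mathbf a'=\min\{\mathbf a,1\}\le 1$, the weighted datum norm $\|\mathbf g\|_{[\maK_{\mathbf b}^0]^2}$ is controlled by $\|\mathbf u - \mathbf u_n\|_{[L^2]^2}$, so that the duality indeed returns the genuine $[L^2(\Omega)]^2$ estimate.
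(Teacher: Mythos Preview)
Your proposal is correct and follows essentially the same Aubin--Nitsche duality argument as the paper: the same adjoint Stokes problem, the same splitting $T_1+T_2+T_3+T_4$ from (\ref{gunn}), the same use of Theorem~\ref{Stokesgraderr} for the forward factor, the same weighted regularity (\ref{stokesregwe}) and graded interpolation (Theorem~\ref{gradprojerr}) for the adjoint factor, and the same treatment of $T_4=T_{41}+T_{42}$ via Lemma~\ref{fcurlgal} and (\ref{Ferrs+}).

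One small technical point deserves care. Your stream-function rate $\|\psi-\psi_I\|_{H^1}=O(h^{\min\{k,l+2\}})$ slightly overstates what Theorem~\ref{gradprojerr} yields: since $\psi\in\maK_{\mathbf a'+2}^{l+3}(\Omega)$, the fixed grading $\kappa_{Q_i}=2^{-\theta/a_i}$ must be re-read as $2^{-\theta_1/(1+a'_i)}$ with $\theta_1=(1+1/a'_i)\theta$, and the theorem then gives $O(h^{\min\{k,\theta_1,l+2\}})$. The paper records this explicitly (see (\ref{psierrwi})). Because $\theta_1\ge\theta+1$, the extra $\theta_1$-cap is absorbed by the $\theta+\theta'+1$ term already produced by $T_1,T_2,T_3$, so the final exponents in (\ref{stokL2err+}) are unaffected and your conclusion stands.
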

\begin{proof}
We only prove (\ref{stokL2err+}) for Taylor-Hood method, all other cases can be proved similarly. 
Similar to Lemma \ref{stokesestlem+}, we take $\mathbf{v} = \mathbf u -\mathbf u_n$, $q=p-p_n$ in (\ref{stokesadweak}), 
then we have 
\bes
\bal
& \|\mathbf u -\mathbf u_n\|_{[(\maK_{\mathbf{b}}^{l}(\Omega))^*]^2} =  \sup_{g\in (\maK_{\mathbf{b}}^{l}(\Omega))]^2} \frac{\langle\mathbf g, \mathbf u -\mathbf u_n \rangle}{\|\mathbf g\|_{[(\maK_{\mathbf{b}}^{l}(\Omega))]^2}},
\eal
\ees
where $l=0,1$.
Let $(\mathbf r_n, s_n)$ be the Taylor-Hood solution of (\ref{stokeadjoint}), then it follows
\bes
\bal
\langle\mathbf g, \mathbf u -\mathbf u_n \rangle = & T_1+T_2+T_3+T_4,
\eal
\ees
where $T_i$, $i=1,\cdots, 4$ have the same expressions as those in Lemma \ref{stokesestlem+}.
For $\mathbf r_n$ and $s_n$, we have the following estimate in the weighted Sobolev space
\be\label{dualest}
\bal
& \|\mathbf r -\mathbf r_n\|_{[H^1(\Omega)]^2} + \|s-s_n\| \leq C \left(\inf_{\mathbf{r}_I \in [V_n^k(\Omega)]^2}\|\mathbf r -\mathbf r_I\|_{[H^1(\Omega)]^2} + \inf_{s_I\in S_n^{k-1}} \|s-s_I\| \right)\\
& \leq Ch^{\min\{\theta, l+1\}}
(\|\mathbf{r}\|_{[\maK_{\mathbf{a}'+1}^{l+2}(\Omega)]^2} + \|s\|_{\maK_{\mathbf{a}'}^{l+1}(\Omega)}).
\eal
\ee
Here we have
\bes
\bal
|T_j| \leq &  Ch^{\min\{\theta, \theta'+1\} + \min\{\theta, l+1\} } (\|\mathbf{r}\|_{[\maK_{\mathbf{a}'+1}^{l+2}(\Omega)]^2}+\|s\|_{\maK_{\mathbf{a}'}^{l+1}(\Omega)}) \\
= & Ch^{\min\{2\theta, \theta+l+1, \theta+\theta'+1, \theta'+l+2\}}(\|\mathbf{r}\|_{[\maK_{\mathbf{a}'+1}^{l+2}(\Omega)]^2}+ \|s\|_{\maK_{\mathbf{a}'}^{l+1}(\Omega)}),
\eal
\ees
where $j=1,2,3$.

Note that
\bes
\bal
T_4 = T_{41}+T_{42}=\langle \mathbf{F}-\mathbf{F}_n, \mathbf{r}_n-\mathbf{r}\rangle + \langle \mathbf{F}-\mathbf{F}_n, \mathbf{r}\rangle.
\eal
\ees
By (\ref{Ferrs+}) and (\ref{dualest}), we have
\bes
\bal
|T_{41}| \leq &  \|\mathbf{F}-\mathbf{F}_n\|_{[H^{-1}(\Omega)]^2}\|\mathbf{r}-\mathbf{r}_n\|_{[H^{1}(\Omega)]^2} \leq Ch^{\min\{\theta'+1, 2\theta'\}+\min\{\theta, l+1 \} }\|\mathbf{r}\|_{[\maK_{\mathbf{a}'+1}^{l+2}(\Omega)]^2}\\
=& Ch^{\min\{\theta+\theta'+1,\theta'+l+2, 2\theta'+l+1, \theta+2\theta'\} }\|\mathbf{r}\|_{[\maK_{\mathbf{a}'+1}^{l+2}(\Omega)]^2}.
\eal
\ees
By Theorem \ref{gradprojerr} for $\psi \in \maK_{\mathbf{a}'+2}^{l+3}(\Omega)$ satisfying (\ref{rtopis}) with $\kappa_{Q_i}=2^{-\frac{\theta}{a_i}}=2^{-\frac{\theta_1}{1+a'_i}}$, we have
\be\label{psierrwi}
\|\psi-\psi_I\|_{H^1(\Omega)} \leq Ch^{\min\{k, \theta_1, l+2\}} \|\mathbf{r}\|_{[\maK_{\mathbf{a}'+1}^{l+2}(\Omega)]^2},
\ee
where $\theta_1 = (1 + \frac{1}{a'_i})\theta \geq  (1 + \frac{1}{a_i})\theta \geq \theta +1$ and $\psi_I$ is the nodal interpolation of $\psi$.

By (\ref{Ferrs+}) and (\ref{psierrwi}), we have 
\bes
\bal
|T_{42}| \leq & \|\mathbf{F}-\mathbf{F}_n\|_{[L^2(\Omega)]^2}\|\psi-\psi_I\|_{H^1(\Omega)} \leq Ch^{\theta' + \min\{k, \theta+\theta'_1, l+2\} }\|\mathbf{r}\|_{[\maK_{\mathbf{a}'+1}^{l+2}(\Omega)]^2}\\
\leq & Ch^{\min\{k+\theta',\theta'+l+2, \theta+\theta'+\theta'_1\} }\|\mathbf{r}\|_{[\maK_{\mathbf{a}'+1}^{l+2}(\Omega)]^2},
\eal
\ees
where $\theta'_1 = \min_i\{\frac{1}{a'_i}\theta\} \geq 1$.
It can be verified that
$$
|T_4| \leq |T_{41}|+|T_{42}| \leq  Ch^{\min\{k+\theta', \theta+\theta'+1,\theta'+l+2, 2\theta'+l+1, \theta+2\theta'\} }\|\mathbf{r}\|_{[\maK_{\mathbf{a}'+1}^{l+2}(\Omega)]^2}.
$$

By the regularity (\ref{stokesregwe}) and the summation of estimates $|T_i|$, $i=1, \cdots, 4$, and $\theta<2\theta'$, we have the error estimate 
\be\label{dualuerr}
\|\mathbf u - \mathbf u_n\|_{[(\maK_{\mathbf{b}}^{1}(\Omega))^*]^2} \leq Ch^{\min\{2\theta, \theta+l+1, k+\theta', \theta+\theta'+1, \theta'+l+2\}}.
\ee
Recall that $k\geq 1$, $\theta\leq k$, and when $\omega>\pi$, we have $\theta<\theta'$, then it follows
\be\label{k1ktheta}
\theta+1 \leq k+\theta',
\ee
and when $\omega<\pi$, we have $\theta'>1$, so the inequality (\ref{k1ktheta}) still holds.
The estimates in (\ref{stokL2err+}) follows from (\ref{dualuerr}) with the fact (\ref{k1ktheta}).
\end{proof}

\begin{remark}\label{stokesoptimal}
By Theorem \ref{Stokesgraderr} and Theorem \ref{StokesgradL2err},
we can find that if we take
\be\label{thetastokes}
\theta = k
\ee
in the grading parameter $\kappa_{Q_i}$, then we can obtain the optimal convergence rate for the Stokes approximations in Algorithm \ref{femalg},
\begin{subequations}\label{stokeserr2}
\begin{align}
\|\mathbf{u}-\mathbf{u}_n\|_{[H^1(\Omega)]^2} + \|p-p_n\| \leq Ch^{k},\\
\|\mathbf{u}-\mathbf{u}_n\|_{[L^2(\Omega)]^2} \leq Ch^{k+1}.
\end{align}
\end{subequations}
For Algorithm \ref{femalg+}, it follows
\begin{subequations}\label{stokeserr2+}
\begin{align}
\|\mathbf{u}-\mathbf{u}_n\|_{[H^1(\Omega)]^2} + \|p-p_n\| \leq Ch^{\min\left\{\frac{\beta_0}{\alpha_0}\max\{k, \alpha_0\}+1, k\right\}},\\
\|\mathbf{u}-\mathbf{u}_n\|_{[L^2(\Omega)]^2} \leq Ch^{\min\left\{ k+1, \frac{\beta_0}{\alpha_0}\max\{k, \alpha_0\}+2 \right\} }.
\end{align}
\end{subequations}
However, to obtain the optimal convergence rate for the biharmonic approximation, the convergence rates of Mini element or Taylor-Hood element approximations don't have to be optimal. Therefore, we shall figure out the admissible parameters $\theta$ such that the convergence rate of the biharmonic approximation is optimal.
\end{remark}


\begin{theorem}\label{phierrthmg}
Set the grading parameters $\kappa_{Q_i}=2^{-\frac{\theta}{a_i}}$ with $0<a_i<\alpha_0^i$ and
\be\label{thetaopt}
\theta=\max\{k-1, a'_i\},
\ee
where $a'_i=\min\{\alpha_0, a_i\} \leq \alpha_0$ for $\alpha_0$ given by (\ref{alpha0}).
Let $\phi_n\in V_{n}^{k}$ be the solution of finite element solution of (\ref{poissonfem}), and $\phi$ is the solution of the biharmonic problem (\ref{eqnbi}). If $\phi_n$ is the solution in Algorithm \ref{femalg}, then it follows
\be\label{phiH1errg}
\|\phi-\phi_n\|_{H^1(\Omega)} \leq Ch^{k};
\ee
it is the solution in Algorithm \ref{femalg+}, then it follows
\be\label{phiH1errg+}
\|\phi-\phi_n\|_{H^1(\Omega)} \leq Ch^{\min\left\{k, \max\left\{\frac{\beta_0}{\alpha_0}(k-1)+2, \beta_0+2\right\}\right\}}.
\ee
\end{theorem}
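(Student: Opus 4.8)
The plan is to follow the template of the quasi-uniform estimate in Theorem \ref{phierrthm3.1}, separating a mesh-independent reduction from the two graded-mesh ingredients and then assembling them. First I would reproduce the reduction: subtracting (\ref{poissonfem}) from (\ref{poissonweak}) gives the Galerkin orthogonality $(\nabla(\phi-\phi_n),\nabla\psi)=(\text{curl }(\mathbf u-\mathbf u_n),\psi)$ for all $\psi\in V_n^k$. Writing $\phi_I\in V_n^k$ for the nodal interpolant, $\epsilon=\phi_I-\phi$ and $e=\phi_I-\phi_n$, choosing $\psi=e$ and moving the operator $\text{curl}$ onto $e$ exactly as in (\ref{eerr}) yields
\be
\|\phi-\phi_n\|_{H^1(\Omega)} \leq C\left(\|\phi-\phi_I\|_{H^1(\Omega)} + \|\mathbf u-\mathbf u_n\|_{[L^2(\Omega)]^2}\right).
\ee
This inequality uses no property of the mesh, so it transfers verbatim to the graded setting, and the whole task reduces to estimating the two terms on the right under the graded interpolation and Stokes theory already established.

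For the first term I would invoke the weighted regularity $\phi\in\maK_{\mathbf a+2}^{m+2}(\Omega)$ from Lemma \ref{biregwsb} and apply the graded interpolation estimate of Theorem \ref{gradprojerr} with $v=\phi$, i.e.\ with smoothness index $m+1$ and weight index $\mathbf a+1$. The key bookkeeping is that the mesh is graded by $\kappa_{Q_i}=2^{-\theta/a_i}$, whereas the relevant weight exponent for $\phi$ is $a_i+1$; rewriting $\kappa_{Q_i}=2^{-\Theta/(a_i+1)}$ forces the effective interpolation exponent $\Theta=\theta(a_i+1)/a_i=\theta(1+1/a_i)$, capped at $\min\{k,m+1\}=k$. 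The choice $\theta=\max\{k-1,a_i'\}$ is made precisely so that $\Theta\geq k$: this reduces to $\theta\geq k a_i/(a_i+1)$, which holds because $\theta\geq k-1$ already covers the range $a_i\leq k-1$, while for $a_i>k-1$ the branch $a_i'=\min\{\alpha_0,a_i\}$ supplies the needed lower bound. Hence $\|\phi-\phi_I\|_{H^1(\Omega)}\leq Ch^{k}$, the optimal rate.

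For the second term I would quote the graded Stokes velocity estimates. In Algorithm \ref{femalg}, Theorem \ref{StokesgradL2err} gives $\|\mathbf u-\mathbf u_n\|_{[L^2(\Omega)]^2}\leq Ch^{\min\{2\theta,\theta+1\}}$; since $\theta\geq k-1$ one has $\theta+1\geq k$, and $2\theta\geq k$ follows from $\theta\geq k-1\geq k/2$ for $k\geq 2$ and from $a_i'\geq\tfrac12$ (admissible as $\alpha_0>\tfrac12$) when $k=1$. Thus this term is also $O(h^{k})$, and combining with the reduction proves (\ref{phiH1errg}). In Algorithm \ref{femalg+} the velocity $L^2$ error additionally carries the term $\theta'+2$, so $\|\mathbf u-\mathbf u_n\|_{[L^2(\Omega)]^2}\leq Ch^{\min\{k,\theta'+2\}}$ and therefore $\|\phi-\phi_n\|_{H^1(\Omega)}\leq Ch^{\min\{k,\theta'+2\}}$. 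Substituting $\theta'=\min\{\tfrac{\beta_0}{\alpha_0}\max\{\theta,\alpha_0\},k\}$ and using $a_i'\leq\alpha_0$, so that $\max\{\theta,\alpha_0\}=\max\{k-1,\alpha_0\}$, the nested extrema collapse via $\tfrac{\beta_0}{\alpha_0}\max\{k-1,\alpha_0\}=\max\{\tfrac{\beta_0}{\alpha_0}(k-1),\beta_0\}$ to exactly $\min\{k,\max\{\tfrac{\beta_0}{\alpha_0}(k-1)+2,\beta_0+2\}\}$, giving (\ref{phiH1errg+}).

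I expect the main obstacle to be the weight-shift accounting in the middle paragraph: correctly translating a grading tuned to $a_i$ into the effective interpolation exponent for $\phi$, which is two orders smoother and carries weight $\mathbf a+2$, and verifying $\theta(1+1/a_i)\geq k$ for the prescribed $\theta$ across the admissible choices of $a_i$. A secondary, purely algebraic care point is the collapse of the $\min/\max$ expression for Algorithm \ref{femalg+}; everything else is a direct citation of Theorems \ref{gradprojerr}, \ref{Stokesgraderr}, and \ref{StokesgradL2err} together with Lemma \ref{biregwsb}.
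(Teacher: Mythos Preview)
Your proposal is correct and follows essentially the same route as the paper: the same mesh-independent reduction $\|\phi-\phi_n\|_{H^1}\leq C(\|\phi-\phi_I\|_{H^1}+\|\mathbf u-\mathbf u_n\|_{[L^2]^2})$, the same weight-shift bookkeeping $\kappa_{Q_i}=2^{-\theta/a_i}=2^{-\theta_1/(a_i+1)}$ with $\theta_1=\theta(1+1/a_i)\geq k$ to get the optimal interpolation rate from Theorem \ref{gradprojerr}, and the same appeal to Theorem \ref{StokesgradL2err} for the Stokes $L^2$ error, followed by the algebraic collapse of $\theta'$ for Algorithm \ref{femalg+}. The paper verifies $\theta_1\geq k$ via the single inequality $\theta/a_i\geq 1$ (hence $\theta_1\geq\theta+1\geq k$) rather than your case split on $a_i\lessgtr k-1$, and it writes the Stokes exponent as $\min\{\max\{2\alpha_0,2(k-1)\},\max\{\alpha_0+1,k\}\}$ before implicitly using $\alpha_0>\tfrac12$, but these are cosmetic differences.
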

\begin{proof}
Denote by $\phi_I \in V_{n}^{k}$ the nodal interpolation of $\phi$. Similar to Theorem \ref{phierrthm3.1}, we have
\be\label{phih1int}
\bal
\|\phi-\phi_n\|_{H^1(\Omega)} \leq & C\left( \|\phi-\phi_I\|_{H^1(\Omega)}+  \|\mathbf{u}-\mathbf{u}_n\|_{[L^2(\Omega)]^2} \right)
\eal
\ee
Recall that $\phi \in \maK_{\mathbf a+2}^{m+2}(\Omega)=\maK_{ (\mathbf a+1)+1}^{(m+1)+1}(\Omega)$ with $m\geq k$, so by Lemma \ref{gradprojerr} with grading parameter $\kappa_{Q_i}=2^{-\frac{\theta_1}{1+a_i}}(= 2^{-\frac{\theta}{a_i}})$ with  $\theta $ given in (\ref{thetaopt}) and $\theta_1=\frac{1+a_i}{a_i}\theta=\theta+\frac{1}{a_i}\theta\geq  \theta+1 \geq k$, we have
\be\label{phigradprojerr}
\|\phi-\phi_I\|_{H^1(\Omega)} \leq Ch^{\min\{k,\theta_1\}} = Ch^{k}.
\ee
For $\theta $ given in (\ref{thetaopt}), Theorem \ref{StokesgradL2err} indicates for Algorithm \ref{femalg},
\begin{equation}\label{stokL2err1}
\|\mathbf u-\mathbf  u_n\|_{[L^2(\Omega)]^2}\leq Ch^{\min\left\{\max\{2\alpha_0, 2(k-1)\},\max\{\alpha_0+1, k\}\right\}}.
\end{equation}
Plugging (\ref{phigradprojerr}) and (\ref{stokL2err1}) into (\ref{phih1int}), the estimate (\ref{phiH1errg}) holds.

For Algorithm \ref{femalg+}, we have
\begin{equation}\label{stokL2err1+}
\|\mathbf u-\mathbf  u_n\|_{[L^2(\Omega)]^2}\leq Ch^{\min\left\{\max\{2\alpha_0, 2(k-1)\},\max\{\alpha_0+1, k\}, \frac{\beta_0}{\alpha_0}\max\{k-1, \alpha_0\}+2          \right\} }.
\end{equation}
By plugging (\ref{phigradprojerr}) and (\ref{stokL2err1+}) into (\ref{phih1int}), it follows the estimate (\ref{phiH1errg}).
\end{proof}

\begin{theorem}\label{phierrL2thm1}
Set the grading parameters $\kappa_{Q_i}=2^{-\frac{\theta}{a_i}}$ with $0<a_i<\alpha_0^i$ and $\theta$ given by
\be\label{thetaopt2}
\theta=\max\left\{k-1, \frac{k+1}{2}, a'_i\right\},
\ee
where $a'_i=\min\{\alpha_0, a_i\} \leq \alpha_0$ for $\alpha_0$ given by (\ref{alpha0}).
Let $\phi_n$ be the solution of finite element solution of (\ref{poissonfem}), and $\phi$ be the solution of the biharmonic problem (\ref{eqnbi}). 
If $\phi_n$ is the solution in Algorithm \ref{femalg}, then it follows
\be\label{phiL2errg}
\|\phi-\phi_n\| \leq Ch^{k+1}.
\ee
it is the solution in Algorithm \ref{femalg+}, then it follows
\be\label{phiL2errg+}
\|\phi-\phi_n\| \leq Ch^{\min\left\{k+1, \max\left\{ \frac{\beta_0}{\alpha_0}(k-1)+2, \beta_0+2 \right\}+1\right\}}.
\ee
\end{theorem}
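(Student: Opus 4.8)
The plan is to repeat the Aubin--Nitsche duality argument of Theorem \ref{phierrL2thm}, replacing every quasi-uniform ingredient by its graded-mesh/weighted-space counterpart from Theorems \ref{gradprojerr}, \ref{StokesgradL2err}, and \ref{phierrthmg}. First I introduce the auxiliary problem $-\Delta v=\phi-\phi_n$ in $\Omega$, $v=0$ on $\partial\Omega$, so that $\|\phi-\phi_n\|^2=(\nabla(\phi-\phi_n),\nabla v)$. Choosing $\psi=v_I\in V_n^k$ (the nodal interpolant of $v$) in the Galerkin orthogonality (\ref{galorth}) and using $(\text{curl }\mathbf w,\psi)=(\mathbf w,\textbf{curl }\psi)$, I obtain the splitting
\begin{equation*}
\|\phi-\phi_n\|^2=(\nabla(\phi-\phi_n),\nabla(v-v_I))+(\mathbf u-\mathbf u_n,\textbf{curl }(v_I-v))+(\mathbf u-\mathbf u_n,\textbf{curl }v)=:I_1+I_2+I_3.
\end{equation*}
The dual problem enjoys the weighted regularity of the type (\ref{poiregw1}): choosing $\mathbf b$ with $b_i<\min\{\beta_0^i,1\}$ (so that $b_i'=\min\{b_i,1\}=b_i$) gives $v\in\maK_{\mathbf b+1}^{2}(\Omega)$ with $\|v\|_{\maK_{\mathbf b+1}^{2}(\Omega)}\le C\|\phi-\phi_n\|$, which will control every $v$-factor below.

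For $I_1$ and $I_2$ I would use Cauchy--Schwarz and bound $\|v-v_I\|_{H^1(\Omega)}\le Ch\,\|v\|_{\maK_{\mathbf b+1}^{2}(\Omega)}$ by re-expressing the fixed grading parameter $\kappa_{Q_i}=2^{-\theta/a_i}$ in terms of the dual weight class $\mathbf b$ and invoking Theorem \ref{gradprojerr} exactly as in (\ref{dualintererr3.1}) (the choice of $\mathbf b$ forcing the effective dual exponent to reach the cap $1$). The factor $\|\phi-\phi_n\|_{H^1(\Omega)}$ comes from Theorem \ref{phierrthmg}, and $\|\mathbf u-\mathbf u_n\|_{[L^2(\Omega)]^2}$ from the first estimate of Theorem \ref{StokesgradL2err}. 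The delicate term is $I_3$, where I would not interpolate $v$ but instead invoke the dual-norm velocity estimate, $|I_3|\le\|\mathbf u-\mathbf u_n\|_{[(\maK_{\mathbf b}^{1}(\Omega))^*]^2}\,\|\textbf{curl }v\|_{\maK_{\mathbf b}^{1}(\Omega)}$, using the second estimate of Theorem \ref{StokesgradL2err} together with $\|\textbf{curl }v\|_{\maK_{\mathbf b}^{1}(\Omega)}\le C\|v\|_{\maK_{\mathbf b+1}^{2}(\Omega)}\le C\|\phi-\phi_n\|$. Matching the weight index of the dual solution $\maK_{\mathbf b+1}^{2}$ to the test space $\maK_{\mathbf b}^{1}$ of the dual-norm estimate is precisely what the choice $b_i<\min\{\beta_0^i,1\}$ arranges, and it plays the role of the $\lfloor\beta\rfloor$-split used in Theorem \ref{phierrL2thm}.

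Dividing by $\|\phi-\phi_n\|$ reduces the statement to an exponent count. For Algorithm \ref{femalg}, $I_1$ contributes $h^{k}\cdot h$, $I_2$ contributes $\|\mathbf u-\mathbf u_n\|_{[L^2(\Omega)]^2}\cdot h=h^{\min\{2\theta,\theta+1\}+1}$, and $I_3$ contributes $\|\mathbf u-\mathbf u_n\|_{[(\maK_{\mathbf b}^{1})^*]^2}=h^{\min\{2\theta,\theta+2\}}$. The point of the proof is that the prescribed $\theta=\max\{k-1,\tfrac{k+1}{2},a_i'\}$ is exactly the smallest value driving all three terms to $h^{k+1}$: the constraint $\theta\ge k-1$ makes $\|\mathbf u-\mathbf u_n\|_{[L^2(\Omega)]^2}\le Ch^{k}$ (hence $I_1,I_2\le Ch^{k+1}$), while the constraint $\theta\ge\tfrac{k+1}{2}$ makes $\min\{2\theta,\theta+2\}\ge k+1$ (hence $I_3\le Ch^{k+1}$), and the floor $a_i'$ only guarantees $\kappa_{Q_i}\le\tfrac12$. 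This yields (\ref{phiL2errg}).

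For Algorithm \ref{femalg+} I would carry the same three-term count, the only difference being that the velocity estimates of Theorem \ref{StokesgradL2err} now carry the additional $\theta'$-contributions generated by the finite-element source $\mathbf F_n=\textbf{curl }w_n$ (controlled through Lemma \ref{fcurlgal} and the first Poisson solve of Lemma \ref{possion1error}, with $\theta'=\min\{\tfrac{\beta_0}{\alpha_0}\max\{\theta,\alpha_0\},k\}$). These propagate through $I_2$ and $I_3$ and, after substituting $\theta=\max\{k-1,\tfrac{k+1}{2},a_i'\}$, produce the extra terms $\tfrac{\beta_0}{\alpha_0}(k-1)+2$ and $\beta_0+2$ inside the final minimum of (\ref{phiL2errg+}) (the trailing $+1$ being the extra power from the dual interpolation). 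I expect the main obstacle to be this exponent bookkeeping --- specifically verifying that $\tfrac{k+1}{2}$ is the binding lower bound for $\theta$ coming from the $I_3$ term and that none of the $\mathbf F-\mathbf F_n$ contributions undercut $h^{k+1}$ --- rather than any new analytic ingredient, since all the requisite regularity, interpolation, and Stokes estimates are already established.
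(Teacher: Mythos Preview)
Your proposal is correct and follows essentially the same approach as the paper: the same Aubin--Nitsche duality problem, the same three-term splitting (your $I_1,I_2,I_3$ are exactly the paper's $T_1,T_2,T_3$), the same weighted regularity for $v$, the same interpolation bound $\|v-v_I\|_{H^1}\le Ch$, and the same appeal to Theorems \ref{phierrthmg} and \ref{StokesgradL2err} for the $\phi$- and $\mathbf u$-errors. Your exponent bookkeeping is accurate; note only that both constraints $\theta\ge k-1$ and $\theta\ge\tfrac{k+1}{2}$ are simultaneously needed for \emph{each} of $I_2$ and $I_3$ (e.g.\ $\min\{2\theta,\theta+1\}\ge k$ requires $\theta\ge k/2$ in addition to $\theta\ge k-1$, and $\min\{2\theta,\theta+2\}\ge k+1$ also requires $\theta\ge k-1$), so the attribution of each constraint to a single term is slightly imprecise, but the conclusion is unaffected.
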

\begin{proof}
Set $\psi = v_I \in V_{n}^{k}$ the nodal interpolation of $v$ of the Poisson problem (\ref{dualL2}). Similar to Theorem \ref{phierrL2thm}, we have
\be\label{dualL21}
\bal
\|\phi - \phi_n\|^2
\leq & \|\phi-\phi_n\|_{H^1(\Omega)} \|v - v_I\|_{H^1(\Omega)} + \|\mathbf{u} - \mathbf{u}_n\|_{[L^2(\Omega)]^2} \|v-v_I\|_{H^1(\Omega)} \\
& + \|\mathbf{u} - \mathbf{u}_n\|_{[(\maK_{\mathbf b}^{1}(\Omega))^*]^2}\|\textbf{curl }v\|_{[\maK_{\mathbf b}^{1}(\Omega)]^2}:=T_1+T_2+T_3.
\eal
\ee
Based on the results in \cite{BNZ205}, the solution
$v\in \mathcal K^{2}_{\mathbf b'+1}(\Omega)$ satisfies the regularity estimate
\be\label{poiregw}
\|v\|_{K^{2}_{\mathbf b'+1}(\Omega)} \leq C\|\phi-\phi_n\|,
\ee
where the $i$th entry of $\mathbf{b}'$ is given by $b'_i=\min\left\{b_i, 1\right\}$ with $b_i<\frac{\pi}{\omega_i}$.
If $\omega>\pi$, we have 
$$
\theta'\geq \theta \geq\frac{k+1}{2}\geq 1,
$$
so it follows the interpolation error
\be\label{dualintererr}
\| v - v_I \|_{H^1(\Omega)} \leq Ch^{\min\{\theta', 1\}}\|v\|_{K^{2}_{\mathbf b'+1}(\Omega)}=Ch\|v\|_{K^{2}_{\mathbf b'+1}(\Omega)}.
\ee
If $\omega<\pi$, the interpolation error (\ref{dualintererr}) is obvious since $v\in H^2(\Omega)$.

For Algorithm \ref{femalg}, we have the following estimate for each $T_i$, $i=1,2,3$.
By Theorem \ref{phierrthmg} and (\ref{dualintererr}), it follows
$$
T_1 =\|\phi-\phi_n\|_{H^1(\Omega)} \|v - v_I\|_{H^1(\Omega)}\leq Ch^{k+1}\|v\|_{K^{2}_{\mathbf b'+1}(\Omega)}.
$$
By Theorem \ref{StokesgradL2err}  and (\ref{dualintererr}), it follows
$$
T_2 = \|\mathbf{u} - \mathbf{u}_n\|_{[L^2(\Omega)]^2} \|v-v_I\|_{H^1(\Omega)}\leq Ch^{k+2}\|v\|_{K^{2}_{\mathbf b'+1}(\Omega)}.
$$
Again, by Theorem \ref{StokesgradL2err}, it follows
$$
T_3 \leq C\|\mathbf{u} - \mathbf{u}_n\|_{[(\maK_{\mathbf b'}^{1}(\Omega))^*]^2}\|v\|_{\maK_{\mathbf b'+1}^{2}(\Omega)} \leq Ch^{k+1}\|v\|_{K^{2}_{\mathbf b'+1}(\Omega)}.
$$
Thus, the regularity estimate (\ref{poiregw}) and the summation of $T_i$, $i=1,2,3$ give the estimate (\ref{phiL2errg}).

For Algorithm \ref{femalg+}, we have the following estimate for $T_i$.
By Theorem \ref{phierrthmg} and (\ref{dualintererr}), it follows
$$
T_1 \leq Ch^{\min\left\{k+1, \max\left\{ \frac{\beta_0}{\alpha_0}(k-1)+2, \beta_0+2 \right\}+1\right\}}\|v\|_{K^{2}_{\mathbf b'+1}(\Omega)}.
$$
By Theorem \ref{StokesgradL2err}  and (\ref{dualintererr}), it follows
$$
T_2 \leq Ch^{\min\left\{k+2, \max\left\{ \frac{\beta_0}{\alpha_0}(k-1)+2, \beta_0+2 \right\}+2\right\}}\|v\|_{K^{2}_{\mathbf b'+1}(\Omega)}.
$$
Again,  it follows by Theorem \ref{StokesgradL2err},
$$
T_3 \leq Ch^{\min\left\{k+1, \max\left\{ \frac{\beta_0}{\alpha_0}(k-1)+2, \beta_0+2 \right\}+1\right\}}\|v\|_{K^{2}_{\mathbf b'+1}(\Omega)}.
$$
Thus, the regularity estimate (\ref{poiregw}) and the summation of $T_i$, $i=1,2,3$ again give the estimate (\ref{phiL2errg+}).

\end{proof}

\begin{remark}
For the results in Theorem \ref{phierrthmg} and Theorem \ref{phierrL2thm1}, we have the following facts.
\begin{itemize}
\item If $k=1$ in (\ref{thetaopt}), then $\theta=a_i$, $i=1,\cdots, N$ gives $\kappa_{Q_i}=\frac{1}{2}$, which indicates the mesh is exactly the uniform mesh.
\item With the grading parameters given, the finite element approximations $\phi_n$ from Algorithm \ref{femalg+} achieve the optimal convergence rates at least for $k= 1,2,3$ in both $H^1$ and $L^2$ norm. Moreover, the optimal convergence rates can be obtained for any $k\geq 1$  when $\omega>\pi$ by (\ref{alpha2beta2}). 

\item With the grading parameters given, the finite element approximations $\phi_n$ from Algorithm \ref{femalg} can achieve optimal convergence rate for any $k \geq 1$.

\item Based on the regularity (\ref{regwsb3.2}) for the involved Stokes problem in Algorithm \ref{femalg+}, if we take the grading parameter $\kappa_{Q_i}=2^{-\frac{\theta}{c_i}}\left(\leq 2^{-\frac{\theta}{a_i}}\right)$, then the corresponding error estimates (\ref{phiH1errg3.1}),  (\ref{uh1errwsb+}), (\ref{stokL2err+}) and (\ref{phiH1errg+}) still hold. 
\item By (\ref{eqnpoisson}) and $\phi \in H_0^2(\Omega)$, we have that
$$
\|\phi\|_{H^2(\Omega)} \leq C\|\mathbf{u}\|_{[H^1(\Omega)]^2},
$$
so the error $\|\mathbf{u}-\mathbf{u}_n\|_{[H^1(\Omega)]^2}$
is an estimate of the solution error to $\phi$ in $H^2$ norm.
\end{itemize}
\end{remark}

\section{Numerical illustrations}\label{sec-5}
In this section, we present numerical tests to validate our theoretical predictions for the proposed finite element algorithm for solving the biharmonic problem (\ref{eqnbi}).
If an exact solution (or vector) $v$ is unknown, we use the following numerical convergence rate
\begin{eqnarray}\label{rate}
{\mathcal R}=\log_2\frac{|v_j-v_{j-1}|_{[H^l(\Omega)]^{l'}}}{|v_{j+1}-v_j|_{[H^l(\Omega)]^{l'}}},
\end{eqnarray}
$l=0,1$ as an  indicator of the actual convergence rate \cite{LN18}.
Here $v_j$ denotes the finite element solution on the  mesh $\mathcal T_j$ obtained after $j$  refinements of the initial triangulation $\mathcal T_0$. For scalar functions, we take $l'=1$, otherwise, $l'=2$. So if $v_j = w_j, \phi_j, p_j$, we take $l'=1$; if $v_j=\mathbf{u}_j$, we take $l'=2$.

To test the performance of Algorithm \ref{femalg+} and Algorithm \ref{femalg} for solving the biharmonic problem (\ref{eqnbi}), we shall use the $H^2$-conforming Argyris finite element approximation \cite{argyris1968tuba} as a reference solution $\phi_R$, which is computed on the same mesh as that for Algorithm \ref{femalg}. Since the solution of the $H^2$-conforming finite element method converges to the exact solution $\phi$ regardless of the convexity of the domain as the mesh is refined, so we can use $\phi_R$ as a good approximation of the exact solution $\phi$.

Since the convergence rate of the finite element approximation $w_j$ of the Poisson equation in Algorithm \ref{femalg+} has been well investigated in many papers (see e.g., \cite{lyz2020, li2021}), so we will not report the convergence rates of $w_j$ in the following numerical tests.

\begin{example}\label{ex5.1}
We solve the bibarmonic problem (\ref{eqnbi}) with $f=1$ using Algorithm \ref{femalg} based on polynomials with $k=2$ on uniform meshes obtained by the midpoint refinements.
The source term of involved Stokes problem (\ref{stokes}) in Algorithm \ref{femalg} is taken as $\mathbf{F}=(0,x)^T$, which satisfies (\ref{curlFf}).

\noindent\textbf{Test case 1.} We first consider this problem in a square domain $\Omega=(-1,1)^2$ with the initial mesh given in Figure \ref{Mesh_InitS}a.
The errors in $L^\infty$ norm between the finite element solution $\phi_j$ and the reference solution $\phi_R$ are given in Table \ref{MaxErrSquare}. The finite element solution and its difference with the reference solution are shown in \ref{Mesh_InitS}b and \ref{Mesh_InitS}c, respectively. These results indicate that the solution of Algorithm \ref{femalg} converges to the exact solution.
In table \ref{Sqr_Poi_Rates}, we report the $H^1$ and $L^2$ convergence rates of the finite element solutions $\phi_j$, respectively. The results indicate that optimal convergence rates are obtained for finite element solutions of the biharmonic problem.

In addition, the Taylor-Hood element approximations $\mathbf{u}_7$ and $p_7$ for the involved Stokes problem are shown in Figure \ref{Mesh_InitS}d-\ref{Mesh_InitS}f.
In Table \ref{Sqr_Poi_Rates}, we also report the $H^1$ and/or $L^2$ convergence rates of the Taylor-Hood element approximations $\mathbf{u}_j$ and $p_j$, the results imply that optimal convergence rates are obtained for Taylor-Hood element approximations of the Stokes problem.

These results are consistent with our expectation in Lemma \ref{stokesestlem}, Theorem \ref{phierrthm3.1}, and Theorem \ref{phierrL2thm} for the biharmonic problem (\ref{eqnbi}) and the involved Stokes problem (\ref{stokes}) in a convex domain.

\begin{table}[!htbp]\tabcolsep0.04in
\caption{The $L^\infty$ error $\|\phi_R-\phi_j\|_{L^\infty(\Omega)}$ in the square domain  on quasi-uniform meshes.}
\begin{tabular}[c]{|c|c|c|c|c|c|c|}
\hline
\multirow{2}{*}{} & $j=3$ & $j=4$ & {$j=5$} & {$j=6$} &{$j=7$} &{$j=8$} \\
\hline
$k=2$  &  6.66250e-05 & 9.10404e-06 & 1.17798e-06 & 1.49996e-07 & 1.89341e-08 & 2.37885e-09 \\
\cline{1-7}
\end{tabular}\label{MaxErrSquare}
\end{table}

\begin{figure}[h]
\centering
\subfigure[]{\includegraphics[width=0.28\textwidth]{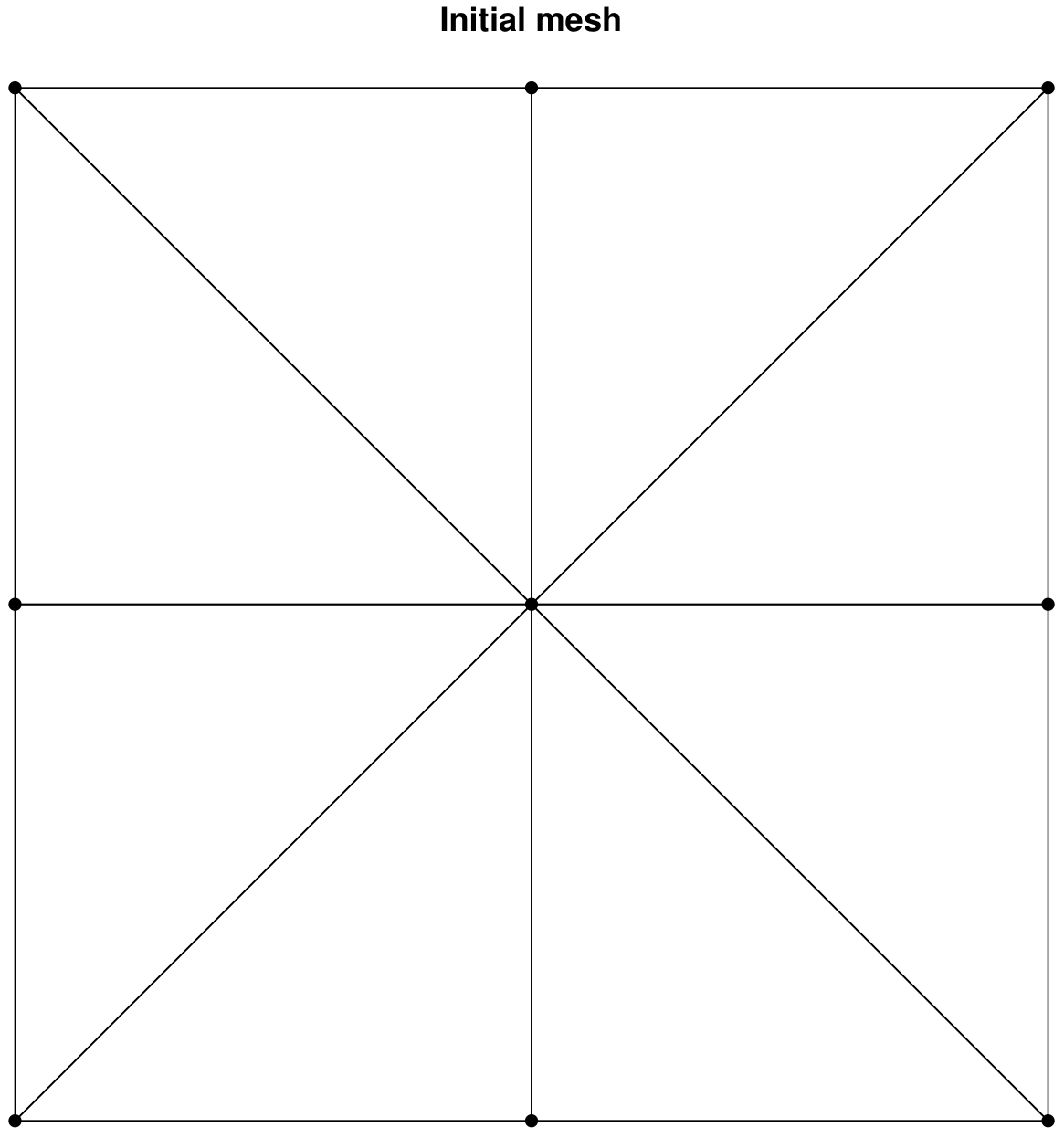}}
\subfigure[]{\includegraphics[width=0.325\textwidth]{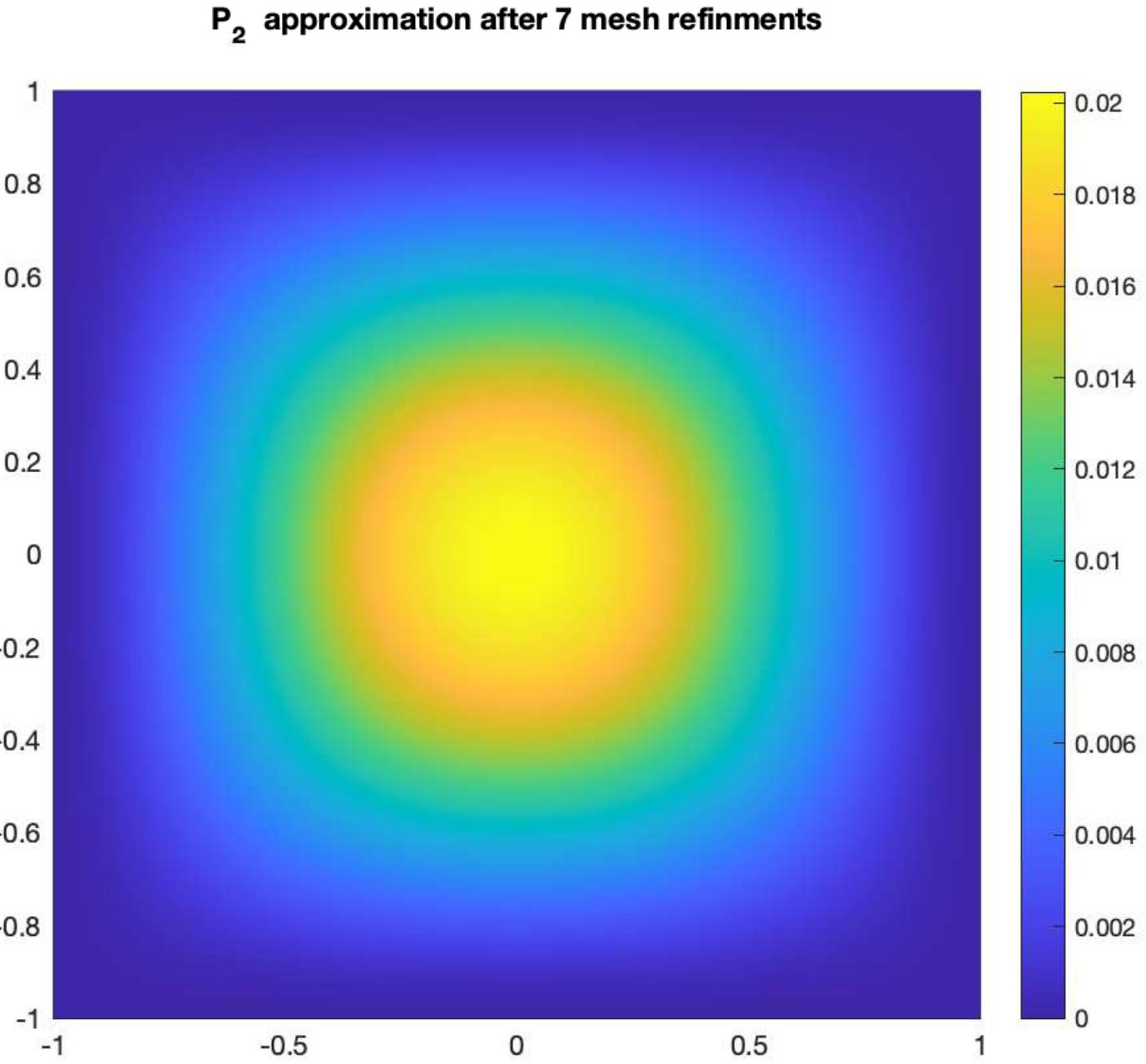}}
\subfigure[]{\includegraphics[width=0.325\textwidth]{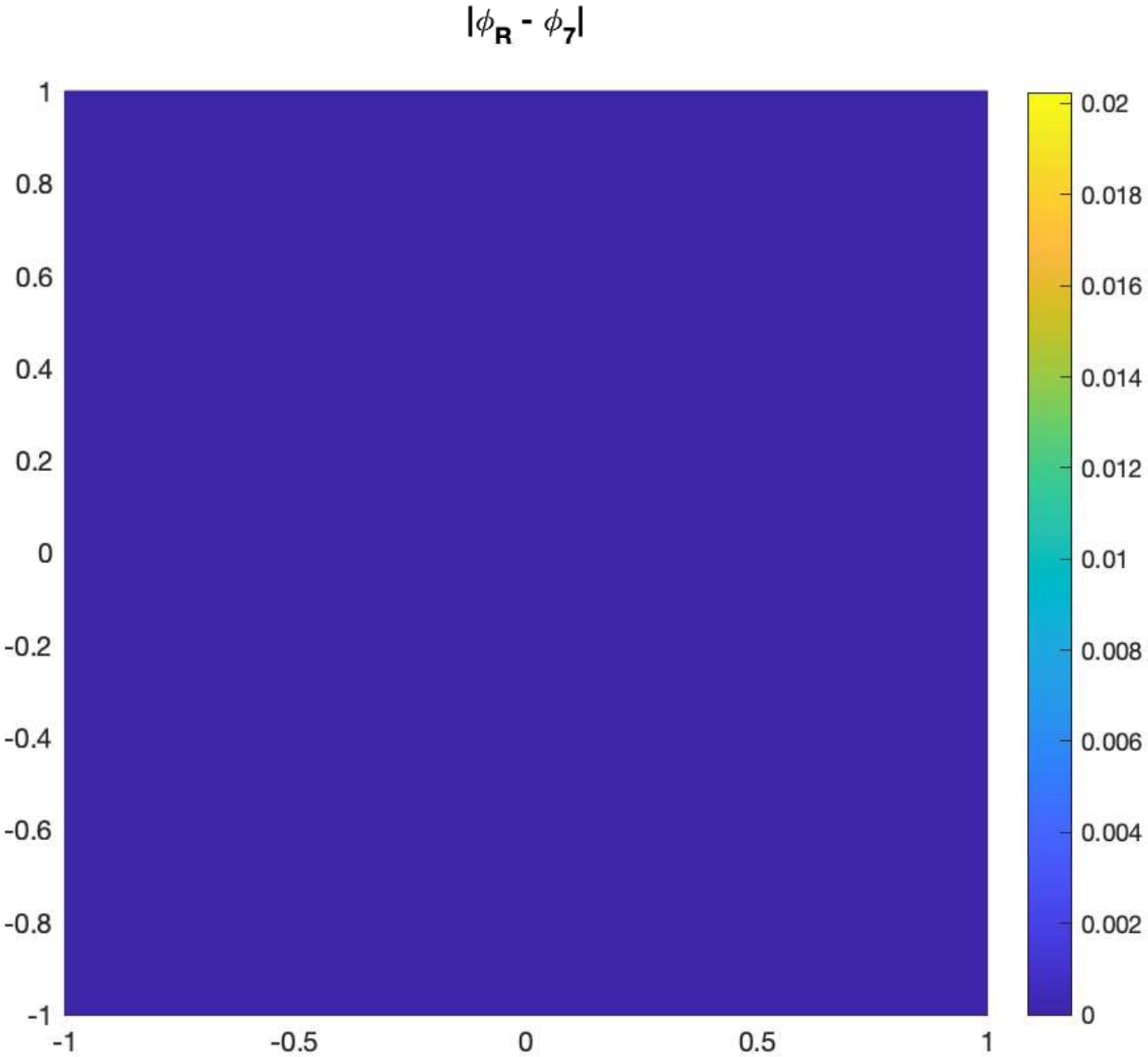}}
\subfigure[]{\includegraphics[width=0.325\textwidth]{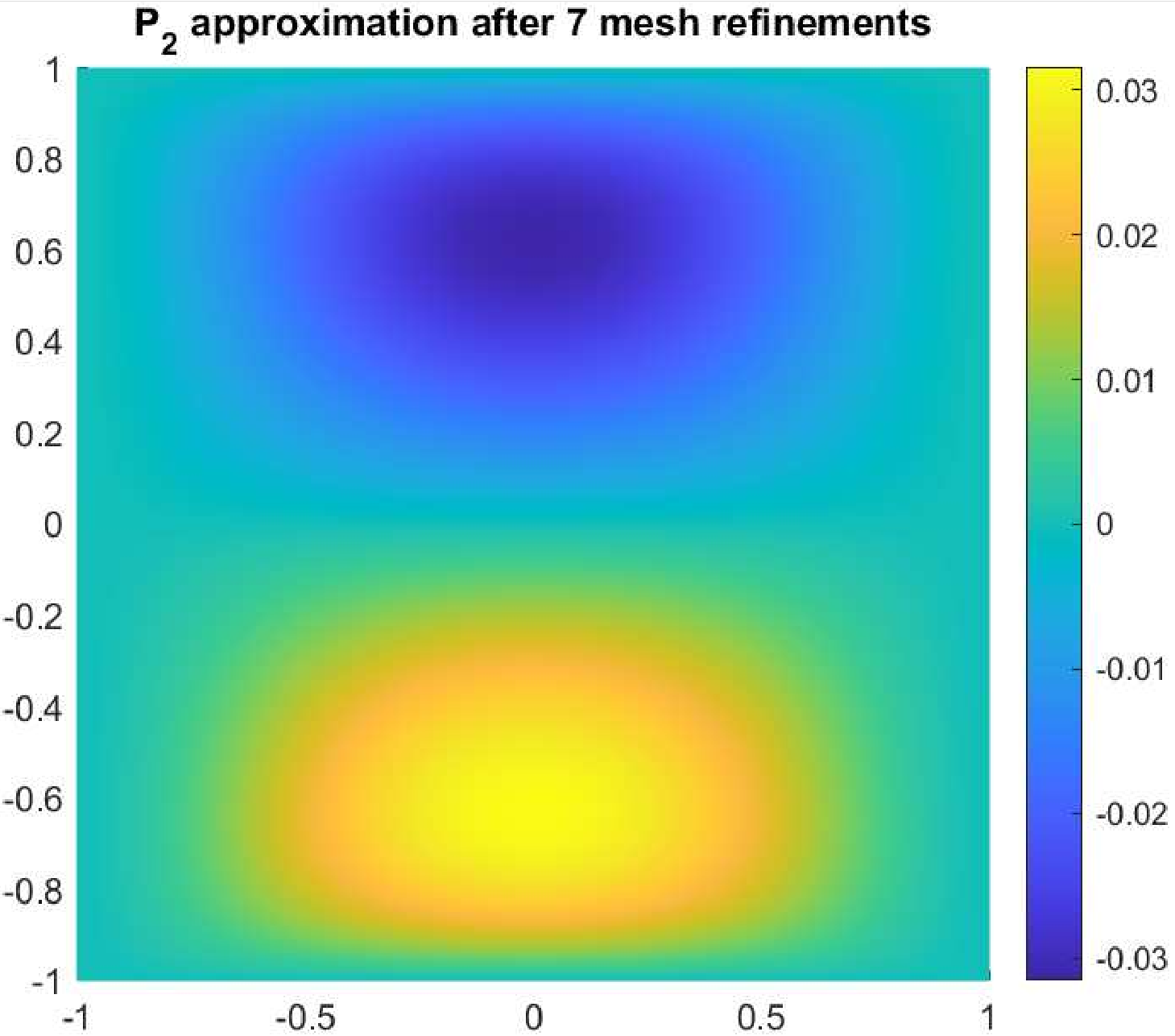}}
\subfigure[]{\includegraphics[width=0.325\textwidth]{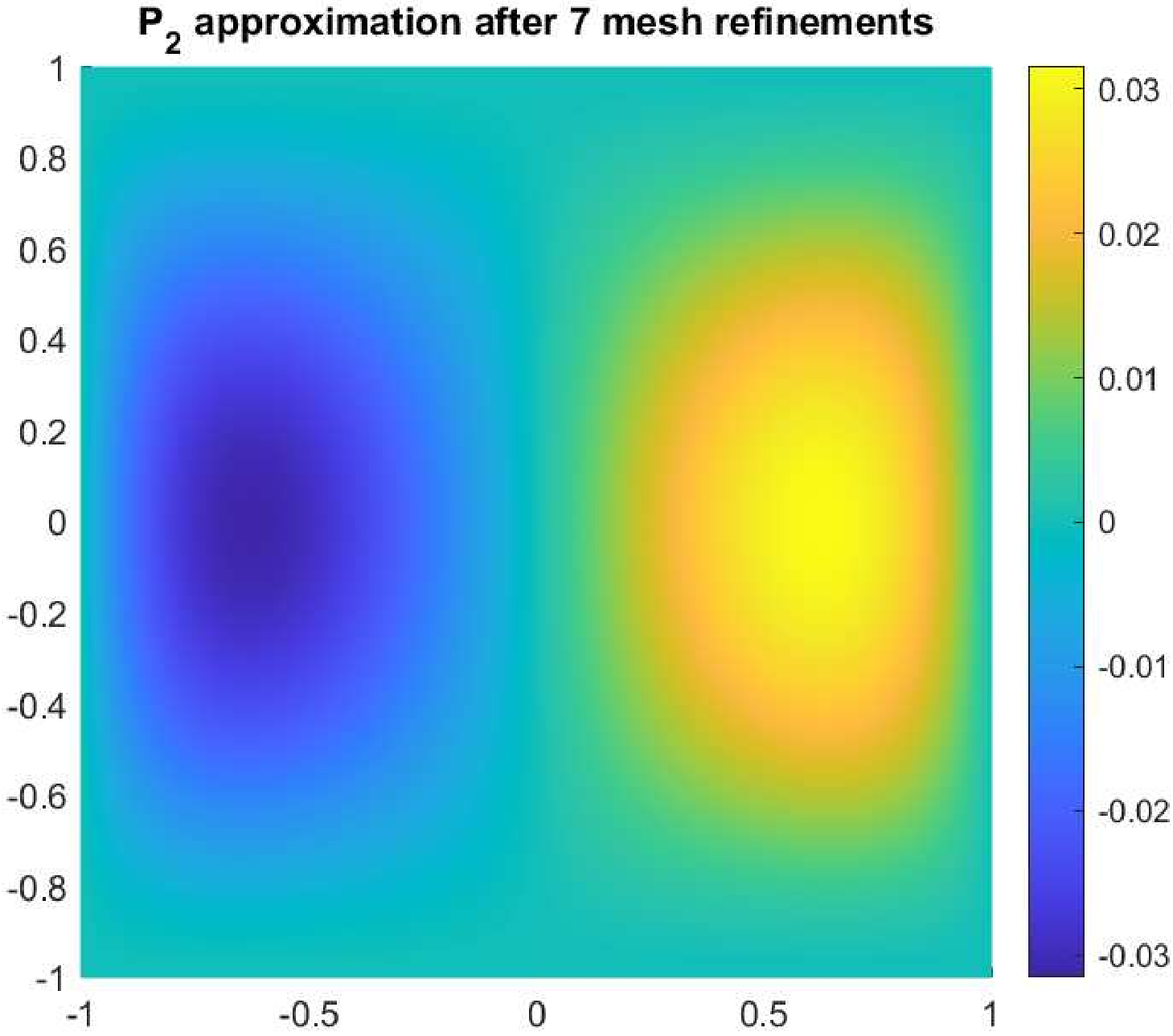}}
\subfigure[]{\includegraphics[width=0.325\textwidth]{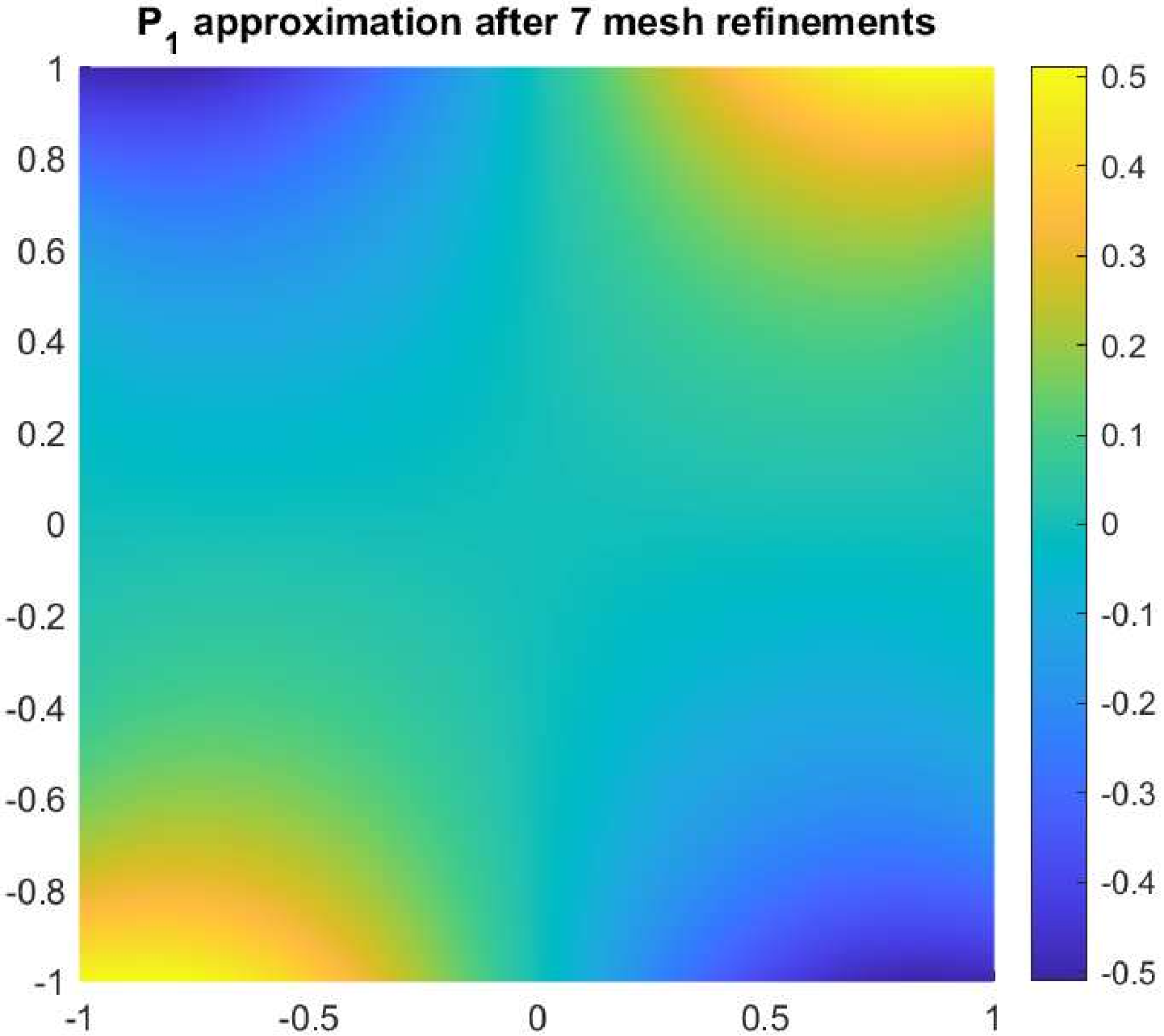}}
\caption{The Square domain (Example \ref{ex5.1}):  (a) the initial mesh; (b) $P_2$ finite element approximation
$\phi_7$; (c) the difference $|\phi_R-\phi_7|$; (d) Taylor-Hood element approximation $u_1$ of $\mathbf{u}_7$; (e) Taylor-Hood element approximation $u_2$ of $\mathbf{u}_7$; (d) Taylor-Hood element approximation $p_7$. }\label{Mesh_InitS}
\end{figure}

\begin{table}[!htbp]\tabcolsep0.04in
\caption{Numerical convergence rates in the square domain on uniform meshes.}
\centering
\begin{tabular}{ |c| c|  c| c|| c |c |c|} \hline
 &  & {$H^1$ rate of $\phi_j$}& {$L^2$ rate of $\phi_j$}&{$H^1$  rate of $\mathbf{u}_j$}&{$L^2$ rate of $\mathbf{u}_j$ } & { $L^2$ rate of $p_j$}\\
\hline
\multirow{5}{*}{$k=2$}
&$j=4$ &1.96	&3.00	&1.99	&3.01& 2.03\\
&$j=5$ &1.99    &3.00   &2.00   &3.02& 2.02\\
&$j=6$ &2.00	&3.00	&2.00	&3.01 & 2.01\\
&$j=7$ &2.00	&3.00	&2.00	&3.00 & 2.00\\
&$j=8$ &2.00	&3.00	&2.00	&3.00 & 2.00\\
\hline
\end{tabular}\label{Sqr_Poi_Rates}
\end{table}

\noindent\textbf{Test case 2.} We then consider this problem in an L-shaped domain $\Omega=\Omega_0\setminus\Omega_1$ with $\Omega_0=(-1,1)^2$ and $\Omega_1=(0,1)\times(-1,0)$ based on the initial mesh given in Figure \ref{Mesh_InitL}a.
The error $\|\phi_R-\phi_j\|_{L^\infty(\Omega)}$ is given in Table \ref{MaxErrLshaped}. The finite element solution and its difference with the reference solution are shown in Figure \ref{Mesh_InitL}b and \ref{Mesh_InitL}c, respectively. These results indicate that the solutions of Algorithm \ref{femalg} converge to the exact solution in a nonconvex polygonal domain.

\begin{table}[!htbp]\tabcolsep0.04in
\caption{The $L^\infty$ error $\|\phi_R-\phi_j\|_{L^\infty(\Omega)}$ in the L-shaped domain  on quasi-uniform meshes.}
\begin{tabular}[c]{|c|c|c|c|c|c|c|}
\hline
\multirow{2}{*}{} & $j=3$ & $j=4$ & {$j=5$} & {$j=6$} &{$j=7$} &{$j=8$} \\
\hline
$k=2$  &  8.74987e-04 & 3.94122e-04 & 1.77980e-04 & 8.26205e-05 & 3.86434e-05 & 1.81330e-05\\
\cline{1-7}
\end{tabular}\label{MaxErrLshaped}
\end{table}

\begin{figure}[h]
\centering
\subfigure[]{\includegraphics[width=0.28\textwidth]{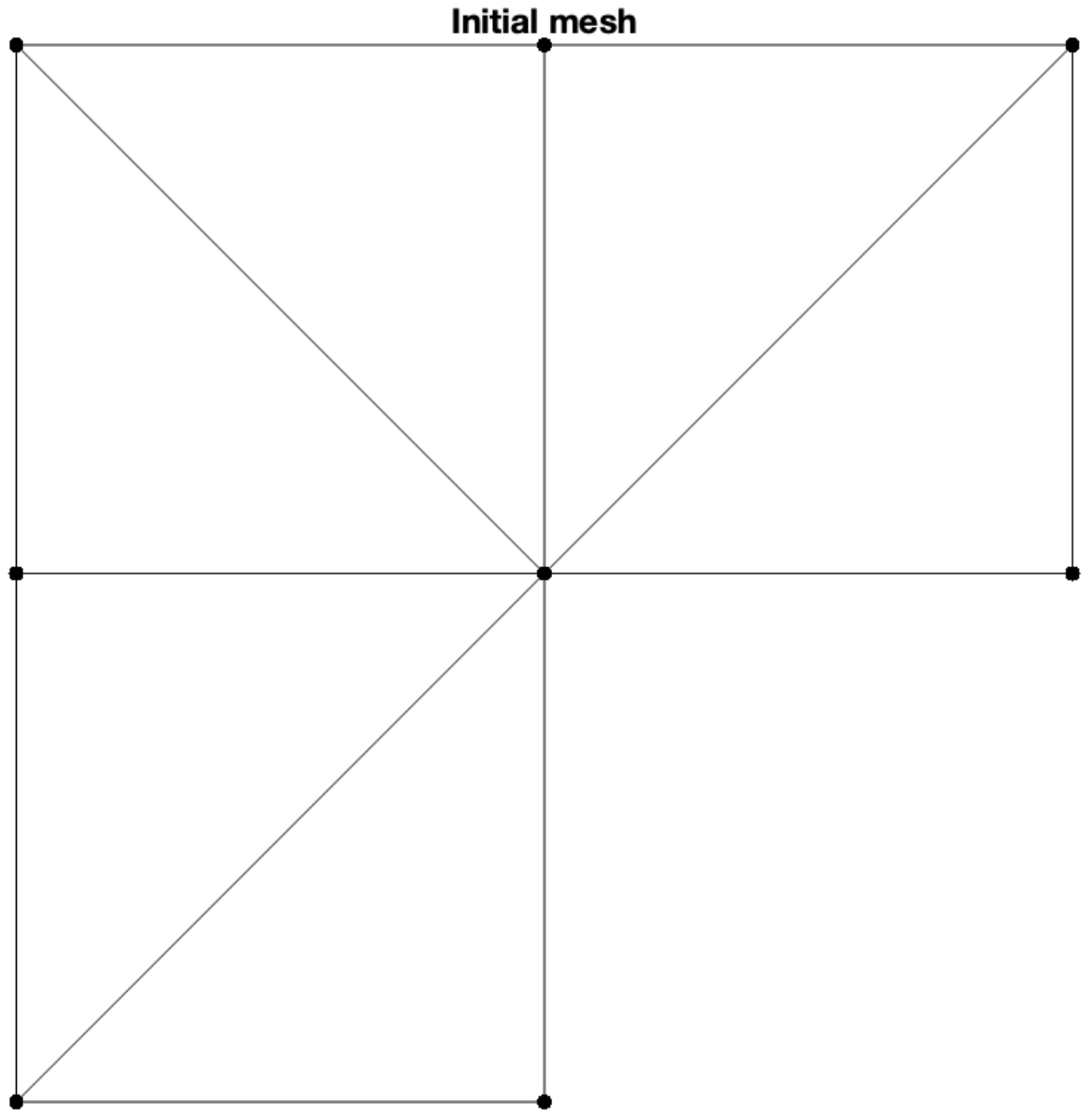}}
\subfigure[]{\includegraphics[width=0.325\textwidth]{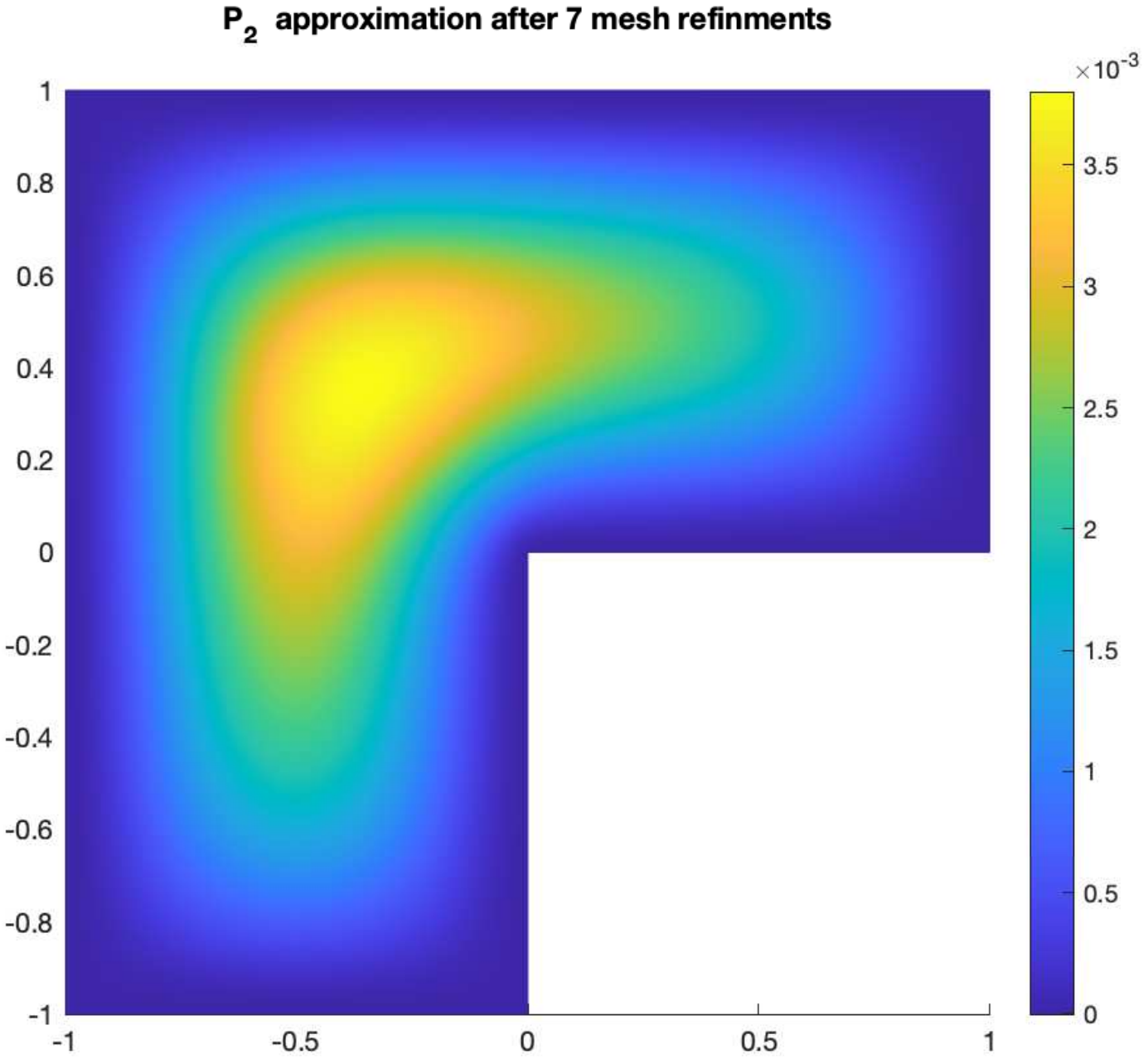}}
\subfigure[]{\includegraphics[width=0.325\textwidth]{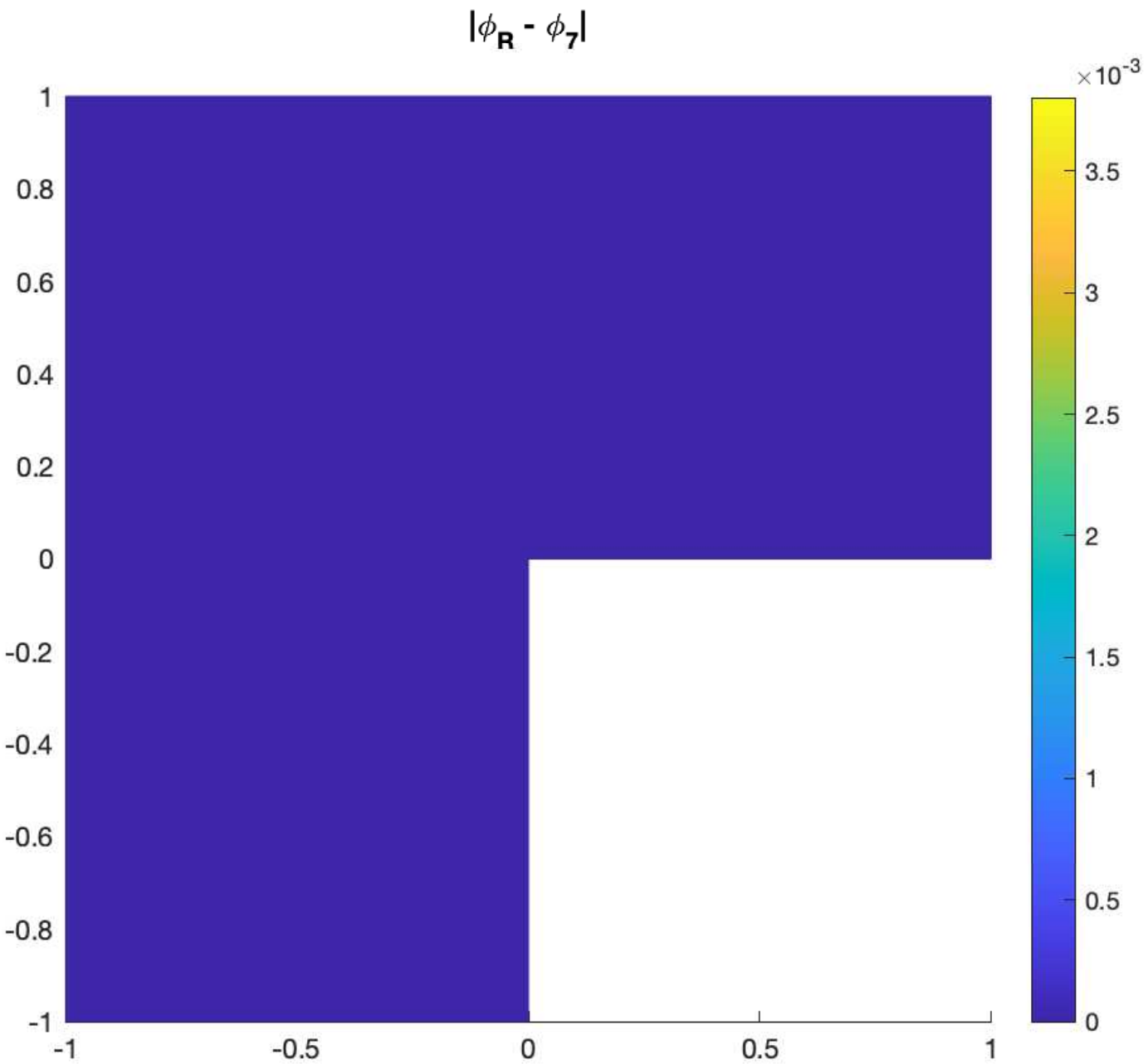}}

\caption{The L-shaped domain (Example \ref{ex5.1}):  (a) the initial mesh; (b) $P_2$ finite element approximation
$\phi_7$; (c) the difference $|\phi_R-\phi_7|$.}\label{Mesh_InitL}
\end{figure}

\end{example}

\begin{example}\label{ex5.3}
We test Example \ref{ex5.1} Test case 2 again using Algorithm \ref{femalg+} and Algorithm \ref{femalg}. For Algorithm \ref{femalg}, we will consider a different source term ${\mathbf F}=(-y, 0)^T$ for the involved Stokes problem, and the new source term also satisfies (\ref{curlFf}), namely, $\text{curl } \mathbf{F}=f=1$.
Here we denote the finite element approximations in Example \ref{ex5.1} Test case 2 (namely, Algorithm \ref{femalg} with $\mathbf{F}=(0,x)^T$) by $\tilde{\phi}_j$ for the biharmonic problem and by $\tilde{\mathbf{u}}_j$ and $\tilde{p}_j$ for the involved Stokes problem.

We compare the finite element approximations with those obtained in Example \ref{ex5.1} Test case 2 on uniform meshes. The errors of the finite element approximations of the biharmonic problem are shown in Table \ref{errratbi}, and these for the Taylor-Hood element approximations of the involved Stokes problem are reported in Table \ref{errratst}. From these results, we find that the errors of finite element approximations ${\phi}_j$ and Taylor-Hood element approximations ${\mathbf{u}}_j$ of both Algorithm \ref{femalg+} and Algorithm \ref{femalg} are converging to the solutions of solutions in Example \ref{ex5.1} Test case 2. Therefore, the finite element approximations $\phi_j$ from both Algorithm \ref{femalg+} and Algorithm \ref{femalg} converge to the exact solution of the biharmonic problem (\ref{eqnbi}).
All these results also indicate that the finite element approximations $\phi_j$ and the Taylor-Hood element approximations $\mathbf{u}_j$ are independent of the choice of the source term $\mathbf F$ as long as it satisfies (\ref{curlFf}), and these approximations are uniquely determined by $f$ in (\ref{eqnbi}). The results are consistent with the theoretical results in Lemma \ref{Stokeindep} and Corollary \ref{coro1}.
We also find that the errors between $\tilde{p}_j$ and $p_j$ are not converging, which implies that $p_j$ depends on the specific $\mathbf{F}$ as indicated in Lemma \ref{Stokeindep}.

\begin{table}[!htbp]\tabcolsep0.04in
\caption{Errors with the reference solution and convergence rates in the L-shaped domain.}
\centering
\begin{tabular}{ |c| c c| c c |}
\hline
 &  \multicolumn{2}{c|}{Algorithm \ref{femalg+}}  & \multicolumn{2}{c|}{Algorithm \ref{femalg}} \\
\hline
 & $\|\phi_j-\tilde \phi_j\|_{H^1(\Omega)}$ & $\|\phi_j-\tilde \phi_j\|$ & $\|\phi_j-\tilde \phi_j\|_{H^1(\Omega)}$ & $\|\phi_j-\tilde \phi_j\|$ \\
\hline
 $j=5$ & 1.73930e-06 & 3.83797e-07 & 7.97013e-09  &  3.72412e-10 \\
 $j=6$ & 3.73791e-07 & 8.38856e-08 & 5.88353e-10 &  2.10894e-11 \\
 $j=7$ & 8.05609e-08 & 1.82423e-08 & 4.56271e-11 &  1.24448e-12\\
 $j=8$ & 1.73829e-08 & 3.95457e-09 & 3.70076e-12 & 7.54393e-14 \\
 $j=9$ & 3.75263e-09 & 8.55733e-10 & 3.10543e-13 &  4.80570e-15\\
 $j=10$ & 8.10299e-10 & 1.84999e-10 & 2.81240e-14 & $--$  \\
\hline
\end{tabular}\label{errratbi}
\end{table}

\begin{table}[!htbp]\tabcolsep0.04in
\caption{Errors with the reference solution and convergence rates in the L-shaped domain.}
\centering
\begin{tabular}{ |c| c c c | c c c|}
\hline
 &  \multicolumn{3}{c|}{Algorithm \ref{femalg+}}  & \multicolumn{3}{c|}{Algorithm \ref{femalg}} \\
\hline
 & $\|\mathbf{u}_j-\tilde{ \mathbf{u}}_j\|_{[H^1(\Omega)]^2}$ & $\|\mathbf{u}_j-\tilde{ \mathbf{u}}_j\|_{[L^2(\Omega)]^2}$ &  $\|p_j-\tilde{p}_j\|$  & $\|\mathbf{u}_j-\tilde{ \mathbf{u}}_j\|_{[H^1(\Omega)]^2}$ & $\|\mathbf{u}_j-\tilde{ \mathbf{u}}_j\|_{[L^2(\Omega)]^2}$ &  $\|p_j-\tilde{p}_j\|$  \\
\hline
$j=5$ & 7.99655e-05 & 1.36408e-06 & 4.19809e-01 & 1.10073e-05 & 7.49412e-08 & 5.59017e-01 \\
$j=6$ & 2.48630e-05 & 2.79879e-07 & 4.19758e-01 & 1.95571e-06 &  6.67031e-09 & 5.59017e-01 \\
 $j=7$ & 7.79196e-06 & 5.87681e-08 & 4.19736e-01 & 3.46595e-07 &  5.91602e-10 & 5.59017e-01 \\
 $j=8$ & 2.44954e-06 & 1.24998e-08 & 4.19727e-01 & 6.13467e-08 & 5.23793e-11 & 5.59017e-01 \\
 $j=9$ & 7.70977e-07 & 2.67752e-09 & 4.19723e-01 & 1.08514e-08 & 4.63360e-12 & 5.59017e-01 \\
 $j=10$ & 2.42772e-07 & 5.75794e-10 & 4.19722e-01 & 1.91888e-09 & 4.09765e-13 & 5.59017e-01 \\
\hline
\end{tabular}\label{errratst}
\end{table}

\end{example}

\begin{example}\label{ex5.2}
We solve the problem in Example \ref{ex5.1} Test Case 2 using both Algorithm \ref{femalg+} and Algorithm \ref{femalg} with polynomials $k=1,2$ on a sequence of graded meshes (including uniform mesh). The initial mesh and the graded mesh after 2 mesh refinements are shown in Figure \ref{Mesh_InitL}a and Figure \ref{Mesh_Init_Graded}a, respectively.

\begin{figure}[h]
\centering
\subfigure[]{\includegraphics[width=0.28\textwidth]{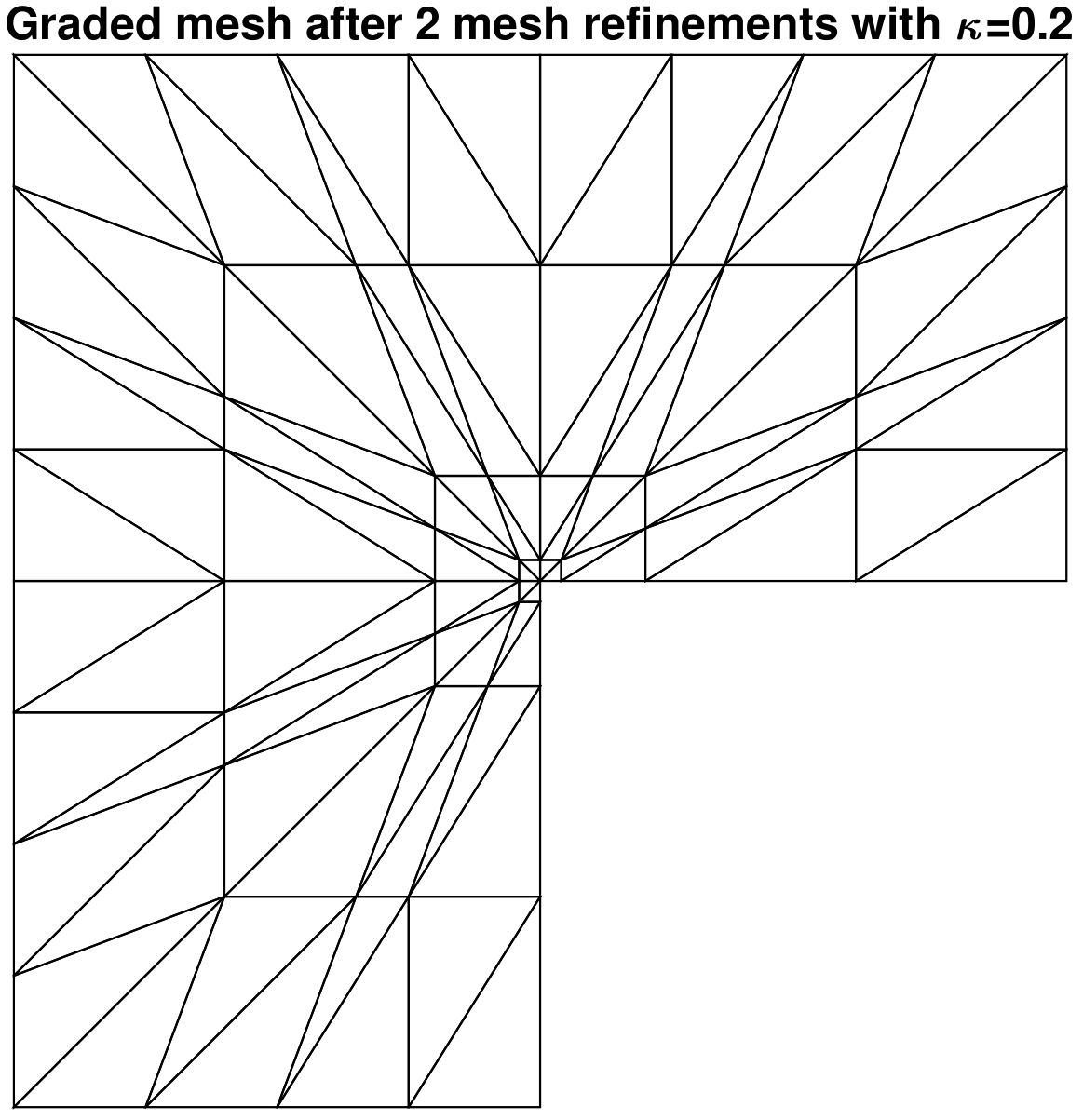}}
\subfigure[]{\includegraphics[width=0.325\textwidth]{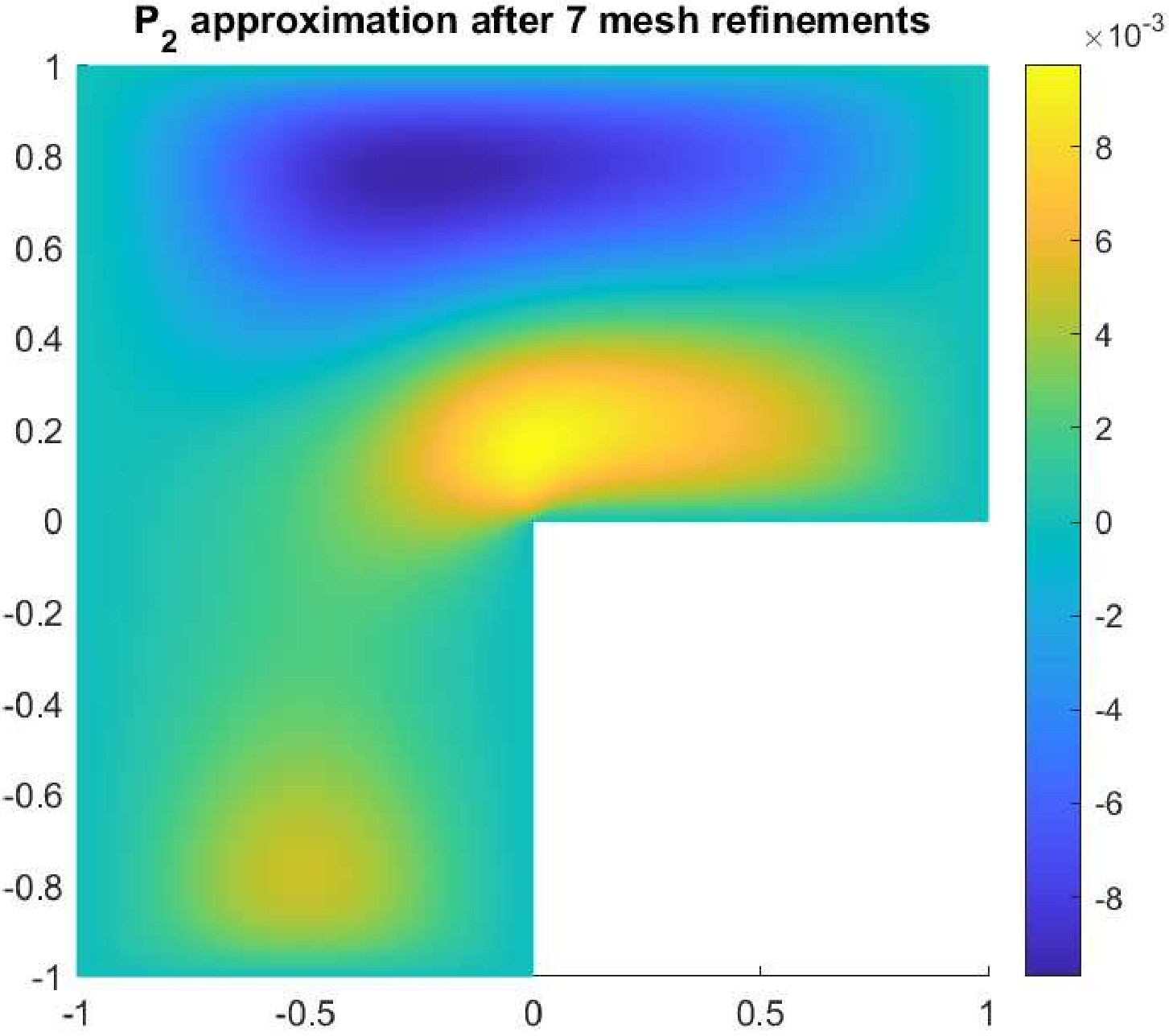}}\\
\subfigure[]{\includegraphics[width=0.325\textwidth]{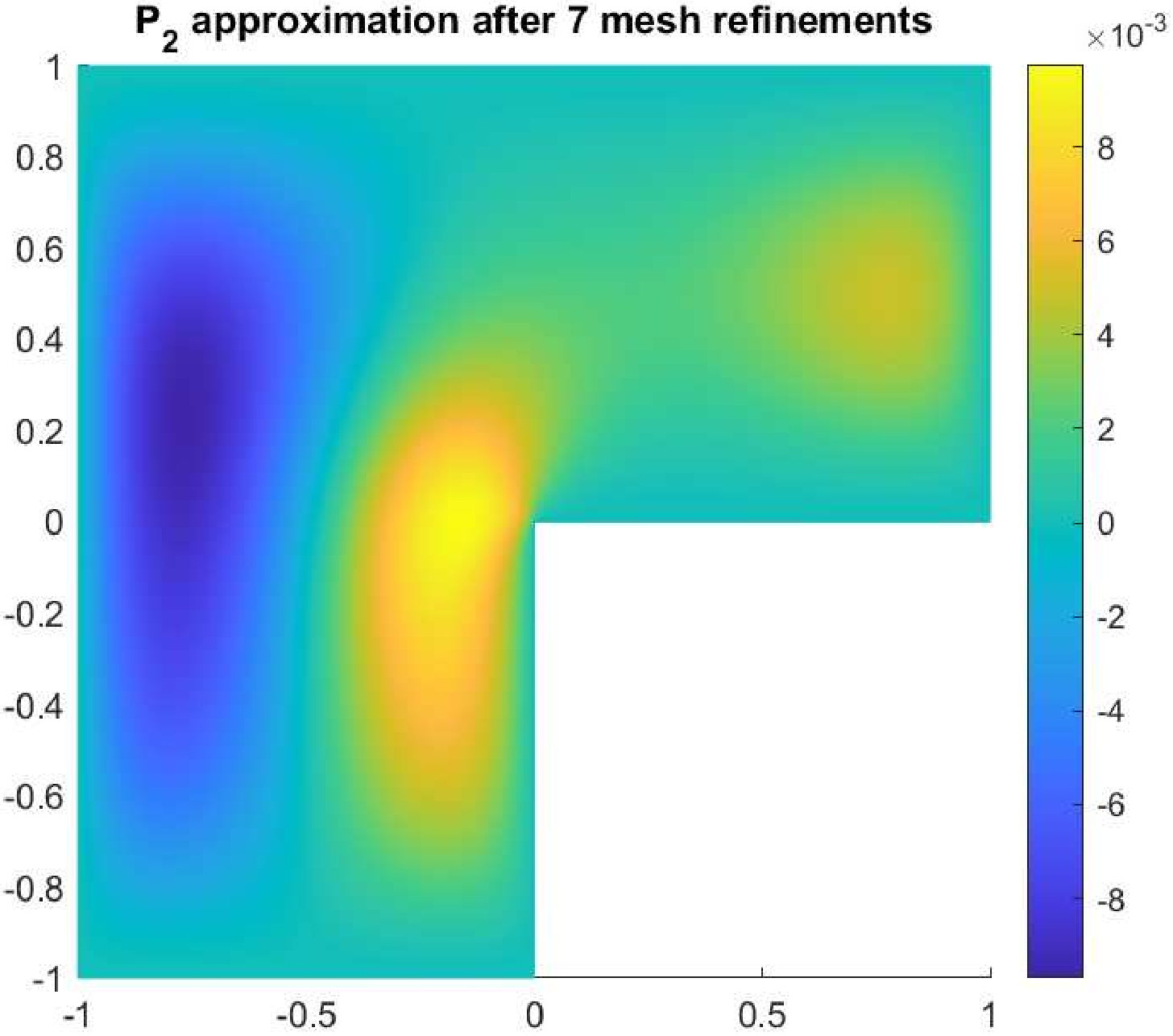}}
\subfigure[]{\includegraphics[width=0.325\textwidth]{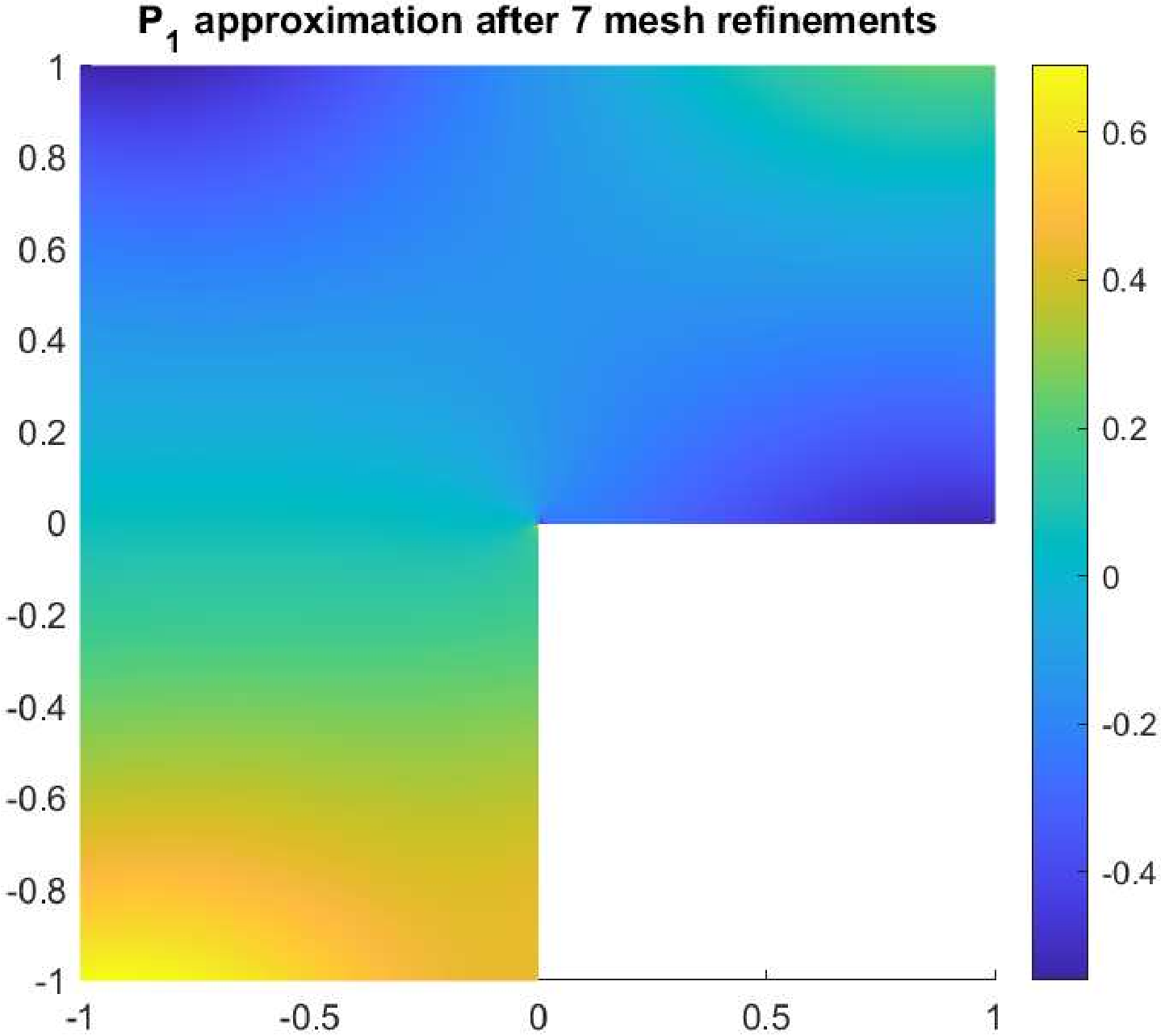}}
\caption{The L-shaped domain (Example \ref{ex5.2}):  (a) the graded mesh after two mesh refinements with $\kappa = 0.2$; (b) Taylor-Hood element approximation $u_1$ of $\mathbf{u}_7$; (e) Taylor-Hood element approximation $u_2$ of $\mathbf{u}_7$; (d) Taylor-Hood element approximation $p_7$.}\label{Mesh_Init_Graded}
\end{figure}

In Table \ref{L_Poi_Rates} (Algorithm \ref{femalg}) and Table \ref{L_Poi_Rates3.1} (Algorithm \ref{femalg+}), we show the numerical convergence rates of finite element approximation $\phi_j$ in both $H^1$ and $L^2$ norm to the solution of the biharmonic problem. We note that the convergence rates from both Algorithm \ref{femalg} and Algorithm \ref{femalg+} are almost the same.
From Table Table \ref{L_Poi_Rates} and Table \ref{L_Poi_Rates3.1}, we find on uniform meshes ($\kappa=0.5$) that the $H^1$ convergence rate of the $P_1$  finite element approximation is optimal with $\mathcal{R} = 1$, and that of the $P_2$  finite element approximations is suboptimal with $\mathcal{R} \approx  1.10$. Both of them are consistent the theoretical result in Theorem \ref{phierrthm3.1} in an L-shaped domain, that is $\mathcal{R}_{\text{exact}} =\min\{k,\alpha_0+1,2\alpha_0\}\approx \min\{k, 1.09\}$ for Algorithm \ref{femalg}, and $\mathcal{R}_{\text{exact}} =\min\{k,\beta_0+2,\alpha_0+1,2\alpha_0\}\approx \min\{k, 1.09\}$ for Algorithm \ref{femalg+}, where $\alpha_0$ is given in Table \ref{alpha0tab} with $\omega=\frac{3\pi}{2}$, and $\beta_0=\frac{\pi}{\omega}$. We also find that
the convergence rates of $P_1$ finite element approximations are optimal with $\mathcal{R}=1$ on graded meshes with $\kappa<0.5$, and that of the $P_2$ finite element approximations are optimal with $\mathcal{R}=2$ on graded meshes with  $\kappa\leq 0.3$, which are close to the theoretical result in Theorem \ref{phierrthmg}, namely, the optimal convergence rate can be obtained when $\kappa < 2^{-\frac{\alpha_0}{\alpha_0}} = 0.5$ for $P_1$ finite element approximations, and $\kappa < 2^{-\frac{1}{\alpha_0}} \approx 0.28$ for $P_2$ finite element approximations.

Again from Table Table \ref{L_Poi_Rates} and Table \ref{L_Poi_Rates3.1}, the $L^2$ convergence rates of both $P_1$ and $P_2$ finite element approximations on uniform meshes are suboptimal with $\mathcal{R} = 1.14$ and $\mathcal{R} \approx 1.09$, which are consistent with the theoretical result $\mathcal{R}_{\text{exact}}\approx \min\{k+1, 1.09\}$ for both Algorithm \ref{femalg}  and Algorithm \ref{femalg+} in Theorem \ref{phierrL2thm}.
On graded meshes, the convergence rates of $P_1$ finite element approximations are optimal with $\kappa\leq 0.2$, and those of the $P_2$ finite element approximations are optimal with  $\kappa\leq 0.1$, which are consistent with the theoretical result in Theorem \ref{phierrL2thm1}, namely, the optimal convergence rate $\mathcal{R}=2$ can be obtained when $\kappa < 2^{-\frac{1}{\alpha_0}} \approx 0.28$ for $P_1$ finite element approximations, and $\mathcal{R}=3$ can be achieved when $\kappa < 2^{-\frac{1.5}{\alpha_0}} \approx 0.15$ for $P_2$ finite element approximations.

\begin{table}[!htbp]\tabcolsep0.04in
\caption{Convergence history of finite element approximation $\phi_j$ of the biharmonic problem from Algorithm \ref{femalg} in the L-shaped domain.}
\centering
\begin{tabular}{ |c| c| c c c c c c| c c c c c c|} \hline
 &  & \multicolumn{6}{c|}{$H_1$ rate of $\phi_j$} & \multicolumn{6}{c|}{ $L_2$ rate of $\phi_j$} \\
\hline
& $\kappa$ & 0.05 & 0.1 & 0.2 & 0.3 & 0.4 &0.5 & 0.05 & 0.1 & 0.2 & 0.3& 0.4&0.5 \\
\hline
\multirow{5}{*}{$k=1$ }
&$j=6$ & $--$ & 1.00 & 1.00 & 1.00 & 1.00 & 1.00  & $--$ & 2.00 & 1.99 & 1.96 & 1.84 & 1.52  \\
&$j=7$ & $--$& 1.00 & 1.00 & 1.00 & 1.00 & 1.00   & $--$ & 2.00 & 2.00 & 1.96 & 1.79 & 1.37  \\
&$j=8$ & $--$ & 1.00 & 1.00 & 1.00 & 1.00 & 1.00  & $--$ & 2.00 & 2.00 & 1.96 & 1.72 & 1.26  \\
&$j=9$ & $--$ & 1.00 & 1.00 & 1.00 & 1.00 & 1.00   & $--$ & 2.00 & 2.00 & 1.95 & 1.66 & 1.18  \\
&$j=10$ & $--$ & 1.00 & 1.00 & 1.00 & 1.00 & 1.00   & $--$ & 2.00 & 2.00 & 1.95 & 1.60 & 1.14  \\
\hline
\multirow{4}{*}{$k=2$}
&$j=6$ & 1.98 & 1.99 & 1.99 & 1.99 & 1.89 & 1.37   & 3.05 & 3.02 & 2.99 & 1.94 & 1.43 & 1.08  \\
&$j=7$ & 1.99 & 2.00 & 2.00 & 2.00 & 1.81 & 1.23   & 3.03 & 3.01 & 2.98 & 1.90 & 1.43 & 1.08  \\
&$j=8$ & 2.00 & 2.00 & 2.00 & 2.00 & 1.71 & 1.15   & 3.02 & 3.00 & 2.96 & 1.89 & 1.44 & 1.08  \\
&$j=9$ & 2.00 & 2.00 & 2.00 & 2.00 & 1.61 & 1.12   & 3.01 & 3.00 & 2.92 & 1.89 & 1.44 & 1.08  \\
\hline
\end{tabular}\label{L_Poi_Rates}
\end{table}

\begin{table}[!htbp]\tabcolsep0.04in
\caption{Convergence history of finite element approximation $\phi_j$ of the biharmonic problem from Algorithm \ref{femalg+} in the L-shaped domain.}
\centering
\begin{tabular}{ |c| c| c c c c c c| c c c c c c|} \hline
 &  & \multicolumn{6}{c|}{$H_1$ rate of $\phi_j$} & \multicolumn{6}{c|}{ $L_2$ rate of $\phi_j$} \\
\hline
& $\kappa$ & 0.05 & 0.1 & 0.2 & 0.3 & 0.4 &0.5 & 0.05 & 0.1 & 0.2 & 0.3& 0.4&0.5 \\
\hline
\multirow{5}{*}{$k=1$ }
&$j=6$ & $--$ & 1.00 & 1.00 & 1.00 & 1.00 & 1.00  & $--$ & 1.99 & 1.99 & 1.96 & 1.85 & 1.54  \\
&$j=7$ & $--$& 1.00 & 1.00 & 1.00 & 1.00 & 1.00   & $--$ & 2.00 & 2.00 & 1.96 & 1.80 & 1.38  \\
&$j=8$ & $--$ & 1.00 & 1.00 & 1.00 & 1.00 & 1.00  & $--$ & 2.00 & 2.00 & 1.96 & 1.73 & 1.26  \\
&$j=9$ & $--$ & 1.00 & 1.00 & 1.00 & 1.00 & 1.00   & $--$ & 2.00 & 2.00 & 1.96 & 1.67 & 1.19  \\
&$j=10$ & $--$ & 1.00 & 1.00 & 1.00 & 1.00 & 1.00   & $--$ & 2.00 & 2.00 & 1.95 & 1.61 & 1.14  \\
\hline
\multirow{4}{*}{$k=2$}
&$j=6$ & 1.98 & 1.99 & 1.99 & 1.99 & 1.89 & 1.35   & 3.04 & 3.02 & 2.99 & 1.94 & 1.41 & 1.04  \\
&$j=7$ & 1.99 & 2.00 & 2.00 & 2.00 & 1.81 & 1.22   & 3.03 & 3.01 & 2.98 & 1.90 & 1.43 & 1.06  \\
&$j=8$ & 2.00 & 2.00 & 2.00 & 2.00 & 1.71 & 1.15   & 3.01 & 3.00 & 2.96 & 1.89 & 1.43 & 1.07  \\
&$j=9$ & 2.00 & 2.00 & 2.00 & 2.00 & 1.61 & 1.12   & 3.01 & 3.00 & 2.92 & 1.89 & 1.44 & 1.08  \\
\hline
\end{tabular}\label{L_Poi_Rates3.1}
\end{table}

The Taylor-Hood element approximations $\mathbf{u}_7$ and $p_7$ based on Algorithm \ref{femalg} on uniform meshes for the involved Stokes problem are shown in Figure \ref{Mesh_Init_Graded}b-\ref{Mesh_Init_Graded}d.
In Table \ref{L_Stok_Rates2}-\ref{L_Stok_Rates3.1}, we display numerical convergence rates of the Mini element approximations and the Taylor-Hood approximations from both Algorithm \ref{femalg} and Algorithm \ref{femalg+} for the involved Stokes problem. The $H^1$ convergence rates of $\mathbf{u}_j$ and the $L^2$ convergence rates of $p_j$ with $k=1,2$ are suboptimal on uniform meshes with convergence rates $\mathcal{R} \approx 0.54$, which are consistent with the theoretical result $\mathcal{R}_{\text{exact}} = \alpha_0 \approx 0.54$ in Lemma \ref{stokesestlem} and Lemma \ref{stokesestlem+} in an L-shaped domain. On graded meshes, the convergence rates are optimal 
with $\mathcal{R}=1$ when $\kappa \leq 0.2$ for $k=1$, and $\mathcal{R}=2$ when $\kappa \leq 0.05$ for $k=2$, these are consistent with the results in Theorem \ref{Stokesgraderr} and Remark \ref{stokesoptimal}, namely, the optimal convergence rate can be achieved when $\kappa<2^{-\frac{1}{\alpha_0}} \approx 0.28$ for $k=1$, and $\kappa<2^{-\frac{2}{\alpha_0}} \approx0.08$ for $k=2$.

The $L^2$ convergence rates of $\mathbf{u}_j$ are suboptimal on uniform meshes with convergence rates $\mathcal{R} \approx 1.13$ and $\mathcal{R} \approx 1.12$, which are consistent with the theoretical result $\mathcal{R}=2\alpha_0 \approx 1.09$ in Lemma \ref{stokesestlem} and Lemma \ref{stokesestlem+} in an L-shaped domain. On graded meshes, the convergence rates are optimal with $\mathcal{R}=2$ for $\kappa \leq 0.2$, and with $\mathcal{R}=3$ for $\kappa \leq 0.1$, which are close to the theoretical results in Theorem \ref{Stokesgraderr} and Remark \ref{stokesoptimal}, namely, the optimal convergence rate can be achieved when $\kappa<2^{-\frac{1}{\alpha_0}} \approx 0.28$ for $k=1$, and $\kappa<2^{-\frac{2}{\alpha_0}} \approx0.08$ for $k=2$.

\begin{table}[!htbp]\tabcolsep0.04in
\caption{Convergence history of the Mini element approximations ($k=1$) of Stokes problem from Algorithm \ref{femalg} in the L-shaped domain.}
\centering
\begin{tabular}{ |c| c c c c c| c c c c c | c c c c c |} \hline
 &  \multicolumn{5}{c|}{$H_1$ rate of $\mathbf{u}_j$} &  \multicolumn{5}{c|}{$L^2$ rate of $\mathbf{u}_j$}   &  \multicolumn{5}{c|}{ $L_2$ rate of $p_j$}  \\
\hline
 $\kappa$ & 0.1 & 0.2 & 0.3 & 0.4 &0.5 & 0.1 & 0.2 & 0.3& 0.4&0.5& 0.1 & 0.2 & 0.3& 0.4&0.5 \\
\hline
$j=6$ & 1.00 & 1.00 & 0.98 & 0.88 & 0.69   & 2.01 & 2.01 & 1.97 & 1.80 & 1.40  & 1.23 & 1.35 & 1.15 & 0.77 & 0.57  \\
$j=7$ & 1.00 & 1.00 & 0.98 & 0.85 & 0.64   & 2.01 & 2.01 & 1.96 & 1.71 & 1.28  & 1.10 & 1.35 & 1.07 & 0.74 & 0.56  \\
$j=8$ & 1.00 & 1.00 & 0.97 & 0.82 & 0.60   & 2.01 & 2.00 & 1.95 & 1.62 & 1.20  & 1.05 & 1.35 & 1.01 & 0.73 & 0.55  \\
$j=9$ & 1.00 & 1.00 & 0.97 & 0.80 & 0.58   & 2.01 & 2.00 & 1.94 & 1.55 & 1.16  & 1.15 & 1.34 & 0.98 & 0.72 & 0.55  \\
$j=10$ & 1.00 & 1.00 & 0.97 & 0.78 & 0.56 & 2.00 & 2.00 & 1.94 & 1.51 & 1.13  & 1.29 & 1.33 & 0.96 & 0.72 & 0.55  \\
\hline
\end{tabular}\label{L_Stok_Rates2}
\end{table}

\begin{table}[!htbp]\tabcolsep0.04in
\caption{Convergence history of the Taylor-Hood element approximations ($k=2$) of Stokes problem from Algorithm \ref{femalg} in the L-shaped domain.}
\centering
\begin{tabular}{ |c |c| c c c c c c| c c c c c c |} \hline
 &  &  \multicolumn{6}{c|}{$H_1$ rate}   &  \multicolumn{6}{c|}{ $L_2$ rate}  \\
\hline
 & $\kappa$ & 0.05 & 0.1 & 0.2 & 0.3 & 0.4 &0.5 & 0.05 & 0.1 & 0.2 & 0.3& 0.4&0.5 \\
\hline
\multirow{5}{*}{$\mathbf{u}_j$ }
& $j=6$ & 1.82 & 1.83 & 1.37 & 0.96 & 0.72 & 0.55 & 2.97 & 3.00 & 2.99 & 2.04 & 1.57 & 1.32  \\
& $j=7$& 1.90 & 1.84 & 1.31 & 0.95 & 0.72 & 0.54 & 3.03 & 3.01 & 2.99 & 1.93 & 1.50 & 1.24  \\
& $j=8$ & 1.95 & 1.83 & 1.28 & 0.95 & 0.72 & 0.54 & 3.03 & 3.01 & 2.98 & 1.90 & 1.47 & 1.19  \\
& $j=9$ & 1.97 & 1.83 & 1.27 & 0.95 & 0.72 & 0.54 & 3.01 & 3.00 & 2.95 & 1.89 & 1.45 & 1.15  \\
& $j=10$ & 1.98 & 1.82 & 1.27 & 0.95 & 0.72 & 0.54 & 3.01 & 3.00 & 2.91 & 1.89 & 1.45 & 1.12  \\
\hline
\multirow{5}{*}{$p_j$}
& $j=6$&\multicolumn{6}{c|}{}  & 1.81 & 1.82 & 1.31 & 0.95 & 0.72 & 0.55  \\
& $j=7$&\multicolumn{6}{c|}{}  & 1.91 & 1.82 & 1.28 & 0.95 & 0.72 & 0.55  \\
& $j=8$&\multicolumn{6}{c|}{}  & 1.97 & 1.82 & 1.27 & 0.95 & 0.72 & 0.54  \\
& $j=9$&\multicolumn{6}{c|}{}  & 1.99 & 1.82 & 1.27 & 0.95 & 0.72 & 0.54  \\
& $j=10$&\multicolumn{6}{c|}{}  & 2.00 & 1.81 & 1.27 & 0.95 & 0.72 & 0.54  \\
\hline
\end{tabular}\label{L_Stok_Rates}
\end{table}

\begin{table}[!htbp]\tabcolsep0.04in
\caption{Convergence history of the Mini element approximations ($k=1$) of Stokes problem from Algorithm \ref{femalg+} in the L-shaped domain.}
\centering
\begin{tabular}{ |c| c c c c c| c c c c c | c c c c c |} \hline
 &  \multicolumn{5}{c|}{$H_1$ rate of $\mathbf{u}_j$} &  \multicolumn{5}{c|}{$L^2$ rate of $\mathbf{u}_j$}   &  \multicolumn{5}{c|}{ $L_2$ rate of $p_j$}  \\
\hline
 $\kappa$ & 0.1 & 0.2 & 0.3 & 0.4 &0.5 & 0.1 & 0.2 & 0.3& 0.4&0.5& 0.1 & 0.2 & 0.3& 0.4&0.5 \\
\hline
$j=6$ & 1.00 & 1.00 & 0.98 & 0.88 & 0.69   & 2.01 & 2.00 & 1.97 & 1.81 & 1.41  & 1.22 & 1.35 & 1.15 & 0.76 & 0.54  \\
$j=7$ & 1.00 & 1.00 & 0.98 & 0.85 & 0.64   & 2.01 & 2.00 & 1.97 & 1.73 & 1.29  & 1.10 & 1.35 & 1.07 & 0.74 & 0.54  \\
$j=8$ & 1.00 & 1.00 & 0.97 & 0.82 & 0.60   & 2.01 & 2.00 & 1.96 & 1.64 & 1.21   & 1.04 & 1.35 & 1.01 & 0.73 & 0.54  \\
$j=9$ & 1.00 & 1.00 & 0.97 & 0.80 & 0.58   & 2.00 & 2.00 & 1.95 & 1.57 & 1.16  & 1.15 & 1.34 & 0.98 & 0.72 & 0.54  \\
$j=10$ & 1.00 & 1.00 & 0.97 & 0.78 & 0.56  & 2.00 & 2.00 & 1.94 & 1.52 & 1.13  & 1.29 & 1.35 & 0.96 & 0.72 & 0.54  \\
\hline
\end{tabular}\label{L_Stok_Rates23.1}
\end{table}

\begin{table}[!htbp]\tabcolsep0.04in
\caption{Convergence history of the Taylor-Hood element approximations ($k=2$) of Stokes problem from Algorithm \ref{femalg+} in the L-shaped domain.}
\centering
\begin{tabular}{ |c |c| c c c c c c| c c c c c c |} \hline
 &  &  \multicolumn{6}{c|}{$H_1$ rate}   &  \multicolumn{6}{c|}{ $L_2$ rate}  \\
\hline
 & $\kappa$ & 0.05 & 0.1 & 0.2 & 0.3 & 0.4 &0.5 & 0.05 & 0.1 & 0.2 & 0.3& 0.4&0.5 \\
\hline
\multirow{5}{*}{$\mathbf{u}_j$ }
& $j=6$ & 1.82 & 1.83 & 1.37 & 0.96 & 0.72 & 0.53 & 2.97 & 2.99 & 2.98 & 2.03 & 1.56 & 1.29  \\
& $j=7$ & 1.90 & 1.84 & 1.31 & 0.95 & 0.72 & 0.54 & 3.03 & 3.01 & 2.99 & 1.93 & 1.50 & 1.23  \\
& $j=8$ & 1.95 & 1.83 & 1.28 & 0.95 & 0.72 & 0.54 & 3.03 & 3.01 & 2.97 & 1.90 & 1.47 & 1.18  \\
& $j=9$ & 1.97 & 1.83 & 1.27 & 0.95 & 0.72 & 0.54 & 3.01 & 3.00 & 2.95 & 1.89 & 1.45 & 1.14  \\
& $j=10$ & 1.98 & 1.82 & 1.27 & 0.95 & 0.72 & 0.54 & 3.00 & 2.98 & 2.93 & 1.89 & 1.45 & 1.12  \\
\hline
\multirow{5}{*}{$p_j$}
& $j=6$&\multicolumn{6}{c|}{}  & 1.75 & 1.71 & 1.27 & 0.94 & 0.71 & 0.53  \\
& $j=7$&\multicolumn{6}{c|}{}  & 1.89 & 1.75 & 1.27 & 0.95 & 0.72 & 0.53  \\
& $j=8$&\multicolumn{6}{c|}{}  & 1.96 & 1.77 & 1.27 & 0.95 & 0.72 & 0.54  \\
& $j=9$&\multicolumn{6}{c|}{}  & 1.99 & 1.78 & 1.26 & 0.95 & 0.72 & 0.54  \\
& $j=10$&\multicolumn{6}{c|}{}  & 2.00 & 1.79 & 1.26 & 0.95 & 0.72 & 0.54  \\
\hline
\end{tabular}\label{L_Stok_Rates3.1}
\end{table}

\end{example}

\begin{example}\label{ex5.4}
We consider the biharmonic problem (\ref{eqnbi}) with $f=1$ in the polygonal domain $\Omega$ (see Figure \ref{Mesh_Init_Gradedex2}a) enclosed by line segments $Q_1Q_2$, $Q_2Q_3$, $Q_3Q_4$, and $Q_4Q_1$ with $Q_1(0, 0)$, $Q_2( \frac{2}{c_1/c_2+1}, -\frac{2c_1}{c_1/c_2+1})$, $Q_3(2, 0)$, and  $Q_4(\frac{2}{c_1/c_2+1}, \frac{2c_1}{c_1/c_2+1})$, here $c_1=\tan\left( \frac{11\pi}{24} \right)$, and $c_2=\tan\left(\frac{13\pi}{72}\right)$. The largest interior angle $\omega=\angle Q_2Q_1Q_4 = \frac{11\pi}{12}$. 
We solve this problem by Algorithm \ref{femalg+} and Algorithm \ref{femalg} on both graded meshes and uniform meshes based on polynomials with $k=2$.
We take $\mathbf{F}=(0,x)^T$ as the source term of involved Stokes problem (\ref{stokes}) in Algorithm \ref{femalg}. 
The finite element approximations $w_7$ and $\phi_7$ from Algorithm \ref{femalg+} are shown in Figure \ref{Mesh_Init_Gradedex2}b and Figure \ref{Mesh_Init_Gradedex2}c, respectively. We show both $H^1$ and $L^2$ convergence rates of the finite element approximation $\phi_j$ from both Algorithm \ref{femalg+} and Algorithm \ref{femalg} in Table \ref{L_Poi_Rates25.4}. We find the the convergence rates are almost the same for these two algorithms.

For $H^1$ convergence rate of $\phi_j$ from both algorithms, we find that  the convergence rates on uniform meshes are $\mathcal{R}=2.00$, which is consistent with the expected convergence rate $\mathcal{R}_{\text{exact}}=\min\{k,\alpha_0+1, 2\alpha_0\}=2$ in Theorem \ref{phierrthm3.1}, where $\alpha_0\approx 1.20$ corresponding to the interior angle $\omega=\frac{11\pi}{12}$ shown in Table \ref{alpha0tab}. On graded meshes, the optimal convergence rate $\mathcal{R}=2.0$ can be observed on graded meshes for $\kappa < 0.5$, which is consistent with the theoretical result $\kappa<2^{-\frac{\alpha_0}{\alpha_0}} = 0.5$ in Theorem \ref{phierrthmg}. For $L^2$ convergence rate, we find that on uniform meshes the convergence rates $\mathcal{R} \approx 2.98$, which is a little bit larger than the expected rate $\mathcal{R}_{\text{exact}} \approx 2\alpha_0 = 2.40$. On graded meshes, we find that the optimal convergence rates $\mathcal{R}=3$ can be obtained when $\kappa\leq 0.4$, which is consistent with the theoretical result $\kappa<2^{-\frac{1.5}{\alpha_0}} \approx 0.42$ in Theorem \ref{phierrL2thm1}.

\begin{figure}[h]
\centering
\subfigure[]{\includegraphics[width=0.24\textwidth]{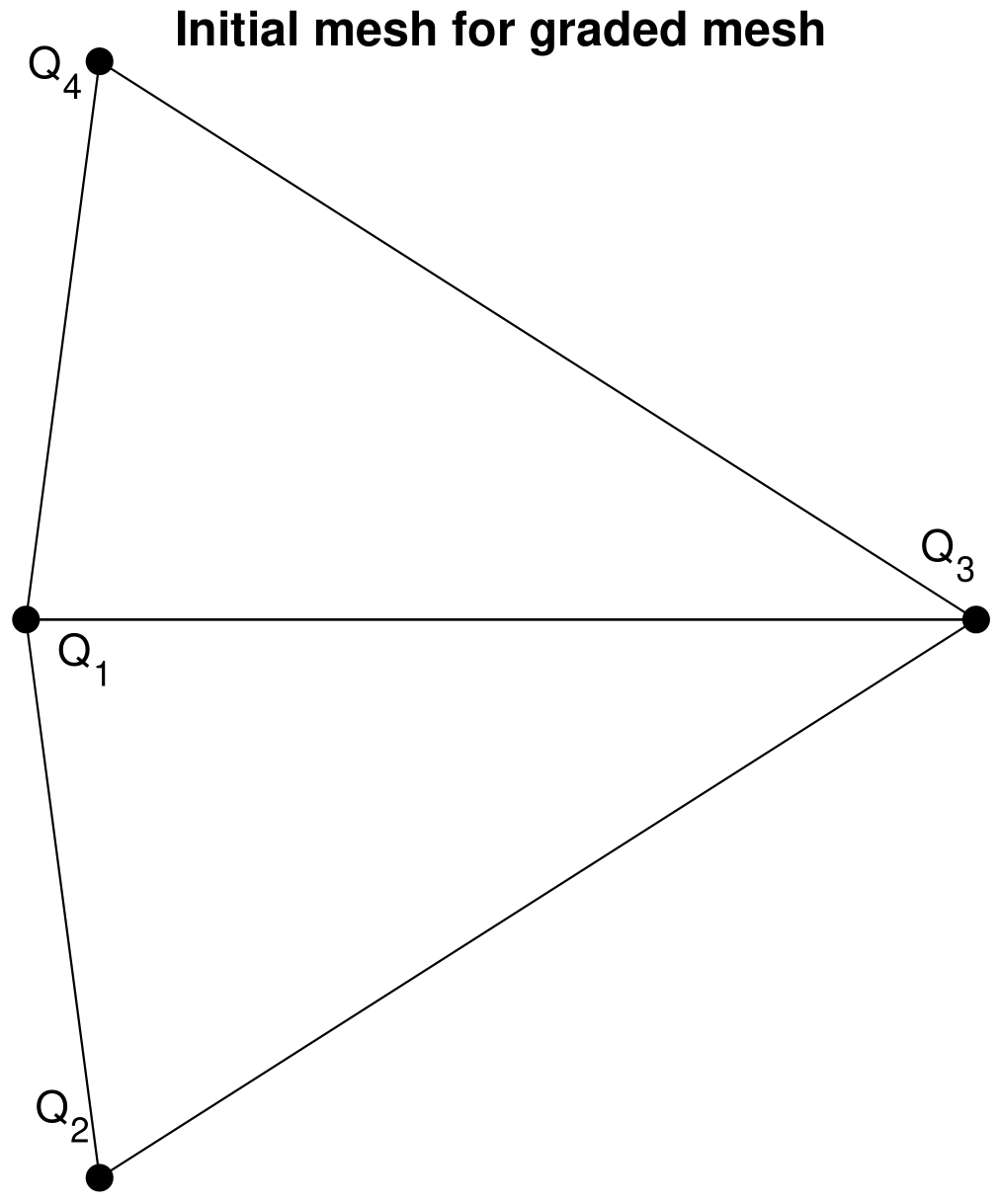}}
\subfigure[]{\includegraphics[width=0.29\textwidth]{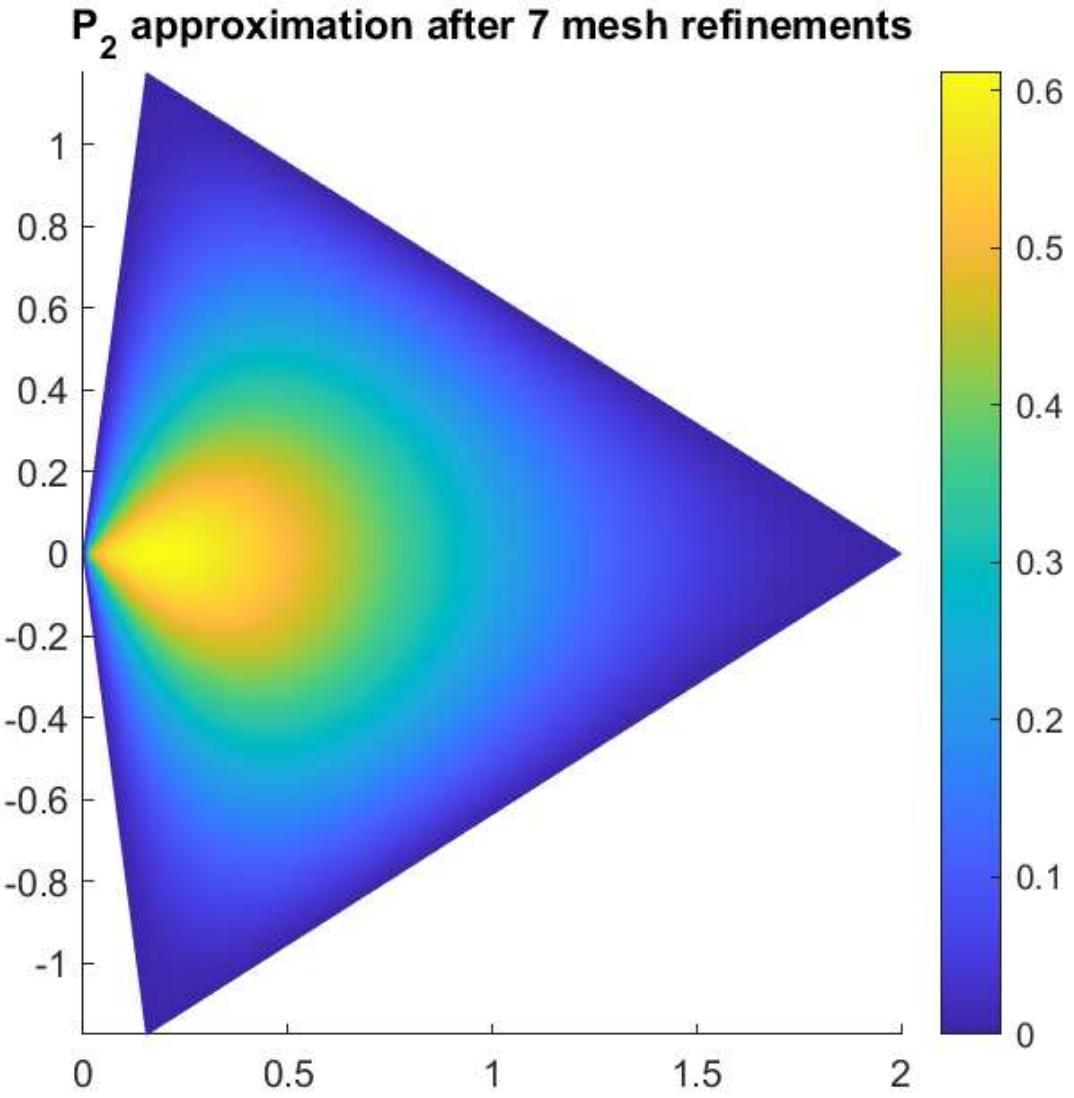}}
\subfigure[]{\includegraphics[width=0.30\textwidth]{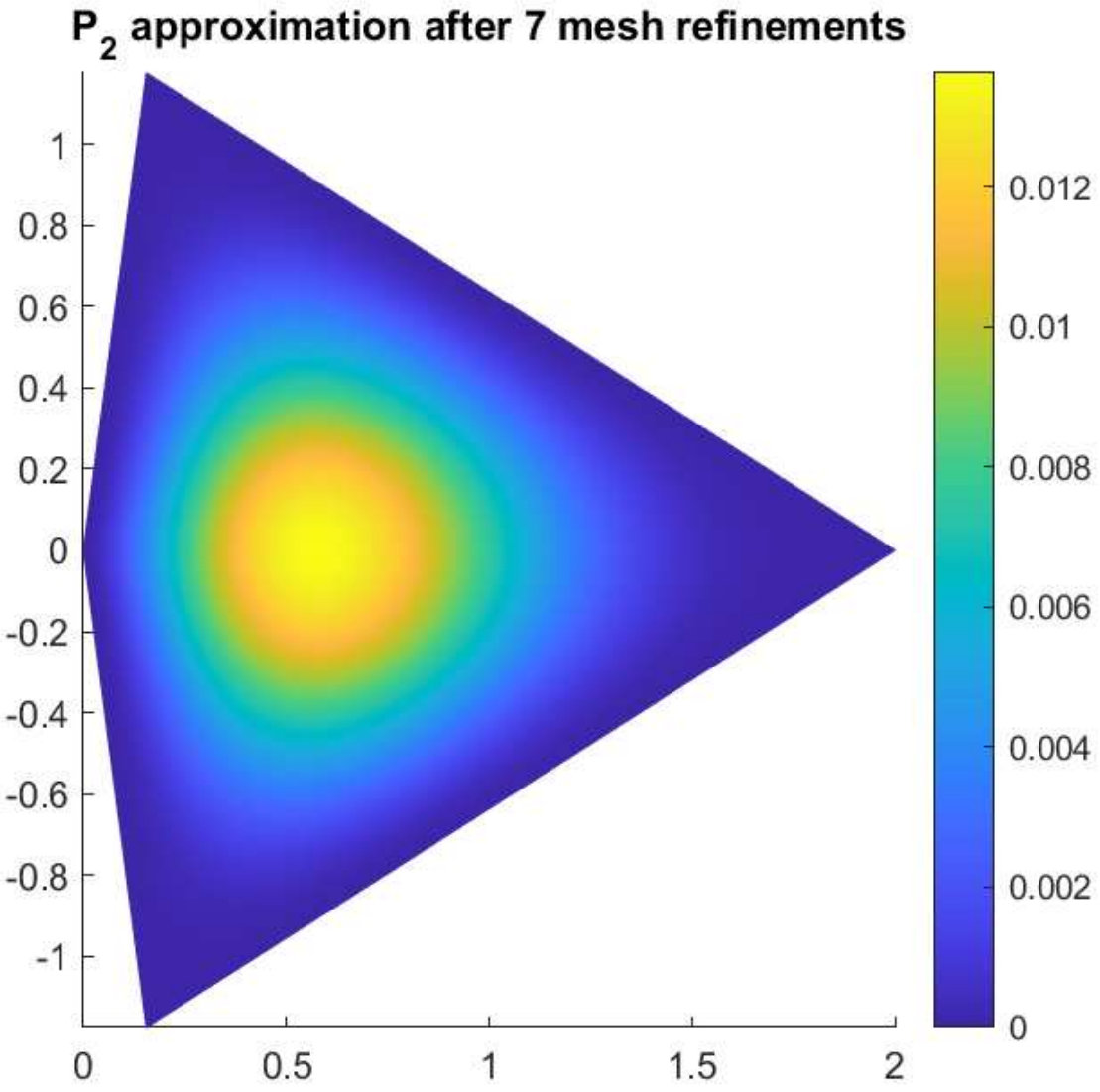}}
\subfigure[]{\includegraphics[width=0.30\textwidth]{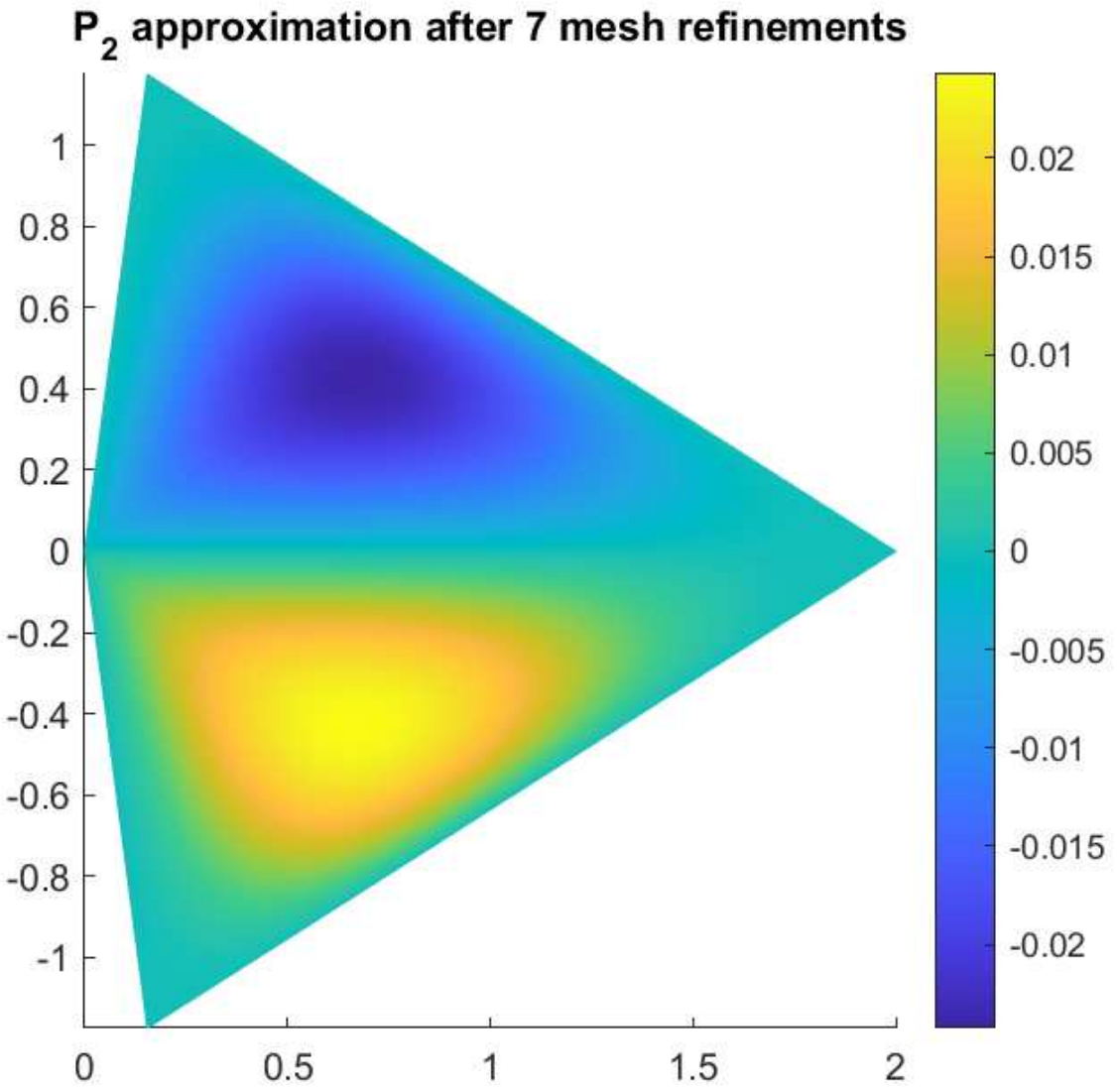}}
\subfigure[]{\includegraphics[width=0.295\textwidth]{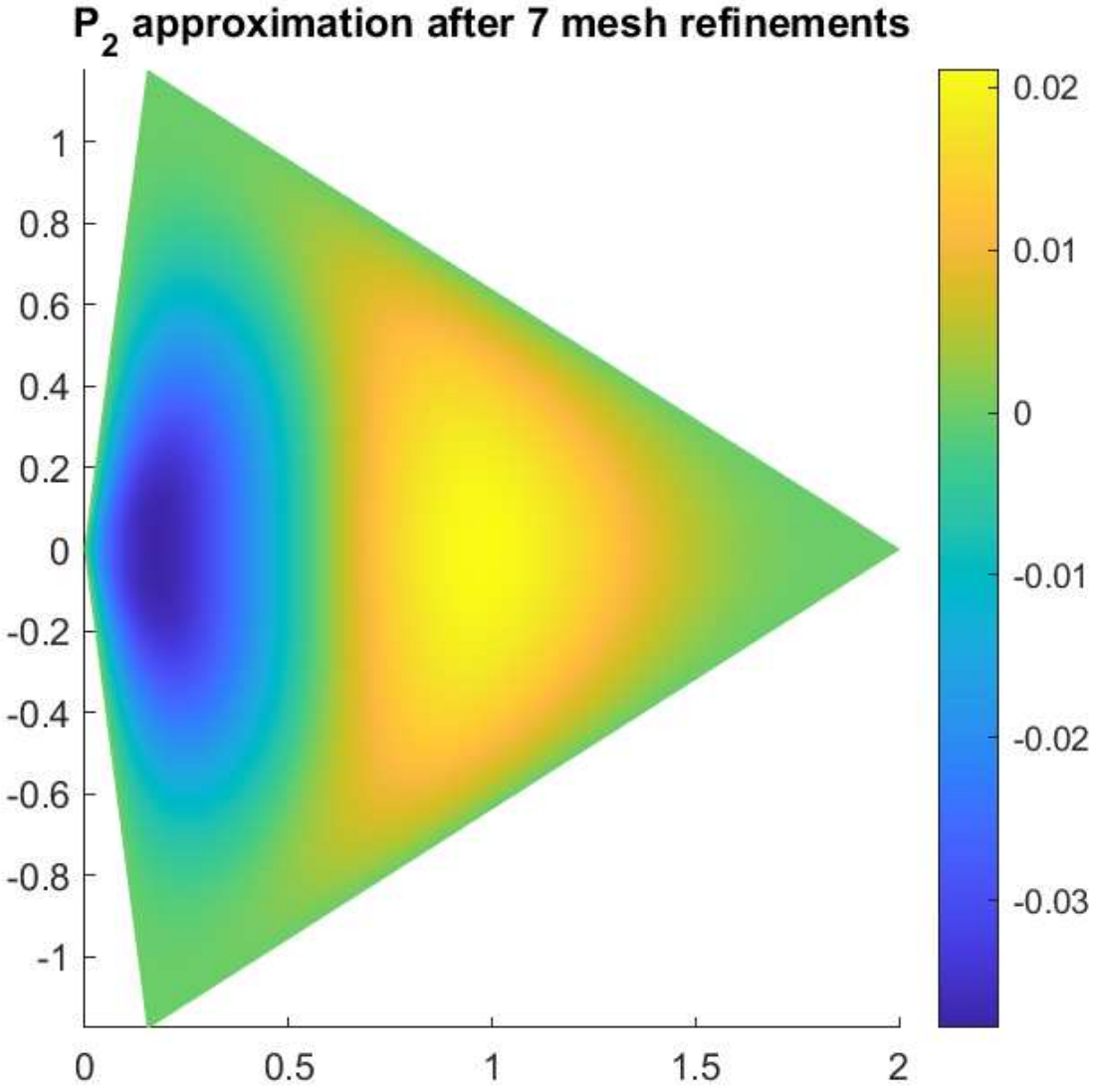}}
\subfigure[]{\includegraphics[width=0.295\textwidth]{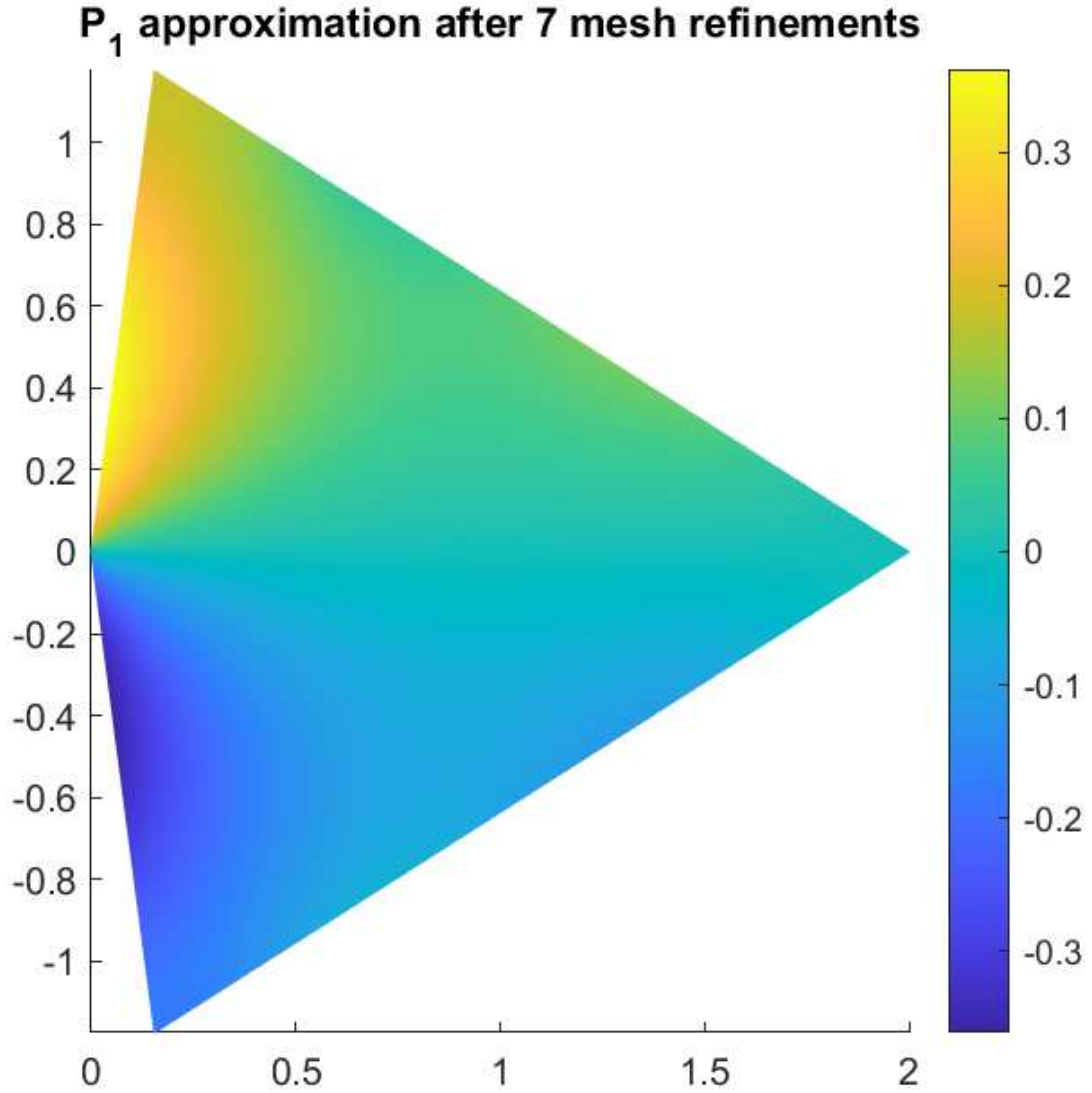}}
\caption{A convex domain with $\omega=\frac{11\pi}{12}$ (Example \ref{ex5.4}):  (a) the initial mesh; (b) $P_2$ finite element approximation $w_7$; (c) $P_2$ finite element approximation $\phi_7$; (d) Taylor-Hood element approximation $u_1$ of $\mathbf{u}_7$; (e) Taylor-Hood element approximation $u_2$ of $\mathbf{u}_7$; (f) Taylor-Hood element approximation $p_7$.}\label{Mesh_Init_Gradedex2}
\end{figure}

\begin{table}[!htbp]\tabcolsep0.04in
\caption{Convergence history of finite element approximation of the biharmonic problem with $\omega=\frac{11\pi}{12}$.}
\centering
\begin{tabular}{ |c| c| c c c c c| c c c c c|} \hline
 &  & \multicolumn{5}{c|}{$H_1$ rate of $\phi_j$} & \multicolumn{5}{c|}{ $L_2$ rate of $\phi_j$} \\
\hline
& $\kappa$ & 0.1 & 0.2 & 0.3 & 0.4 &0.5 & 0.1 & 0.2 & 0.3& 0.4&0.5 \\
\hline
\multirow{5}{*}{Algorithm \ref{femalg}}
&$j=6$ & 1.99 & 1.99 & 1.99 & 2.00 & 1.99   & 3.02 & 3.01 & 3.00 & 3.00 & 2.994  \\
&$j=7$ & 2.00 & 2.00 & 2.00 & 2.00 & 1.99   & 3.00 & 3.00 & 3.00 & 3.00 & 3.995  \\
&$j=8$ & 2.00 & 2.00 & 2.00 & 2.00 & 2.00   & 3.00 & 3.00 & 3.00 & 3.00 & 2.994  \\
&$j=9$ & 2.00 & 2.00 & 2.00 & 2.00 & 2.00   & 3.00 & 3.00 & 3.00 & 3.00 & 2.990  \\
&$j=10$ & 2.00 & 2.00 & 2.00 & 2.00 & 2.00  & 3.00 & 3.00 & 3.00 & 3.00 & 2.992  \\
\hline
\multirow{5}{*}{Algorithm \ref{femalg+}}
&$j=6$ & 1.99 & 1.99 & 1.99 & 2.00 & 1.99   & 3.02 & 3.01 & 3.00 & 3.00 & 2.994  \\
&$j=7$ & 2.00 & 2.00 & 2.00 & 2.00 & 1.99   & 3.01 & 3.00 & 3.00 & 3.00 & 2.995  \\
&$j=8$ & 2.00 & 2.00 & 2.00 & 2.00 & 2.00   & 3.00 & 3.00 & 3.00 & 3.00 & 2.994  \\
&$j=9$ & 2.00 & 2.00 & 2.00 & 2.00 & 2.00   & 3.00 & 3.00 & 3.00 & 3.00 & 2.990  \\
&$j=10$ & 2.00 & 2.00 & 2.00 & 2.00 & 2.00  & 3.00 & 3.00 & 3.00 & 3.00 & 2.984  \\
\hline
\end{tabular}\label{L_Poi_Rates25.4}
\end{table}

We show the Taylor-Hood element approximations $\mathbf{u}_7$ and $p_7$ from Algorithm \ref{femalg+} on uniform meshes in Figure \ref{Mesh_Init_Gradedex2}d-\ref{Mesh_Init_Gradedex2}f.
We also report the convergence rate of the Taylor-Hood element approximations of the involved Stokes problem in Table \ref{L_Stok_Rates25.4} (Algorithm \ref{femalg}) and Table \ref{L_Stok_Rates25.42} (Algorithm \ref{femalg+}). The $H^1$ convergence rate of $\mathbf{u}_j$ and the $L^2$ convergence rate of $p_j$ are converging with the rate $\mathcal{R}=1.20$ on uniform meshes, and it is consistent with the theoretical result $\mathcal{R}_{\text{exact}}=\alpha_0 \approx 1.20$ in Lemma \ref{stokesestlem} and Lemma \ref{stokesestlem+}. The optimal convergence rate $\mathcal{R}=2$ can be observed for $\kappa \leq 0.3$, which is consistent with the theoretical result $\kappa<2^{-\frac{2}{\alpha_0}} \approx 0.31$ in Theorem \ref{Stokesgraderr} and Remark \ref{stokesoptimal}.
The $L^2$ convergence rate of $\mathbf{u}_j$ on uniform meshes is $\mathcal{R}=2.20$, which is consistent with the expected rate $\mathcal{R}_{\text{exact}}=\alpha_0+1 \approx 2.20$. On graded meshes, the optimal convergence rate $\mathcal{R}=3$ is observed with $\kappa \leq 0.3$, which is also consistent with the theoretical result $\kappa<2^{-\frac{2}{\alpha_0}} \approx 0.31$ in Theorem \ref{StokesgradL2err} and Remark \ref{stokesoptimal}.

\begin{table}[!htbp]\tabcolsep0.04in
\caption{Convergence history of the Taylor-Hood element approximations ($k=2$) of Stokes problem from Algorithm \ref{femalg} with $\omega=\frac{11\pi}{12}$.}
\centering
\begin{tabular}{ |c| c c c c c| c c c c c | c c c c c |} \hline
 &  \multicolumn{5}{c|}{$H_1$ rate of $\mathbf{u}_j$} &  \multicolumn{5}{c|}{$L^2$ rate of $\mathbf{u}_j$}   &  \multicolumn{5}{c|}{ $L_2$ rate of $p_j$}  \\
\hline
 $\kappa$ & 0.1 & 0.2 & 0.3 & 0.4 &0.5 & 0.1 & 0.2 & 0.3& 0.4&0.5& 0.1 & 0.2 & 0.3& 0.4&0.5 \\
\hline
$j=6$ & 2.00 & 2.00 & 2.00 & 1.88 & 1.32   & 3.01 & 3.00 & 3.01 & 2.99 & 2.26  & 2.01 & 2.00 & 2.00 & 1.94 & 1.42  \\
$j=7$ & 2.00 & 2.00 & 2.00 & 1.83 & 1.24   & 3.00 & 3.00 & 3.00 & 2.98 & 2.22  & 2.00 & 2.00 & 2.00 & 1.90 & 1.29 \\
$j=8$ & 2.00 & 2.00 & 2.00 & 1.77 & 1.21   & 3.00 & 3.00 & 3.00 & 2.98 & 2.21  & 2.00 & 2.00 & 2.00 & 1.85 & 1.23  \\
$j=9$ & 2.00 & 2.00 & 2.00 & 1.71 & 1.20   & 3.00 & 3.00 & 3.00 & 2.98 & 2.20  & 2.00 & 2.00 & 2.00 & 1.79 & 1.21  \\
$j=10$ & 2.00 & 2.00 & 2.00 & 1.67 & 1.20 & 3.00 & 3.00 & 3.00 & 2.97 & 2.20  & 2.00 & 2.00 & 2.00 & 1.73 & 1.20  \\
\hline
\end{tabular}\label{L_Stok_Rates25.4}
\end{table}

\begin{table}[!htbp]\tabcolsep0.04in
\caption{Convergence history of the Taylor-Hood element approximations ($k=2$) of Stokes problem from Algorithm \ref{femalg+} with $\omega=\frac{11\pi}{12}$.}
\centering
\begin{tabular}{ |c| c c c c c| c c c c c | c c c c c |} \hline
 &  \multicolumn{5}{c|}{$H_1$ rate of $\mathbf{u}_j$} &  \multicolumn{5}{c|}{$L^2$ rate of $\mathbf{u}_j$}   &  \multicolumn{5}{c|}{ $L_2$ rate of $p_j$}  \\
\hline
 $\kappa$ & 0.1 & 0.2 & 0.3 & 0.4 &0.5 & 0.1 & 0.2 & 0.3& 0.4&0.5& 0.1 & 0.2 & 0.3& 0.4&0.5 \\
\hline
$j=6$ & 2.00 & 2.00 & 1.99 & 1.88 & 1.31   & 3.01 & 3.00 & 3.00 & 2.99 & 2.26  & 2.04 & 2.02 & 2.00 & 1.75 & 1.20  \\
$j=7$ & 2.00 & 2.00 & 2.00 & 1.83 & 1.24   & 3.00 & 3.00 & 3.00 & 2.98 & 2.22  & 2.02 & 2.01 & 2.00 & 1.69 & 1.19 \\
$j=8$ & 2.00 & 2.00 & 2.00 & 1.77 & 1.21   & 3.00 & 3.00 & 3.00 & 2.98 & 2.20  & 2.01 & 2.00 & 1.99 & 1.65 & 1.19  \\
$j=9$ & 2.00 & 2.00 & 2.00 & 1.71 & 1.20   & 3.00 & 3.00 & 3.00 & 2.98 & 2.20  & 2.00 & 2.00 & 1.99 & 1.63 & 1.19  \\
$j=10$ & 2.00 & 2.00 & 2.00 & 1.67 & 1.20 & 3.00 & 3.00 & 3.00 & 2.97 & 2.20  & 2.00 & 2.00 & 2.00 & 1.61 & 1.20  \\
\hline
\end{tabular}\label{L_Stok_Rates25.42}
\end{table}

\end{example}

\begin{example}

In this example, we compare the CPU time and the memory usage of the proposed finite element algorithms (Algorithm \ref{femalg} and Algorithm \ref{femalg+}) with those of the $H^2$-conforming Argyris finite element method by solving the biharmonic problem (\ref{eqnbi}) in Example \ref{ex5.1} on the same meshes. The results of the CPU time comparison (in seconds) are shown in Table \ref{TabCPU}.
The results of the memory usage comparison (in GB) are shown in Table \ref{TabM}.
All results are tested on MATLAB R2021a in Linux with 16 GB memory and Intel\circledR~Core$^{\text{TM}}$ i7-6600U processors.

\begin{table}[!htbp]\tabcolsep0.03in
\caption{The CPU time (in seconds) of Algorithm \ref{femalg}, Aglorithm \ref{femalg+}, and the Argyris finite element method. (``$--$" represents running out of memory)}
\begin{tabular}[c]{|c|ccccc|ccccc|}
\hline
\multirow{2}{*}{method$\backslash j$}  & \multicolumn{5}{|c|}{Example \ref{ex5.1} Test case 1} & \multicolumn{5}{|c|}{Example \ref{ex5.1} Test case 2}  \\
\cline{2-11}
& $j=4$ & $j=5$ & $j=6$ & $j=7$ & $j=8$ & $j=4$ & $j=5$ & $j=6$ & $j=7$ & $j=8$\\
\hline
Argyris FEM & 7.09 & 28.57 & 118.10 & 492.43 & $--$ & 5.27 & 21.62 & 87.70 & 367.76 & $--$ \\
\hline
Algorithm \ref{femalg}(k=1) & 0.08 & 0.39 & 2.82 & 15.92 & 90.94 & 0.05 & 0.24 & 1.13 & 9.59 & 58.84 \\
\hline
Algorithm \ref{femalg+}(k=1) & 0.10 & 0.42 & 3.01 & 16.09 & 96.83 & 0.08 & 0.30 & 1.36 & 11.37 & 66.22 \\
\hline
Algorithm \ref{femalg}(k=2) & 0.88 & 2.29 & 9.05 & 37.72 & 150.40 & 0.60 & 1.90 & 7.03 & 28.21 & 120.95 \\
\hline
Algorithm \ref{femalg+}(k=2) & 0.91 & 2.74 & 10.79 & 43.29 & 181.42 & 0.77 & 2.27 & 8.25 & 34.25 & 147.97 \\
\hline
\end{tabular}\label{TabCPU}
\end{table}

\begin{table}[!htbp]\tabcolsep0.03in
\caption{The memory usage (in GB) of Algorithm \ref{femalg}, Aglorithm \ref{femalg+}, and the Argyris finite element method.  (``$--$" represents running out of memory)}
\begin{tabular}[c]{|c|ccccc|ccccc|}
\hline
\multirow{2}{*}{method$\backslash j$}  & \multicolumn{5}{|c|}{Example \ref{ex5.1} Test case 1} & \multicolumn{5}{|c|}{Example \ref{ex5.1} Test case 2}  \\
\cline{2-11}
& $j=4$ & $j=5$ & $j=6$ & $j=7$ & $j=8$ & $j=4$ & $j=5$ & $j=6$ & $j=7$ & $j=8$  \\
\hline
Argyris FEM & 0.06 & 0.27 & 1.00 & 5.10 & $--$ & 0.04 & 0.13 & 0.54 & 3.00 & $--$ \\
\hline
Algorithm \ref{femalg}(k=1) & $<$0.01 & 0.01 & 0.02 & 0.41 & 3.03 & $<$0.01 & 0.01 & 0.02 & 0.27 & 2.20 \\
\hline
Algorithm \ref{femalg+}(k=1) & $<$0.01 & 0.01 & 0.02 & 0.46 & 2.98 & $<$0.01 & 0.01 & 0.02 & 0.27 & 2.19 \\
\hline
Algorithm \ref{femalg}(k=2) & 0.01 & 0.05 & 0.23 & 1.07 & 4.14 & $<$0.01 & 0.04 & 0.20 & 0.80 & 3.02 \\
\hline
Algorithm \ref{femalg+}(k=2) & 0.01 & 0.04 & 0.21 & 0.93 & 4.19 & $<$0.01 & 0.03 & 0.19 & 0.80 & 3.14 \\
\hline
\end{tabular}\label{TabM}
\end{table}

From Table \ref{TabCPU}, we find that Algorithm \ref{femalg} and Algorithm \ref{femalg+} are much faster than the Argyris finite element method due to the availability of fast Stokes solvers and Poisson solvers. Moreover, Algorithm \ref{femalg} is faster than Algorithm \ref{femalg+}, since Algorithm \ref{femalg+} has one extra Poisson problem to compute. The results in Table \ref{TabM} indicate that Algorithm \ref{femalg} and Algorithm \ref{femalg+} use much less memory compared with the Argyris finite element method. We notice that there are not too many differences in memory usage for both Algorithm \ref{femalg} and Algorithm \ref{femalg+}, which is because both algorithms are serial, and the Stokes solver accounts for the maximum memory usage.  

\end{example}

\appendix

\section{Error bounds of finite element approximations to the Stokes problem}

In this section, we only prove the error bound of the Taylor-Hood approximations to the Stokes problem, the error bound of the the Mini element approximations can be proved similarly. We introduce the following operators.
\be
\bal
\mathcal{B} = & -\text{div} \ : \ [H_0^1(\Omega)]^2 \rightarrow (L_0^2(\Omega))'=L_0^2(\Omega), \quad \langle \mathcal{B} \mathbf{v}, q \rangle = -(\text{div } \mathbf{v}, q), \\
\mathcal{B}' = & \nabla \ : \ L_0^2(\Omega) \rightarrow [H^{-1}(\Omega)]^2, \quad \langle \mathbf{v}, \mathcal{B}'q  \rangle = -(\text{div } \mathbf{v}, q).
\eal
\ee
We denote $\mathcal{B}_n  :  [V_n^{k}]^2 \rightarrow (S_{n}^{k-1})'$ be the discrete counterpart of the operator $\mathcal{B}$ and it satisfies 
$$
\langle \mathcal{B}_n \mathbf{v}, q \rangle = \langle \mathcal{B} \mathbf{v}, q \rangle = -(\text{div } \mathbf{v}, q) \quad \forall (\mathbf{v}, q) \in [V_n^{k}]^2 \times S_{n}^{k-1}.
$$
The nullspace of $\mathcal{B}_n$ is given by
\be\label{kerbn}
\ker(\mathcal{B}_n) = \{ \mathbf{v} \in  [V_n^{k}]^2 \ | \ \forall q \in S_{n}^{k-1}, (\text{div } \mathbf{v}, q) = 0 \}.
\ee
\begin{lem}\label{stokesTHbdd}
Let $(\mathbf{u}, p)$ be the solution of the Stokes problem (\ref{stokesweak}), and $(\mathbf{u}_n, p_n)$ be the Taylor-Hood element solution ($k\geq 2$) of (\ref{stokesfem}) satisfying the LBB condition (\ref{skinfsupTH}), then it follows
\bes
\|\mathbf{u}-\mathbf{u}_n\|_{[H^1(\Omega)]^2} + \|p-p_n\| \leq C \left( \inf_{\mathbf{v} \in [V_n^{k}]^2 } \|\mathbf{u}-\mathbf{v}\|_{[H^1(\Omega)]^2} + \inf_{ q \in S_{n}^{k-1} } \|p-q\| + \|\mathbf{F}-\mathbf{F}_n\|_{[H^{-1}(\Omega)]^2} \right).
\ees
\end{lem}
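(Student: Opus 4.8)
The plan is to prove this as a quasi-optimality (C\'ea-type) estimate for the mixed saddle-point problem, in the spirit of Brezzi's theory but augmented with a consistency term that accounts for the perturbed right-hand side $\mathbf{F}_n$ appearing in (\ref{stokesfem}). The three structural ingredients are the coercivity of $a(\mathbf{u},\mathbf{v})=(\nabla\mathbf{u},\nabla\mathbf{v})$, which by the Poincar\'e inequality holds on all of $[H_0^1(\Omega)]^2$ and hence on $\ker(\mathcal{B}_n)$; the discrete inf-sup condition (\ref{skinfsupTH}); and the boundedness (\ref{skubb}). First I would record the error equations: subtracting (\ref{stokesfem}) from (\ref{stokesweak}) restricted to the discrete test spaces gives, for all $\mathbf{v}\in[V_n^k]^2$ and $q\in S_n^{k-1}$,
\[
(\nabla(\mathbf{u}-\mathbf{u}_n),\nabla\mathbf{v})-(\text{div }\mathbf{v},p-p_n)=\langle\mathbf{F}-\mathbf{F}_n,\mathbf{v}\rangle,\qquad (\text{div }(\mathbf{u}-\mathbf{u}_n),q)=0 .
\]
The only departure from exact Galerkin orthogonality is the consistency pairing $\langle\mathbf{F}-\mathbf{F}_n,\mathbf{v}\rangle$, bounded by duality by $\|\mathbf{F}-\mathbf{F}_n\|_{[H^{-1}(\Omega)]^2}\|\mathbf{v}\|_{[H^1(\Omega)]^2}$.

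Next I introduce arbitrary $\mathbf{v}_I\in[V_n^k]^2$, $q_I\in S_n^{k-1}$ and set $\mathbf{e}_n:=\mathbf{u}_n-\mathbf{v}_I$, $\epsilon_n:=p_n-q_I$; by the triangle inequality it suffices to bound $\|\mathbf{e}_n\|_{[H^1(\Omega)]^2}$ and $\|\epsilon_n\|$ by the interpolation errors $\|\mathbf{u}-\mathbf{v}_I\|_{[H^1(\Omega)]^2}$, $\|p-q_I\|$ and the consistency term, whereupon taking the infimum over $\mathbf{v}_I,q_I$ yields the claim. For the velocity, the second error equation together with $\text{div }\mathbf{u}=0$ gives $(\text{div }\mathbf{e}_n,q)=(\text{div }(\mathbf{u}-\mathbf{v}_I),q)$ for all $q\in S_n^{k-1}$, so $\mathbf{e}_n$ is discretely divergence-free up to the interpolation defect. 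Using (\ref{skinfsupTH}) to invert $\mathcal{B}_n$ with a bounded right inverse, I split off a correction $\mathbf{z}_n\in[V_n^k]^2$ with $\mathcal{B}_n\mathbf{z}_n=\mathcal{B}_n\mathbf{e}_n$ and $\|\mathbf{z}_n\|_{[H^1(\Omega)]^2}\le C\|\mathbf{u}-\mathbf{v}_I\|_{[H^1(\Omega)]^2}$, so that $\mathbf{e}_n-\mathbf{z}_n\in\ker(\mathcal{B}_n)$ as in (\ref{kerbn}).

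Testing the first error equation with $\mathbf{v}=\mathbf{e}_n-\mathbf{z}_n$ is the decisive move: writing $p-p_n=(p-q_I)-\epsilon_n$, the term $(\text{div }(\mathbf{e}_n-\mathbf{z}_n),\epsilon_n)$ vanishes because $\mathbf{e}_n-\mathbf{z}_n\in\ker(\mathcal{B}_n)$ and $\epsilon_n\in S_n^{k-1}$, so the unknown discrete pressure drops out. Coercivity of $a$ then controls $\|\mathbf{e}_n-\mathbf{z}_n\|_{[H^1(\Omega)]^2}$ by $\|\mathbf{u}-\mathbf{v}_I\|_{[H^1(\Omega)]^2}+\|p-q_I\|+\|\mathbf{F}-\mathbf{F}_n\|_{[H^{-1}(\Omega)]^2}$, and adding back $\|\mathbf{z}_n\|_{[H^1(\Omega)]^2}$ gives the bound on $\|\mathbf{e}_n\|_{[H^1(\Omega)]^2}$. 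Finally, the pressure estimate uses (\ref{skinfsupTH}) directly, $\tilde\gamma_1\|\epsilon_n\|\le\sup_{\mathbf{v}}\big(-(\text{div }\mathbf{v},\epsilon_n)\big)/\|\mathbf{v}\|_{[H^1(\Omega)]^2}$; rewriting $-(\text{div }\mathbf{v},\epsilon_n)$ through the first error equation expresses it in terms of $(\nabla(\mathbf{u}-\mathbf{u}_n),\nabla\mathbf{v})$, $(\text{div }\mathbf{v},p-q_I)$ and $\langle\mathbf{F}-\mathbf{F}_n,\mathbf{v}\rangle$, each now bounded by the already-controlled velocity error, the interpolation error, and the consistency term, using (\ref{skubb}).

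The part requiring the most care is avoiding a circular coupling between the two bounds: a naive velocity estimate involves $\|p-p_n\|$ while the pressure estimate involves $\|\mathbf{u}-\mathbf{u}_n\|_{[H^1(\Omega)]^2}$. The kernel decomposition $\mathbf{e}_n=(\mathbf{e}_n-\mathbf{z}_n)+\mathbf{z}_n$ is precisely what breaks this loop, since it annihilates the discrete pressure in the velocity equation and lets the pressure be recovered afterwards; this is exactly where the discrete inf-sup condition (\ref{skinfsupTH}) and the bounded right inverse of $\mathcal{B}_n$ are indispensable. The rest is routine bookkeeping with the boundedness constants from (\ref{skubb}) and the duality bound for the consistency term.
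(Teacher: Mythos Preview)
Your proposal is correct and follows essentially the same approach as the paper: both record the perturbed error equations (\ref{notgo}), use the discrete inf-sup condition (\ref{skinfsupTH}a) to build a correction $\mathbf{z}_n$ (the paper's $\tilde{\mathbf{v}}$) so that $\mathbf{e}_n-\mathbf{z}_n\in\ker(\mathcal{B}_n)$, test the first error equation against this kernel element to eliminate the discrete pressure and bound the velocity error, and finally recover the pressure error via (\ref{skinfsupTH}a). The only cosmetic difference is that you invoke coercivity of $a(\cdot,\cdot)$ directly by testing with $\mathbf{e}_n-\mathbf{z}_n$, whereas the paper writes this step through the inf-sup form (\ref{skinfsupTH}b) and takes a supremum over $\ker(\mathcal{B}_n)$; for a symmetric coercive form these are equivalent.
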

\begin{proof}
By the LBB condition (\ref{skinfsupTH}a), we have that the operator $B_n$ is surjective. Thus, for given $\mathbf{v} \in [V_n^{k}]^2$, 
there exists $\tilde{\mathbf{v}} \in [V_n^{k}]^2$ such that 
\be\label{Bnproj}
\mathcal{B}_n \tilde{\mathbf{v}} = \mathcal{B}_n(\mathbf{u}_n-\mathbf{v})
\ee
and
\be\label{tvubb}
\tilde{\gamma}_1 \|\tilde{\mathbf{v}}\|_{[H^1(\Omega)]^2} \leq \sup_{q \in S_{n}^{k-1}}\frac{-(\text{div } (\mathbf{u}_n-\mathbf{v}), q)}{\|q\|} = \sup_{q \in S_{n}^{k-1}}\frac{-(\text{div } (\mathbf{u}-\mathbf{v}), q)}{\|q\|}  \leq C_1 \|\mathbf{u}-\mathbf{v}\|_{[H^1(\Omega)]^2},
\ee
where we have used (\ref{notgo}b).

Set $\mathbf{w} = \mathbf{v}+\tilde{\mathbf{v}} \in [V_n^{k}]^2$, then it follows by (\ref{Bnproj}),
$$
\mathcal{B}_n(\mathbf{u}_n-\mathbf{w}) = \mathcal{B}_n(\mathbf{u}_n-\mathbf{v}-\tilde{\mathbf{v}})  = 0,
$$
which implies that $\mathbf{u}_n-\mathbf{w} \in \ker(\mathcal{B}_n)$. 

By (\ref{skinfsupTH}b), we have
\bes
\bal
\tilde\gamma_2\|\mathbf{u}_n-\mathbf{w}\|_{[H^1(\Omega)]^2} \leq & \frac{(\nabla (\mathbf{u}_n-\mathbf{w}), \nabla (\mathbf{u}_n-\mathbf{w}))}{\|\mathbf{u}_n-\mathbf{w}\|_{[H^1(\Omega)]^2}} \leq  \sup_{\tilde{\mathbf{w}}\in \ker(\mathcal{B}_n)} \frac{(\nabla (\mathbf{u}_n-\mathbf{w}), \nabla \tilde{\mathbf{w}})}{\|\tilde{\mathbf{w}}\|_{[H^1(\Omega)]^2}}\\
= & \sup_{\tilde{\mathbf{w}}\in \ker(\mathcal{B}_n)} \frac{(\nabla (\mathbf{u}_n-\mathbf{u}), \nabla \tilde{\mathbf{w}}) + (\nabla (\mathbf{u}-\mathbf{w}), \nabla \tilde{\mathbf{w}})}{\|\tilde{\mathbf{w}}\|_{[H^1(\Omega)]^2}}\\
= & \sup_{\tilde{\mathbf{w}}\in \ker(\mathcal{B}_n)} \frac{-(\text{div } \tilde{\mathbf{w}}, p-p_n) - \langle\mathbf{F}-\mathbf{F}_n, \tilde{\mathbf{w}} \rangle + (\nabla (\mathbf{u}-\mathbf{w}), \nabla \tilde{\mathbf{w}})}{\|\tilde{\mathbf{w}}\|_{[H^1(\Omega)]^2}}\\
= & \sup_{\tilde{\mathbf{w}}\in \ker(\mathcal{B}_n)} \frac{-(\text{div } \tilde{\mathbf{w}}, p-p_n) + (\nabla (\mathbf{u}-\mathbf{w}), \nabla \tilde{\mathbf{w}})}{\|\tilde{\mathbf{w}}\|_{[H^1(\Omega)]^2}}+\sup_{\tilde{\mathbf{w}}\in \ker(\mathcal{B}_n)} \frac{ - \langle\mathbf{F}-\mathbf{F}_n, \tilde{\mathbf{w}} \rangle}{\|\tilde{\mathbf{w}}\|_{[H^1(\Omega)]^2}},
\eal
\ees
where we have used (\ref{notgo}a). For any $q \in S_{n}^{k-1}$, we have by (\ref{kerbn}) with $\tilde{\mathbf{w}} \in \ker(\mathcal{B}_n)$,
$$
-(\text{div } \tilde{\mathbf{w}}, p_n-q) = 0.
$$
Therefore, it follows
\be\label{unwubb}
\bal
\tilde\gamma_2\|\mathbf{u}_n-\mathbf{w}\|_{[H^1(\Omega)]^2} \leq &  \sup_{\tilde{\mathbf{w}}\in \ker(\mathcal{B}_n)} \frac{-(\text{div } \tilde{\mathbf{w}}, p-q) - \langle\mathbf{F}-\mathbf{F}_n, \tilde{\mathbf{w}} \rangle + (\nabla (\mathbf{u}-\mathbf{w}), \nabla \tilde{\mathbf{w}})}{\|\tilde{\mathbf{w}}\|_{[H^1(\Omega)]^2}}\\
\leq & C_1 \|p-q\| + C_2 \|\mathbf{u}-\mathbf{w}\|_{[H^1(\Omega)]^2} + \|\mathbf{F}-\mathbf{F}_n\|_{[H^{-1}(\Omega)]^2},
\eal
\ee

Note that using triangle inequality and (\ref{tvubb}) yields
\be\label{uwtouv}
\|\mathbf{u}-\mathbf{w}\|_{[H^1(\Omega)]^2} \leq \|\mathbf{u}-\mathbf{v}\|_{[H^1(\Omega)]^2} + \|\tilde{\mathbf{v}}\|_{[H^1(\Omega)]^2} \leq \left(1+\frac{C_1}{\tilde\gamma_1} \right)\|\mathbf{u}-\mathbf{v}\|_{[H^1(\Omega)]^2}.
\ee
Thus, we have by the triangle inequality and (\ref{uwtouv})
\be\label{uubdd}
\bal
\|\mathbf{u}-\mathbf{u}_n\|_{[H^1(\Omega)]^2} \leq& \left(1+ \frac{C_2}{\tilde\gamma_2}\right)\|\mathbf{u}-\mathbf{w}\|_{[H^1(\Omega)]^2} + \frac{C_2}{\tilde\gamma_2} \|p-q\| + \frac{1}{\tilde\gamma_2} \|\mathbf{F}-\mathbf{F}_n\|_{[H^{-1}(\Omega)]^2}\\
\leq & C_3 \|\mathbf{u}-\mathbf{v}\|_{[H^1(\Omega)]^2} + \frac{C_2}{\tilde\gamma_2} \|p-q\| + \frac{1}{\tilde\gamma_2} \|\mathbf{F}-\mathbf{F}_n\|_{[H^{-1}(\Omega)]^2},
\eal
\ee
where $C_3=\left(1+\frac{C_1}{\tilde\gamma_1} \right)\left(1+ \frac{C_2}{\tilde\gamma_2}\right)$.

Next, we need to obtain the estimate for $\|p-p_n\|$. From (\ref{notgo}a), we have 
$$
- (\text{div } \mathbf{v}, q-p_n) = - (\nabla (\mathbf{u}-\mathbf{u}_n), \nabla \mathbf{v}) +  \langle\mathbf{F}-\mathbf{F}_n, \mathbf{v} \rangle - (\text{div } \mathbf{v}, q-p).
$$
By the LBB condition (\ref{skinfsupTH}a) and the boundedness of the bilinear forms, we have
\bes
\bal
\tilde \gamma_1 \|q-p_n\| \leq C_2 \|\mathbf{u}-\mathbf{u}_n\|_{[H^1(\Omega)]^2} +  \|\mathbf{F}-\mathbf{F}_n\|_{[H^{-1}(\Omega)]^2} + C_1\|p-q\|.
\eal
\ees
Thus, we have
\be\label{ppnupp}
\bal
\|p-p_n\| \leq \left(1+\frac{C_1}{\tilde\gamma_1}\right)\|p-q\|+\frac{C_2}{\tilde\gamma_1}\|\mathbf{u}-\mathbf{u}_n\|_{[H^1(\Omega)]^2} +\frac{1}{\tilde\gamma_1} \|\mathbf{F}-\mathbf{F}_n\|_{[H^{-1}(\Omega)]^2}.
\eal
\ee
The conclusion follows from (\ref{uubdd}) and (\ref{ppnupp}).
\end{proof}

If $\mathbf{F}_n$ is the $L^2$ projection of $\mathbf{F}$, i.e., $\langle\mathbf{F}-\mathbf{F}_n, \mathbf{v} \rangle=0$ for $\forall \mathbf{v} \in [V_n^k]^2$, then
(\ref{notgo}) becomes the Galerkin orthogonality of a general Taylor-Hood method, and the result in Lemma \ref{stokesTHbdd} degenerates to the well known estimate bound in \cite{brezzi1974}.
\begin{corollary}\label{stokesTHbdd+}
Let $(\mathbf{u}, p)$ be the solution of the Stokes problem (\ref{stokesweak}), and $(\mathbf{u}_n, p_n)$ be the Taylor-Hood element solution ($k\geq 2$) in Algorithm \ref{femalg} satisfying the LBB condition (\ref{skinfsupTH}), then it follows
\bes
\|\mathbf{u}-\mathbf{u}_n\|_{[H^1(\Omega)]^2} + \|p-p_n\| \leq C \left( \inf_{\mathbf{v} \in [V_n^{k}]^2 } \|\mathbf{u}-\mathbf{v}\|_{[H^1(\Omega)]^2} + \inf_{ q \in S_{n}^{k-1} } \|p-q\| \right).
\ees
\end{corollary}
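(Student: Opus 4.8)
The plan is to obtain this corollary as an immediate specialization of Lemma \ref{stokesTHbdd}. The key observation is purely structural: in Step~1 of Algorithm \ref{femalg} the discrete Stokes problem is posed with the \emph{exact} source term $\mathbf{F}$ from Lemma \ref{Fint}, the very same right-hand side that drives the continuous weak formulation (\ref{stokesweak}). Hence, in the bookkeeping of Lemma \ref{stokesTHbdd}, the discrete source $\mathbf{F}_n$ coincides with $\mathbf{F}$, so that the consistency contribution $\langle \mathbf{F}-\mathbf{F}_n, \mathbf{v}\rangle$ collapses to $\langle \mathbf{F}-\mathbf{F}, \mathbf{v}\rangle = 0$; equivalently, taking $\mathbf{F}_n=\mathbf{F}$ trivially realizes the $L^2$-projection identity $\langle \mathbf{F}-\mathbf{F}_n, \mathbf{v}\rangle = 0$ for all $\mathbf{v}\in [V_n^k]^2$.

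First I would subtract the discrete equations of Algorithm \ref{femalg} from (\ref{stokesweak}) tested against the conforming functions $\mathbf{v}\in [V_n^k]^2\subset [H_0^1(\Omega)]^2$ and $q\in S_n^{k-1}\subset L_0^2(\Omega)$. Because both problems carry the identical datum $\langle\mathbf{F}, \mathbf{v}\rangle$, this produces the exact Galerkin orthogonality
\bes
\bal
(\nabla(\mathbf{u}-\mathbf{u}_n), \nabla \mathbf{v}) - (\text{div }\mathbf{v}, p-p_n) = & \ 0 \quad \forall \mathbf{v}\in [V_n^k]^2,\\
-(\text{div }(\mathbf{u}-\mathbf{u}_n), q) = & \ 0 \quad \forall q\in S_n^{k-1},
\eal
\ees
which is precisely (\ref{notgo}) with the residual term $\langle\mathbf{F}-\mathbf{F}_n, \mathbf{v}\rangle$ set to zero. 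Then I would invoke Lemma \ref{stokesTHbdd} verbatim: its hypotheses, namely the LBB condition (\ref{skinfsupTH}) and the boundedness (\ref{skubb}) of the bilinear forms, are in force on quasi-uniform meshes, so its conclusion holds with $\|\mathbf{F}-\mathbf{F}_n\|_{[H^{-1}(\Omega)]^2}=0$. Dropping this vanishing term from the right-hand side of Lemma \ref{stokesTHbdd} yields exactly the asserted C\'ea-type bound.

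The argument carries no genuine analytic obstacle, since all the substance is already packaged in Lemma \ref{stokesTHbdd}; the only point demanding care is the bookkeeping that separates Algorithm \ref{femalg} from Algorithm \ref{femalg+}. In Algorithm \ref{femalg+} the Stokes source is $\textbf{curl }w_n$, so $\mathbf{F}_n\neq\mathbf{F}$ and the consistency term genuinely survives and must be estimated by (\ref{Ferrs}); in Algorithm \ref{femalg} the source is the exact $\mathbf{F}$, and no such term appears. I would therefore make explicit that the corollary concerns only the Algorithm \ref{femalg} setting, where the Galerkin orthogonality is exact. Finally I would remark that the Mini element case $k=1$ is handled by the identical reasoning, replacing $[V_n^k]^2\times S_n^{k-1}$ by $[V_n^1\oplus B_n^3]^2\times S_n^1$ and using the corresponding LBB condition (\ref{skinfsupMini}), so the statement holds uniformly for $k\geq 1$.
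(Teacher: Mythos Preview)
Your proposal is correct and matches the paper's own reasoning: the corollary is obtained from Lemma \ref{stokesTHbdd} by noting that in Algorithm \ref{femalg} the discrete Stokes problem uses the exact source $\mathbf{F}$, so $\mathbf{F}_n=\mathbf{F}$ and the consistency term $\|\mathbf{F}-\mathbf{F}_n\|_{[H^{-1}(\Omega)]^2}$ vanishes. The paper phrases this as the $L^2$-projection identity $\langle\mathbf{F}-\mathbf{F}_n,\mathbf{v}\rangle=0$ for all $\mathbf{v}\in[V_n^k]^2$ recovering the classical Brezzi estimate, which is exactly what you wrote.
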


For Mini element approximations to the Stokes problem, we have the following error bounds.
\begin{lem}\label{stokesMinibdd}
Let $(\mathbf{u}, p)$ be the solution of the Stokes problem (\ref{stokesweak}), and $(\mathbf{u}_n, p_n)$ be the Mini element solution ($k=1$) in Algorithm \ref{femalg+} satisfying the LBB condition (\ref{skinfsupMini}), then it follows
\bes
\|\mathbf{u}-\mathbf{u}_n\|_{[H^1(\Omega)]^2} + \|p-p_n\| \leq C \left( \inf_{\mathbf{v} \in [V_n^{1}]^2 } \|\mathbf{u}-\mathbf{v}\|_{[H^1(\Omega)]^2} + \inf_{ q \in S_{n}^{1} } \|p-q\| + \|\mathbf{F}-\mathbf{F}_n\|_{[H^{-1}(\Omega)]^2} \right).
\ees
Moreover, if 
$(\mathbf{u}_n, p_n)$ is the Mini element solution ($k=1$) in Algorithm \ref{femalg} satisfying the LBB condition (\ref{skinfsupMini}), then it follows
\bes
\|\mathbf{u}-\mathbf{u}_n\|_{[H^1(\Omega)]^2} + \|p-p_n\| \leq C \left( \inf_{\mathbf{v} \in [V_n^{1}]^2 } \|\mathbf{u}-\mathbf{v}\|_{[H^1(\Omega)]^2} + \inf_{ q \in S_{n}^{1} } \|p-q\| \right).
\ees
\end{lem}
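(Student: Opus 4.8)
The plan is to mirror, almost verbatim, the argument used for the Taylor--Hood case in Lemma \ref{stokesTHbdd}, since that proof is purely abstract and relies only on the inf--sup and boundedness properties of the bilinear forms rather than on the specific structure of the discrete spaces. First I would introduce the discrete divergence operator $\mathcal{B}_n : [V_n^1 \oplus B_n^3]^2 \to (S_n^1)'$ defined by $\langle \mathcal{B}_n \mathbf{v}, q\rangle = -(\text{div } \mathbf{v}, q)$, together with its kernel $\ker(\mathcal{B}_n) = \{\mathbf{v} \in [V_n^1\oplus B_n^3]^2 : (\text{div }\mathbf{v},q)=0, \ \forall q \in S_n^1\}$. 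The first LBB inequality (\ref{skinfsupMini}a) guarantees that $\mathcal{B}_n$ is surjective, so for any $\mathbf{v}\in[V_n^1\oplus B_n^3]^2$ I can solve $\mathcal{B}_n\tilde{\mathbf{v}} = \mathcal{B}_n(\mathbf{u}_n-\mathbf{v})$ with the stability bound $\tilde\gamma_1\|\tilde{\mathbf{v}}\|_{[H^1(\Omega)]^2} \leq C_1\|\mathbf{u}-\mathbf{v}\|_{[H^1(\Omega)]^2}$, using the discrete divergence constraint exactly as in (\ref{tvubb}).

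Setting $\mathbf{w}=\mathbf{v}+\tilde{\mathbf{v}}$ forces $\mathbf{u}_n-\mathbf{w}\in\ker(\mathcal{B}_n)$. I would then apply the second inf--sup condition (\ref{skinfsupMini}b) over the kernel and insert the discrete Stokes equation — the Mini element analogue of (\ref{notgo}a) for Algorithm \ref{femalg+}, which carries the extra term $\langle\mathbf{F}-\mathbf{F}_n,\tilde{\mathbf{w}}\rangle$ — to bound $\|\mathbf{u}_n-\mathbf{w}\|_{[H^1(\Omega)]^2}$ by $\|p-q\|$, $\|\mathbf{u}-\mathbf{w}\|_{[H^1(\Omega)]^2}$, and $\|\mathbf{F}-\mathbf{F}_n\|_{[H^{-1}(\Omega)]^2}$, just as in (\ref{unwubb}); replacing $p_n$ by an arbitrary $q\in S_n^1$ uses the kernel property $(\text{div }\tilde{\mathbf{w}}, p_n-q)=0$ exactly as before. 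A triangle inequality together with the stability bound on $\tilde{\mathbf{v}}$ then yields the velocity estimate for $\|\mathbf{u}-\mathbf{u}_n\|_{[H^1(\Omega)]^2}$, and a further application of (\ref{skinfsupMini}a) to the discrete momentum residual produces the pressure estimate $\|p-p_n\|$, completing the first inequality after taking infima over $\mathbf{v}$ and $q$.

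For the ``moreover'' statement concerning Algorithm \ref{femalg}, I would observe that there the Stokes source is the exact $\mathbf{F}$ supplied by Lemma \ref{Fint} rather than an approximation, so the discrete equation satisfies genuine Galerkin orthogonality and the residual term $\langle\mathbf{F}-\mathbf{F}_n,\cdot\rangle$ is absent. The same chain of estimates then gives the bound without the $\|\mathbf{F}-\mathbf{F}_n\|_{[H^{-1}(\Omega)]^2}$ contribution, in direct parallel with Corollary \ref{stokesTHbdd+}.

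I do not expect a genuine obstacle here: the only point requiring care is confirming that the surjectivity and kernel-coercivity steps go through with the bubble-enriched velocity space $[V_n^1\oplus B_n^3]^2$, but these follow immediately from the assumed Mini element LBB conditions (\ref{skinfsupMini}) and the boundedness of the forms. Thus the proof is essentially a formal transcription of Lemma \ref{stokesTHbdd} with $[V_n^k]^2\times S_n^{k-1}$ replaced throughout by $[V_n^1\oplus B_n^3]^2\times S_n^1$.
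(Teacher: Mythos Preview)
Your proposal is correct and is exactly the approach the paper intends: the paper explicitly states that only the Taylor--Hood case is proved in detail and that ``the error bound of the Mini element approximations can be proved similarly,'' which is precisely the transcription you outline. The one cosmetic point is that the lemma's infimum is stated over $[V_n^1]^2$ while your argument naturally runs over $[V_n^1\oplus B_n^3]^2$; since the former is contained in the latter, restricting $\mathbf{v}$ to $[V_n^1]^2$ at the end yields the stated bound.
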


\section*{Acknowledgments}
H. Li was supported in part by the National Science Foundation Grant DMS-1819041 and by the Wayne State University Faculty Competition for Postdoctoral Fellows Award.


\bibliography{LWY21}

\bibliographystyle{plain}

\end{document}